\documentclass[11pt]{amsart}
\usepackage[utf8]{inputenc}

\usepackage{xcolor}
\usepackage[babel=true]{csquotes}
\usepackage{amssymb}
\usepackage{amsmath}
\usepackage{graphicx}
\usepackage{amsthm}
\usepackage{amscd}
\usepackage[all]{xy}               %xypic macro for latex2.09
\usepackage{epsfig,exscale}
\usepackage{tikz-cd}

\newcommand\red[1]{\textcolor{red}{#1}}

\newcommand\orange[1]{\textcolor{orange}{#1}}

 \font \eightrm=cmr8

 \newcommand{\nc}{\newcommand}

 \setlength{\textheight}{24.5cm}
 \setlength{\topmargin}{-1.5cm}
 \setlength{\textwidth}{16.5cm}
 \setlength{\oddsidemargin}{-7pt}
 \setlength{\evensidemargin}{-7pt}

% \nc{\delete}[1]{}

\hfuzz5pt \vfuzz5pt

\newtheorem{thm}{Theorem}
\newtheorem{exam}{Example}

\newtheorem{cor}[thm]{Corollary}

\newtheorem{lem}[thm]{Lemma}
\newtheorem{prop}[thm]{Proposition}
\newtheorem{defn}{Definition}
\newtheorem{rmk}[thm]{Remark}

%%%%%%%%%%%%%%%%%%%%%%%%%%%%%%%%%%%%%%%%%%%%%%%%%%%%%%%%%%%trees%%%%%%%%%%%%%

\newcommand{\cC}{\mathcal{C}} 
\newcommand{\cD}{\mathcal {D}}
\newcommand{\cE}{\mathcal{E}} 
\newcommand{\cF}{\mathcal{F}}
\newcommand{\cG}{\mathcal{G}}

\newcommand{\cL}{\mathcal{L}}
\newcommand{\cM}{\mathcal{M}}

\newcommand{\cU}{\mathcal{U}} 
\newcommand{\cV}{\mathcal{V}} 
\newcommand{\cW}{\mathcal{W}}

\newcommand{\cZ}{\mathcal{Z}}

%%%%%%%%%%%%%%%%%%%%%%%%%%%%%%%%%%%%%%%%%%%%%%%%%%%%%%%%%%%%%%%%%%%%%%%%%%%%

\nc{\ignore}[1]{{}}
\nc{\mrm}[1]{{\rm #1}}
\nc{\dirlim}{\displaystyle{\lim_{\longrightarrow}}\,}
\nc{\invlim}{\displaystyle{\lim_{\longleftarrow}}\,}
\nc{\vep}{\varepsilon} \nc{\ep}{\epsilon}
\nc{\sigmat}{\widetilde\sigma}
\nc{\ostar}{\overline{*}}
\nc{\mchar}{\mrm{Char}}
\nc{\Hom}{\mrm{Hom}}
\nc{\id}{\mrm{id}}

\nc{\remark}{\noindent{\bf{Remark:}}}
\nc{\remarks}{\noindent{\bf{Remarks:}}}

 \nc{\delete}[1]{}
 \nc{\grad}[1]{^{({#1})}}
 \nc{\fil}[1]{_{#1}}

\nc{\BA}{{\mathbb A}} \nc{\CC}{{\mathbb C}} \nc{\DD}{{\mathbb D}}
\nc{\EE}{{\mathbb E}} \nc{\FF}{{\mathbb F}} \nc{\GG}{{\mathbb G}}
\nc{\HH}{{\mathbb H}} \nc{\LL}{{\mathbb L}} \nc{\NN}{{\mathbb N}}
\nc{\PP}{{\mathbb P}} \nc{\QQ}{{\mathbb Q}} \nc{\RR}{{\mathbb R}}
\nc{\TT}{{\mathbb T}} \nc{\VV}{{\mathbb V}} \nc{\ZZ}{{\mathbb Z}}
\nc{\setr}{\RR} \nc{\setn}{\NN}
\nc{\Cal}[1]{{\mathcal {#1}}}
\nc{\mop}[1]{\mathop{\hbox {\rm #1} }}
\nc{\smop}[1]{\mathop{\hbox {\eightrm #1} }}
\nc{\mopl}[1]{\mathop{\hbox {\rm #1} }\limits}
\nc{\frakg}{{\mathfrak g}}
\nc{\g}[1]{{\mathfrak {#1}}}
\def \restr#1{\mathstrut_{\textstyle |}\raise-8pt\hbox{$\scriptstyle #1$}}
\def \srestr#1{\mathstrut_{\scriptstyle |}\hbox to
  -1.5pt{}\raise-4pt\hbox{$\scriptscriptstyle #1$}}
\nc{\wt}{\widetilde}
\nc{\wh}{\widehat}
\nc{\un}{\hbox{\bf 1}}
\nc{\redtext}[1]{\textcolor{red}{{\tt [[#1]]}}}
\nc{\bluetext}[1]{\textcolor{blue}{#1}}

\nc\fleche[1]{\mathop{\hbox to #1 mm{\rightarrowfill}}\limits}
\def\semi{\!\mathrel{\times}\kern -6.5 pt\joinrel\mathrel{\raise
    1.4pt\hbox{${\scriptscriptstyle |}$}}\,\,}

\nc{\gmop}[1]{\mathop{\hbox {\bf #1} }\nolimits}

\def\fleche#1{\mathop{\hbox to #1 mm{\rightarrowfill}}\limits}
\def\gfleche#1{\mathop{\hbox to #1 mm{\leftarrowfill}}\limits}
\def\inj#1{\mathop{\hbox to #1 mm{$\lhook\joinrel$\rightarrowfill}}\limits}
\def\ginj#1{\mathop{\hbox to #1 mm{\leftarrowfill$\joinrel\rhook$}}\limits}
\def\surj#1{\mathop{\hbox to #1 mm{\rightarrowfill\hskip 2pt\llap{$\rightarrow$}}}\limits}
\def\gsurj#1{\mathop{\hbox to #1 mm{\rlap{$\leftarrow$}\hskip 2pt
      \leftarrowfill}}\limits}
\def \restr#1{\mathstrut_{\textstyle |}\raise-6pt\hbox{$\scriptstyle #1$}}
\def \srestr#1{\mathstrut_{\scriptstyle |}\hbox to
-1.5pt{}\raise-4pt\hbox{$\scriptscriptstyle #1$}}

%%%%%%%%%%%
\nc{\scal}[2]{ \langle #1\, ,\, #2 \rangle}
\nc{\abs}[1]{ \left\vert  #1 \right\vert}
\nc{\norm}[1]{ \left\lVert  #1 \right\rVert}

%%%%%%%%%%%%%%%%%%%%%%%%%%%%%%%%%%

%\newcommand{\WF}{\operatorname{WF}}

\newcommand{\WF}[1]{\mathop{\mathrm{WF}(#1)}\nolimits}
%%%%%%%%%%%%%%%%%%%%%%%%%%Macros specifiques

\newcommand{\supp}[1]{{\mathrm{supp}(#1)}}
\newcommand{\rank}{\mathrm{rank}}

\newcommand{\pr}[1]{\mathop{\mathrm{pr}_{#1}}\nolimits}
\newcommand{\Gr}[1]{\mathop{\mathrm{Gr}_{#1}}\nolimits}

%%%%%%%%%%%%%%%%%%%%%%%%%%%%%%%%%%%%%%%%%%%%%
\begin{document}
%%%%%%%%%%%%%%%%%%%%%%%%%%%%%%%%%%%%%%%%%%%%%
%%%%%%%%%%%%%%%%%%%%%%%%%%%%%%%%%%%%%%%%%%%%%
%OFI invariants sur les groupo\"\i des
\title[FIO on Lie groupoids]
      {Fourier Integrals operators on Lie groupoids}

\author{Jean-Marie Lescure, St\'ephane Vassout ${}^{(1)}$}
\address{}
 
\footnotetext[1]{The  authors are supported by the  Grant ANR-14-CE25-0012-01  SINGSTAR}

%%%%%%%%%%%%%%%%%%%%%%%%%%%%%%%%%%%%%%%%%%%%%%%%%%%%%%%%%%%%%%%%%%%
\date{\today
%\noindent {\footnotesize{${}\phantom{a}$ Mathematics Subject
%Classification 2000}: 12H20,37C10 }}
}
%%%%%%%%%%%%%%%%%%%%%%%%%%%%%%%%%%%%%%%%%%%%%%%%%%%%%%%%%%%%%%%%%%%

\begin{abstract}
As announced in \cite{LMV1}, we develop a calculus of Fourier integral $G$-operators on any Lie groupoid $G$.  For that purpose, we study convolability and invertibility of Lagrangian conic submanifolds of the symplectic groupoid $T^*G$. We also identify those Lagrangian which correspond to  equivariant families parametrized by the unit space $G^{(0)}$ of homogeneous canonical relations  in $(T^*G_x\setminus 0)\times (T^*G^x\setminus 0)$. This allows us to select a subclass of Lagrangian distributions on any Lie groupoid $G$ that deserve the name of Fourier integral $G$-operators ($G$-FIO). By construction, the class of $G$-FIO contains the class of equivariant families of ordinary Fourier integral operators on the manifolds $G_x$,   $x\in G^{(0)}$.  We then develop for $G$-FIO the first stages of the calculus in the spirit of Hormander's work. Finally, we work out an example proving the efficiency of the present approach for studying Fourier integral operators on singular manifolds.   
\end{abstract}

\maketitle
%%%%%%%%%%%%%%%%%%%%%%%%%%%%%%%%%%%%%%%%%%%%%%%%%%%%%%%%%%%%%%%%%%%

%\orange{ table des mati\`eres \`a enlever dans la version finale}
\setcounter{tocdepth}{2}
\tableofcontents

%%%%%%%%%%%%%%%%%%%%%%%%%%%%%%%%%%%%%%%%%%%%%%%%%%%%%%%%%%%%%%%%%%%
\section{Introduction}
% \noindent
% \orange{Comments} \\
% \blue{To add/complete} \\
% \red{To remove} \\
% \ \\
 Lagrangian distributions constitute an important and wide class of distributions, containing for instance the classes of pseudodifferential operators and Fourier integrals operators on manifolds. To formalize properly this type of distributions on a manifold and both to analyse their singularities and develop a calculus, one is led to handle, in addition to Fourier analysis, tools from microlocal analysis and from symplectic geometry. For arbitrary manifolds, all of this is fully achieved in the series of L. H\"ormander's books \cite{Horm-classics}. 

Motivated by analysis on singular spaces, foliations and all other situations where a groupoid encodes a 
suitable pseudodifferential calculus, the purpose of the present work is the study of Lagrangian distributions as defined in \cite{Horm-classics} in the specific situation where the underlying manifold is a Lie groupoid $G$, and then, to be able to propose a class of Fourier integral operators suitable for singular spaces, foliations, etc... 
 
Pseudodifferential operators on general Lie groupoids \cite{MP,NWX} ($G$-PDO) provide a stimulating example: 
they are exaclty the $G$-operators given by equivariant $C^\infty$ families of pseudodifferential operators acting 
in the $r$ or $s$-fibers of $G$. Importantly, the space of $G$-PDOs coincides with the space of Lagrangian distributions on $G$ subordinated to  $A^*G=N^*(G^{(0)})\subset T^*G$, that is, to the dual of the normal bundle of the embedding $G^{(0)}\hookrightarrow G$. 

Another inspiration comes from the paper \cite{LMV1} in which distributions on  Lie groupoids are studied. On one hand, distributions on a Lie groupoid $G$ that provide $G$-operators are charaterized and natural sufficient conditions on their wave front sets are given.  On the other hand, still in \cite{LMV1}, the convolution product of distributions on a Lie groupoid $G$ is analyzed and sufficient conditions for that product to be defined are again given in terms of wave front sets. 
For the understanding of the present work, it is relevant to note that all the conditions mentionned above, as well as the formula for the wave front set of a convolution product of distributions, have an algebraic nature involving the symplectic groupoid $T^*G$.

This background gives a general framework into which one should develop the theory of Fourier integral operators on a groupoid, and brings in natural questions too. 

The least we can require is that a Fourier integral operator should be an element of $I(G,\Lambda)$ \cite[Section 25.1]{Horm-classics}, that is a Lagrangian distribution on the manifold $G$ subordinated to some Lagrangian  $\Lambda\subset T^*G\setminus 0$, and  should also be regular enough to produce an adjointable $G$-operator. The article \cite{LMV1} already gives a  way to fulfill this constraint: if the Lagrangian $\Lambda\subset T^*G\setminus 0$ does not intersect the kernel of source and target maps of $T^*G\rightrightarrows A^*G$, then the elements of $I(G,\Lambda)$ provide adjointable $G$-operators. Note that this is a purely algebraic condition, very simple to check in practice, which boils down to the so-called "no-zeros" condition \cite{Horm-classics,Melrose1981} in the case of the pair groupoid $G=X\times X$. We call $G$-relations the conic Lagrangian submanifolds of $T^*G\setminus 0$ fulfilling this condition, in reference to the classical term "canonical relations",
and we abbreviate the corresponding classes of Lagrangian distributions by $G$-FIO. 

Then, a first natural question arises: given a $G$-relation $\Lambda$ and a $G$-FIO $u\in I(G,\Lambda)$, we have at hand a ($C^\infty$, equivariant) family of distributions $u_x\in \cD'(G^x\times G_x)$, $x\in G^{(0)}$, and so, it is natural to ask whether these distributions are Lagrangian, that is, are ordinary Fourier integral operators on the manifold $G_x$ ? 

The answer is no in general. Actually, the distributions $u_x$ are still given by oscillatory integrals, but we provide an example where some of them are not Lagrangian distributions. This unstable behavior is fixed by imposing  that the underlying $G$-relation $\Lambda\subset T^*G\setminus 0$ has a projection in $G$ transversal to the canonical (singular) foliation of $G$. Indeed, this transversality condition implies that $\Lambda$ gives a ($C^\infty$, equivariant) family of canonical relations $\Lambda_x\in T^*G^x\times G_x$,  $x\in G^{(0)}$, and each $u_x$, being expressed as an appropriate pull-back distribution, is then an element of $I(G^x\times G_x,\Lambda_x)$.  We call family $G$-relations the  $G$-relations enjoying this transversality condition and we abbreviate by $G$-FFIO the corresponding Lagrangian distributions. Note that the transversality property characterizing family $G$-relations among  the general ones is geometric and still very simple to check in practice. 

Thus, by construction, $G$-FFIO provide $C^\infty$ equivariant families of Lagrangian distributions $u_x$, $x\in G^{(0)}$ and the next natural goal is to obtain the converse. This is achieved after proving that $C^\infty$ equivariant family of canonical relations $\Lambda_x\in T^*G^x\times G_x$,  $x\in G^{(0)}$, can be "glued" into a single family $G$-relation $\Lambda\subset T^*G$. This requires a preliminary work on families of Lagrangian submanifolds in the cotangent spaces of the fibers of an arbitrary submersion. 

To summarize the previous discussion,  $G$-FIO provide a class of distributions on $G$  desserving the name of Fourier integral operators on $G$ and among them,  we know how to characterize in a simple geometric way those which correspond to $C^\infty$ equivariant families of ordinary FIO in the $s$ or $r$ fibers of $G$.
 
The next natural point is to develop a calculus for $G$-FIO: adjointness, composition, module structure over the algebra of pseudodifferential operators, Egorov theorem and $C^*$-continuity.  We could have restricted ourselves to the sub-class of $G$-FFIO in order to export all the results available on manifolds to groupoids via the point of view of families. Actually, we have chosen to treat  $G$-FFIO as single distributions on $G$ to develop the calculus: the statements are simpler, more conceptual, and the central role of the  symplectic  groupoid $T^*G$ is enlighted. Moreover, most of the results hold for $G$-FIO and not only for $G$-FFIO.

More precisely, we prove that adjoints of $G$-FIO  are $G$-FIO, and adjunction replaces the Lagrangian by its image under the inverse map of the groupoid $T^*G\rightrightarrows A^*G$. Next, we work out a natural convolability assumption on $G$-relations in order that their convolution in $T^*G$ is again a $G$-relation. Then, when $\Lambda_1$ and $\Lambda_2$ are convolable, we prove that the convolution of any distributions $u_j\in I(G,\Lambda_j)$ (that is, the composition of the corresponding $G$-operators), is a $G$-FIO subordinated to the $G$-relation $\Lambda_1.\Lambda_2$.  We observe that the convolution of family $G$-relations is not always a family $G$-relation and then, convolution of $G$-FFIO are not always  $G$-FFIO: we explain how to strengthen the convolability assumption to fix this problem. 

The previous adjunction and composition theorems have direct applications. 
Firstly, the composition of $G$-FIO (resp. $G$-FFIO) with pseudodifferential operators are $G$-FIO (resp. $G$-FFIO), the Lagrangian being unchanged. Secondly, for any convolable $G$-relations $\Lambda_1,\Lambda_2$  whose  convolution   is contained in the unit space of $T^*G\rightrightarrows A^*G$, that is,  $\Lambda_1.\Lambda_2\subset A^*G$,  we get an obvious statement generalizing Egorov Theorem. 

The assumption made in our version of Egorov Theorem can be viewed as a weak invertibility property for $G$-relations. Actually, for any $G$-relation $\Lambda_1$,  we prove that  the existence of a convolable $G$-relation $\Lambda_2$ such that   $\Lambda_1*\Lambda_2=r_\Gamma(\Lambda_1)$ and $\Lambda_2*\Lambda_1=s_\Gamma(\Lambda_1)$ (here $s_\Gamma, r_\Gamma $ denote the source and target maps of $ T^*G\rightrightarrows A^*G$) is equivalent for $\Lambda_1$ to be a Lagrangian bissection. This is what we call invertible $G$-relations. 

It then follows that for any invertible $G$-relation $\Lambda$, the $G$-relation  $\Lambda^\star=i_\Gamma(\Lambda)$ (where $i_\Gamma$ is the inversion of $T^*G$) is an inverse and by the composition result  it also follows that  $uu^*$ is a pseudodifferential operator as soon as $u\in I(G,\Lambda)$. Hence, using known $C^*$-continuity results for pseudodifferential operators, which rely on the classical H\"ormander's trick to prove $L^2$-continuity,  we obtain $C^*$-continuity results for $G$-FIO subordinated to invertible $G$-relations. This also holds for locally invertible $G$-relations, that is, for $G$-relations onto which the source and target maps of $T^*G$ are only local diffeomophisms, also known as Lagrangian local bissections \cite{AS2006}.  

For the sake of clarity of this introduction, we  have ignored a technical point about the regularity of $G$-relations. More precisely, as sets, $G$-relations are true submanifolds, but all the  statements above are true for local $G$-relations, that is, those based on immersed submanifolds (ranges of immersions). As in the classical case, we can not avoid the introduction of immersed submanifolds since the convolution of two $G$-relations  is the image of some submanifold by a $C^\infty$ map of constant rank. Such images are not in general true submanifolds but are always images of some not necessarily injective immersion: we call them local submanifolds, following the (implicit) suggestion of \cite[Prop. C.3.3]{Horm-classics}.  Local submanifolds are countable union of true submanifolds of the same dimension and it is very convenient for our purposes to handle them in this way.

To finish, we treat the example of the groupoids arising in the case of manifolds with boundary and we compare the corresponding $G$-relations with the boundary canonical relations as defined by R. Melrose \cite{Melrose1981}.
Fourier integral operators on Lie groups are treated in \cite{NielStetk1974}. The present work recovers and significantly extends the results of \cite{NielStetk1974,Melrose1981}. The present paper  also uses  the notations and results about distributions on groupoids in \cite{LMV1}, a subject which is also treated in \cite{VY}.

The paper is organized as follows. In Section \ref{Sec:vocabulary} we introduce useful vocabulary and we  present classical facts in differential geometry in a  perhaps not completely classical way. Section \ref{sec:Lagrange-family} is devoted to families of Lagrangian (submanifolds, distributions). It contains in particular a new result about the comparison of families of Lagrangian submanifolds in the cotangent spaces of the fibers of a given submersion and Lagrangian submanifolds of the cotangent space of the total space of that  submersion. In section \ref{sec:Lag-subm-of-Gamma}, we study operations on Lagrangian submanifolds of the symplectic groupoid $T^*G$. This includes the study of their convolution, transposition and invertibility property. The notion of $G$-relation is also introduced and the relationship with equivariant families of Lagrangian is clarified. The  $G$-FIO are introduced in section \ref{G-FIO}. Similarly to $G$-relations, the parallel with equivariant families of FIO is fully analysed.
 Furthmermore, we extend the basic calculus of FIO to $G$-FIO, which includes a formula for the product of principal symbols. Section \ref{sec:b-FIO} is devoted to the comparison between the calculus we get in the case of the groupoid of the $b$-calculus and former constructions by R. Melrose \cite{Melrose1981}.

\bigskip \noindent {\bf Acknowledgments and credits.} We are grateful to Daniel Bennequin for the extremely stimulating mathematical discussion he offered to them. Also, we would like to thank Dominique Manchon for his constant encouragement. The first author thanks Claire Debord for the time and the help she has given to him at each milestone of this project.

\section{General definitions and preliminaries}\label{Sec:vocabulary}

\subsection{Local submanifolds, families of submanifolds}
 
We recall here some vocabulary of differential geometry. All manifolds are $C^\infty$, $\sigma$-compact,  with connected components of the same dimension.  All maps are $C^\infty$.  

\begin{defn}\label{defn:appendix-DG-0} Let $X$ be a manifold.
\begin{enumerate}
 \item  A local submanifold of $X$ of dimension $p$ is any subset consisting of a countable union of submanifolds of $X$ of dimension $p$. 
\item   A submanifold of $X$ contained in a local submanifold $Z$ of the same dimension is called a patch of $Z$. 
\item  A parametrization of a local submanifold $Z$  is a diffeomorphism of a manifold onto a patch of $Z$. 
\end{enumerate}
\end{defn}
A local submanifold is essentially the same thing as an immersed submanifold, the difference being that for a local submanifold, the immersion is not a priori given.

Indeed, let $Z$ be a local submanifold of $X$ and let  $(Z_j)_j$ be a countable family  of patches such that $Z=\cup_j Z_j$. Consider the manifold $Y=\sqcup_j Z_j$ and observe that the obvious immersion $f : Y\to X$ satisfies $f(Y)=Z$. Conversely, if $f : Y\to X$ is an immersion then   one can cover $Y$ by a countable family $(U_j)$ of open subsets   such that  $f: U_j\to f(U_j)$ is a diffeomorphism. Thus the image $Z=f(Y)=\cup f(U_j)$ is a local submanifold of dimension $\dim Y$. 

Actually, images of maps $f : Y\to X$ of constant rank are also local submanifolds. Indeed, by \cite[Proposition C.3.3]{Horm-classics}) there exist   countable families of local coordinate systems $(U_j)_j$ covering $Y$ and  $(V_j)_j$ covering $X$ such that $f(U_j)\subset V_j$ and $f$ is given in these coordinates by
 \begin{equation}
   f(y_1,\ldots,y_m) = (y_1,\ldots,y_p,0,\ldots,0) \quad \forall (y_1,\ldots,y_m)\in U_j.
 \end{equation}
Set $Y_j=\{ y\in U_j\ ;\ y_{p+1}=\cdots=y_m=0\}$ and consider the manifold $\widetilde{Y}=\sqcup_j Y_j$. The natural map 
\begin{equation}
   \widetilde{Y}\ni y \longmapsto f(y)
\end{equation}
is an immersion with range $Z$.

We recall that two submanifolds $Z_1,Z_2$ of $X$ have a clean intersection if $Z_1 \cap Z_2$  is a submanifold and at any point $z\in Z_1 \cap Z_2$,
\begin{equation}
 \label{eq:clean-tangent-cond}
 T_z (Z_1 \cap Z_2)= T_z Z_1 \cap T_z Z_2.
\end{equation}
The excess of the intersection is  the number
\(
 e = \mathrm{codim}( T_z Z_1 + T_z Z_2).
\)
The intersection is transversal if $e=0$ and the transversality of $Z_1,Z_2$ is denoted by $Z_1 \pitchfork Z_2$.

\begin{defn}\label{defn:appendix-DG-1}
 Let $Z_1,Z_2$ be two local submanifolds of $X$. The intersection $Z_1 \cap Z_2$ is clean (resp. transversal) if there exist covers $(Z_{1j})$ and $(Z_{2k})$ of $Z_1$ and $Z_2$ by countably many patches such that $Z_{1j}\cap Z_{2k}$ is clean with the same excess (resp. transversal) for all $j,k$ . 
\end{defn}
 
In the following definitions, we consider a surjective submersion $\pi : X \longrightarrow B$ between manifolds. The fiber of $\pi$ at the point $b$ is noted $X_b$.
\begin{defn}\label{defn:appendix-DG-2} \
\begin{enumerate}
 \item A submanifold $Z$ of $X$ is said transverse to $\pi$ if $\pi\vert_{Z} : Z\to B$ is a submersion. 
 \item A local submanifold $Z$ of $X$ is said transverse to $\pi$ if it can be covered by countably many submanifolds transverse to $\pi$. 
\end{enumerate}
\end{defn}

\begin{defn}\label{defn:appendix-DG-5} 
 A family $\cZ=(Z_b)_{b\in U}$ of subsets  $Z_b\subset X_b$, $U$ open in $B$, is  a $C^\infty$ family subordinated to $\pi$ of (resp. local) submanifolds if $Z=\cup_U Z_b$ is a (resp. local) submanifold of $X$ transverse to $\pi$.
 \end{defn}

\begin{defn}\label{defn:appendix-DG-4}
 Let $\cZ=(Z_b)_{b\in B}$ be a $C^\infty$ family of local submanifolds and set $Z=\cup_B Z_b$.
\begin{enumerate}
 \item  Patches and parametrizations of a family $\cZ$ refer to the same objects for $Z$. 
 \item A section of $\cZ$ is a $C^\infty$ locally defined section of $\pi : X\to B$  with values in a patch of $\cZ$.
\end{enumerate}
\end{defn}

\subsection{Phases, clean and non-degenerate phases}

A subset $\cU$ of $\RR^n\times \RR^N$ is conic if $(x,\theta)\in \cU$ implies $(x,t\theta)\in \cU$ for all $t>0$. A map $\chi : \cU\to \cV$ between conic subsets is homogeneous if $\chi(x,t\theta)=t\chi(x,\theta)$ for all $t>0$.

\begin{defn}\cite[p.86]{Horm-FIO1}\cite[21.1.8]{Horm-classics}
A cone bundle consists of a  surjective submersion $p:C\to X$ and  an action of $\RR_+^*$ on  $C$ which respects the fibers of $p$, such that: \\
For all $v\in C$, there exists a conic neighborhood $\cU$ of $v$ in $C$  and a homogeneous diffeomorphism $\chi:\cU\to\cV\subset \RR^n\times(\RR^N\setminus\{0\})$ onto a conic open subset  such that  the diagram 
\begin{equation}
\begin{tikzcd}
    \cU \arrow{rr}{\chi} \arrow{dr}{p}&   &   \cV
    \arrow{ld}{\pr{1}}   \\
     & U &
\end{tikzcd}
\end{equation}
commutes. %$p(\cN)=\pr{1}(\cU)=U$ and $\chi$  preserves the fibers of the projection maps $C\to X$ and $\pr{1}:	\Gamma\to\RR^n$.  
The triple $(\cU,\cV,\chi)$  is then called a conic local trivialization of  the cone bundle around $v$.  When $X$ is a point,  $C$ is called a conic manifold. 
\end{defn}

\begin{exam} 
\begin{enumerate}
\item If $X$ is a manifold, $T^*X\setminus 0$ is a conic manifold. 
%as well as any submanifold $W\subset T^*X\setminus 0$ such that $(x,\xi)\in W$ implies $(x,t\xi)\in W$ for all $t>0$. 

\item Let $\pi : Z \to X$ be a submersion onto $X$ and set $C= Z\times \RR^{k}\setminus 0$, $p=\pi\circ\pr{1}$. Then $C$, with the obvious $\RR_+$-action is a cone bundle over $X$. Indeed, Local trivializations $\kappa : p^{-1}(U)\overset{\simeq}{\to}U\times Y\times \RR^{k}\setminus 0$ provide conic local trivializations after composition by   
\[
  (x,y,\theta) \longmapsto (x,\vert\theta\vert.y,\theta) \in U\times(\RR^{n_Z-n_X+k}\setminus 0).
\]

\end{enumerate}
\end{exam}
\begin{defn}\cite[Def. 21.2.15]{Horm-classics},\cite[p. 154]{GS}.  Let $X$ be a manifold and $U\subset X$ an open subset. 
\begin{enumerate}
 \item A phase function over $U$  consists of a cone bundle $(p,C,U)$ and a $C^\infty$ homogeneous function 
\(\phi : C\to \RR\) without critical points.

\item Let   $\phi : C\to \RR$ be a phase function over $U$. Let us note $\phi'_{\mathrm{vert}} : C \to (\ker dp)^*$ the restriction of the differential of $\phi$ to the fibers of $p :C\to U$. We say that $\phi$ is clean if the set 
\begin{equation}
 C_\phi =\{ v\in C;\ \phi'_{\mathrm{vert}}(v)=0 \} = (\phi')^{-1}(\ker dp^\perp)
\end{equation}
is a submanifold of $C$ with tangent space given by the equation $d\phi'_{\mathrm{vert}}=0$. The excess of the clean phase $\phi$ is the number 
$e=\dim C_\phi - \dim X= \dim \ker dp - \mathrm{rk}(d\phi'_{\mathrm{vert}})$.

\item The phase function $\phi$ is non degenerate if $\phi'_{\mathrm{vert}}$ is a submersion (that is, clean and $e=0$). 
\end{enumerate}
\end{defn}
Using ${}^tdp^{-1} : (\ker dp)^\perp \to T^*X$, the \enquote{horizontal} part of $d\phi$ is then well defined on $C_\phi$ by  $\phi'_{\mathrm{hor}}(v) = {}^t(dp_v)^{-1}(\phi'(v))\in T^*_{p(v)}X$, that is 
\begin{equation}
  \phi'_{\mathrm{hor}}(v)(t) = \phi'(v)(u), \quad t\in p^*(TX)_v,\ dp(u)=t.
\end{equation}
We introduce the map
\begin{eqnarray}\label{eq:dp-graph-dphi}
T_\phi:C_\phi&\longrightarrow & T^*X   \\
 v&\longmapsto& (p(v),\phi'_{\mathrm{hor}}(v)\big)\nonumber
\end{eqnarray}
and we set 
\begin{equation}\label{eq:Lambda-phi}
 \Lambda_\phi=T_\phi(C_\phi) = \{ (p(v),\phi'_{\mathrm{hor}}(v))\ ;\  \phi'_{\mathrm{vert}}(v)=0 \}.
\end{equation}
If $\phi$ is clean, then for any $v\in C_\phi$, there exists an open conic neighborhood $V$ of $v$ into $C$ such that $T_\phi(V)$  is a $C^\infty$ conic Lagrangian submanifold of $T^*X\setminus 0$ and $T_\phi : C_\phi\cap V\longrightarrow T_\phi(V)$ is a  fibration with fibers of dimension $e$   and therefore $\Lambda_\phi$ is a conic Lagrangian  local submanifold of $T^*X\setminus 0$ (\cite{Horm-classics,GS}, see also Remark \ref{rem:clean-phase-and-symplectic-reduction} below).
If the fibers of $T_\phi$ are moreover connected and compact, then $\Lambda_\phi$ is a true submanifold  and $T_\phi : C_\phi \longrightarrow\Lambda_\phi $ is a  fibration.
On the other hand, if $\phi$ is non degenerate, we just gain that  $T_\phi$ is an immersion: $\Lambda_\phi$ is still a local submanifold  (usually called in the litterature   {\sl immersed submanifold}, self-intersections being not excluded). 

Conversely, any  conic Lagrangian submanifold $\Lambda$ of $T^*X\setminus 0$ can be locally parametrized by   non denegerate phase functions \cite{Horm-classics,GS}. This means that for any $(x,\xi)\in \Lambda$ there exist  an open conic neighborhood $W$ of $(x,\xi)$ into $T^*X$, an open conic  subset $V\subset X\times \RR^N\setminus 0$  and a non-degenerate phase function $\phi: V\to \RR$ with $\Lambda_\phi=\Lambda\cap W$.

\subsection{Lagrangian distributions on a manifold}
Unless otherwise stated, we use the definitions and notations of \cite{Horm-classics} for all the notions involved in the theory of Lagrangian distributions. 

Let $X$ be a $C^\infty$ manifold of dimension $n$, $E$ a complex vector bundle over $E$, $\Lambda$ a conic Lagrangian submanifold of $T^*X\setminus 0$ and $m\in\RR$. 
The set $I^m(X,\Lambda;E)$ consists of distributions belonging to $\cD'(X,E)$ which, modulo $C^\infty(X,E)$, are  locally finite sum of oscillatory integrals (\cite[Section 25.1]{Horm-classics}):
\begin{equation}
  u= \sum_{j\in J} (2\pi)^{-(n+2N_j)/4}\int e^{i\phi_j(x,\theta_j)} a_j(x,\theta_j)d\theta_j \mod C^\infty(X,E)
\end{equation}
where for all $j$,
\begin{itemize}
 \item $(x,\theta_j)\in \cV_j\subset U_j\times \RR^{N_j}$ with  $U_j$ a local coordinate patch of $X$ and $\cV_j$ an open conic subset;
 \item $\phi_j : \cV_j \to\RR$ is a non degenerate phase function providing a local parametrization of $\Lambda$;
 \item $a_j(x,\theta_j)\in S^{m+(n_X-2N_j)/4}(U_j\times \RR^{N_j},E)$ has support in the interior of a cone with compact base and included in $\cV_j$.
\end{itemize}
Such distributions are called Lagrangian distributions associated with $\Lambda$, with values in $E$. 
When $\Lambda$ is the conormal bundle of a submanifold, they are called conormal distributions. 

In the definition above, one can allow  conic Lagrangian local submanifolds of $T^*X\setminus 0$ and thus, the set $I^m(X,E)$  of all Lagrangian distributions with values in $E$ is a vector space. 

The principal symbol of an element in $I^m(X,\Lambda;E\otimes \Omega^{1/2}_X)$ can be defined as an element of  $S^{m+n/4}(\Lambda,I_\Lambda\otimes\hat{E})$ well defined modulo $S^{m+n/4-1}$. Here $I_\Lambda$ is the tensor product of the Maslov bundle with half densities over $\Lambda$ and  $\hat{E}$ is the pull back of $E$ onto $\Lambda$. The principal symbol map gives an isomorphism \cite[Theorem 25.1.9]{Horm-classics}
\begin{equation}
 \sigma : I^{[m]}(X,\Lambda;E\otimes \Omega^{1/2}_X) \longrightarrow S^{[m+n/4]}(\Lambda,I_\Lambda\otimes\hat{E}),
\end{equation}
with the conventions  $I^{[*]}=I^*/I^{*-1}$, $S^{[*]}=S^*/S^{*-1}$.
  
Let $X,Y,Z$ be $C^\infty$ manifolds and $\Lambda_1\subset  T^*X\setminus 0\times T^*Y \setminus 0$ and $\Lambda_2\subset  T^*Y\setminus 0\times T^*Z \setminus 0$ be   conic Lagrangian submanifolds closed in $T^*X\times T^*Y \setminus 0$  and $T^*Y\times T^*Z \setminus 0$ respectively. It is understood that the symplectic structures of $T^*X\times T^*Y$ and $T^*Y\times T^*Z$ are the product ones. Assume that the intersection of $\Lambda_1\times \Lambda_2$ with $T^*X\times N^*(\Delta_Y)\times T^*Z$ is clean with excess $e$, where $N^*(\Delta_Y)$ is the conormal space of the diagonal $\Delta_Y$ in $Y^2$. If $A_1\in I^{m_1}(X\times Y,\Lambda_1;\Omega^{1/2}_{X\times Y})$ and 
$A_2\in I^{m_2}(Y\times Z,\Lambda_2;\Omega^{1/2}_{Y\times Z})$ are properly supported, then \cite[Theorem 25.2.3]{Horm-classics}
\begin{equation}
 A= A_1\circ A_2 \in I^{m_1+m_2+e/2}(X\times Z,\Lambda,\Omega^{1/2}_{X\times Z}). 
\end{equation}
Here $A_1\circ A_2$ is defined through the Schwartz kernel theorem and $\Lambda$ is the conic Lagrangian local submanifold defined by the composition of $\Lambda_1$ and $\Lambda_2$:
\begin{equation}
 \Lambda=\Lambda_1\circ \Lambda_2 =\{(x,\xi,z,\zeta)\ ;\ \exists (y,\eta)\in T^*Y, (x,\xi,y,-\eta,y,\eta,z,\zeta)\in  \Lambda_1\times \Lambda_2\}.
\end{equation}
Under the same assumptions on $\Lambda_i$, $i=1,2$, there is thus a well defined product of principal symbols:
\begin{equation}
 S^{[m_1+(n_X+n_Y)/4]}(\Lambda_1,I_{\Lambda_1})\times S^{[m_2+(n_Y+n_Z)/4]}(\Lambda_2,I_{\Lambda_2})\overset{\circ}{\longrightarrow} S^{[m_1+m_2+e/2+(n_X+n_Z)/4]}(\Lambda,I_{\Lambda})
\end{equation}
which is defined abstractly by 
\begin{equation}
 a= a_1\circ a_2 = \sigma(\sigma^{-1}(a_1)\circ \sigma^{-1}(a_2))
\end{equation}
and computed concretely through the integral 
\begin{equation}
 a(\gamma) = \int_{C_\gamma} a_1\times a_2
\end{equation}
where $a_1,a_2,a$ are representants in $S^*$ of the given classes in $S^{[*]}$, $\gamma\in \Lambda_1\circ \Lambda_2$, the manifold $C_\gamma$ is the fiber of the projection map 
\begin{equation}
 p :\widetilde{\Lambda}:=\Lambda_1\times \Lambda_2\cap T^*X\times N^*(\Delta_Y) \times T^*Z\longrightarrow \Lambda_1\circ \Lambda_2
\end{equation}
and $a_1\times a_2$ is the density on $C_\gamma$ with values in $I_\Lambda$ resulting from the natural bundle homomorphism
\begin{equation}
\begin{tikzcd}
    I_{\Lambda_1}\otimes I_{\Lambda_2} \arrow{rr}{p} \arrow{dr}&   &  p^*(I_\Lambda)\otimes \Omega(\ker dp)\otimes\Omega^{-1/2}(T^*Y)
    \arrow{ld}   \\
     & \widetilde{\Lambda} &
\end{tikzcd}
\end{equation}
and from the trivialization of $\Omega^{-1/2}(T^*Y)$ using the canonical density of $T^*Y$ (see \cite[Theorems 21.6.6, 25.2.3]{Horm-classics})

\subsection{Lie groupoids, cotangent groupoids, associated foliations}
 
The following reminder about Lie groupoids is already included in \cite{LMV1}. We hope that this repetition will help the reading by improving the self-containness of the paper. 

A Lie groupoid is a manifold $G$ endowed with the additional following structures:
\begin{itemize}
 \item two surjective submersions $r,s: G\rightrightarrows G^{(0)}$ onto a manifold $G^{(0)}$ called the space of units. 
 \item An embedding $u : G^{(0)}\longrightarrow G$, which allows to consider  $G^{(0)}$ as a submanifold of $G$ and then such that
 \begin{equation}
   r(x) = x \quad ,\quad s(x)=x,\quad \text{ for all } x\in G^{(0)}.
 \end{equation}
 \item A $C^\infty$ map  
 \begin{equation}
   i : G\longrightarrow G, \ \ \gamma\longmapsto \gamma^{-1}
 \end{equation}
 called inversion and satisfying $s(\gamma^{-1})=r(\gamma)$ and $r(\gamma^{-1})=s(\gamma)$ for any $\gamma$.
 \item a $C^\infty$ map 
  \begin{equation}
  m : G^{(2)}=\{(\gamma_1,\gamma_2)\in G^2\ ;\ s(\gamma_1)=r(\gamma_2)\}\longrightarrow G, \ \ (\gamma_1,\gamma_2)\longmapsto \gamma_1\gamma_2
 \end{equation}	
 called the multiplication, satisfying the relations, whenever they make sense 
 \begin{align}
 & (\gamma_1\gamma_2)\gamma_3= \gamma_1(\gamma_2\gamma_3) & & r(\gamma)\gamma=\gamma & & \gamma s(\gamma)=\gamma \\
 &   \gamma\gamma^{-1} = r(\gamma) & & \gamma^{-1} \gamma= s(\gamma)\  & & \ r(\gamma_1\gamma_2)=r(\gamma_1),\ s(\gamma_1\gamma_2)=s(\gamma_2) .
 \end{align}
\end{itemize}
It follows from these axioms that $i$ is a diffeomorphism equal to its inverse, $m$ is a surjective submersion and  $\gamma^{-1}$ is the unique inverse of $\gamma$, for any $\gamma$, that is the only element of $G$ satisfying $ \gamma\gamma^{-1} = r(\gamma), \ \gamma^{-1} \gamma= s(\gamma)$. These assertions need a proof, and the unfamiliar reader is invited to consult for instance \cite{Mackenzie2005} and references therein. 

It is customary to write
\[
 G_x=s^{-1}(x),\quad G^x=r^{-1}(x),\ G_x^y= G_x\cap G^y, \ m_x=m\vert_{G^x\times G_x} : G^x\times G_x\longrightarrow G.
\]
$G_x$, $G^x$ are submanifolds and $G_x^x$ is a Lie group. The submersion $d :(\gamma_1,\gamma_2)\mapsto \gamma_1\gamma_2^{-1}$ defined on $G\underset{s}{\times}G$ is called the division map. 

Obviously, Lie groups, $C^\infty$ vector bundles, principal bundles, are Lie groupoids. Also, for any manifold $X$, the manifold $X\times X$ inherits a canonical structure of Lie groupoid with unit space $X$ and multiplication given by $(x,y).(y,z)=(x,z)$. The reader can find in \cite{Winkelnkemper1983,Pradines1986,Connes1994,NWX,MP,Debord2001,Monthubert1999,DLN,DL2009,VE2010I,VE2010II,DLR} further examples of groupoids as well as applications. 

The Lie algebroid $AG$ of a Lie groupoid $G$ is the vector bundle over $G^{(0)}$ defined by 
\begin{equation}
 AG = T_{G^{(0)}}G / TG^{(0)} = N G^{(0)}.
\end{equation}
Since $T_{G^{(0)}}G=\ker ds \oplus TG^{(0)}= \ker dr \oplus TG^{(0)}$, the bundle $AG$ can be replaced, up to canonical isomorphisms, by $\ker ds\vert_{G^{(0)}}$ or $\ker dr\vert_{G^{(0)}}$. We will often use the dual Lie algebroid $A^*G$, that is, the conormal space of $G^{(0)}$ in $G$.

 Differentiating a Lie groupoid $G$ produces a Lie groupoid $TG\rightrightarrows TG^{(0)}$ with the obvious structure maps $dr,ds,du,di,dm$ and the submanifold of composable pairs  coincides with the tangent space of the submanifold $G^{(2)}$, that is $(TG)^{(2)}=T(G^{(2)})\subset TG^2$. Another associated groupoid of particular interest in this work is the cotangent groupoid $\Gamma=T^*G\rightrightarrows A^*G$ discovered in \cite{CDW} and whose structure maps will be denoted $r_\Gamma,s_\Gamma,u_\Gamma, i_\Gamma,m_\Gamma$. All the structure maps of $T^*G$ and the choice of $A^*G$ as a unit space are dictated by the aim of defining the product in $T^*G$ by the natural formula:
 \begin{equation}
 (\gamma_1,\xi_1).(\gamma_2,\xi_2)= (\gamma_1.\gamma_2, \xi_1\oplus\xi_2)
\end{equation}
 with 
\[
 \xi_1\oplus\xi_2(dm(t_1,t_2))=\xi_1(t_1)+\xi_2(t_2), \quad \forall (t_1,t_2)\in T_{(\gamma_1,\gamma_2)}G^{(2)}.
\]
This makes sense if and only if $(\xi_1,\xi_2)\in T^*_{(\gamma_1,\gamma_2)}G^2$ vanishes on $\ker dm$, that is, denoting by
 \begin{equation}
  \label{eq:rest-lin-forms}
  \rho : T^*_{G^{(2)}}G^2\longrightarrow T^*G^{(2)}
 \end{equation}
the natural restriction map, if and only if 
\begin{equation}
 \label{eq:basic-composability-cond-cot-gpd}
 \rho(\xi_1,\xi_2) \in \ker (dm_{(\gamma_1,\gamma_2)})^\perp. 
\end{equation}
In that case we can set
\begin{equation}
 \label{eq:product-cot-gpd}
  \xi_1\oplus\xi_2 = ({}^tdm_{(\gamma_1,\gamma_2)})^{-1}(\rho(\xi_1,\xi_2)).
\end{equation}
This leads to the following formulas (see \cite{LMV1,CDW,Mackenzie2005,Pradines1988} for more details) for the remaining structure maps:
\begin{itemize}
 \item \( s_\Gamma(\gamma,\xi) = (s(\gamma),\overline{s}(\xi)) \) with  \(\overline{s}(\xi)=  {}^td(L_{\gamma})_{s(\gamma)}(\xi)\in A^*_{s(\gamma)}G=(T_{s(\gamma)}G/T_{s(\gamma)}G^{(0)})^* \);
 \item \(r_\Gamma(\gamma,\xi) = (r(\gamma),\overline{r}(\xi)) \) with  \(\overline{r}(\xi)= {}^td(R_{\gamma})_{r(\gamma)}(\xi) \in A^*_{r(\gamma)}G = (T_{r(\gamma)}G/T_{r(\gamma)}G^{(0)})^*\) ;
 \item \(i_\Gamma(\gamma,\xi) = (\gamma^{-1},-({}^tdi_{\gamma})^{-1}(\xi)). \)
\end{itemize}
Here, $R_\gamma : G_{r(\gamma)}\to G_{s(\gamma)}, \gamma_1\mapsto \gamma_1\gamma$ and $L_\gamma : G^{s(\gamma)}\to G^{r(\gamma)}, \gamma_2\mapsto \gamma\gamma_2$ denotes the (partially defined) right and left multiplication maps by $\gamma$ in $G$.
% 
%  and we have used $A^* G= (\ker ds|_{G^{(0)}})^*$. If we rather use the more symmetric presentation $A^*G=N^*(G^{(0)})=(T_{G^{(0)}}G/TG^{(0)})^*$, we replace $\widetilde{s}(\xi)$ and $\widetilde{r}(\xi)$ for any $\xi\in T^*_\gamma G$ by the natural classes $\overline{s}(\xi)\in N^*_{s(\gamma)}(G^{(0)})$ and $\overline{r}(\xi)\in N^*_{r(\gamma)}(G^{(0)})$ obtained from $\xi \circ d(L_\gamma)_{s(\gamma)}\in T^*_{s(\gamma) }G^{s(\gamma)}$ and $\xi \circ d(R_\gamma)_{r(\gamma)}\in T^*_{r(\gamma) }G_{r(\gamma)}$.  

Taking into account the vector bundle structures 
\begin{equation}
 p : T^*G \longrightarrow G \quad ;\quad p^{2} : T^*G^{2} \longrightarrow G^{2} \quad ;\quad p^{(2)} : T^*G^{(2)} \longrightarrow G^{(2)} \quad ;\quad p^{(0)} : A^*G \longrightarrow G^{(0)},
\end{equation}
we observe that all structure maps of $T^*G$ are vector bundles homomorphisms and we get the following exact sequences:
\begin{equation}\label{diag:cot-gpd-mult-overview}
 \xymatrix{
  0 \ar[r] &  N^*G^{(2)} \ar[r]\ar^{p^2}[d]  & (T^*G)^{(2)}  \ar^{m_\Gamma}[r]\ar^{p^2}[d]  &  T^*G \ar[r]\ar^{p}[d]  & 0 \\
     & G^{(2)}  \ar^{=}[r] &  G^{(2)} \ar^{m}[r]&  G,  & 
}
\end{equation}
\begin{equation}\label{diag:cot-gpd-source-overview}
 \xymatrix{
  0 \ar[r] &  (\ker dr)^\perp \ar[r]\ar^{p}[d]  & T^*G  \ar^{s_\Gamma}[r]\ar^{p}[d]  &  A^*G \ar[r]\ar^{p^{(0)}}[d]  & 0 \\
     & G \ar^{=}[r] &  G \ar^{s}[r]&  G^{(0)} , & 
}
\end{equation}
and
\begin{equation}\label{diag:cot-gpd-target-overview}
 \xymatrix{
  0 \ar[r] &  (\ker ds)^\perp \ar[r]\ar^{p}[d]  & T^*G  \ar^{r_\Gamma}[r]\ar^{p}[d]  &  A^*G \ar[r]\ar^{p^{(0)}}[d]  & 0 \\
     & G \ar^{=}[r] &  G \ar^{r}[r]&  G^{(0)}.  & 
}
\end{equation}
It is useful to summarize the construction of the product $m_\Gamma$ in the commutative diagram below, in which the first two lines are exact. 
\begin{equation}\label{diag:cot-gpd-overview}
 \xymatrix{
  0 \ar[r] &  N^*G^{(2)} \ar[r] & T^*_{G^{(2)}} G^2 \ar^{\rho}[r] & T^*G^{(2)} \ar[r] & 0 \\
  0 \ar[r]  &  \ker m_\Gamma \ar[r] \ar^{=}[u] \ar^{(m,0)}[d] &  (T^*G)^{(2)} \ar^{\rho}[r]\ar^{\hookrightarrow}[u]\ar^{m_\Gamma}[d] &  (\ker dm)^\perp \ar[r]\ar^{\hookrightarrow}[u]\ar^{(m,({}^tdm)^{-1})}[d]  & 0 \\
  0 \ar[r]  &  G\times\{0\} \ar^{\hookrightarrow}[r]   &   T^*G  \ar^{=}[r]  &   T^*G \ar[r]   & 0.
}
\end{equation}
The map $(m,({}^tdm)^{-1}) : (\ker dm)^\perp \to T^*G$ will be noted $\widetilde{m_\Gamma}$ later on.

Let $G$ be a Lie groupoid $G$ and consider the equivalence relation on $G^{(0)}$
\begin{equation}
 x\sim_{G^{(0)}} y \quad  \text{ if }\quad    G^x_y\not=\emptyset.
\end{equation}
The equivalence class of $x\in G^{(0)}$, also called the orbit of $x$ (under the action of $G$ onto $G^{(0)}$) is denoted by $O_x$. We obviously have
\begin{equation}
 O_x = r(s^{-1}(x))=s(r^{-1}(x))\subset G^{(0)}.
\end{equation} It is true  that  the $O_x$ are all immersed submanifolds \cite[Theorem 1.5.11]{Mackenzie2005}, see also \cite{Pradines1986}, which define a singular Stefan foliation (see \cite[Section 1.8, p.51]{Mackenzie2005}). We call it the canonical foliation of $G^{(0)}$ and denote it by $\cF_{G^{(0)}}$.

The leaves of $\cF_{G^{(0)}}$ can be lifted to $G$ using $r$ and this gives rise to another Stefan foliation $\cF_G$ that we call the canonical foliation of $G$. Using $s$ instead of $r$ gives the same foliation. The leaves of $\cF_G$ are immersed submanifolds  and coincide with the equivalence classes of the equivalence relation on $G$ given by 
\begin{equation}
 \gamma_1\sim_G \gamma_2\quad  \text{ if }\quad    G_{r(\gamma_2)}^{s(\gamma_1)}\not=\emptyset.
\end{equation}
Finally, the projection maps $G^2\to G$, $G^{(2)}\to G$  are respectively denoted by $\pr{j}$, $\pr{(j)}$, $j=1,2$ and if 
   $E,F$ are  vector bundles over $G$, we will use the shorthand notation $E\boxtimes F$  to denote $\pr{(1)}^*(E)\otimes\pr{(2)}^*(F)\to G^{(2)}$.

\section{Families of Lagrangian submanifolds and of Lagrangian distributions}\label{sec:Lagrange-family}

\subsection{Families of Lagrangian submanifolds and submersions}

Let $\pi : M\to B$ be a submersion onto $B$, with fibers of dimension $n$ and base of dimension $q$. The inclusion $M_b\hookrightarrow M$ is denoted by $i_b$. We consider the vector bundle  $V^*M=(\ker d\pi)^*=\cup_{b\in B} T^*M_b$ over $M$ and we denote by $p$ both the projection maps $T^*M\to M$ and $V^*M\to M$. Similarly the natural submersions maps $T^*M\to B$ and $V^*M\to B$ are both denoted by $\sigma$, while the natural restriction map $T^*M\to V^*M$ is denoted by $\rho$. The fibers of $V^*M\to B$ are exactly the cotangent spaces $T^*M_b$, $b\in B$. We have a short exact sequence of vector bundles over $M$
\begin{equation}
 \label{eq:around-pi}
0\longrightarrow (\ker d\pi)^\perp \longrightarrow T^*M \overset{\rho}{\longrightarrow}V^*M\longrightarrow 0.
\end{equation}
% where $\cM(\pi)=(\ker d\pi)^\perp=\cup_b N^*(M_b)$ is the union of the conormal spaces of the fibers of $\pi$.  
We are interested in  $C^\infty$ families $(\Lambda_b)_{b\in B}$ of  (local, Lagrangian, conic) submanifolds subordinated to $\sigma$ in the sense of Definition \ref{defn:appendix-DG-5}.
By a slight abuse of vocabulary, we will say that they are subordinated to $\pi$. Similarly, we will say that $\Lambda\subset T^*M$ is transverse to $\pi$ if it is transverse to $\sigma=\pi\circ p : T^*M\to B$ in the sense of Definition \ref{defn:appendix-DG-5},  which is here obviously equivalent to the condition
\begin{equation}
\label{eq:transv-equiv-cond}
 T_xM_b + dp(T_{x,\xi}\Lambda) = T_xM \quad \forall b\in B,\ \forall x\in M_b;
\end{equation}
that is, equivalent to the transversality of the maps $i_b:M_b\to M$ and $p\vert_\Lambda : \Lambda\to M$ for any $b$. 
 The next theorem is a straight adaptation of Theorem 21.2.16 in \cite{Horm-classics}.
\begin{thm}\label{thm:family-Lagrangian-and-generating-function}
Let  $\cL=(\Lambda_b)_{b\in B}$ be a family subordinated to $\pi$ of conic lagrangian submanifolds and $L=\cup_{b\in B} \Lambda_b\subset V^*M\setminus 0$ the associated transversal submanifold. 
%$L=\cup_B\Lambda_b\subset V^*M$ the corresponding submanifold transverse to $\sigma :V^*M\to B$. 
%let $\lambda$ be a  section of $(\Lambda_b)_{b\in B}$ around $b_0\in B$. 
\begin{enumerate}
 \item For any $(m_0,\xi_0)\in\Lambda_{b_0}$,  there exists local trivializations of $\pi$  around  $m_0$ such that in the associated local coordinates $(x,b,\xi)$ of $V^*M$, the map 
\begin{equation}\label{eq:family-Lagrangian-and-generating-function-1}
  L\ni ( x,b,\xi) \longmapsto (b,\xi)
\end{equation}
is a local diffeomorphism. Such a local trivialization is called adapted to $\cL$ (or $L$). 
 \item In local trivializations adapted to $\cL$, there exists conic neighborhoods  $\cW$ of $(b_0,\xi_0)\in\RR^q\times (\RR^n\setminus 0)$ and $\cV$ of $(m_0,\xi_0)\in V^*M\setminus 0$ and a unique  $C^\infty$ function  $H:\cW\to\RR$  homogeneous of degree $1$   such that
\begin{equation}
 L\cap \cV =\{ (H'_\xi(b,\xi),b,\xi)\ ;\ (b,\xi)\in \cW\}.
\end{equation}
 In other words, the $C^\infty$ function
\(
 \phi(x,b,\xi) = \langle x,\xi\rangle - H(b,\xi)
\)
provides a family labelled by $b$ of non-degenerate phase functions $\phi(\cdot,b,\cdot)$ parametrizing $\Lambda_b$. 
\end{enumerate}
\end{thm}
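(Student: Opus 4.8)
The plan is to mimic the proof of Theorem 21.2.16 in \cite{Horm-classics}, but carrying the extra base parameter $b$ along as a spectator coordinate throughout. First I would fix $(m_0,\xi_0)\in\Lambda_{b_0}\subset L$ and note that since $L$ is transverse to $\sigma:V^*M\to B$ by hypothesis, the map $\sigma\vert_L : L\to B$ is a submersion, so $L$ is locally fibered over $B$ with fiber the Lagrangian $\Lambda_b\subset T^*M_b$. The point $\xi_0\in T^*_{m_0}M_{b_0}\setminus 0$ is a nonzero covector on the fiber $M_{b_0}$, so by the classical theory (applied fiberwise) one can, after a suitable choice of local coordinates $x=(x_1,\dots,x_n)$ on the fiber adapted to the Lagrangian $\Lambda_{b_0}$, arrange that the projection $\Lambda_{b_0}\ni(x,\xi)\mapsto \xi$ is a local diffeomorphism near $(m_0,\xi_0)$; concretely, one picks coordinates so that the ``vertical'' differential $d(\rho)\vert$ restricted to the tangent space of $\Lambda_{b_0}$ is onto the $\xi$-directions. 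The real content of part (1) is that this can be done smoothly in $b$: choosing a local trivialization $\pi^{-1}(U)\cong U_0\times W$ of $\pi$ (with $W$ a neighborhood of $b_0$ in $B$), the coordinates $(x,b)$ on $M$ induce coordinates $(x,b,\xi)$ on $V^*M$, and I would check via the implicit function theorem that, shrinking the trivialization if necessary, the map \eqref{eq:family-Lagrangian-and-generating-function-1}, namely $L\ni(x,b,\xi)\mapsto(b,\xi)$, has bijective differential at $(m_0,\xi_0)$ — its differential is block-triangular with identity on the $b$-block and the classical fiberwise isomorphism on the $\xi$-block — hence is a local diffeomorphism. This is what ``adapted to $\cL$'' means.

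For part (2), I would invoke the standard argument that a conic Lagrangian submanifold of $T^*(\text{fiber})\setminus 0$ which projects diffeomorphically onto the fibers $(\text{covector})$-space is, near the given point, the graph of the differential of a homogeneous degree-one generating function. In the adapted coordinates, write $\Psi:\cW\to\cV$ for the inverse of \eqref{eq:family-Lagrangian-and-generating-function-1}; its first component gives a smooth function $(b,\xi)\mapsto x(b,\xi)$ homogeneous of degree $0$ in $\xi$, with $L\cap\cV=\{(x(b,\xi),b,\xi)\}$. The key closedness computation: $L$ restricted to each fixed $b$ is Lagrangian in $T^*M_b$ with respect to the canonical symplectic form $d\xi\wedge dx$, so on the graph the one-form $\sum_j x_j\,d\xi_j$ is closed in the $\xi$-variables for each fixed $b$; by homogeneity (Euler) and the Poincaré lemma on the conic set $\cW$, there is a unique homogeneous degree-one $H(b,\xi)$ with $x_j(b,\xi)=\partial H/\partial\xi_j$, i.e.\ $x(b,\xi)=H'_\xi(b,\xi)$. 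Smoothness of $H$ in the parameter $b$ is automatic since $x(b,\xi)$ is smooth in $(b,\xi)$ and $H$ is recovered by integration along rays in $\xi$. Finally I would verify that $\phi(x,b,\xi)=\langle x,\xi\rangle - H(b,\xi)$ is, for each fixed $b$, a non-degenerate phase function parametrizing $\Lambda_b$: the critical set $\phi'_\xi=0$ is exactly $x=H'_\xi(b,\xi)$, the map $(x,\xi)\mapsto(x,\phi'_x)=(x,\xi)$ recovers $\Lambda_b$, and non-degeneracy (the differentials $d(\partial\phi/\partial\xi_j)$ are independent) follows because the $\xi$'s serve as fiber coordinates on $\Lambda_b$ — this is just the classical verification, with $b$ inert.

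The main obstacle, and the only place where something genuinely new beyond \cite[Thm.~21.2.16]{Horm-classics} happens, is the uniform-in-$b$ part of step (1): one must ensure that a single choice of fiber coordinates $x$ works simultaneously for all $b$ near $b_0$, not just for $b=b_0$. This is handled precisely by the transversality hypothesis (equation \eqref{eq:transv-equiv-cond}, here in the guise that $L$ is transverse to $\sigma$), which guarantees $L\to B$ is a submersion so that the fiberwise picture varies smoothly, combined with the openness of the ``projects diffeomorphically'' condition: the linear-algebraic nondegeneracy at $(m_0,\xi_0)$ persists on a neighborhood, and then the inverse function theorem applied to the map \eqref{eq:family-Lagrangian-and-generating-function-1} on $L$ (a manifold of dimension $q+n$, the same as $\RR^q\times\RR^n$) finishes it. Once \eqref{eq:family-Lagrangian-and-generating-function-1} is a local diffeomorphism, everything in part (2) is a parametrized version of the classical construction and goes through with $b$ as a passive coordinate, so no further difficulty arises.
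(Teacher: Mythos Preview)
Your proposal is correct and follows essentially the same route as the paper: both reduce part (1) to combining the surjectivity of $L\to B$ (from transversality) with the fiberwise bijectivity of $\Lambda_{b_0}\ni(x,\xi)\mapsto\xi$ obtained from H\"ormander's Theorem 21.2.16, and then invoke the dimension count $\dim L=n+q$. The only cosmetic difference is in part (2): where you invoke closedness of $\sum x_j\,d\xi_j$ and the Poincar\'e lemma to produce $H$, the paper writes down $H(b,\xi)=\langle x(b,\xi),\xi\rangle$ explicitly and checks $H'_\xi=x$ using the vanishing of the Liouville form $\sum\xi_j\,d_\xi x_j$ on the conic Lagrangian --- these are two standard and equivalent ways to recover the generating function.
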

Using the notions of sections, transversality and parametrizations introduced in Section \ref{Sec:vocabulary}, we see that the conclusions of the theorem hold for families  of conic lagrangian local submanifolds as well. We just need to change $L$ in \eqref{eq:family-Lagrangian-and-generating-function-1}  by a patch $L'$.  

Thus, families  of lagrangian local submanifolds are  parametrized by families of non-degenerate phases functions defined in open cones of $M\times \RR^n\setminus 0$.

\begin{proof} 
% \blue{The assertions being local, we can decompose the family $\cL$ into patches and therefore we can directly assume that $\cL$ is a  family  of conic lagrangian  submanifolds subordinated to $\pi$.}

 Let $(y,z)$ be a local trivialisation around $m_0$.  Here, $z=(z_1,\ldots,z_q)$ gives local coordinates of $B$ at $b_0$ and for fixed $b$, $y=(y_1,\ldots,y_n)$ gives local coordinates of $M_b$.  Following the proof of \cite[Theorem 21.2.16]{Horm-classics}, we can perform a change of variables $x=x(y)$  so that, as submanifolds of $T^*M_b$, the space $\Lambda_{b_0}$ is transversal to the horizontal subspace $\xi=\xi_0$  at the point $(m_0,\xi_0)$ and then, so that the map $\Lambda_{b_0}\ni (x,b_0,\xi)\to \xi $ has a bijective differential at $(x_0,b_0,\xi_0)$. 
Moreover, by assumption, the map $L \ni (x,b,\xi)\mapsto b$ has a surjective differential at $(x_0, b_0,\xi_0)$. Therefore, the differential of  
$L \ni (x, b,\xi)\mapsto (b,\xi)$ is surjective   at $(x_0, b_0,\xi_0)$, hence bijective for $\dim L=n+q$.

We now turn to the second assertion which consists of routine computations (see for instance the end of the proof of \cite[Theorem 21.2.16]{Horm-classics}).  By 1., there exists a neighborhood $\cU= (U\times  W ) \times C$ of $(x_0,b_0,\xi_0)\in \RR^{n+q}\times(\RR^n\setminus 0)$ and a $C^\infty$ function $x(b,\xi)$ defined on $W\times C$ such that 
\[
 L\cap \cU = \{ (x(b,\xi),b,\xi)\ ;\ b\in W, \xi\in C\}.
\]
Since $x$ is necessarily homogeneous of degree $0$ in $\xi$, we can assume that $C$ is a cone. Since the canonical one form of $T^*M_b$ vanishes on $\Lambda_b$, we get 
$$
  \sum_j \xi_j d_\xi x_j(b,\xi)=0. 
$$
In other words, the linear form $u\mapsto \langle x'_\xi(b,\xi).u , \xi\rangle $ vanishes.  It follows  that 
\begin{equation}
x(b,\xi) = H'_\xi(b,\xi), \qquad \text{ with }  H(b,\xi) = \langle x(b,\xi) , \xi \rangle,
\end{equation}
and that, by Euler formula,  this function $H$ is unique among $C^\infty$ functions $K(b,\xi)$ homogeneous of degree $1$ in $\xi$ and satisfying $K'_\xi=x$. 
Finally, it is clear that for fixed $b$,  the function
$\phi(x,b,\xi):=\langle x,\,\xi\rangle-H(b,\xi)$
is a non-degenerate phase function parametrizing  $\Lambda_b$. 

\end{proof}

\begin{thm}\label{thm:gluing-family-of-Lagrangian}
Let  $(\Lambda_b)_{b\in B}$  be a family    subordinated to $\pi$ of conic lagrangian local submanifolds. There exists a unique  conic lagrangian local submanifold $\Lambda\subset T^*M$ transverse to $\pi$ such that 
\begin{equation}\label{eq:gluing-condition}
   i_b^*\Lambda = \Lambda_b,\quad b\in B.
\end{equation}
One says that $\Lambda$ is the gluing of the family $(\Lambda_b)_{b\in B}$. 
\end{thm}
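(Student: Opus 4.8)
The plan is to build $\Lambda$ locally from the generating functions provided by Theorem~\ref{thm:family-Lagrangian-and-generating-function} and then to check that the local pieces patch together coherently and that the resulting object is independent of all choices, which will give both existence and uniqueness. Concretely, fix $(m_0,\xi_0^{\mathrm{vert}})\in L:=\cup_b\Lambda_b\subset V^*M\setminus 0$ and choose a local trivialization of $\pi$ adapted to $\cL$, giving coordinates $(x,b)$ on $M$ and $(x,b,\xi)$ on $V^*M$, together with the homogeneous function $H(b,\xi)$ such that $L$ is locally $\{(H'_\xi(b,\xi),b,\xi)\}$ and the phase $\phi(x,b,\xi)=\langle x,\xi\rangle-H(b,\xi)$ parametrizes each $\Lambda_b$. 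The key observation is that the very same $\phi$, now read as a phase function on the full coordinate patch of $M$ with the full set of fibre variables $\xi\in\RR^n$, is a non-degenerate phase function \emph{on $M$} (its vertical derivative in $\theta=\xi$ is $x-H'_\xi(b,\xi)$, which is a submersion), so it defines a conic Lagrangian local submanifold
\begin{equation}
 \Lambda_\phi=\{(x,b,\xi,-H'_b(b,\xi))\ ;\ x=H'_\xi(b,\xi)\}\subset T^*M\setminus 0,
\end{equation}
where I write covectors on $M$ as $(\xi,\beta)$ in the coordinates dual to $(x,b)$. This is the natural candidate for the local model of $\Lambda$, and one reads off immediately that $\rho(\Lambda_\phi)=L$ locally (the $dx$-component is exactly $\xi$) and that $\Lambda_\phi$ is transverse to $\pi$ since $dp(T\Lambda_\phi)$ surjects onto the $b$-directions through the free parameter $b$. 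Pulling back by $i_b$ — which in these coordinates is the inclusion $\{b=\mathrm{const}\}$ and whose cotangent pullback just forgets the $db$-component — returns $\{(x,\xi)\ ;\ x=H'_\xi(b,\xi)\}=\Lambda_b$, i.e. \eqref{eq:gluing-condition} holds locally.

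Next I would glue these local models. Given two adapted trivializations with overlapping domains and generating functions $H^{(1)},H^{(2)}$, on the overlap both $\Lambda_{\phi^{(1)}}$ and $\Lambda_{\phi^{(2)}}$ are conic Lagrangian local submanifolds of $T^*M\setminus 0$ whose images under $\rho$ agree (both equal $L$) and whose $i_b$-pullbacks agree (both equal $\Lambda_b$); the point is that these two conditions force $\Lambda_{\phi^{(1)}}=\Lambda_{\phi^{(2)}}$ on the overlap. This is where one uses that $\rho:T^*M\to V^*M$ together with the "horizontal" data along the base is enough to pin down a covector transverse to $\pi$: if $(x,b,\xi,\beta)\in\Lambda_{\phi^{(1)}}$ then $(x,b,\xi)=\rho(x,b,\xi,\beta)\in L$, so there is a point $(x,b,\xi,\beta')\in\Lambda_{\phi^{(2)}}$ with the same $\rho$-image, and transversality to $\pi$ of a Lagrangian pins the remaining component: the difference $\beta-\beta'$ is a conormal to the fibre, and the Lagrangian (isotropic + transverse to $\pi$, hence of the right dimension) condition applied to the tangent spaces shows $\beta=\beta'$. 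More conceptually, transversality to $\pi$ implies $\Lambda$ is cut out inside $T^*M$ as $\rho^{-1}(L)\cap(\text{Lagrangian condition})$, and over each point of $L$ the fibre of $\rho^{-1}(L)\to L$ is an affine space of dimension $q$ on which being isotropic of maximal dimension selects a single point; working this out cleanly (e.g. via the symplectic reduction picture, identifying $\Lambda$ with the image of $\rho^{-1}(L)\cap$(appropriate coisotropic) under reduction) is, I expect, the main technical obstacle. Having it, the locally defined $\Lambda_\phi$'s agree on overlaps and their union is a well-defined conic Lagrangian local submanifold $\Lambda$ of $T^*M\setminus 0$, transverse to $\pi$, with $i_b^*\Lambda=\Lambda_b$.

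Finally, uniqueness: if $\Lambda'$ is any conic Lagrangian local submanifold transverse to $\pi$ with $i_b^*\Lambda'=\Lambda_b$ for all $b$, then in an adapted trivialization around any of its points the transversality to $\pi$ lets me apply Theorem~\ref{thm:family-Lagrangian-and-generating-function}(1)-type reasoning to $\Lambda'$ itself: the map $\Lambda'\to(b,\xi)$ is a local diffeomorphism (here I use that $\dim\Lambda'=n+q$, that the $\xi$-differential is bijective after the preliminary change of variables exactly as in the proof of that theorem, and that transversality to $\pi$ gives surjectivity onto the $b$-directions), so $\Lambda'$ is locally a graph $x=x(b,\xi)$, $\beta=\beta(b,\xi)$; the Lagrangian condition forces $x(b,\xi)=K'_\xi$ and $\beta(b,\xi)=-K'_b$ for some homogeneous degree-one $K(b,\xi)$, and $i_b^*\Lambda'=\Lambda_b$ forces $K'_\xi=H'_\xi$ for each $b$, whence $K=H$ by the Euler-formula uniqueness already used in Theorem~\ref{thm:family-Lagrangian-and-generating-function}. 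Therefore $\Lambda'$ coincides locally with $\Lambda_\phi=\Lambda$, and since this holds near every point, $\Lambda'=\Lambda$. As in the remark following Theorem~\ref{thm:family-Lagrangian-and-generating-function}, the whole argument is carried out patch by patch so it applies verbatim to families of local (immersed) submanifolds, simply replacing $L$ by a patch $L'$ throughout.
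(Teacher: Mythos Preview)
Your proposal is correct and follows essentially the same route as the paper: produce the local model $\{(H'_\xi(b,\xi),b,\xi,-H'_b(b,\xi))\}$ from the generating function of Theorem~\ref{thm:family-Lagrangian-and-generating-function}, then use uniqueness to glue. The only real difference is organizational. The paper proves uniqueness \emph{first}, by assuming a solution $\Lambda$ exists, observing that in adapted coordinates $\Lambda\ni(x,b,\xi,\tau)\mapsto(b,\xi)$ is a local diffeomorphism, and then computing directly from the vanishing of the canonical one-form (together with the homogeneity of $H$) that $\tau(b,\xi)=-H'_b(b,\xi)$; existence and gluing are then immediate consequences. You instead build the local model first and leave the gluing step as a self-acknowledged ``main technical obstacle'', only to give afterwards a complete uniqueness argument (via an auxiliary generating function $K$ with $K'_\xi=H'_\xi$, hence $K=H$ by Euler) that in fact resolves that obstacle. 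So there is no genuine gap, but your write-up would be cleaner if you moved the uniqueness argument before the gluing step and simply invoked it there, exactly as the paper does; the symplectic-reduction aside is then unnecessary.
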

\begin{proof} We first assume that $(\Lambda_b)_{b\in B}$ is a family of submanifolds. Assume that $\Lambda$ is a conic lagrangian submanifold of $T^*M$ satisfying \eqref{eq:gluing-condition}. Let  $\kappa : \cU\to U\times W$, $\kappa(m)=(x,b)$, be an adapted local trivialisation  and $H$ the corresponding function  constructed in Theorem \ref{thm:family-Lagrangian-and-generating-function}. By assumption, we have in these coordinates
\begin{equation}
 \kappa_*(\Lambda\cap T^*\cU ) \subset \{ (H'_\xi(b,\xi),b,\xi,\tau);\ \xi\in \cC,\ (b,\tau)\in T^*W\}.
\end{equation}
The projection $(x,b,\xi,\tau)\to (b,\xi)$ restricted to $\kappa_*(\Lambda\cap T^*\cU )$ is still a local diffeomorphism since $\dim \Lambda=n+q$, thus $\tau$ is  a $C^\infty$ function of $(b,\xi) $.  Since $\Lambda$ is conic and Lagrangian, the fundamental one form of $T^*M$ vanishes identically on $\Lambda$, which yields
\begin{eqnarray*}
 0&=& \sum_j \xi_j d(H'_{\xi_j})(b,\xi) + \sum_l\tau_ldb_l \\
&=& \sum_{i,j} \xi_j H''_{\xi_i\xi_j}(b,\xi) d\xi_i +\sum_{l,j}  \xi_j H''_{b_l\xi_j}(b,\xi) db_l +\sum_l\tau_ldb_l \\
 &=& \sum_{l,j}  \xi_j H''_{b_l\xi_j}(b,\xi) db_l +\sum_l\tau_ldb_l,\quad  \text{ since } H'_{\xi_i} \text{ is homogeneous of degree 0 in  } \xi,\\
 & = & \sum_{l,j}   H'_{b_l}(b,\xi) db_l +\sum_l\tau_ldb_l,\quad  \text{ since } H'_{b_l} \text{ is homogeneous of degree 1 in  } \xi,
\end{eqnarray*}
which proves that $\tau(b,\xi) =-H'_b(b,\xi)$ and thus 
\begin{equation}\label{eq:gluing-family-of-Lagrangian-1}
 \kappa_*(\Lambda\cap T^*\cU ) = \{ (H'_\xi(b,\xi),b,\xi, -H'_b(b,\xi));\ \xi\in \cC,\ b\in W\}\subset  (T^*U\times T^*W)\setminus 0.
\end{equation}
This proves the unicity and the transversality of $\Lambda$ with respect to $\pi$ as well. It also proves the existence in open subsets of the form $T^*\cU$, $\cU$ being the domain of an adapted local trivialisation. Remark for future reference that given $(m,\xi)\in\Lambda_b$, there is a unique $(m,\zeta)\in \Lambda$ such that $\rho(m, \zeta)=(m,\xi)$.

The existence follows from the local existence and the unicity. Indeed, let $(\kappa_j,\cU_j) $, $j=1,2$, be  two adapted local trivialisations such that $\cU_1\cap\cU_2\not=\emptyset$ and $\Lambda_1,\Lambda_2$ the submanifolds of $T^*\cU_1$ and $T^*\cU_2$ defined by \eqref{eq:gluing-family-of-Lagrangian-1}.  The previous argument of unicity proves that over $T^*\cU_1\cap\cU_2$, we have $\Lambda_1=\Lambda_2$. This allows to define a solution $\Lambda$ globally on $T^*M$ using a cover by adapted trivialisations. 
 
Now, let us consider the general case. Choose a countable cover of $\cL=(\Lambda_b)_b$ by families $\cL_j=(\Lambda_{bj})_{b\in U_j}$, $j\in J$, of conic Lagrangian submanifolds. By the first part of the proof there exists for any $j$ a unique  $\Lambda_j\subset T^*M$   gluing $\cL_j$. Then $\Lambda=\cup_J \Lambda_j$ is a gluing of  $\cL$ and this proves the existence.  
If $\Lambda'_1$ is a patch contained in another solution $\Lambda'$, then $\rho(\Lambda'_1)$ is contained in $\rho(\Lambda')=L=\cup_B \Lambda_b$. 
For any $j\in J$, the set $L_j=\cup_b\Lambda_{bj}$ is a patch of $L$ and by the remark made just after the proof of the unicity in the submanifold case, we get that $\rho^{-1}(L_j)\cap \Lambda'_1$ is contained in the unique conic Lagrangian submanifold $\Lambda_j$ gluing $\cL_j$. Therefore, $\Lambda'_1\subset \cup_J\Lambda_j=\Lambda$ and the unicity follows directly. 
\end{proof}

Conversely, we have

\begin{thm}\label{thm:Lagrangian-slicing}
Let $\Lambda\subset T^*M\setminus 0$ be a conic lagrangian submanifold transverse to $\pi$. Then
\begin{enumerate}
\item $\Lambda\cap (\ker d\pi)^\perp=\emptyset$. 
\item  $(i_b^*(\Lambda))_{b\in B}$ is a family of conic lagrangian local submanifolds of $T^*M_b\setminus 0$. In other words, $\rho(\Lambda)$ is a local submanifold of $V^*M$ transverse to $\pi$ and for any $b$, the fiber
\begin{equation}
\Lambda_b= i_b^*(\Lambda)=\rho(\Lambda)\cap T^*M_b
\end{equation}
is a conic Lagrangian local submanifold of $T^*M_b\setminus 0$.
\end{enumerate}
\end{thm}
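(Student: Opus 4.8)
The two claims are essentially local, so the strategy is to work in an adapted local trivialization of $\pi$ and reduce everything to the normal form already established in Theorem \ref{thm:family-Lagrangian-and-generating-function} and Theorem \ref{thm:gluing-family-of-Lagrangian}. For (1), suppose $(m,\xi)\in\Lambda\cap(\ker d\pi)^\perp$, so $\xi$ annihilates $\ker d\pi_m=T_mM_b$ where $b=\pi(m)$. The transversality hypothesis \eqref{eq:transv-equiv-cond} says $T_mM_b+dp(T_{m,\xi}\Lambda)=T_mM$. Pair this decomposition against $\xi$: the fundamental one-form $\alpha$ of $T^*M$ satisfies $\alpha_{(m,\xi)}=\xi\circ dp$, and on a conic Lagrangian submanifold $\alpha$ vanishes, so $\xi(dp(v))=0$ for all $v\in T_{m,\xi}\Lambda$; combined with $\xi|_{T_mM_b}=0$ this forces $\xi=0$ on all of $T_mM$, i.e. $(m,\xi)\in 0$, contradicting $\Lambda\subset T^*M\setminus 0$. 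This is the short argument and I expect no obstacle there.

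**Proof of (2).** Fix $b_0$ and $(m_0,\xi_0)\in\Lambda$ with $\pi(m_0)=b_0$. Choose an adapted local trivialization $\kappa:\cU\to U\times W$, $\kappa(m)=(x,b)$, as in Theorem \ref{thm:gluing-family-of-Lagrangian}; by \eqref{eq:gluing-family-of-Lagrangian-1} we have, in the induced coordinates $(x,b,\xi,\tau)$ on $T^*\cU$,
\begin{equation}
  \kappa_*(\Lambda\cap T^*\cU)=\{(H'_\xi(b,\xi),b,\xi,-H'_b(b,\xi))\ ;\ \xi\in\cC,\ b\in W\}
\end{equation}
for the homogeneous degree-one function $H$ produced by Theorem \ref{thm:family-Lagrangian-and-generating-function}. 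Here the key point is that Theorem \ref{thm:gluing-family-of-Lagrangian} is stated as a gluing theorem, but its proof in fact shows that \emph{any} conic Lagrangian submanifold $\Lambda$ transverse to $\pi$ is locally of this form — the computation deriving $\tau=-H'_b$ only used that $\Lambda$ is conic, Lagrangian and transverse, not that it was built by gluing. (If one prefers, set $\Lambda_b=i_b^*\Lambda$, note these form a $C^\infty$ family subordinated to $\pi$ since $\rho|_\Lambda$ is transverse to $\pi$ by the same argument as in (1), apply Theorem \ref{thm:gluing-family-of-Lagrangian} to get a glued $\Lambda'$, and invoke the uniqueness there to conclude $\Lambda=\Lambda'$.) Applying $\rho$, which in these coordinates is $(x,b,\xi,\tau)\mapsto(x,b,\xi)$, gives
\begin{equation}
  \rho(\Lambda\cap T^*\cU)=\{(H'_\xi(b,\xi),b,\xi)\ ;\ \xi\in\cC,\ b\in W\},
\end{equation}
which is manifestly a submanifold of $V^*\cU$ of dimension $n+q$, and the projection to $(b,\xi)$ is a local diffeomorphism, so $\rho(\Lambda)$ is a local submanifold of $V^*M$; its intersection with the fiber $T^*M_b=\sigma^{-1}(b)$ amounts to fixing $b$, giving $\{(H'_\xi(b,\xi),\xi)\ ;\ \xi\in\cC\}$, i.e. $\Lambda_b$ is parametrized by the non-degenerate phase $\phi(x,\xi)=\langle x,\xi\rangle-H(b,\xi)$ and is therefore a conic Lagrangian local submanifold of $T^*M_b\setminus 0$ (it avoids the zero section because $\xi\in\cC\subset\RR^n\setminus 0$, or again by (1)). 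Transversality of $\rho(\Lambda)$ to $\pi$ is immediate from the normal form since the projection to $b$ is a submersion. Covering $\Lambda$ by countably many such $T^*\cU$ (using $\sigma$-compactness) assembles these into the global statement, and patching is consistent by the uniqueness argument of Theorem \ref{thm:gluing-family-of-Lagrangian}.

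**Main obstacle.** The only genuinely delicate point is the bookkeeping for local submanifolds rather than honest submanifolds: one must choose the covering patches of $\Lambda$ and of $\rho(\Lambda)$ compatibly with Definition \ref{defn:appendix-DG-5} so that the family $(i_b^*\Lambda)_b$ genuinely satisfies the definition of a $C^\infty$ family of local submanifolds, and verify that "the same excess" / transversality conditions of Definition \ref{defn:appendix-DG-1} and Definition \ref{defn:appendix-DG-2} are met patch by patch. Everything else — the Lagrangian one-form computation and the generating-function normal form — is already in hand from the earlier theorems, so the proof is mostly a matter of transcribing \eqref{eq:gluing-family-of-Lagrangian-1} through $\rho$ and reading off the conclusions.
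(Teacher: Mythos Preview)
Your proof of (1) is correct and is essentially the same argument as the paper's: both exploit that the Liouville one-form vanishes on a conic Lagrangian, which is exactly the inclusion $\Lambda\subset (dp(T\Lambda))^\perp$, and then combine this with the transversality hypothesis.

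Your proof of (2), however, is circular as written. You want to invoke the normal form \eqref{eq:gluing-family-of-Lagrangian-1} from the proof of Theorem~\ref{thm:gluing-family-of-Lagrangian}, claiming that ``the computation deriving $\tau=-H'_b$ only used that $\Lambda$ is conic, Lagrangian and transverse.'' But that is not so: the proof of Theorem~\ref{thm:gluing-family-of-Lagrangian} begins by choosing an \emph{adapted} local trivialization and the function $H$ furnished by Theorem~\ref{thm:family-Lagrangian-and-generating-function}, and both of these require as input a $C^\infty$ family $(\Lambda_b)_b$ of conic Lagrangian submanifolds. The very first line of that argument, $\kappa_*(\Lambda\cap T^*\cU)\subset\{(H'_\xi(b,\xi),b,\xi,\tau)\}$, already uses $i_b^*\Lambda=\Lambda_b$ with $\Lambda_b$ known to be Lagrangian and parametrized by $H$. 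Your parenthetical alternative (``set $\Lambda_b=i_b^*\Lambda$, note these form a $C^\infty$ family \ldots\ by the same argument as in (1)'') does not escape the circle: part (1) gives only $\Lambda\cap(\ker d\pi)^\perp=\emptyset$, not that $\rho(\Lambda)$ is a local submanifold or that each $i_b^*\Lambda$ is Lagrangian.

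The paper breaks the circle with an external input you omit: it first shows, for each fixed $b$, that $i_b^*\Lambda$ is a conic Lagrangian local submanifold of $T^*M_b\setminus 0$ by interpreting the transversality hypothesis \eqref{eq:transv-equiv-cond} as transversality of the canonical relation $\Lambda(i_b)$ with $\Lambda$ and applying H\"ormander's composition theorem \cite[Theorem 21.2.14]{Horm-classics}. Only after $\Lambda_{b_0}$ is known to be Lagrangian can one perform the fibration-respecting change of $x$-coordinates making $\Lambda_{b_0}\ni(x,\xi)\mapsto\xi$ a local diffeomorphism, which together with transversality to $\pi$ yields that $\Lambda\ni(x,b,\xi,\tau)\mapsto(b,\xi)$ has bijective differential. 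From this the remaining assertions (that $\rho|_\Lambda$ is an immersion, that $\rho(\Lambda)$ is transverse to $\pi$) follow exactly as you outlined. So your strategy is salvageable, but it needs this one missing step establishing the Lagrangian property of each slice before the generating-function machinery can be invoked.
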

Clearly, the statement generalizes to the local case. 
\begin{proof} 
\begin{enumerate}

\item On one hand, by dualizing the transversality condition \eqref{eq:transv-equiv-cond}, we get 
\begin{equation}\label{eq:dualizing-transv-Lambda}
(\ker d\pi)^\perp\cap (dp(T\Lambda))^\perp =M\times \{0\} \subset T^*M.
\end{equation}
On the other hand, the inclusion
\begin{equation}\label{eq:inclusion-lambda-dpTLambda-perp}
\Lambda  \subset (dp(T\Lambda))^\perp
\end{equation}
holds.  Indeed, by conicity of $\Lambda$, any $(x,\xi)$ in $\Lambda$  corresponds canonically to a vertical vector in $T_{(x,\xi)}\Lambda$,  denoted by $v(\xi)$. Since $\Lambda$ is Lagrangian, we have with these notations
\[\xi(dp(Z))= \omega(v(\xi),Z)= 0,\quad \forall Z\in T_{(x,\xi)}\Lambda,\] 
where $\omega$ denotes the symplectic form of $T^*M$.  Therefore $\Lambda\cap (\ker d\pi)^\perp\subset \{0\}$ and since by assumption $\Lambda\subset T^*M\setminus 0$, the first assertion is proved. 

 \item As observed in  \cite[Chap. 4, Par. 4]{GS}, the transversality assumption \eqref{eq:transv-equiv-cond} is actually equivalent 
to the transversality of the intersection of the canonical relation 
\[
 \Lambda(i_b) = \{ (m,-\xi,m,\zeta)\in T^*M_b\times T^*M\ ; \ m\in M_b, \zeta\vert_{T_mM_b}=\xi\}
\]
with $\Lambda$, viewed as a canonical relation from $T^*M$ to a point. Therefore, The Hormander's product of canonical relations applies \cite[Theorem 21.2.14]{Horm-classics}, that is, the map
\[
 \rho_b : \Lambda\cap T_{M_b}^*M \longrightarrow T^*M_b\setminus 0\ ,\ (m,\zeta)\longmapsto (m,\zeta\vert_{T_mM_b})
\]
is an immersion with range $\rho_b(\Lambda)=\Lambda_b=i^*_b(\Lambda)$ a conic Lagrangian local submanifold of $T^*M_b\setminus 0$, for any $b$.
From now on, let $(m_0,\zeta_0)\in \Lambda$, $b_0=\pi(m_0)$, $(m_0,\xi_0)=\rho(m_0,\zeta_0)$ and choose a local trivialization $\kappa(m)=(x,b)$  of $\pi$  around $m_0$. After applying if necessary a diffeomorphism in the $x$ variables independent of $b$, we can assume that $\kappa$ is such that in a neighborhood of $(m_0,\xi_0)$ in $T^*M_{b_0}$, the projection $\Lambda_{b_0}\ni (x,\xi)\to \xi$ has a bijective differential. Moreover, by assumption, the map  $(x,b,\zeta)\to b$ has a surjective differential everywhere. It follows that the map 
\begin{equation}\label{eq:basic-property-transerve-lagrangian}
 \Lambda\ni (x,b,\xi,\tau) \longmapsto (b,\xi)\in\RR^q\times \RR^n
\end{equation}
has a surjective differential, therefore bijective for dimensional reason. In particular, this proves that the map $\rho : \Lambda\to V^*M$ is an immersion, thus $\rho(\Lambda)$ is a local submanifold. It is also obvious  from the same argument that $\rho(\Lambda)$ is transverse to $\pi$, which proves that $(i^*(\Lambda_b))_{b\in B}$ is a $C^\infty$ family of conic Lagrangian local submanifolds. 
\end{enumerate}
\end{proof}

\subsection{Families of Lagrangian distributions and submersions}

\begin{defn}\label{defn:family-FIO-submersion}
Let $\pi : M\longrightarrow B$ be a $C^\infty$ submersion of a manifold $M$ of dimension $n_M$ onto a manifold $B$ of dimension $n_B$.  A $C^{\infty}$ family of Lagrangian distributions of order $m$ relative to $\pi$ is a family $u_b\in I^m(\pi^{-1}(b),\Lambda_b,\Omega^{1/2}_\pi)$, $b\in B$, such that  $(\Lambda_b)_{b\in B}$ is a $C^\infty$ family and in any local trivialization $\kappa : \cU\to U\times W$  of $\pi$,
we have
\[
 \kappa_*(u|_\cU) = \int e^{i\phi(x,b,\theta)}a(x,b,\theta)d\theta,
\]
with $a\in S^{m+(n_M-n_B-2N)/4}(U\times W\times \RR^N)$ and $(x,b,\theta)\mapsto \phi(x,b,\theta)$ is $C^\infty$ and a non-degenerate phase function in $(x,\theta)$ which parametrizes locally $\Lambda_b$, for all $b$. 
\end{defn}

\begin{prop}\label{prop:global-distri-to-family-submersion-case}
Let  $B\ni b \mapsto u_b\in I^m(\pi^{-1}(b),\Lambda_b,\Omega^{1/2}_\pi)$ be a $C^{\infty}$ family. The formula
\begin{equation}
    \langle  \widetilde{u},f\rangle = \int_B \langle u_b,f\rangle ,\ f\in C^\infty_c(M,\Omega^{1/2}_\pi\otimes \pi^*(\Omega_B))
\end{equation}
defines a Lagrangian distribution
\begin{equation}
 \widetilde{u} \in I^{m-n_B/4}(M,\Lambda)
\end{equation}
where $\Lambda$ is the gluing of the family $(\Lambda_b)_b$. The map $(u_b)_{b\in B}\mapsto \widetilde{u}$ is bijective.

\end{prop}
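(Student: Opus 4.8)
The statement is local over $M$: the pairing $\langle\widetilde{u},f\rangle=\int_B\langle u_b,f\rangle$ involves no choice, the integral is really over the compact set $\pi(\mathrm{supp}\,f)$ and depends smoothly on $b$ (so $\widetilde{u}$ is a well-defined distribution), and ``being a Lagrangian distribution subordinated to $\Lambda$'' is a local property. So the plan is to fix a local trivialisation $\kappa:\cU\to U\times W$, $\kappa(m)=(x,b)$, of $\pi$, use the representation $\kappa_*(u_b|_\cU)=\int e^{i\phi(x,b,\theta)}a(x,b,\theta)\,d\theta\bmod C^\infty$ of Definition \ref{defn:family-FIO-submersion} (with $a\in S^{m+(n_M-n_B-2N)/4}(U\times W\times\RR^N)$ and, for each fixed $b$, $\phi(\cdot,b,\cdot)$ a non-degenerate phase parametrising $\Lambda_b$), and then glue with a partition of unity subordinate to such $\cU$'s.

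The first and main step will be a geometric identity: \emph{when $b$ is promoted from a parameter to a base variable, $\phi$ becomes a non-degenerate phase function on $(U\times W)\times(\RR^N\setminus 0)$ whose Lagrangian $\Lambda_\phi$ is exactly $\kappa_*(\Lambda\cap T^*\cU)$.} Non-degeneracy comes for free, since the covectors $d_{(x,b,\theta)}(\partial_{\theta_j}\phi)$ are linearly independent on $C_\phi=\{\partial_\theta\phi=0\}$ as soon as the $d_{(x,\theta)}(\partial_{\theta_j}\phi)$ already are. Then $C_\phi$ is a submanifold of dimension $n_M$, the composite $C_\phi\to\cU\to W$ is a submersion, so $\Lambda_\phi=T_\phi(C_\phi)$ is transverse to $\pi$; and $i_b^*\Lambda_\phi=\Lambda_b$ for all $b$ because $\rho(\Lambda_\phi)\cap T^*M_b=\{(x,\phi'_x(x,b,\theta))\ ;\ \phi'_\theta(x,b,\theta)=0\}=\Lambda_{\phi(\cdot,b,\cdot)}$. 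Thus $\Lambda_\phi$ glues the family $(\Lambda_b)$ over $\cU$, and the uniqueness clause of Theorem \ref{thm:gluing-family-of-Lagrangian} forces $\Lambda_\phi=\kappa_*(\Lambda\cap T^*\cU)$. (Alternatively, one first reduces via Theorem \ref{thm:family-Lagrangian-and-generating-function} to the adapted phase $\phi(x,b,\theta)=\langle x,\theta\rangle-H(b,\theta)$, for which $\Lambda_\phi$ is literally the set \eqref{eq:gluing-family-of-Lagrangian-1}.) I expect this identification — recognising that ``promoting the parameter'' is precisely the gluing operation — to be the crux; the rest is bookkeeping.

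Next I would unwind the pairing. Pushing forward by $\kappa$ and absorbing the density factors ($\Omega^{1/2}_\pi$ along the fibres, $\pi^*\Omega_B$ for the integration in $b$), one gets for $f\in C^\infty_c(\cU,\Omega^{1/2}_\pi\otimes\pi^*\Omega_B)$
\[
  \langle\widetilde{u}|_\cU,f\rangle=\int_W\int_U\int_{\RR^N}e^{i\phi(x,b,\theta)}\,a(x,b,\theta)\,f(x,b)\,d\theta\,dx\,db\ \bmod C^\infty ,
\]
so $\kappa_*(\widetilde{u}|_\cU)$ is an oscillatory integral over the $n_M$-dimensional base $U\times W$ with phase $\phi$ and amplitude $a$. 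Since $\dim(U\times W)=n_M$, the condition $a\in S^{m+(n_M-n_B-2N)/4}$ reads exactly $a\in S^{(m-n_B/4)+(n_M-2N)/4}$, the amplitude order defining $I^{m-n_B/4}$. Combined with the previous step this gives $\kappa_*(\widetilde{u}|_\cU)\in I^{m-n_B/4}(U\times W,\kappa_*(\Lambda\cap T^*\cU))$, and summing over a partition of unity yields $\widetilde{u}\in I^{m-n_B/4}(M,\Lambda)$. The $C^\infty$-modulo-$C^\infty$ ambiguity is harmless: a $C^\infty$ family of $C^\infty$ sections integrates in $b$ to a $C^\infty$ section, which is exactly where the $C^\infty$ (rather than merely measurable) dependence on $b$ is used.

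Finally, bijectivity. For injectivity: if $\widetilde{u}=0$ then $\int_B\langle u_b,f\rangle=0$ for all $f$; localising $f$ near a single fibre and using the continuity of $b\mapsto u_b$ built into the notion of $C^\infty$ family forces $u_b=0$ for every $b$. For surjectivity: given $v\in I^{m-n_B/4}(M,\Lambda)$ with $\Lambda$ transverse to $\pi$, work in the same trivialisations, where (by the first step, or directly by Theorems \ref{thm:family-Lagrangian-and-generating-function} and \ref{thm:Lagrangian-slicing} applied to $(i_b^*\Lambda)_b$) $\Lambda$ is parametrised by a ``promoted'' phase $\phi(x,b,\theta)$; writing $\kappa_*(v|_\cU)=\int e^{i\phi}\widetilde{a}\,d\theta$ with $\widetilde{a}\in S^{m+(n_M-n_B-2N)/4}$, set $u_b:=\int e^{i\phi(x,b,\theta)}\widetilde{a}(x,b,\theta)\,d\theta\in I^m(\pi^{-1}(b),\Lambda_b,\Omega^{1/2}_\pi)$. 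These local families glue to a global $C^\infty$ family $(u_b)_{b\in B}$ — coherence of the local definitions follows because the principal symbol of $v$ is globally defined modulo lower order while the glued phase is canonical (Theorem \ref{thm:gluing-family-of-Lagrangian}) — and $\widetilde{(u_b)}=v$ by the computation above. Besides the geometric identity of the second step, the places that will need care are the density-bundle bookkeeping responsible for the $n_B/4$ shift and the coherence of this disintegration.
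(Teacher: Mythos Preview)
Your proof is correct and follows essentially the same route as the paper: work in local trivialisations, observe that the family phase $\phi(x,b,\theta)$ is automatically non-degenerate as a phase on $U\times W$ and parametrises the gluing $\Lambda$ by the uniqueness clause of Theorem~\ref{thm:gluing-family-of-Lagrangian}, read off the order shift from $S^{m+(n_M-n_B-2N)/4}=S^{(m-n_B/4)+(n_M-2N)/4}$, and obtain bijectivity by the inverse operation of restricting $v\in I^{m-n_B/4}(M,\Lambda)$ to the fibres (which is allowed precisely because $\Lambda$ is transverse to $\pi$). Your treatment is somewhat more detailed about densities, partitions of unity, and the separate injectivity argument, but the substance is the same.
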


\begin{proof}
 Let $u_b\in I^m(\pi^{-1}(b),\Lambda_b,\Omega^{1/2}_\pi)$ be a $C^{\infty}$ family. In sufficently small local trivializations, we have
\begin{equation}\label{eq:local-form-family-LD}
   u_b(x) = \int e^{i\phi(b,x,\theta)}a(b,x,\theta)d\theta 
\end{equation}
for some  $C^{\infty}$ family of non-degenerate phases functions $(\phi(b,\cdot,\cdot))_b$ parametrizing the family $(\Lambda_b)_b$ and some symbol $a\in S^{m+(n_M-n_B-2N)/4}(U\times W\times \RR^N)$.

Since $(x,\theta)\mapsto\phi(b,x,\theta)$ is non-degenerate phase function for any $b$, the function $(b,x,\theta)\mapsto \phi(b,x,\theta)$ is actually a non-degenerate phase function. We have necessarily $\Lambda_\phi=\Lambda$ locally, because on one hand $i_b^*(\Lambda_\phi)=\Lambda_b$  for any $b$ and on the other hand, $\Lambda$  is the unique lagrangian satisfying this condition. It follows that $\widetilde{u}$ is given locally by the oscillatory integral :
\[
 \widetilde{u}(b,x)=\int e^{i\phi(b,x,\theta)}a(b,x,\theta)d\theta,
\]
which proves that $\widetilde{u}\in I^{m-n_B/4}(M,\Lambda)$. Conversely, if $v\in I^{m-n_B/4}(M,\Lambda)$ then locally 
\begin{equation}\label{eq:local-form-product-LD}
   v(x) = \int e^{i\phi(b,x,\theta)}a(b,x,\theta)d\theta 
\end{equation}
for some non-degenerate phase function $\phi$ parametrizing $\Lambda$ and some symbol $S^{m-n_B/4+(n_M -2N)/4}(U\times W\times \RR^N)$. Since $\Lambda$ is transverse to $\pi$, the restriction $v_b$ of $v$ to $M_b$ is allowed and given by the $C^\infty$ family $b\mapsto v_b(x) = \int e^{i\phi(b,x,\theta)}a(b,x,\theta)d\theta$ where $\phi$ is regarded as a non degenerate phase function in $(x,\theta)$ for fixed $b$. This proves that $u\mapsto \widetilde{u}$ is bijective.
\end{proof}

%%%%%%%%%%%%%%%%%%%%%%%%%%
\section{Lagrangian submanifolds of the cotangent groupoid}\label{sec:Lag-subm-of-Gamma}
%%%%%%%%%%%%%%%%%%%%%%%%%%
%
%We have seen that Fourier integral operators on manifolds can be composed when the underlying canonical relations can be composed and it is the case if suitable transversality assumptions are satisfied, see proposition \ref{prop:SC_for_composition_canonical_rel}). Actually, putting $X=Y=Z$ in this proposition, the statement and the proof can be rewritten in a particulary neat way using the groupoid $G=X\times X$ and the associated symplectic cotangent groupoid $\Gamma=T^*(X\times X)$ \orange{citer Costes-Dazord-Weinstein, Pradines1988, Mackenzie2005}.

The  cotangent    groupoid $T^*G \rightrightarrows A^*G$  \cite{CDW,Mackenzie2005,Pradines1988}  plays a basic role in the convolution of distributions on $G$ \cite{LMV1}. It is thus natural to investigate the behavior of Lagrangian submanifolds of  $T^*G$ under convolution. 

\subsection{Generalities}
We begin by classical facts in symplectic geometry. 
\begin{prop}\label{prop:push-forward-lagrangian} 
Let $(S,\omega_S),(T,\omega_T)$ be symplectic manifolds, $H$ a submanifold of $S$ and $\mu : H\to T$ a surjective submersion  such that
\begin{equation}\label{eq:push-forward-lagrangian-1}
  \mu^*(\omega_T) = \omega_S\vert_H. 
\end{equation}
\begin{enumerate}
\item The following assertions are equivalent
\begin{itemize} %\label{eq:push-forward-lagrangian-2}
\item[(a)] $H$ is coisotropic;
\item[(b)] $ (\ker d\mu)^{\perp_{\omega_S}} = TH$;
\item[(c)] the graph  $\Gr{\mu}=\{(x,\mu(x))\ ;\ x\in H\}$ is a Lagrangian submanifold of $S\times (-T)$.
\end{itemize}
\item Assume that the previous assertions are true. If $\widetilde{\Lambda}$ is a  Lagrangian local submanifold  of $S$ in clean intersection with  $H $  then 
\begin{equation}
 \label{eq:clean-compo-lagr-gpd-style-rank}
   \Lambda := \mu(\widetilde{\Lambda}\cap H)
\end{equation}
is  a  local Lagrangian submanifold of $T$.  If moreover $\widetilde{\Lambda}$ is a submanifold and the map $\mu : \widetilde{\Lambda}\cap H\to \Lambda$ has compact and connected fibers, then $\Lambda$ is a submanifold. 
\end{enumerate}
\end{prop}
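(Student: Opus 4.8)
The plan is to handle the two assertions in order, with part (1) being a standard but essential warm-up and part (2) the real content. For part (1), the equivalences follow from linear symplectic algebra applied pointwise together with the hypothesis $\mu^*(\omega_T)=\omega_S|_H$. The key observation is that, since $\mu$ is a submersion satisfying \eqref{eq:push-forward-lagrangian-1}, the kernel $\ker d\mu_x\subset T_xH$ is exactly the $\omega_S$-orthogonal complement of $T_xH$ intersected with $T_xH$, i.e. $\ker d\mu_x = (T_xH)^{\perp_{\omega_S}}\cap T_xH$; indeed $v\in\ker d\mu_x$ iff $\omega_S(v,w)=\omega_T(d\mu\,v,d\mu\,w)=0$ for all $w\in T_xH$. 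From this, $(b)\Leftrightarrow(a)$ is immediate: $H$ is coisotropic means $(T_xH)^{\perp_{\omega_S}}\subset T_xH$, which combined with the displayed identity gives $(T_xH)^{\perp_{\omega_S}}=\ker d\mu_x$, and conversely. The equivalence $(b)\Leftrightarrow(c)$ is the familiar fact that $\Gr{\mu}$ is isotropic in $S\times(-T)$ precisely by \eqref{eq:push-forward-lagrangian-1}, and it is Lagrangian iff its dimension is $\dim S$, i.e. iff $\dim H - \dim T = \dim S - \dim H$, equivalently $\operatorname{codim} H = \tfrac12\operatorname{rk}(\ker d\mu)^{\perp}$... more cleanly, one checks $T_{(x,\mu(x))}\Gr{\mu}{}^{\perp}$ and sees that it reduces to $(b)$.

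For part (2), assume $(a)$--$(c)$ hold. The strategy is to reduce the statement to the composition of Lagrangian submanifolds under a canonical relation, which is exactly H\"ormander's clean composition theorem (the manifold version of \cite[Theorem 21.2.14]{Horm-classics}, already invoked in the proof of Theorem \ref{thm:Lagrangian-slicing}). Concretely, view $\Gr{\mu}\subset S\times(-T)$ as a canonical relation $C$ from $S$ to $T$, and view $\widetilde\Lambda\subset S$ as a Lagrangian submanifold, i.e. a canonical relation from a point to $S$. Then $\mu(\widetilde\Lambda\cap H)$ is precisely the set-theoretic composition $C\circ\widetilde\Lambda$. The clean-intersection hypothesis on $\widetilde\Lambda\cap H$ translates into the clean-intersection hypothesis required for composition: one must check that $\widetilde\Lambda\times C$ meets $S\times N^*(\Delta_S)\times T$ cleanly, but since $C=\Gr{\mu}$ has its $S$-component equal to all of $H$, this intersection is a graph over $\widetilde\Lambda\cap H$ and inherits cleanness (with the same excess) directly. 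H\"ormander's theorem then yields that $C\circ\widetilde\Lambda=\mu(\widetilde\Lambda\cap H)$ is a conic Lagrangian \emph{local} submanifold of $T$ (an immersed submanifold in his terminology), which is the first claim. For the final sentence, when $\widetilde\Lambda$ is a genuine submanifold, the composition map is $\mu|_{\widetilde\Lambda\cap H}:\widetilde\Lambda\cap H\to\Lambda$; if this has compact connected fibers it is a proper submersion onto its image with connected fibers, hence a locally trivial fibration, and in particular the image $\Lambda$ is a closed embedded submanifold rather than merely immersed --- this is the standard upgrade of ``immersed'' to ``embedded'' under properness plus connected fibers, and it is essentially the same argument already used for $\Lambda_\phi$ in the discussion after \eqref{eq:Lambda-phi}.

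The main obstacle I anticipate is bookkeeping rather than conceptual: one has to set up the dictionary between ``$\mu$ satisfies $\mu^*\omega_T=\omega_S|_H$ with $H$ coisotropic'' and ``$\Gr{\mu}$ is a canonical relation'' carefully enough that the clean-intersection hypotheses match up verbatim, including the excess. In particular one should verify that the excess $e$ of the intersection $\widetilde\Lambda\cap H$ equals the excess appearing in H\"ormander's composition (so that no dimension shift sneaks in), and that conicity is preserved --- which it is, since $\mu$, being a morphism of symplectic manifolds here arising from groupoid structure, is homogeneous, or can be arranged to be so in the cases of interest. A secondary point to be careful about is the passage from submanifolds to local submanifolds: one covers $\widetilde\Lambda$ by patches on which $\widetilde\Lambda\cap H$ decomposes into clean pieces of equal excess, applies the submanifold case to each, and takes the (countable) union, exactly as in Definition \ref{defn:appendix-DG-1}.
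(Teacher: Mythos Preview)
Your approach is essentially the same as the paper's: for part (1) you argue pointwise via linear symplectic algebra (the paper does the same, though it uses a small lemma on projections of coisotropic subspaces for $(c)\Rightarrow(a)$ where you use a dimension count), and for part (2) both you and the paper invoke symplectic reduction --- the paper cites \cite[Proposition 21.2.13]{Horm-classics} directly, while you wrap it in the composition-of-canonical-relations formulation \cite[Theorem 21.2.14]{Horm-classics}, which is proved via 21.2.13.

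Two small caveats. First, \cite[Theorem 21.2.14]{Horm-classics} is stated for conic Lagrangians in cotangent bundles, whereas the proposition is for general symplectic manifolds $(S,\omega_S),(T,\omega_T)$; you should therefore either cite the linear reduction 21.2.13 (or \cite[p.~12]{Weinstein1979}) directly, as the paper does, or remark that the proof of 21.2.14 goes through verbatim in this generality. Second, your aside about conicity being preserved is a red herring here: the proposition has no conicity hypothesis, and none is needed --- drop that paragraph.
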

\begin{proof}
\begin{enumerate}
 \item The condition \eqref{eq:push-forward-lagrangian-1} implies that for any $x$, $\ker d\mu_x\subset (T_xH)^{\perp_{\omega_S}}$. Let us assume that $H$ is coisotropic, that is, that $(T_xH)^{\perp_{\omega_S}}\subset T_xH$ for all $x$. Let $u\in (T_xH)^{\perp_{\omega_S}}$. Then by assumption
 \[  
   \omega_T(d\mu(u),d\mu(v))= \omega_S(u,v)= 0 \text{ for all } v\in T_xH,
 \]
 which by surjectivity of $d\mu$ proves that $u\in \ker d\mu_x$. This gives (a)$\Rightarrow$(b) and the converse implication is trivial. 
 
Let $(s,t)\in (T\Gr{\mu})^{\perp_{\omega}}$ and choose $u\in T_xH$ with $d\mu(u)=t$. Then  $\omega_T(d\mu(u),d\mu(s'))=\omega_S(s,s')$ 
for all $( s',d\mu(s'))\in T\Gr{\mu}$. Using  \eqref{eq:push-forward-lagrangian-1}, this gives $u-s\in (TH)^{\perp_\omega}$ and 
assuming (b) this gives $t=d\mu(s)$, which proves  that $\Gr{\mu}$ is coisotropic and thus (c) 
since  \eqref{eq:push-forward-lagrangian-1} is obviously equivalent to the isotropy of  $\Gr{\mu}$.

For (c)$\Rightarrow$(a) we apply the following elementary lemma
\begin{lem}
Let $\lambda$ be a coisotropic linear subspace in a product of symplectic vector spaces $S_1\times S_2$. Then $\pr{j}(\lambda)$ is a  coisotropic subspace of $S_j$, $j=1,2$.  
\end{lem}
  
\item Using a decomposition of $\widetilde{\Lambda}$ into patches, it is sufficient to assume that $\widetilde{\Lambda}$ is a submanifold. Now, the result follows from a symplectic reduction procedure: see \cite[Proposition 21.2.13, Theorem 21.2.14]{Horm-classics} or \cite[page 12]{Weinstein1979}. We outline the proof.
 
 By assumption $\widetilde{\Lambda}\cap H$ is a $C^\infty$ submanifold and at any point $x\in \widetilde{\Lambda}\cap H$ 
\begin{equation}
 T_{x}(\widetilde{\Lambda}\cap H)= T_{x}\widetilde{\Lambda}\cap T_{x}H. 
\end{equation}
Since $\ker d\mu_x\subset (\ker d\mu_x)^{\perp_{\omega}} = T_xH$, the symplectic reduction  (\cite[proposition 21.2.13]{Horm-classics}) applied to $\lambda=T_{x}\widetilde{\Lambda}$ asserts that 
\[
 \lambda'= \left(T_{x} \widetilde{\Lambda}\cap H \right) / \left(T_{x}(\widetilde{\Lambda})\cap\ker d\mu_x\right)
\]
is a Lagrangian subspace of the symplectic vector space $S'=T_{x}H / \ker d\mu_{x}\simeq T_{\mu(x)}(T)$. Therefore $\rank\ d\mu_x=\dim T/2$ is independent of $x$ and the image $\Lambda=\mu(\widetilde{\Lambda}\cap H)$ is a local submanifold  of $T$ of dimension $\dim T/2$. Assumption \eqref{eq:push-forward-lagrangian-1} implies that $d\mu_x(T_x\widetilde{\Lambda}\cap H)$ is Lagrangian, therefore $\Lambda$ is a Lagrangian local submanifold. 
If the fibers of $\mu\vert_{\widetilde{\Lambda}\cap H}$ are moreover compact and connected,  it follows by standard arguments of differential geometry that $\Lambda$ is actually a  submanifold of $T$.
\end{enumerate}
\end{proof}
We now give a generic example in which Proposition \ref{prop:push-forward-lagrangian} applies. This example also shows how, and when, clean phase functions arise in the task of parametrizing Lagrangian submanifolds.  
\begin{prop}\label{prop:model-case-of-convol-phases}
Let $X, Y$ be manifolds, $Z\subset X$ a submanifold and $f :Z\to Y$ a submersion. Set $H=(\ker df)^\perp\subset T^*X$ and 
\[
  \mu   : H\ni (x,\xi)\longmapsto (f(x),{}^tdf^{-1}_x(\xi))\in T^*Y.
\]
The following assertions hold.
\begin{enumerate}
\item  $\Gr{\mu}$ is a Lagrangian submanifold of $T^*X\times (-T^*Y)$.
\item Let  $\widetilde{\Lambda}$ be a conic Lagrangian local submanifold of $T^*X\setminus 0$  intersecting cleanly   $H$ with excess $e$ and such that $\widetilde{\Lambda}\cap N^*Z=\emptyset$. Let   
\(
 (x,\xi)\in \widetilde{\Lambda}\cap H
\)
and  
\[
 \widetilde{\phi}  : U \times\RR^{N}\longrightarrow \RR \quad ;\    U  \text{ open subset of } X,
\]
 be a non-degenerate phase function parametrizing $\widetilde{\Lambda}$ around $(x,\xi)$. Setting $V=U\cap Z$,
the restriction  $\phi$ of $\widetilde{\phi}$ to $V\times \RR^N$
is a phase function on the cone bundle  
\begin{equation}
 \label{eq:clean-compo-phases-gpd-extended-covariable}
 f\circ \pr{1} : C= V\times( \RR^{N}\setminus 0) \longrightarrow f(V)\subset Y.
\end{equation}
This phase function is clean with excess $e$ and  parametrizes  $\Lambda=\mu(\widetilde{\Lambda}\cap H)$ around 
$\mu(x,\xi)=(f(x),{}^t(df^{-1}(\xi)))$.  
\end{enumerate}   
\end{prop}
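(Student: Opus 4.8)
The plan is to prove the two assertions separately, with part (1) being a direct application of Proposition \ref{prop:push-forward-lagrangian} and part (2) being where the real work lies.

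For part (1), I would take $S = T^*X$, $T = T^*Y$, and $H = (\ker df)^\perp \subset T^*X$ as given, and verify the hypotheses of Proposition \ref{prop:push-forward-lagrangian}. First, $H$ is a submanifold of $T^*X$: since $f : Z \to Y$ is a submersion, $\ker df$ is a subbundle of $TZ$ of constant rank, so its annihilator $(\ker df)^\perp$ inside $T^*_Z X$ is a subbundle, hence a submanifold of $T^*X$. Next, $\mu : H \to T^*Y$ is a surjective submersion: it is the composite of the base map $f$ and the fiberwise linear isomorphism $\xi \mapsto {}^t df_x^{-1}(\xi)$ (which makes sense precisely because $\xi$ annihilates $\ker df$, so it descends through $df_x$), and surjectivity of $df$ at each point makes $\mu$ a submersion onto $T^*Y$. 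The key identity $\mu^*(\omega_{T^*Y}) = \omega_{T^*X}|_H$ is a computation with the canonical one-forms: pulling back the tautological one-form $\alpha_{T^*Y}$ along $\mu$ gives $\alpha_{T^*X}|_H$ (this is the standard fact that $\mu$ intertwines tautological forms, because $\mu$ is built from a cotangent lift), and then one takes $d$. Given \eqref{eq:push-forward-lagrangian-1}, the equivalence (a)$\Leftrightarrow$(c) of Proposition \ref{prop:push-forward-lagrangian}(1) reduces assertion (1) to checking that $H$ is coisotropic, which again follows because $\ker d\mu = \ker df$ (viewed appropriately inside $TH$) and one verifies $(T_x H)^{\perp_\omega} = \ker d\mu_x \subset T_x H$ by a dimension count: $\dim H = \dim X + \dim Y$, so $H$ has codimension $\dim X - \dim Y = \rank(\ker df)$, which matches $\dim \ker d\mu_x$.

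For part (2), I would first record that the hypotheses of Proposition \ref{prop:push-forward-lagrangian}(2) are met: $\widetilde{\Lambda}$ meets $H$ cleanly with excess $e$ by assumption, so $\Lambda = \mu(\widetilde{\Lambda}\cap H)$ is a conic Lagrangian local submanifold of $T^*Y$ — and conicity is preserved since $\mu$ is homogeneous. The condition $\widetilde{\Lambda} \cap N^*Z = \emptyset$ guarantees that $\Lambda$ avoids the zero section: a point of $\widetilde{\Lambda}\cap H$ mapping to a zero covector in $T^*Y$ would have to be a covector in $T^*_Z X$ annihilating all of $T_x Z$ (since it annihilates $\ker df$ and pushes to zero under ${}^tdf^{-1}$), i.e. lie in $N^*Z$. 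Then comes the main content: showing that $\phi = \widetilde{\phi}|_{V\times\RR^N}$ is a clean phase function on the cone bundle $C = V \times (\RR^N\setminus 0) \to f(V)$ with excess $e$, and that it parametrizes $\Lambda$. Here I would unwind the definitions. The vertical differential of $\phi$ relative to the cone bundle $f\circ\pr{1}$ involves differentiating in the $\theta$ directions \emph{and} in the fiber directions of $f$, i.e. along $\ker df$; so $C_\phi = \{(z,\theta) : \widetilde\phi'_\theta(z,\theta) = 0,\ \widetilde\phi'_z(z,\theta)|_{\ker df_z} = 0\}$. The first condition says $(z,\theta) \in C_{\widetilde\phi}$, i.e. the point $(z, \widetilde\phi'_z(z,\theta))$ lies on $\widetilde\Lambda$; the second says that covector annihilates $\ker df$, i.e. lies in $H$. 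So $T_{\widetilde\phi}$ identifies $C_\phi$ with $\widetilde\Lambda \cap H$ (pulled back along the fibration $C_{\widetilde\phi}\to\widetilde\Lambda$), and the clean intersection hypothesis on $\widetilde\Lambda \cap H$ transfers to cleanness of $\phi$ with the stated excess. Finally $\phi'_{\mathrm{hor}}$ with respect to $f\circ\pr{1}$ is exactly ${}^tdf^{-1}$ applied to the horizontal part of $\widetilde\phi$, so $T_\phi(C_\phi) = \mu(\widetilde\Lambda\cap H) = \Lambda$, and near the chosen point this gives the asserted parametrization.

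\textbf{Main obstacle.} The delicate point is the bookkeeping in part (2): correctly identifying the vertical differential of $\phi$ with respect to the \emph{enlarged} set of fiber variables (the $\RR^N$ variables of the phase together with the kernel directions of $f$), and then checking that the tangent-space condition defining cleanness of $\phi$ — namely that $T_{(z,\theta)}C_\phi$ is cut out by $d\phi'_{\mathrm{vert}} = 0$ — corresponds precisely to the clean-intersection tangent condition $T(\widetilde\Lambda\cap H) = T\widetilde\Lambda \cap TH$ under the identification above, with the excess matching. This amounts to a careful diagram chase through the short exact sequence $0 \to (\ker df)^\perp \to T^*_Z X \to T^*(\text{fibers}) \to 0$ combined with the symplectic reduction already used in Proposition \ref{prop:push-forward-lagrangian}; the rest is routine.
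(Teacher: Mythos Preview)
Your proposal is correct and follows essentially the same route as the paper. For part (1) the paper simply writes down adapted local coordinates $(x',x'',x''')$ with $Z=\{x'''=0\}$ and $f(x',x'')=x'$, reads off $H=\{(x',x'',0,\xi',0,\xi''')\}$, and observes coisotropy directly before invoking Proposition \ref{prop:push-forward-lagrangian}; your abstract dimension count achieves the same thing. For part (2) the paper carries out exactly the computation you outline: it uses the hypothesis $\widetilde{\Lambda}\cap N^*Z=\emptyset$ to check $d\phi$ never vanishes, identifies $C_\phi$ with $T_{\widetilde\phi}^{-1}(\widetilde{\Lambda}\cap H)$ via the equivalence $(z,\theta)\in C_\phi \Leftrightarrow (z,\widetilde\phi'_x(z,\theta))\in\widetilde{\Lambda}\cap H$, transfers the clean-intersection tangent condition to $TC_\phi=\{d\phi'_\omega=0\}$, computes $\phi'_h$ to get $\Lambda_\phi=\Lambda$, and finishes with the excess count $e=\dim C_\phi-\dim\Lambda$—precisely the bookkeeping you flag as the main obstacle.
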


\begin{proof}
\begin{enumerate}
\item This is immediately checked using local coordinates $(x',x'',x''')$ for $X$ such that $Z$ is given by $x'''=0$, $x'$ gives local coordinates for $Y$ and $f(x',x'')=x'$. Then one has $H=\{(x', x'', 0,\xi', 0,\xi''')\}$ so that $H$ is coisotropic and one can apply the first part of Proposition \ref{prop:push-forward-lagrangian}.

\item All the assertions being local, we may assume that  $\widetilde{\Lambda}$ is $C^\infty$.
 It is obvious that $\phi$ is $C^\infty$ and   homogeneous in the fibers of the given conic manifold.
Assume that  $d\phi$  vanishes at a point $(x,\theta)\in V\times \RR^N$. This means that $\widetilde{\phi}$ satisfies
\[
 \widetilde{\phi}'_x(x,\theta)(t)=0 \ \forall t\in T_xZ, \quad \widetilde{\phi}'_\theta(x,\theta)=0
\]
This implies that $(x,\widetilde{\phi}'_x(x,\theta))\in \widetilde{\Lambda}\cap N^*Z$, which contradicts the assumptions. Thus $\phi$ is a phase function. 

To precise $C_\phi$, we note $y\in Y$ the space coordinate of $\phi$ and 
\[
   \omega = (z,\theta), \quad \text{ where }  z\in f^{-1}(y)\subset Z
\]
the parameters. Then 
\begin{align}\label{proof:compo-phases-1}
  C_\phi  = \{ (x,\theta) \ ;\ \phi'_\omega(x,\theta)=0 \} 
\end{align}
Observe that $ \phi'_\omega = (\widetilde{\phi}'_z,\widetilde{\phi}'_{\theta})= (\phi'_z,\phi'_{\theta})$ thus 
\begin{align}
  C_\phi  = \{ (x,\theta)\in C_{\widetilde{\phi}} \ ;\ \widetilde{\phi}'_z(x,\theta)=0 \}.
\end{align}
Observe that $\widetilde{\phi}'_z(x,\theta)=0$ means exactly that $\widetilde{\phi}'_x(x,\theta)\in (\ker df)^\perp$, therefore
\begin{equation}
 (x,\theta)\in C_\phi \Leftrightarrow (x,\widetilde{\phi}'_x(x,\theta)) \in \widetilde{\Lambda}\cap H.
\end{equation}
It follows the local diffeomorphism $T_{\widetilde{\phi}} : C_{\widetilde{\phi}} \to \widetilde{\Lambda}$ maps $C_\phi$ onto $\widetilde{\Lambda}\cap H$:
\begin{equation}
 \label{proof:compo-phases-3}
   C_\phi \ni (x,\theta)\overset{T_{\widetilde{\phi}}}{\longmapsto} (x,\widetilde{\phi}'_x(x,\theta))\in \widetilde{\Lambda}\cap H.
\end{equation}
This proves that $C_\phi$ is a $C^\infty$ submanifold of $ C_{\widetilde{\phi}}$ since by assumption $\widetilde{\Lambda}\cap H$ is a $C^\infty$ submanifold. Recall that  $C_\phi$ is given by the equations
\begin{equation}\label{eq:equations-for-C-phi}
(x,\theta)\in V\times \RR^N,\quad \widetilde{\phi}'_z(x,\theta)=0,\quad \widetilde{\phi}'_{\theta}(x,\theta) =0. 
\end{equation}
The first one means that $(x,\widetilde{\phi}'_x(x,\theta))=T_{\widetilde{\phi}}(x,\theta)\in H$ and the second one that  $(x,\widetilde{\phi}'_x(x,\theta))=T_{\widetilde{\phi}}(x,\theta)\in\widetilde{\Lambda}$. Recall that $H$ is given by the equation 
\begin{equation}
  \rho(x,\xi)=(x,0),
\end{equation}
where $\rho : T_Z^*X\to (\ker df)^*, (x,\xi)\mapsto (x,\xi_z)$ is the submersion given by the restriction of linear forms to $\ker df$. Since by assumption we have
\begin{equation}
   T(\widetilde{\Lambda}\cap H ) = T \widetilde{\Lambda}\cap TH
\end{equation}
and since $T \widetilde{\Lambda}$ and $TH$ are given respectively by  the equations $d\phi'_{\theta}=0$ and $d\rho = 0$, it follows that 
$(t,\zeta)\in TZ\times \RR^N$ belongs to $TC_\phi$ if and only if $ dT_{\widetilde{\phi}}(t,\zeta)\in T(\widetilde{\Lambda}\cap H)$, that is if and only if 
\begin{equation}
d\phi'_{\theta}(t,\zeta)= 0 \text{ and } d\rho dT_{\widetilde{\phi}}(t,\zeta) = 0
\end{equation}
which, taking into account the definition of $\rho$ and its linearity in the fibers, is equivalent to 
\begin{equation}
 d\phi'_{\theta}(t,\zeta)= 0 \text{ and }  d\phi'_z(t,\zeta) = 0
\end{equation}
that is to $d\phi'_{\omega}(t,\zeta)= 0$ 
and this proves that $\phi$ is a clean phase function. 

Remember that $\phi$ is a phase function on the cone bundle $V\times \RR^{N}\overset{  f \circ\pr{1} }{\to}Y$, that is, the space variable is $y\in Y$ and the parameter variable is $\omega=(z,\theta)$ with $z\in f^{-1}(y)$. The differential $\phi'_h(y,\omega)\in T^*_y Y$ of $\phi$ in the \enquote{horizontal direction $y$} is well defined if and only if the vertical differential $\phi'_{\omega}$   vanishes and then
\begin{equation}
 \phi'_h(y,\omega)(v) = d_z\phi(z,\theta)(u) \quad \forall u\in T_{z}Z\text{ such that } df(u)=v.
\end{equation}
 Since $ d\phi_{(z,\theta)}(u) =  d\widetilde{\phi}_{(z,\theta)}(u)$, it follows that, around $\mu(x,\xi)$
\begin{align}
 \Lambda_\phi & = \{ (y,\phi'_h(y,\omega)) \ ; \  (y,\omega)=(z,\theta)\in C_\phi\} \nonumber \\
              &=  \{ (f(z), {}^t df^{-1}(\phi'_{z}(z, \theta)) \ ; \  (z,\theta)\in C_\phi\} \\
 &= \Lambda.\nonumber 
\end{align}
 We have $\dim H= n_X+ n_Y$ and $\dim C_\phi=\dim\widetilde{\Lambda}\cap H$.  Then
\begin{eqnarray}
 e&=& (2n_X-\dim\widetilde{\Lambda})+ (2n_X-\dim H) -(2n_X-\dim \widetilde{\Lambda}\cap H) \nonumber  \\
  &=& n_X +n_X-n_Y -2n_X +\dim \widetilde{\Lambda}\cap H = \dim\widetilde{\Lambda}\cap H-\dim\Lambda\\
  &=& \dim C_\phi - \dim\Lambda \nonumber 
\end{eqnarray}
and the latter is by definition the  excess $e$ of $\phi$.
\end{enumerate}
\end{proof}
\begin{remark}\label{rem:clean-phase-and-symplectic-reduction}
 Let  $\phi$ be phase function over $Y$, defined on the total space of a given cone bundle $p :C\to Y$.   Then $\widetilde{\Lambda}=\mathrm{Gr}(\phi')$ is a Lagrangian submanifold of $T^*C$ and in the notations of the previous proposition with $Z=X=C$, $f=p$, we get 
\[
 \widetilde{\Lambda} \cap H \text{ is clean if and only if } \phi \text{ is a clean phase function }
\]
and since $\mu(v,\xi) = (p(v), \xi_{\mathrm{hor}})$ for all $(v,\xi)\in H=(\ker dp)^\perp$, we also have
\[
  \mu(\widetilde{\Lambda} \cap H) = \Lambda_\phi 
\]
where $\Lambda_\phi$ is defined in \eqref{eq:Lambda-phi}.
 \end{remark}
\begin{prop}\label{lem:compo-LD-model-case}
We use the notations and assumptions of Proposition \ref{prop:model-case-of-convol-phases}. Let $\Omega$ be any line bundle such that $\Omega\vert_Z=\Omega_Z$. 
We note 
\[
i^* : \cE'_{\widetilde{\Lambda}}(X,\Omega)\longrightarrow \cE'_{i^*\widetilde{\Lambda}}(Z,\Omega_Z)
\]
 the restriction to $Z$ of distributions on $X$ and 
 \[
 f_*: \cE'_{i^*\widetilde{\Lambda}}(Z,\Omega_Z)\longrightarrow \cE'_{\Lambda}(Y,\Omega_Y),  
 \]
the push-forward along $f$. The map 
 \begin{align*}
 f_\# : I^m_c(X,\widetilde{\Lambda};\Omega)& \longrightarrow  I^{m+e/2+(n_X-2n_Z+n_Y)/4}_c (Y,\Lambda) \\
                u & \longmapsto f_*(i^*u)
 \end{align*}
is well defined.
\end{prop}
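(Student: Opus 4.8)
The plan is to reduce the statement to the composition theorems of the local theory by exhibiting $f_\#$ as the effect on Lagrangian distributions of the clean composition studied in Proposition \ref{prop:push-forward-lagrangian} and made concrete by the phase-function picture of Proposition \ref{prop:model-case-of-convol-phases}. First I would set up the two elementary building blocks. The restriction $i^*$ to the submanifold $Z\subset X$ is the composition with the canonical relation $\Lambda(i)=\{(z,\xi,z,\zeta)\ ;\ z\in Z,\ \zeta|_{T_zZ}=\xi\}\subset T^*Z\times T^*X$; since by hypothesis $\widetilde\Lambda\cap N^*Z=\emptyset$, this composition is transverse (excess $0$) and \cite[Theorem 25.2.6 / 25.2.3]{Horm-classics} shows $i^*u\in I^{m+(n_X-n_Z)/4}_c(Z,i^*\widetilde\Lambda;\Omega_Z)$ with wave front set contained in $i^*\widetilde\Lambda=\rho_0(\widetilde\Lambda\cap T_Z^*X)$. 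The push-forward $f_*$ along the submersion $f:Z\to Y$ is integration over the fibers, i.e.\ composition with the graph Lagrangian $\Gr{\mu}^{-1}\subset T^*Y\times T^*Z$ of Proposition \ref{prop:model-case-of-convol-phases}(1); one checks that the clean-intersection hypothesis for this composition is exactly the clean intersection of $i^*\widetilde\Lambda$ with $H\cap T_Z^*X$ with the same excess $e$, so \cite[Theorem 25.2.3]{Horm-classics} gives $f_*(i^*u)\in I^{m+(n_X-n_Z)/4+e/2+(n_Z-2n_Y+\dots)/4}_c(Y,\Lambda)$ for the appropriate half-density bookkeeping, with $\Lambda=\mu(i^*\widetilde\Lambda\cap H)=\mu(\widetilde\Lambda\cap H)$ by Proposition \ref{prop:model-case-of-convol-phases}(2).

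Alternatively, and more in the spirit of the surrounding text, I would give the direct oscillatory-integral proof using the phase functions of Proposition \ref{prop:model-case-of-convol-phases}. Fix $(x,\xi)\in\widetilde\Lambda\cap H$ and a non-degenerate phase $\widetilde\phi:U\times\RR^N\to\RR$ parametrizing $\widetilde\Lambda$ near $(x,\xi)$, so that locally $u=\int e^{i\widetilde\phi(x,\theta)}a(x,\theta)\,d\theta$ with $a\in S^{m+(n_X-2N)/4}$. Restricting to $Z$ replaces $x$ by coordinates on $Z$ and leaves the amplitude a symbol of the same order in the now-fewer space variables; this is licit precisely because $\widetilde\Lambda\cap N^*Z=\emptyset$ guarantees $\widetilde\phi|_{Z\times\RR^N}$ has no fibre-critical points lying over $\ker df$-vanishing, so it remains a phase function (as proved in Proposition \ref{prop:model-case-of-convol-phases}(2)). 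Pushing forward along $f$ then amounts to integrating the fibre variables $z\in f^{-1}(y)$ against this oscillatory kernel, so one is left with
\begin{equation}
 f_*(i^*u)(y) = \int e^{i\phi(y,z,\theta)} a(z,\theta)\,\mu(z)\,dz\,d\theta,
\end{equation}
where $\phi$ is the clean phase of excess $e$ on the cone bundle $f\circ\pr{1}:V\times(\RR^N\setminus0)\to Y$ from Proposition \ref{prop:model-case-of-convol-phases}, and $\Lambda_\phi=\Lambda$. Now I would invoke the standard reduction of a clean phase of excess $e$ to a non-degenerate one by integrating out $e$ of the fibre variables via stationary phase (\cite[Theorem 25.1.9, Proposition 25.1.5]{Horm-classics}; this is the content of clean composition), which produces a non-degenerate phase parametrizing $\Lambda$ and shifts the order by $e/2$. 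Tracking the number of integration variables — $N$ from $\theta$ plus $\dim f^{-1}(y)=n_Z-n_Y$ from $z$, reduced by $e$ — together with the half-density/order normalization $S^{m'+(n_Y-2N')/4}$ then yields exactly the claimed order $m+e/2+(n_X-2n_Z+n_Y)/4$; the shift $(n_X-n_Z)/4$ comes from restriction, a further $-(n_Y)/4$-type term from integrating the $z$ variables against a density, and reassembling these is the one genuinely bookkeeping-heavy step.

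The main obstacle is neither the geometry (which is entirely delivered by Propositions \ref{prop:push-forward-lagrangian} and \ref{prop:model-case-of-convol-phases}: coisotropy of $H$, cleanliness and excess of $\phi$, and $\Lambda_\phi=\Lambda$) nor the existence of the parametrization, but the careful accounting of orders and half-densities. One must (i) handle the half-density bundle $\Omega$: restriction of $\Omega$-valued distributions to $Z$ uses $\Omega|_Z=\Omega_Z$, and push-forward along $f$ absorbs the fibre half-densities so the output is genuinely scalar ($\Omega_Y$-free in the sense stated); (ii) match the symbol-order conventions of \cite{Horm-classics} through the two operations, where each integration variable contributes $-1/4$ and the excess contributes $+e/2$ via stationary phase; and (iii) verify that the apparently local construction patches: $\Lambda$ being a local submanifold, decompose $\widetilde\Lambda$ into patches as in Definition \ref{defn:appendix-DG-1}, run the argument on each, and note that the resulting local contributions to $f_*(i^*u)$ sum locally finitely because $u$ is compactly supported and $f$ is proper on $\supp u\cap Z$ after the usual cutoff. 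Modulo these normalizations, the statement follows; I would present the phase-function argument as the main line and relegate the order computation to a short explicit display.
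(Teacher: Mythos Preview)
Your proposal is correct and follows essentially the same route as the paper: the paper gives exactly your second (direct oscillatory-integral) argument, writing $i^*A$ as $\int e^{i\phi(z,\theta)}a(z,\theta)\,d\theta$ and then integrating over the fibers of $f$ using the clean phase $\phi$ of Proposition \ref{prop:model-case-of-convol-phases}; it even remarks beforehand that the result could alternatively be deduced from \cite[Theorem 25.2.3]{Horm-classics}, which is your first approach. The one concrete step the paper makes explicit that you leave inside the ``bookkeeping'' is the homogenization of the fiber variable: since the fiber coordinate $z'$ of $f$ is not homogeneous, $a(y,z',\theta)$ is not literally a symbol on the cone bundle over $Y$, and the paper substitutes $\omega=(|\theta|\,z',\theta)$ with Jacobian $|\theta|^{n_Y-n_Z}$ before reading off the final order.
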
 
For non compactly supported distributions, we get the same result by taking care of supports for the push-forward operation. For instance, giving   any $\varphi\in C^\infty(Z)$ such that $f : \supp{\varphi}\to Y$ is proper, the conclusion of the lemma holds true with 
\[
  u \longmapsto f_*(\varphi i^*u).
\]

Proposition \ref{lem:compo-LD-model-case} could be deduced from \cite[Theorem 25.2.3]{Horm-classics}, but the direct proof below is instructive.
\begin{proof}[Proof of the Lemma]
Let 
$$
A(x) = \int e^{i\widetilde{\phi}(x,\theta)}a(x,\theta)d\theta \in I^m(X,\widetilde{\Lambda}).
$$
Here $\widetilde{\phi} : U\times \RR^N\to \RR$ is a non degenerate phase function parametrizing $\widetilde{\Lambda}$ and $a\in S^{m+(n_X-2N)/4}(U\times \RR^N)$. 
Since $\WF{A}\subset \widetilde{\Lambda}$ and, by assumption, $\widetilde{\Lambda}\cap N^*Z=\emptyset$, the distribution $i^*(A)$ is well defined (\cite[Chp. 6, Section 1]{GS}) and given by the oscillatory integral
\begin{equation}\label{eq:push-forward-LD-model-case-1}
i^*(A)(z) = \int e^{i\phi(z,\theta)}a(z,\theta)d\theta \in I^{m +(n_X-n_Z)/4}(Z,i^*\widetilde{\Lambda})
\end{equation}
where we recall that $i^*\widetilde{\Lambda} =p(\widetilde{\Lambda}\cap T^*X\vert_{Z})$ with $p:T^*X\vert_{Z}\to T^*Z$ is the canonical projection, since applying the previous proposition to the case when $Y=Z$ gives exactly that $\phi=\widetilde{\phi}\vert_{(U\cap Z)\times \RR^N}$ is a non degenerate phase function parametrizing $i^*\widetilde{\Lambda}= p(\widetilde{\Lambda}\cap T^*X\vert_{Z})$.
%
%Indeed, since $N^*Z=\ker p$ and   $\widetilde{\Lambda}\cap N^*Z=\emptyset$, we may consider that $\phi=\widetilde{\phi}\vert_{(U\cap Z)\times \RR^N}$ is a non degenerate phase function parametrizing $i^*\widetilde{\Lambda}\simeq \widetilde{\Lambda}\cap H$.

The next step consists in pushing $i^*(A)$ forward with $f$. This amounts to integrate in the fibers of $f$ the Lagrangian distribution $i^*A$, which gives:
\begin{equation}\label{eq:push-forward-LD-model-case-2}
f_\#A(y) = \int_{f^{-1}(y)\times\RR^N} e^{i\phi(z,\theta)}a(z,\theta)dzd\theta
\end{equation}
where the integral is understood in the distributional sense. We already know by the previous proposition that $\phi$ is a clean phase function over $ W= f(U\cap Z)$ subordinated to the cone bundle $ (U\cap Z)\times \RR^N \to W,\ (z,\theta)\mapsto f(z)$. To conclude, it just remains to pay attention to the fact that the fiber part of the variable $z$ is not homogeneous and thus, strictly speaking, $a$ is not a symbol on $W$. Working in local coordinates, we can write
\begin{equation}
  a(x,\theta) = a(y,z',\theta)\in  S^{m+(n_X-2N)/4}( \RR^{n_Y}\times\RR^{n_Z-n_Y} \times\RR^N).
\end{equation}
Setting $\omega(z',\theta)=(\vert \theta\vert . z', \theta)$; $\psi(y,\omega)=\phi(y,z',\theta)$ and $b(y,\omega)=a(y,z',\theta)\vert\det(\omega^{-1})\vert$, we get $\vert\det(\omega^{-1})\vert=\vert\theta\vert^{n_Y-n_Z}$ and thus $b\in S^{m+(n_X-2N)/4+n_Y-n_Z}(\RR^{n_Y} \times \RR^{n_Z-n_Y+N})$. It follows that 
\begin{equation}\label{eq:push-forward-LD-model-case-3}
f_\#A(y) = \int_{f^{-1}(y)\times\RR^N} e^{i\psi(y,\omega)}b(y,\omega)d\omega
\end{equation}
belongs to $I^{m'}(Y,\Lambda)$ where 
\[
 m' - e/2 + (n_Y-2(n_Z-n_Y+N))/4 = m +(n_X-2N)/4+n_Y-n_Z
\]
that is $m'=  m +e/2  +(n_X-n_Y)/4-(n_Z-n_Y)/2= m+e/2+(n_X-2n_Z+n_Y)/4$ .
\end{proof}

\subsection{Elementary operations on Lagrangian submanifolds of $T^*G$}
Recall \cite{CDW,Mackenzie2005,Pradines1988} that a  groupoid $\Gamma$ endowed with a symplectic form is symplectic  if
\begin{equation}
 \label{eq:symplectic-gpd-cond}
 \mathrm{Gr}(m_\Gamma)= \{ (\gamma_1,\gamma_2,\gamma)\in \Gamma^3\ ;\ \gamma = \gamma_1\gamma_2\}
\end{equation}
is a Lagrangian submanifold of $  \Gamma\times \Gamma\times (-\Gamma) $.
%In a symplectic groupoid, the unit space $\Gamma^{(0)}$ is always lagrangian. 
This assumption on the graph allows us to  apply Proposition \ref{prop:push-forward-lagrangian} with $S=\Gamma^2$, $T=\Gamma$, $H=\Gamma^{(2)}$ and $\mu=m_\Gamma$.  
\begin{cor}\label{cor:clean-compo-Lagrangian-in-symplectic-gpd}
 Let $\Gamma$ be a symplectic groupoid with multiplication map $m_\Gamma$. Let $\widetilde{\Lambda}$ be a  Lagrangian local submanifold  of $\Gamma^2$. If $\widetilde{\Lambda}\cap \Gamma^{(2)} $ is clean  then 
\begin{equation}
 \label{eq:clean-compo-lagr-gpd-style-rank-2}
   \Lambda := m_\Gamma(\widetilde{\Lambda}\cap \Gamma^{(2)})
\end{equation}
is  a Lagrangian local submanifold of $\Gamma$.  
\end{cor}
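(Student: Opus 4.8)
The plan is to obtain this as a direct application of Proposition \ref{prop:push-forward-lagrangian}, with the substitutions $S=\Gamma^2$ equipped with its product symplectic form, $T=\Gamma$, $H=\Gamma^{(2)}$ and $\mu=m_\Gamma$. The only thing that really needs to be argued is that the standing hypotheses of that proposition are met in the present setting: namely that $m_\Gamma:\Gamma^{(2)}\to\Gamma$ is a surjective submersion, that $m_\Gamma^*(\omega_\Gamma)=\omega_{\Gamma^2}\vert_{\Gamma^{(2)}}$, and that one of the equivalent conditions (a)--(c) of Proposition \ref{prop:push-forward-lagrangian}(1) holds. Once this is in place, part (2) of that proposition yields the conclusion verbatim.

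First I would recall that in any Lie groupoid the multiplication map is a surjective submersion; applied to the groupoid $\Gamma$ this says precisely that $m_\Gamma:\Gamma^{(2)}\to\Gamma$ is a surjective submersion. Next, the defining property \eqref{eq:symplectic-gpd-cond} of a symplectic groupoid states exactly that $\mathrm{Gr}(m_\Gamma)$ is a Lagrangian submanifold of $\Gamma\times\Gamma\times(-\Gamma)=S\times(-T)$, which is condition (c) of Proposition \ref{prop:push-forward-lagrangian}(1) for the above data. In particular $\mathrm{Gr}(m_\Gamma)$ is isotropic, and, as observed in the proof of Proposition \ref{prop:push-forward-lagrangian}, the isotropy of $\mathrm{Gr}(m_\Gamma)$ is equivalent to the relation $m_\Gamma^*(\omega_\Gamma)=\omega_{\Gamma^2}\vert_{\Gamma^{(2)}}$, i.e. to \eqref{eq:push-forward-lagrangian-1}. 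Hence all hypotheses of Proposition \ref{prop:push-forward-lagrangian} hold, and by the equivalence (a)$\Leftrightarrow$(c) the submanifold $H=\Gamma^{(2)}$ is coisotropic in $\Gamma^2$.

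It then remains only to invoke Proposition \ref{prop:push-forward-lagrangian}(2): since $\widetilde{\Lambda}$ is a Lagrangian local submanifold of $\Gamma^2$ in clean intersection with $H=\Gamma^{(2)}$, the image $\Lambda=m_\Gamma(\widetilde{\Lambda}\cap\Gamma^{(2)})$ is a Lagrangian local submanifold of $\Gamma$. The clean intersection of local submanifolds is handled exactly as in the proof of Proposition \ref{prop:push-forward-lagrangian}(2), by decomposing $\widetilde{\Lambda}$ into a countable family of patches and arguing patch by patch. There is no genuine obstacle in this argument; the statement is truly a corollary, and the only point worth paying attention to is the bookkeeping check that the symplectic-groupoid axiom supplies precisely the two ingredients --- surjective submersion and compatibility $m_\Gamma^*(\omega_\Gamma)=\omega_{\Gamma^2}\vert_{\Gamma^{(2)}}$ of the symplectic forms --- needed to feed Proposition \ref{prop:push-forward-lagrangian}.
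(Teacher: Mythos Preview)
Your proposal is correct and follows exactly the approach the paper intends: the paper states, immediately before the corollary, that the symplectic-groupoid assumption on $\mathrm{Gr}(m_\Gamma)$ ``allows us to apply Proposition \ref{prop:push-forward-lagrangian} with $S=\Gamma^2$, $T=\Gamma$, $H=\Gamma^{(2)}$ and $\mu=m_\Gamma$,'' and your argument simply makes this application explicit by verifying the hypotheses. Your observation that the Lagrangian condition on $\mathrm{Gr}(m_\Gamma)$ simultaneously supplies condition (c) and (via isotropy) the compatibility \eqref{eq:push-forward-lagrangian-1} is precisely the point.
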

Applying this with $\widetilde{\Lambda}=\Lambda_1\times\Lambda_2$ where $\Lambda_1,\Lambda_2$ are Lagrangian local submanifolds of $\Gamma$,  we get natural sufficient conditions to perform the convolution of Lagrangian submanifolds.
\begin{defn}\label{defn:convolable-Lagrangrian} Let $\Lambda_1,\Lambda_2$ be two Lagrangian local submanifolds of $\Gamma$.
  We say that $\Lambda_1$ and $\Lambda_2$ are {\sl cleanly convolable} if $\Lambda_1\times \Lambda_2$ cleanly intersects $\Gamma^{(2)}$. In that case, we denote by
\begin{equation}
 \Lambda_1 . \Lambda_2=m_\Gamma(\Lambda_1\times \Lambda_2\cap \Gamma^{(2)})
\end{equation}
the convolution product of $\Lambda_1$ and $\Lambda_2$.
\end{defn}
Another (obvious) operation is the transposition of Lagrangian submanifolds. Let $\Lambda$ be a local Lagrangian submanifold of $\Gamma$. Then 
\[
 \Lambda^\star = i_\Gamma(\Lambda)
\]
is again a local Lagrangian submanifold of $\Gamma$.

When $\Gamma=T^*G$, with $G$ a Lie groupoid \cite{CDW,LMV1}, Proposition \ref{prop:model-case-of-convol-phases} applies with $X=G^2$, $Y=G$, $Z= G^{(2)}$, $ f=m$ and this gives us a practical way to parametrize the convolution product of two conic Lagrangian submanifolds of $T^*G$ which are cleanly convolable by the help of clean phase functions.
\begin{cor}\label{prop:clean-compo-phases-gpd}
 Let  $\Lambda_1,\Lambda_2$ be local conic Lagrangian submanifolds of $T^*G\setminus 0$ which are cleanly convolable with excess $e$ and satisfy $ \Lambda_1\times \Lambda_2 \cap N^*(G^{(2)})=\emptyset$. Let   
\(
 (\gamma_1,\xi_1,\gamma_2,\xi_2)\in (\Lambda_1\times \Lambda_2)\cap\Gamma^{(2)}
\)
and  
\(
 \phi_j : U_j\times\RR^{N_j}\longrightarrow \RR
\)
 be non-degenerate phases functions parametrizing $\Lambda_j$ around $(\gamma_j,\xi_j)$,  $j=1,2$. Then
\begin{equation}
 \label{eq:clean-composition-phase-def}
 (U_1\times U_2\cap G^{(2)})\times( \RR^{N_1}\setminus 0)\times(\RR^{N_2}\setminus 0)\longrightarrow \RR,\ (\gamma_1,\gamma_2,\theta_1,\theta_2)\longmapsto
  \phi_1(\gamma_1,\theta_1)+\phi_2(\gamma_2,\theta_2) 
\end{equation}
is a phase function over $U=U_1.U_2\subset G$  associated with the cone bundle  
\begin{equation}
 \label{eq:clean-compo-phases-gpd-extended-covariable-2}
  (U_1\times U_2\cap G^{(2)})\times( \RR^{N_1}\setminus 0)\times(\RR^{N_2}\setminus 0) \longrightarrow G, \ (\gamma_1,\gamma_2,\theta_1,\theta_2)\longmapsto \gamma_1\gamma_2.
\end{equation}
This phase function is clean  with excess $e$ and parametrizes   $\Lambda_1.\Lambda_2$ around 
$(\gamma_1\gamma_2,\xi_1\oplus\xi_2)$. 
\end{cor}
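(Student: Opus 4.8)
The plan is to deduce Corollary \ref{prop:clean-compo-phases-gpd} from Proposition \ref{prop:model-case-of-convol-phases} by making explicit the dictionary $X=G^2$, $Y=G$, $Z=G^{(2)}$, $f=m$. First I would check that the hypotheses of Proposition \ref{prop:model-case-of-convol-phases} are met: the division/multiplication map $m : G^{(2)}\to G$ is a submersion (a groupoid axiom), so the submanifold $Z=G^{(2)}\subset X=G^2$ together with $f=m$ fits the template; the set $H=(\ker dm)^\perp\subset T^*G^2$ is exactly $(T^*G)^{(2)}$ as it appears in diagram \eqref{diag:cot-gpd-overview}, and the associated map $\mu$ of Proposition \ref{prop:model-case-of-convol-phases} is literally $\mu(\gamma_1,\gamma_2,\xi_1,\xi_2)=(\gamma_1\gamma_2,({}^tdm)^{-1}\rho(\xi_1,\xi_2))=(\gamma_1\gamma_2,\xi_1\oplus\xi_2)$, i.e.\ $\mu=m_\Gamma$ under the identifications of \eqref{eq:product-cot-gpd}. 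Then I would take $\widetilde\Lambda=\Lambda_1\times\Lambda_2$, which is a conic Lagrangian local submanifold of $T^*G^2\setminus 0 = (T^*G\setminus 0)\times(T^*G\setminus 0)$ for the product symplectic structure; the cleanly convolable hypothesis says precisely that $\widetilde\Lambda$ intersects $H=\Gamma^{(2)}$ cleanly with excess $e$, and the stated hypothesis $\Lambda_1\times\Lambda_2\cap N^*(G^{(2)})=\emptyset$ is exactly $\widetilde\Lambda\cap N^*Z=\emptyset$.

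Next I would translate the parametrization conclusion. Given non-degenerate phases $\phi_j : U_j\times\RR^{N_j}\to\RR$ parametrizing $\Lambda_j$ around $(\gamma_j,\xi_j)$, the function $\widetilde\phi(\gamma_1,\gamma_2,\theta_1,\theta_2)=\phi_1(\gamma_1,\theta_1)+\phi_2(\gamma_2,\theta_2)$ on $(U_1\times U_2)\times(\RR^{N_1}\setminus 0)\times(\RR^{N_2}\setminus 0)$ is a non-degenerate phase function parametrizing $\Lambda_1\times\Lambda_2$ around $(\gamma_1,\xi_1,\gamma_2,\xi_2)$: indeed $\widetilde\phi'_{(\theta_1,\theta_2)}=(\phi_1{}'_{\theta_1},\phi_2{}'_{\theta_2})$ is a submersion since each $\phi_j$ is non-degenerate, and $\Lambda_{\widetilde\phi}=\Lambda_1\times\Lambda_2$ by inspection of \eqref{eq:Lambda-phi} applied to a sum of phases in separated variables. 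Then Proposition \ref{prop:model-case-of-convol-phases}(2) applied to $\widetilde\phi$ says exactly that its restriction $\phi$ to $Z\times\RR^{N_1+N_2}=(U_1\times U_2\cap G^{(2)})\times(\RR^{N_1}\setminus 0)\times(\RR^{N_2}\setminus 0)$ is a phase function on the cone bundle $f\circ\pr{1}$, which is the cone bundle \eqref{eq:clean-compo-phases-gpd-extended-covariable-2} $(\gamma_1,\gamma_2,\theta_1,\theta_2)\mapsto\gamma_1\gamma_2$, that it is clean with excess $e$, and that it parametrizes $\Lambda=\mu(\widetilde\Lambda\cap H)=m_\Gamma(\Lambda_1\times\Lambda_2\cap\Gamma^{(2)})=\Lambda_1.\Lambda_2$ around $\mu(\gamma_1,\xi_1,\gamma_2,\xi_2)=(\gamma_1\gamma_2,\xi_1\oplus\xi_2)$. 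Since $\phi=\widetilde\phi|_{Z\times\cdots}$ is evidently $\gamma_1\gamma_2\mapsto\phi_1(\gamma_1,\theta_1)+\phi_2(\gamma_2,\theta_2)$, this is precisely \eqref{eq:clean-composition-phase-def} and the statement follows.

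The one point requiring a little care is the identification $H=(\ker dm_{(\gamma_1,\gamma_2)})^\perp = (T^*G)^{(2)}$ together with $\mu=m_\Gamma$: one must recall from the discussion around \eqref{eq:basic-composability-cond-cot-gpd}--\eqref{eq:product-cot-gpd} that $(\xi_1,\xi_2)\in(\ker dm)^\perp$ iff $\xi_1\oplus\xi_2$ makes sense, and that then $\xi_1\oplus\xi_2=({}^tdm)^{-1}(\rho(\xi_1,\xi_2))$, which matches the formula ${}^tdf^{-1}_x(\xi)$ from Proposition \ref{prop:model-case-of-convol-phases} once $\rho$ (restriction to $G^{(2)}$) is folded into the identification $T^*_{G^{(2)}}G^2 \supset (\ker dm)^\perp \simeq T^*G^{(2)} \supset (\ker dm)^\perp_{\text{intrinsic}}$; this is exactly the content of diagram \eqref{diag:cot-gpd-overview}, in particular the map $\widetilde{m_\Gamma}=(m,({}^tdm)^{-1})$. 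I expect this bookkeeping — checking that the ``$\mu$'' of the model Proposition coincides on the nose with the cotangent-groupoid multiplication $m_\Gamma$, and that the excess computation of Proposition \ref{prop:model-case-of-convol-phases} with $n_X=2n_G$, $n_Y=n_G$ reproduces the claimed excess $e$ — to be the only place where there is anything to verify; everything else is a direct citation of Proposition \ref{prop:model-case-of-convol-phases} (equivalently, of Corollary \ref{cor:clean-compo-Lagrangian-in-symplectic-gpd} for the submanifold statement, since $\Gamma=T^*G$ is a symplectic groupoid by \cite{CDW}, so \eqref{eq:symplectic-gpd-cond} holds and Proposition \ref{prop:push-forward-lagrangian} applies with $S=\Gamma^2$, $T=\Gamma$, $H=\Gamma^{(2)}$, $\mu=m_\Gamma$).
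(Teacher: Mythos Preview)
Your proposal is correct and follows exactly the approach indicated in the paper: the corollary is stated immediately after the sentence ``When $\Gamma=T^*G$, \ldots\ Proposition \ref{prop:model-case-of-convol-phases} applies with $X=G^2$, $Y=G$, $Z=G^{(2)}$, $f=m$,'' and that single sentence is the paper's entire proof. Your write-up simply makes the dictionary explicit---in particular the identifications $H=(\ker dm)^\perp=(T^*G)^{(2)}=\Gamma^{(2)}$ and $\mu=m_\Gamma$ via \eqref{eq:basic-composability-cond-cot-gpd}--\eqref{eq:product-cot-gpd} and diagram \eqref{diag:cot-gpd-overview}---which is precisely the bookkeeping the paper leaves to the reader.
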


\subsection{Invertibility of Lagrangian submanifolds of $T^*G$}

\begin{defn} Let $\Gamma$ be a symplectic groupoid.
\begin{enumerate}
 \item  A Lagrangian submanifold $\Lambda\subset \Gamma$ is invertible if there exists a Lagrangian submanifold $\Lambda'\subset\Gamma$  cleanly convolable with $\Lambda$ and such that 
 \begin{equation}\label{defn:condition-invertibility-Lagrangian}
  \Lambda. \Lambda'=r_\Gamma(\Lambda)\quad \text{ and }\quad \Lambda' . \Lambda =s_\Gamma(\Lambda). 
 \end{equation}
 $\Lambda'$ is then called an inverse of $\Lambda$.
\item A Lagrangian local   submanifold   $\Lambda$ is locally invertible if it can be covered by  invertible patches and any Lagrangian  local  submanifold  made of inverses of  the corresponding invertible patches is called a local inverse of $\Lambda$.
\end{enumerate}
\end{defn}
\begin{thm}\label{thm:inversibility-equiv-transversal-adjoint}
 Let $\Lambda$ be a Lagrangian submanifold of $\Gamma$. Then $\Lambda$ is locally invertible (resp. invertible) if and only if the maps 
\begin{equation}\label{prop:equiv-condition-invertibility-Lagrangian}
 r_\Gamma : \Lambda \longrightarrow \Gamma^{(0)} \text{ and }  s_\Gamma : \Lambda \longrightarrow  \Gamma^{(0)}
\end{equation}
are local diffeomorphisms (resp. diffeomorphisms onto their ranges). In that case, $\Lambda$ is transversally convolable with $\Lambda^\star$ which  provides a local inverse (resp. an inverse) of $\Lambda$. 
\end{thm}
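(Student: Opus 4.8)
The plan is to extract everything from two facts: the groupoid identities $\gamma\gamma^{-1}=r_\Gamma(\gamma)$, $\gamma^{-1}\gamma=s_\Gamma(\gamma)$ together with the elementary observation that in any groupoid $ab=e\in\Gamma^{(0)}$ forces $b=a^{-1}$; and the symplectic reduction content of Proposition~\ref{prop:push-forward-lagrangian} (through Corollary~\ref{cor:clean-compo-Lagrangian-in-symplectic-gpd}), namely that a convolution product $\Lambda_1.\Lambda_2$ is a Lagrangian local submanifold of dimension $\tfrac12\dim\Gamma$ and that $m_\Gamma$ restricted to $(\Lambda_1\times\Lambda_2)\cap\Gamma^{(2)}$ has constant rank $\tfrac12\dim\Gamma$. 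Throughout I use that $\Gamma^{(0)}$ is a Lagrangian submanifold of $\Gamma$, so $\dim\Gamma^{(0)}=\tfrac12\dim\Gamma=\dim\Lambda$.

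\emph{Sufficiency.} Assume $s_\Gamma|_\Lambda$ and $r_\Gamma|_\Lambda$ are diffeomorphisms onto their ranges and set $\Lambda^\star=i_\Gamma(\Lambda)$, a Lagrangian submanifold since $i_\Gamma$ is an involutive diffeomorphism. First I would check transversality of $\Lambda\times\Lambda^\star$ and $\Gamma^{(2)}$: realizing $\Gamma^{(2)}$ locally as the zero set of the submersion $(\gamma_1,\gamma_2)\mapsto s_\Gamma(\gamma_1)-r_\Gamma(\gamma_2)$, the quotient $T\Gamma^2/T\Gamma^{(2)}$ is identified with $T\Gamma^{(0)}$, and at a point $(\lambda_1,\lambda_2)$ of the intersection the induced map $T_{\lambda_1}\Lambda\oplus T_{\lambda_2}\Lambda^\star\to T\Gamma^{(0)}$, $(w_1,w_2)\mapsto ds_\Gamma(w_1)-dr_\Gamma(w_2)$, is already onto via $ds_\Gamma(T_{\lambda_1}\Lambda)=T\Gamma^{(0)}$ because $s_\Gamma|_\Lambda$ is a local diffeomorphism; hence $\Lambda$ and $\Lambda^\star$ are transversally (a fortiori cleanly) convolable. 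Next, injectivity of $s_\Gamma|_\Lambda$ shows that a pair $(\lambda,i_\Gamma(\nu))$ lies in $\Gamma^{(2)}$ iff $s_\Gamma(\lambda)=s_\Gamma(\nu)$ iff $\lambda=\nu$, so $(\Lambda\times\Lambda^\star)\cap\Gamma^{(2)}=\{(\lambda,i_\Gamma(\lambda)):\lambda\in\Lambda\}$ and $\Lambda.\Lambda^\star=\{\lambda\lambda^{-1}:\lambda\in\Lambda\}=r_\Gamma(\Lambda)$; symmetrically, injectivity of $r_\Gamma|_\Lambda$ gives $\Lambda^\star.\Lambda=s_\Gamma(\Lambda)$. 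Thus $\Lambda^\star$ is an inverse and $\Lambda$ is invertible. When $s_\Gamma|_\Lambda,r_\Gamma|_\Lambda$ are merely local diffeomorphisms, one runs this argument on a cover of $\Lambda$ by patches on which both are diffeomorphisms onto open sets; $\Lambda^\star$ is then a local inverse, and transversality (being pointwise) still holds.

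\emph{Necessity.} Let $\Lambda'$ be an inverse, so $\Lambda.\Lambda'=r_\Gamma(\Lambda)$ and $\Lambda'.\Lambda=s_\Gamma(\Lambda)$, both contained in $\Gamma^{(0)}$, and set $W=(\Lambda\times\Lambda')\cap\Gamma^{(2)}$. For $(\lambda,\mu)\in W$ the product $\lambda\mu=m_\Gamma(\lambda,\mu)$ lies in $\Lambda.\Lambda'\subseteq\Gamma^{(0)}$, hence is a unit, hence $\mu=i_\Gamma(\lambda)$; so $W\subseteq\mathrm{Gr}(i_\Gamma|_\Lambda)$, a submanifold of $\Gamma^2$ of dimension $\dim\Lambda$. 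Since $W$ is a submanifold that surjects via $m_\Gamma$ onto $\Lambda.\Lambda'$, which by Corollary~\ref{cor:clean-compo-Lagrangian-in-symplectic-gpd} has dimension $\dim\Lambda$, we get $\dim W=\dim\Lambda$ and $W$ open in $\mathrm{Gr}(i_\Gamma|_\Lambda)$; consequently $\Lambda_0:=\pr{1}(W)=\Lambda\cap i_\Gamma(\Lambda')$ is open in $\Lambda$ and $\pr{1}|_W:W\to\Lambda_0$ is a diffeomorphism. Reading off the products gives $r_\Gamma(\Lambda_0)=\Lambda.\Lambda'=r_\Gamma(\Lambda)$ and $s_\Gamma(\Lambda_0)=\Lambda'.\Lambda=s_\Gamma(\Lambda)$. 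I would then prove $\Lambda_0=\Lambda$: given $\lambda^\ast\in\Lambda$, pick $\lambda_1\in\Lambda_0$ with $s_\Gamma(\lambda_1)=s_\Gamma(\lambda^\ast)$; then $i_\Gamma(\lambda_1)\in\Lambda'$ and $(\lambda^\ast,i_\Gamma(\lambda_1))\in W\subseteq\mathrm{Gr}(i_\Gamma|_\Lambda)$, so $i_\Gamma(\lambda_1)=i_\Gamma(\lambda^\ast)$, i.e. $\lambda^\ast=\lambda_1\in\Lambda_0$. Finally $r_\Gamma\circ\pr{1}|_W=m_\Gamma|_W$ (both send $(\lambda,i_\Gamma(\lambda))$ to $r_\Gamma(\lambda)$), which has constant rank $\tfrac12\dim\Gamma=\dim\Gamma^{(0)}$ by Proposition~\ref{prop:push-forward-lagrangian}; as $\pr{1}|_W$ is a diffeomorphism, $r_\Gamma|_\Lambda$ is an immersion, hence a local diffeomorphism onto an open subset of $\Gamma^{(0)}$, and likewise $s_\Gamma|_\Lambda$.

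This already settles the "locally invertible" case (apply the above to each invertible patch). For the "invertible" case one adds injectivity: if $r_\Gamma(\lambda_1)=r_\Gamma(\lambda_2)$, then $i_\Gamma(\lambda_1)\in\Lambda'$ because $\Lambda=\Lambda_0\subseteq i_\Gamma(\Lambda')$, and $(i_\Gamma(\lambda_1),\lambda_2)\in(\Lambda'\times\Lambda)\cap\Gamma^{(2)}$, so $i_\Gamma(\lambda_1)\lambda_2\in\Lambda'.\Lambda\subseteq\Gamma^{(0)}$ is a unit, whence $\lambda_2=i_\Gamma(i_\Gamma(\lambda_1))=\lambda_1$; the same applies to $s_\Gamma|_\Lambda$. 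Combined with the local diffeomorphism property this shows $r_\Gamma|_\Lambda,s_\Gamma|_\Lambda$ are diffeomorphisms onto open subsets of $\Gamma^{(0)}$, and the final assertion of the theorem is precisely what the sufficiency argument delivered. I expect the one genuinely delicate step to be the equality $\Lambda_0=\Lambda$, that is, ruling out that the postulated inverse $\Lambda'$ is only a proper "sub‑inverse" of $i_\Gamma(\Lambda)$; everything else is bookkeeping with the groupoid axioms together with one rank count.
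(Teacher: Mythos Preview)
Your proof is correct and follows essentially the same approach as the paper: both directions hinge on the identity $r_\Gamma\circ\pr{1}=m_\Gamma$ on $(\Lambda\times\Lambda')^{(2)}$ together with the constant-rank property of $m_\Gamma$ coming from symplectic reduction, and the surjectivity of $\pr{1}$ is obtained by the same mixing of the two inverse conditions. Your explicit identification of $W$ with an open subset of $\mathrm{Gr}(i_\Gamma|_\Lambda)$ is a clean way to package the paper's observation that $\pr{1}$ is onto, but it is the same argument.
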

\begin{proof}
Let assume that $\Lambda$ is locally inversible. By restricting our attention to a sufficiently small patch, we can assume that $\Lambda$ is invertible. Let $\Lambda'$ be an inverse. Firstly, note that $\Lambda.\Lambda'$ is a local submanifold of $T^*G$ contained in $\Gamma^{0}$.
Since $\Lambda.\Lambda'$ and $\Gamma^{(0)}$ are Lagrangian we have  $\dim A^*G=\dim \Lambda.\Lambda'$ and thus  each patch of $\Lambda.\Lambda'$ is an open subset of $\Gamma^{(0)}$. It follows that $\Lambda.\Lambda'$ itself is an open subset of $\Gamma^{(0)}$ and therefore a true submanifold. Now, by assumption,
\begin{equation}\label{thm:inversibility-equiv-transversal-adjoint-2}
m_\Gamma : (\Lambda\times \Lambda')^{(2)} \longrightarrow \Lambda.\Lambda'=r_\Gamma(\Lambda)
\end{equation}
is a surjective submersion. Since the map $r_\Gamma$ is equal to the identity map in restriction to $\Gamma^{(0)}$, we have the equality of maps 
\begin{equation}\label{thm:inversibility-equiv-transversal-adjoint-3}
 r_\Gamma\circ\pr{1} = r_\Gamma\circ m_\Gamma =m_\Gamma : (\Lambda\times \Lambda')^{(2)} \longrightarrow\Lambda.\Lambda'\subset \Gamma^{(0)}. 
\end{equation}
It follows that 
\begin{equation}\label{thm:inversibility-equiv-transversal-adjoint-4}
  r_\Gamma\circ\pr{1} : (\Lambda\times \Lambda')^{(2)} \longrightarrow\Lambda.\Lambda'=r_\Gamma(\Lambda)
\end{equation}
is a submersion.
Observe also that 
\begin{equation}\label{thm:inversibility-equiv-transversal-adjoint-5}
\pr{1} :  (\Lambda\times \Lambda')^{(2)}\to \Lambda
\end{equation}
 is surjective. Indeed, for any $\gamma\in\Lambda$, there exists by surjectivity of the map \eqref{thm:inversibility-equiv-transversal-adjoint-2} an element $(\gamma_1,\gamma_2)\in  (\Lambda\times \Lambda')^{(2)}$ such that $\gamma_1\gamma_2=r_\Gamma(\gamma)$. In particular $r(\gamma_1)=r(\gamma)$ and $\gamma_1^{-1}=\gamma_2\in \Lambda'$. Thus $(\gamma_1^{-1},\gamma)\in  (\Lambda'\times \Lambda)^{(2)}$ and the assumption $\Lambda'.\Lambda\subset \Gamma^{(0)}$ implies $\gamma=\gamma_1=\pr{1}(\gamma_1,\gamma_2)$. 

Since the map \eqref{thm:inversibility-equiv-transversal-adjoint-5} is surjective, we deduce from the surjectivity of the differential of \eqref{thm:inversibility-equiv-transversal-adjoint-4} at any point the surjectivity of the differential of 
$r_\Gamma : \Lambda \to \Gamma^{(0)}$ everywhere too. By equality of dimension, $r_\Gamma$ is then a local diffeomorphism. The same holds for $s_\Gamma$. 

Conversely, let us assume that $r_\Gamma,s_\Gamma : \Lambda\to \Gamma^{(0)}$ are local diffeomorphisms. Then the map 
\[
 s_\Gamma\times r_\Gamma\vert_{\Lambda \times  \Lambda^\star} : \Lambda \times  \Lambda^\star \longrightarrow \Gamma^{(0)}\times \Gamma^{(0)}
\]
is also a local diffeomorphism. It follows that 
\begin{equation}
  (\Lambda \times \Lambda^\star)^{(2)} = (s_\Gamma\times r_\Gamma)_{\vert \Lambda \times  \Lambda^\star}^{-1}(\Delta_{\Gamma^{(0)}})=
  (s_\Gamma\times r_\Gamma)^{-1}(\Delta_{\Gamma^{(0)}})\cap \Lambda\times \Lambda^\star
\end{equation}
is a submanifold of dimension $n$ of $\Gamma^2$ with tangent space given by 
\[
 T(\Lambda \times \Lambda^\star)^{(2)} = T\Gamma^{(2)}\cap T(\Lambda \times \Lambda^\star).
\]
Therefore the intersection $\Lambda \times \Lambda^\star \cap \Gamma^{(2)}$ is clean with excess  satisfying
\[
 e= \mathrm{codim}(\Lambda \times \Lambda^\star)+ \mathrm{codim}(\Gamma^{(2)}) -\mathrm{codim}((\Lambda \times \Lambda^\star)^{(2)})
  = 2n +n -3n =0,
\]
in other words, we get $(\Lambda \times \Lambda^\star ) \pitchfork \Gamma^{(2)}$. 
%\orange{In particular, the fibers of the map
% \[
%  m_\Gamma : (\Lambda \times \Lambda^\star)^{(2)} \longrightarrow \Lambda*\Lambda^\star \subset T^*G 
% \]
% are discrete subspaces of $T^*G^2$.}
Moreover, for any $\delta\in \Lambda$, there exists an open conic neighborhood $U$ of $\delta$ in $\Gamma$ such that 
\[
 r_\Gamma, s_\Gamma : \Lambda_U=\Lambda\cap U \longrightarrow \Gamma^{(0)}
\]
are diffeomorphisms onto their respective images. By the previous arguments, $(\Lambda_U\times  i_\Gamma(\Lambda_U) \pitchfork \Gamma^{(2)}$ and if $\eta\in  i_\Gamma(\Lambda_U)$ is such that $(\delta,\eta)\in (\Lambda_U\times i_\Gamma(\Lambda_U))^{(2)}$ then by injectivity of $s_\Gamma$ we get $\eta=\delta^{-1}$. It follows that $\Lambda_U.i_\Gamma(\Lambda_U)=r_\Gamma(\Lambda_U)$. This proves that $\Lambda$ is locally invertible and since $i_\Gamma(\Lambda_U)=(\Lambda^\star)_{U^{-1}}$, we conclude that $\Lambda^\star$ is a local inverse. 

Now, assume that $r_\Gamma,s_\Gamma$ are diffeomorphisms onto their ranges, that is, are injective local diffeomorphisms. If there exists $\delta\in \Lambda$ and $\eta\in\Lambda^\star$ such that $\delta\eta\not\in \Gamma^{(0)}$ then $\delta, \eta^{-1}\in \Lambda$, $\delta\not=\eta^{-1}$ but $s_\Gamma(\delta)=s_\Gamma(\eta^{-1})$ which contradicts the injectivity of $s_\Gamma$. This gives the inclusion $\Lambda.\Lambda^\star\subset \Gamma^{(0)}$ and then the equality $\Lambda.\Lambda^\star=r_\Gamma(\Lambda)$ follows from the definition of $\Lambda^\star$. We get the equality $\Lambda^\star .\Lambda=s_\Gamma(\Lambda)$ using the injectivity of $r_\Gamma$.

Conversely, assume that $\Lambda'$ is an inverse of $\Lambda$. Let  $u\in r_\Gamma(\Lambda)$. Since $\Lambda.\Lambda'=r_\Gamma(\Lambda)$,
there exists $(\delta_1,\delta_1')\in (\Lambda\times\Lambda')^{(2)}$ such that $\delta_1.\delta_1'=u$. Let $\delta\in \Lambda$  be such that $r(\delta)=u$. Then $(\delta_1',\delta)\in (\Lambda'\times\Lambda)^{(2)}$ and thus $\delta_1'.\delta\in \Gamma^{(0)}$. This gives
\[
 \delta_1 = \delta_1'{}^{-1}=\delta.
\]
In other words, $r_\Gamma\vert_{\Lambda} : \Lambda \longrightarrow \Gamma^{(0)}$ is injective. The same holds for $s_\Gamma$. 
\end{proof}
\begin{rmk}\ 
 \begin{enumerate}
  \item We have proved that the (local) invertible Lagrangian submanifolds of $\Gamma$ are precisely the Lagrangian (local) bissections of $\Gamma$. Here we follow the terminology of \cite{AS2006} for bissections, while in \cite[see Paragraphs I.3 and II.1]{CDW} bissections are required to project onto $\Gamma^{(0)}$: this is a minor and technical distinction implying that the set $\mathrm{Gr}(\Gamma)$ is no more here a group but a groupoid with unit space given by the collection of open subspaces of $\Gamma^{(0)}$. 
  \item In particular we recover results from \cite{Horm-classics} in the case where $G=M\times M$ is the pair groupoid on a manifold $M$. Then a conic Lagrangian submanifold of $\Gamma=T^*G$ is (locally) invertible  if and only if it   coincides (locally) with the graph  of a partially defined homogeneous canonical transformation \cite[Sections 25.3 and 21.2]{Horm-classics}, that is, the graph of a homogeneous symplectomorphism  from an open conic subset  of $T^*M$ to another one.
 \end{enumerate}
\end{rmk}

\subsection{$G$-relations}
From now on, $\Gamma=T^*G$ for a given Lie groupoid $G$. By construction of $T^*G$, we know that $N^*(G^{(2)})=\ker m_\Gamma$ where $m_\Gamma : (T^*G)^{(2)}\to T^*G$ is regarded as a vector bundles homomorphism. Since  
$r_\Gamma\circ m_\Gamma = r_\Gamma\circ \pr{1}$, we obtain that a sufficient condition to get the technical asusmption $ \Lambda_1\times \Lambda_2 \cap N^*(G^{(2)})=\emptyset$ in Corollary \ref{prop:clean-compo-phases-gpd} is for instance $ \Lambda_1\cap \ker r_\Gamma=\emptyset$. We also proved in \cite{LMV1} that if $u\in\cD'(G,\Omega^{1/2})$ and $\WF{u}\cap \ker \sigma_\Gamma=\emptyset$ for $\sigma=s,r$ then $u$ gives by convolution an adjointable $G$-operator. For these reasons, we set
\begin{defn}\label{defn:G-rel}
%we also know that  $\cM(r)=\ker s_\Gamma$ and $\cM(s)=\ker r_\Gamma$. 
A   set $\Lambda\subset T^*G\setminus 0$ is called admissible if 
\begin{equation}\label{eq:nozeros-condition}
 \Lambda\cap \ker s_\Gamma= \Lambda\cap \ker r_\Gamma =\emptyset.
\end{equation}
 A $G$-relation is an admissible conic Lagrangian submanifold of $T^*G$. Local  $G$-relations are defined accordingly. 
\end{defn}
\begin{exam}
 Any conic Lagrangian submanifold $\Lambda\subset T^*G\setminus 0$ which is invertible is a $G$-relation. Otherwise, we will deduce from the conicity assumption that the differential of either $ s_\Gamma : \Lambda\to A^*G$  or $r_\Gamma : \Lambda\to A^*G$ at some point is not injective.
\end{exam}

% 
%  
%  \subsection{$G$-relations, equivariant families of Lagrangian}
% 
% We now relate conic Lagrangrian submanifolds of $T^*G\setminus 0$ and families of homogeneous canonical relations of $(T^*G^x\setminus 0)\times (T^*G_x\setminus 0)$, $x\in G^{(0)}$. 
%  

Let us comment further the definition. Firstly, for any $G$-relation $\Lambda$ , the set  
\begin{equation}\label{eq:rem-about-G-rel-1}
 m^*(\Lambda)=\{ (\gamma_1,\gamma_2,\zeta)\in T^*(G^{(2)})\ ;\ \exists (\gamma,\xi)\in \Lambda\ ,\ \gamma_1\gamma_2=\gamma,\ {}^t(dm)_{\gamma_1,\gamma_2}(\xi)=\zeta\}\subset T^*(G^{(2)})\setminus 0, 
\end{equation}
is still a conic Lagrangian submanifold of $T^*(G^{(2)})\setminus 0$, since $m: G^{(2)}\to G$ is a surjective submersion \cite[Chp. 4, Proposition 4.1]{GS}. Moreover, remembering the map $\widetilde{m_\Gamma}$ defined after  Diagram \eqref{diag:cot-gpd-overview}) we get:
\begin{equation}\label{eq:rem-about-G-rel-2}
 m^*(\Lambda)= \widetilde{m_\Gamma}^{-1}(\Lambda). 
\end{equation}
This implies the inclusion, which will be reused later:
\begin{equation}\label{eq:rem-about-G-rel-3}
 \ker d\widetilde{m_\Gamma} \subset Tm^*(\Lambda).
\end{equation}
 Next, we relate Condition \eqref{eq:nozeros-condition}  with the \enquote{no-zero} condition \cite{Horm-classics,Melrose1981} required for homogeneous canonical relations. For that purpose, we introduce the family of sets $i_x^*(m^*(\Lambda))=m_x^*(\Lambda)$, $x\in G^{(0)}$, that is:
\begin{equation}\label{eq:rem-about-G-rel-4}
 m_x^*\Lambda =\{ (\gamma_1,\xi_1,\gamma_2,\xi_2)\in T^*G^x\times T^*G_x\ ;\ \exists (\gamma,\xi)\in \Lambda,\ \gamma_1\gamma_2=\gamma,\ {}^t(dm_x)_{\gamma_1,\gamma_2}(\xi)=(\xi_1,\xi_2)\},
\end{equation}
and we prove
 \begin{prop}\label{lem:equiv-admiss-in-G-n-in-pull-back-by-jx-2}
 Let $W\subset T^*G\setminus 0$. Then  $W$  is admissible if and only if 
\begin{equation}\label{eq:no-zero-family}
 m_x^*W\subset (T^*G_x\setminus 0)\times (T^*G^x\setminus 0),\ \quad \forall x\in G^{(0)}.
\end{equation}
\end{prop}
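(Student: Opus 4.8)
The plan is to reduce both sides of the stated equivalence to one and the same pointwise condition on the covectors occurring in $W$. First I would record what the kernels of the structure maps of $\Gamma=T^{*}G$ look like fibrewise: from the exact sequences \eqref{diag:cot-gpd-source-overview} and \eqref{diag:cot-gpd-target-overview} one reads off that, over a point $\gamma\in G$, one has $\ker s_{\Gamma}\cap T^{*}_{\gamma}G=(\ker dr_{\gamma})^{\perp}$ and $\ker r_{\Gamma}\cap T^{*}_{\gamma}G=(\ker ds_{\gamma})^{\perp}$. Hence, by \eqref{eq:nozeros-condition}, the set $W\subset T^{*}G\setminus 0$ is admissible exactly when no $(\gamma,\xi)\in W$ has $\xi$ vanishing on $\ker dr_{\gamma}$, and none has $\xi$ vanishing on $\ker ds_{\gamma}$.

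The heart of the argument is then to unwind the definition \eqref{eq:rem-about-G-rel-4} of $m_{x}^{*}W$ by computing the transpose differential of $m_{x}$. Fix $x$ and a point $(\gamma_{1},\gamma_{2})$ in the domain of $m_{x}$, and set $\gamma=m_{x}(\gamma_{1},\gamma_{2})$. The partial map of $m_{x}$ obtained by freezing the second variable is a left translation by a fixed element, and the one obtained by freezing the first variable is a right translation by a fixed element; since left translations restrict to diffeomorphisms of $r$-fibres and right translations to diffeomorphisms of $s$-fibres, the two partial differentials of $m_{x}$ at $(\gamma_{1},\gamma_{2})$ are linear isomorphisms of $T_{\gamma_{1}}G^{x}=\ker dr_{\gamma_{1}}$ onto $\ker dr_{\gamma}$, resp. of $T_{\gamma_{2}}G_{x}=\ker ds_{\gamma_{2}}$ onto $\ker ds_{\gamma}$. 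Writing ${}^{t}(dm_{x})_{\gamma_{1},\gamma_{2}}(\xi)=(\xi_{1},\xi_{2})$, one therefore gets $\xi_{1}=0\iff\xi|_{\ker dr_{\gamma}}=0$ and $\xi_{2}=0\iff\xi|_{\ker ds_{\gamma}}=0$; equivalently, an element of $m_{x}^{*}W$ has a vanishing first (resp. second) component precisely when the witnessing covector $(\gamma,\xi)\in W$ lies in $\ker s_{\Gamma}$ (resp. in $\ker r_{\Gamma}$). This identification of the partial differentials of $m_{x}$ with translations, together with the matching of the two factors $T^{*}G^{x}$, $T^{*}G_{x}$ with $(\ker dr_{\gamma})^{\perp}$, $(\ker ds_{\gamma})^{\perp}$, is the only place where the groupoid structure enters, and it is the one bit of bookkeeping that needs care.

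With this dictionary the two implications fall out. If $W$ is admissible, then for every $x$, every $(\gamma_{1},\gamma_{2})$ in the domain of $m_{x}$ and every $(\gamma,\xi)\in W$ lying over $\gamma=m_{x}(\gamma_{1},\gamma_{2})$, the covector $\xi$ vanishes neither on $\ker dr_{\gamma}$ nor on $\ker ds_{\gamma}$, so both components of ${}^{t}(dm_{x})(\xi)$ are nonzero; thus $m_{x}^{*}W$ satisfies \eqref{eq:no-zero-family} for every $x$. For the converse I would argue by contraposition: if $W$ is not admissible, choose $(\gamma,\xi)\in W$ with $\xi\neq 0$ vanishing on $\ker dr_{\gamma}$ or on $\ker ds_{\gamma}$; since the maps $m_{x}$, $x\in G^{(0)}$, jointly cover $G$ (one may take $x=s(\gamma)$ and a base point over $\gamma$ one of whose coordinates is a unit), the computation above exhibits an element of $m_{x}^{*}W$ with a zero component, so \eqref{eq:no-zero-family} fails for that $x$. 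I do not expect a genuine obstacle here; the only point deserving attention is that the forward implication must be checked over every point of the domain of each $m_{x}$, which is exactly why it matters that the partial differentials of $m_{x}$ are translations — hence isomorphisms onto $\ker dr_{\gamma}$ and $\ker ds_{\gamma}$ — at all points and not merely over the units of $G$.
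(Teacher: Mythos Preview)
Your argument is correct and follows essentially the same route as the paper: both compute ${}^t(dm_x)$ by identifying the partial differentials of $m_x$ with right and left translations, and then use that these are linear isomorphisms onto $\ker ds_\gamma$ and $\ker dr_\gamma$ to match the vanishing of each component of ${}^t(dm_x)(\xi)$ with membership of $(\gamma,\xi)$ in $\ker r_\Gamma=(\ker ds)^\perp$ or $\ker s_\Gamma=(\ker dr)^\perp$. One minor slip: freezing the second variable gives $\gamma_1\mapsto\gamma_1\gamma_2=R_{\gamma_2}(\gamma_1)$, a \emph{right} translation (and symmetrically for the other), so your pairing of components with $\ker s_\Gamma$ versus $\ker r_\Gamma$ is swapped; this is harmless here since admissibility requires both intersections to be empty.
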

%also known as \enquote{no zeros} condition for  conic subsets of $(T^*G^x\times T^*G_x)\setminus 0$ \cite{Horm-classics,Melrose1981}.
\begin{proof}
Differentiating $m_x$, we get $d(m_x)_{(\gamma_1,\gamma_2)}(t_1,t_2)= dR_{\gamma_2}(t_1)+dL_{\gamma_1}(t_2)$, $t_1\in T_{\gamma_1}G_x$, $t_2\in T_{\gamma_2}G^x$ thus  
\[
 {}^t d(m_x)_{(\gamma_1,\gamma_2)}(\xi)= ({}^tdR_{\gamma_2}(\xi),{}^tdL_{\gamma_1}(\xi))= ({}^tdR_{\gamma_1^{-1}}(\overline{r}(\xi)), {}^tdL_{\gamma_2^{-1}}(\overline{s}(\xi)))\in T_{\gamma_1}^*G_x\times  T_{\gamma_2}^*G^x.
\]
It follows that 
\[
 m_x^*(W)=\left\lbrace \left(\gamma_1,{}^tdR_{\gamma_1^{-1}}(\overline{r}(\xi)), \gamma_2, {}^tdL_{\gamma_2^{-1}}(\overline{s}(\xi))\right)\ ;\  
\ (\gamma_1,\gamma_2)\in G_x\times G^x,\  (\gamma_1\gamma_2,\xi)\in W \right\rbrace  .
\]
Since ${}^tdR_{\gamma_i^{-1}}$ and ${}^tdL_{\gamma_2^{-1}} $ are bijective, the result follows. 
\end{proof}
 Observe that if $\Lambda$ is a $G$-relation, then the subsets $m_x^*(\Lambda)$ are not necessarily   Lagrangian submanifolds of $T^*(G_x \times G^x)$. An example will be given below (Example \ref{exam:G-FIO-not-G-FFIO}): for the $G$-relation  $\Lambda$ considered in \eqref{eq:G-rel-not-family-G-rel}, we get that  $m^*_{(0,0)}\Lambda$, given in \eqref{exam:FIO-not-FFIO-10}, is isotropic but not Lagrangian. This bad behavior leads us to 
\begin{defn}\label{defn:family-G-relation}\ 
 A family $G$-relation $\Lambda$ is a $G$-relation such that the pull-back  $m^*(\Lambda)\subset T^*(G^{(2)})$ is a Lagrangian submanifold
  transverse to $\pi : G^{(2)}\to G^{(0)}$.
Local family $G$-relations are defined accordingly. 
\end{defn}
If $\Lambda$ is a $G$-relation, we obtain from Theorems \ref{thm:gluing-family-of-Lagrangian} and \ref{thm:Lagrangian-slicing} that  $\Lambda$ is a family $G$-relation  if and only if $(m_x^*\Lambda)_{x\in G^{(0)}}$ is a $C^\infty$ family of conic Lagrangian submanifolds subordinated to $\pi : G^{(2)}\to G^{(0)}$, which gives a first justification for the terminology. 
 
Next, if $\Lambda$ is a family $G$-relation, then the family given by $\Lambda_x=m_x^*(\Lambda)$, $x\in G^{(0)}$, is equivariant in the following sense:
\begin{equation}\label{eq:equivariance-family-lag}
 \forall x,y\in G^{(0)},\ \forall (\gamma_1,\gamma_2,\xi_1,\xi_2)\in \Lambda_x,\ \forall \gamma\in G_y^x,\quad (\gamma_1\gamma,\gamma^{-1}\gamma_2,{}^t(dR_{\gamma^{-1}})(\xi_1),{}^t(dL_{\gamma})(\xi_2))\in \Lambda_y.
\end{equation}
Indeed, if $c_\gamma : G_x\times G^x\longrightarrow G_y\times G^y$, $\gamma\in G_y^x$, is the map defined by $c_\gamma(\gamma_1,\gamma_2)= (\gamma_1\gamma,\gamma^{-1}\gamma_2)$, then the commutative diagram
\begin{equation}
 \label{diag:justify-equivar-family-lag}
  \xymatrix{
     G_x\times G^x \ar[d]_{c_\gamma}\ar[rr]^{m_x}& & G \\
      G_y\times G^y \ar[urr]_{m_y}&  &
}
\end{equation}
yields the equality $(c_\gamma)^*(\Lambda_y)=\Lambda_x$ and then the property \eqref{eq:equivariance-family-lag}.
All the previous remarks are unchanged if submanifolds are replaced by local submanifolds and we now prove that all local family  $G$-relations come from equivariant families, which ends the  justification of the terminology. 
\begin{thm}\label{thm:bijection-Lagrangian-over-G-n-equiv-family}
Let $(\Lambda_x)_x$ be a $C^\infty$ equivariant family of conic local canonical relations in $(T^*G_x\setminus 0)\times (T^*G^x\setminus 0)$. Then there exists a unique local family $G$-relation $\Lambda$ such that 
\[
   m_x^*(\Lambda)=\Lambda_x, \ \text{ for all } x\in G^{(0)}.
\]
\end{thm}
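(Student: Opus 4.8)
The plan is to construct $\Lambda$ in two stages: first glue $(\Lambda_x)_x$ along the submersion $\pi\colon G^{(2)}\to G^{(0)}$, $\pi(\gamma_1,\gamma_2)=s(\gamma_1)=r(\gamma_2)$ (whose fibre over $x$ is the domain of $m_x$), into a single Lagrangian $\widetilde L\subset T^*G^{(2)}$; then descend $\widetilde L$ through the multiplication $m\colon G^{(2)}\to G$, the equivariance being exactly what makes the descent possible. I shall use the surjective submersion $\widetilde{m_\Gamma}\colon(\ker dm)^\perp\to T^*G$ of Diagram~\eqref{diag:cot-gpd-overview} — fibrewise the linear isomorphism $({}^tdm)^{-1}$ covering $m$, and such that $m^*\Lambda'=\widetilde{m_\Gamma}^{-1}(\Lambda')$ by~\eqref{eq:rem-about-G-rel-2} — together with the maps $c_\gamma$ of Diagram~\eqref{diag:justify-equivar-family-lag}, which form a free action of $G$ on $G^{(2)}$ with $m\circ c_\gamma=m$ whose orbits are the fibres of $m$, so that any two points of one fibre of $m$ are exchanged by a unique $c_\delta$. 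Since $(\Lambda_x)_x$ is in particular a $C^\infty$ family subordinated to $\pi$ of conic Lagrangian local submanifolds, Theorem~\ref{thm:gluing-family-of-Lagrangian} provides a unique conic Lagrangian local submanifold $\widetilde L\subset T^*G^{(2)}\setminus 0$, transverse to $\pi$, with $i_x^*\widetilde L=\Lambda_x$ for all $x$ (it misses the zero section by the normal form~\eqref{eq:gluing-family-of-Lagrangian-1}, since the $\Lambda_x$ do).

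The heart of the proof is that $\widetilde L$ descends through $\widetilde{m_\Gamma}$, i.e.\ that (i) $\widetilde L\subset(\ker dm)^\perp$ and (ii) $\widetilde{m_\Gamma}^{-1}\bigl(\widetilde{m_\Gamma}(\widetilde L)\bigr)=\widetilde L$, using that the equivariance relation~\eqref{eq:equivariance-family-lag} says exactly that $\widetilde L$ is invariant under the cotangent lift $\widetilde c_\bullet$ of the action $c_\bullet$. For (i), arguing on a patch we may take $\widetilde L$ to be an honest conic Lagrangian submanifold; as in the proof of Theorem~\ref{thm:Lagrangian-slicing}, conicity and the Lagrangian property give the inclusion~\eqref{eq:inclusion-lambda-dpTLambda-perp}, $\widetilde L\subset\bigl(dp(T\widetilde L)\bigr)^\perp$, while differentiating the invariance of $\widetilde L$ along the action-orbit through a point $v=(w,\zeta)\in\widetilde L$ shows that $dp(T_v\widetilde L)$ contains the tangent $\ker dm_w$ to the $m$-fibre through $w$ (the action being free and transitive on $m$-fibres); hence $\zeta$ annihilates $\ker dm_w$. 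For (ii), if $\widetilde{m_\Gamma}(v')=\widetilde{m_\Gamma}(v)$ with $v=(w,\zeta)\in\widetilde L$, then $p(v')$ and $w$ lie in one fibre of $m$, so $p(v')=c_\delta(w)$ for a unique $\delta$; transposing $dm_{c_\delta(w)}\circ dc_\delta=dm_w$ and using~(i) forces $v'=\widetilde c_\delta(v)$, whence $v'\in\widetilde L$ by~\eqref{eq:equivariance-family-lag}.

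Now set $\Lambda:=\widetilde{m_\Gamma}(\widetilde L)$. By Proposition~\ref{prop:model-case-of-convol-phases}(1), applied with $Z=X=G^{(2)}$, $f=m$, $Y=G$, the graph of $\widetilde{m_\Gamma}$ is Lagrangian, so the hypotheses of Proposition~\ref{prop:push-forward-lagrangian} hold with $S=T^*G^{(2)}$, $T=T^*G$, $H=(\ker dm)^\perp$, $\mu=\widetilde{m_\Gamma}$; since $\widetilde L\subset H$, the intersection $\widetilde L\cap H=\widetilde L$ is clean, and part~(2) gives that $\Lambda$ is a conic Lagrangian local submanifold of $T^*G\setminus 0$ ($\widetilde{m_\Gamma}$ being fibrewise injective on $H$). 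By~(ii), $m^*\Lambda=\widetilde{m_\Gamma}^{-1}(\Lambda)=\widetilde L$, which is transverse to $\pi$, and $m_x^*\Lambda=i_x^*(m^*\Lambda)=i_x^*\widetilde L=\Lambda_x$ for every $x$ (using the identity $m_x^*=i_x^*\circ m^*$ already used after Definition~\ref{defn:family-G-relation}); since each $\Lambda_x\subset(T^*G_x\setminus0)\times(T^*G^x\setminus0)$, Proposition~\ref{lem:equiv-admiss-in-G-n-in-pull-back-by-jx-2} shows $\Lambda$ is admissible. Hence $\Lambda$ is a local family $G$-relation with $m_x^*\Lambda=\Lambda_x$ for all $x$. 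For uniqueness, if $\Lambda'$ is another local family $G$-relation with $m_x^*\Lambda'=\Lambda_x$, then $m^*\Lambda'$ is a Lagrangian local submanifold transverse to $\pi$ whose slices are the $\Lambda_x$, so $m^*\Lambda'=\widetilde L=m^*\Lambda$ by the uniqueness clause of Theorem~\ref{thm:gluing-family-of-Lagrangian}, and applying the surjection $\widetilde{m_\Gamma}$ gives $\Lambda'=\Lambda$. The step I expect to be most delicate is the second paragraph: passing from the pointwise equivariance~\eqref{eq:equivariance-family-lag} to the facts that $\widetilde L$ lies in $(\ker dm)^\perp$ and is $\widetilde{m_\Gamma}$-saturated — where "equivariant family of canonical relations" becomes "Lagrangian descending through the groupoid multiplication" — along with the routine but unavoidable care needed to keep every object within the class of conic Lagrangian \emph{local} submanifolds.
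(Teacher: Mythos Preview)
Your proof is correct and follows essentially the same route as the paper: glue $(\Lambda_x)_x$ into $\widetilde L\subset T^*G^{(2)}$ via Theorem~\ref{thm:gluing-family-of-Lagrangian}, use equivariance together with the vanishing of the Liouville form on a conic Lagrangian to get $\widetilde L\subset(\ker dm)^\perp$, push down to $\Lambda\subset T^*G$, and use uniqueness of the gluing for uniqueness of $\Lambda$. The only cosmetic difference is that the paper first pulls $\widetilde\Lambda$ back through $\rho$ into $T^*G^2$ and applies Corollary~\ref{cor:clean-compo-Lagrangian-in-symplectic-gpd}, whereas you stay in $T^*G^{(2)}$ and apply Proposition~\ref{prop:push-forward-lagrangian} with $H=(\ker dm)^\perp$, $\mu=\widetilde{m_\Gamma}$; since $m_\Gamma=\widetilde{m_\Gamma}\circ\rho$ on $(T^*G)^{(2)}$ these produce the same $\Lambda$. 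Your saturation step~(ii) is a useful addition --- the paper simply asserts $m^*\Lambda=\widetilde\Lambda$ --- but the point you rightly flag as delicate deserves one more line: equivariance~\eqref{eq:equivariance-family-lag} is stated fibrewise, so to get invariance of the gluing $\widetilde L$ (with its extra $\tau$-component) under $\widetilde c_\delta$ you extend $c_\delta$ to a local diffeomorphism of $G^{(2)}$ via a local bisection through $\delta$, observe that its cotangent lift sends $\widetilde L$ to a conic Lagrangian transverse to $\pi$ with the same slices, and conclude by the uniqueness clause of Theorem~\ref{thm:gluing-family-of-Lagrangian}; the paper's explicit path $\widetilde\lambda(t)$ is the infinitesimal version of the same observation.
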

\begin{proof}
To prove the existence, we can decompose the family into patches and then, we can assume that  $(\Lambda_x)_x$ is a family of submanifolds. Let $\widetilde{\Lambda}\subset T^*(G^{(2)})\setminus 0$  denotes its gluing (Theorem \ref{thm:gluing-family-of-Lagrangian}).   Recall that it is the unique Lagrangian submanifold such that $i_x^*\widetilde{\Lambda}=\Lambda_x$ for all $x$. 
In an appropriate local trivialization of $\pi : G^{(2)}\to G^{(0)}$, we have 
\begin{equation}
 \widetilde{\Lambda} = \{(\gamma_1,\gamma_2,\xi_1,\xi_2,\tau)\in T^*(G^{(2)})\ ; \ (\gamma_1,\gamma_2,\xi_1,\xi_2)\in \Lambda_{x} \}
\end{equation}
where $\tau$ is a $C^\infty$ function of $\xi_1,\xi_2$ and $x=s(\gamma_1)=r(\gamma_2)$. It is understood that 
$(\xi_1,\xi_2,\tau)\in T^*_{(\gamma_1,\gamma_2)}G^{(2)}\simeq T^*_{\gamma_1}G_x\times T^*_{\gamma_2}G^x \times T^*_xG^{(0)}$ where the decomposition comes from the local trivialisation of  $\pi : G^{(2)}\to G^{(0)}$.

Let $\widetilde{\lambda}=(\delta,\xi)\in\widetilde{\Lambda}$ with $\delta=(\gamma_1,\gamma_2)$ and $\xi=(\xi_1,\xi_2,\tau)$. Let   $u=(u_1,u_2)\in \ker dm_{\gamma_1,\gamma_2}$ and choose a $C^\infty$ path  $t\mapsto \gamma(t)$ in $G$ such that $\gamma(0)=x$, $\frac{d}{dt}\gamma_1\gamma(t)\vert_{t=0}=u_1$, $\frac{d}{dt}\gamma(t)^{-1}\gamma_2\vert_{t=0}=u_2$. It gives rise to a $C^\infty$ path in $(\ker d\pi)^*$ defined by
\begin{equation}
 \lambda_t = (\gamma_1\gamma(t),\gamma(t)^{-1}\gamma_2,{}^t(dR_{\gamma(t)^{-1}})(\xi_1),{}^t(dL_{\gamma(t)})(\xi_2))
\end{equation}
Thanks to the equivariance, we have
\begin{equation}
 \lambda_t\in \Lambda_{s(\gamma(t))} \text{ for all } t.
\end{equation}
Thus, we get a $C^\infty$ path in $\widetilde{\Lambda}$ as well:
\begin{equation}
 \widetilde{\lambda}(t) = (\gamma_1\gamma(t),\gamma(t)^{-1}\gamma_2,{}^t(dR_{\gamma(t)^{-1}})(\xi_1),{}^t(dL_{\gamma(t)})(\xi_2),\tau(t)) =(\delta(t),\xi(t))
\end{equation}
Since $\widetilde{\Lambda}$ is conic and Lagrangian, the canonical one form $\alpha=\xi d\delta$ vanishes identically on it and in particular we get for all $t$
\begin{equation}
 (\widetilde{\lambda})^*\alpha(t) = \langle  \xi(t)  , \delta'(t)\rangle =0.
\end{equation}
For $t=0$, this gives $\langle \xi ,\ u\rangle =0$, and therefore
\begin{equation}\label{eq:bijection-Lagrangian-over-G-n-equiv-family-5}
 \widetilde{\Lambda}\subset (\ker dm)^\perp=\rho(T^*G)^{(2)}\subset \rho(T^*G^{2} )
\end{equation}
where $\rho : T^*G^2\to T^*(G^{(2)})$ is the natural restriction of linear forms seen in the diagram (\ref{diag:cot-gpd-overview}). Note that for every Lagrangian submanifold $\widetilde{\Lambda} $ in $T^*(G^{(2)})$, then $\Lambda =\rho ^{-1}(\widetilde{\Lambda})$ is a Lagrangian submanifold in $T^*G^{2}$ (it is the push-forward of $\widetilde{\Lambda} $ by the natural immersion $G^{(2)} \to G^2$- see \cite[Prop 4.2]{GS}).

We can then apply Corollary \ref{cor:clean-compo-Lagrangian-in-symplectic-gpd} to the Lagrangian $\rho^{-1}(\widetilde{\Lambda})$.
Indeed, by construction, $\rho^{-1}(\widetilde{\Lambda}) \subset (T^*G)^{(2)}$ and thus the clean intersection assumption of Corollary \ref{cor:clean-compo-Lagrangian-in-symplectic-gpd} is trivially satisfied. It follows that $\Lambda=m_\Gamma(\rho^{-1}(\widetilde{\Lambda}))$ is a local $G$-relation such that $m^*(\Lambda)=\widetilde{\Lambda}$. Hence it is a local family $G$-relation such that $m_x^*\Lambda=\Lambda_x$ for any $x$ by Theorem \ref{thm:gluing-family-of-Lagrangian}.
% We can then apply Corollary \ref{cor:clean-compo-Lagrangian-in-symplectic-gpd} to the pull-back Lagrangian $\rho^*(\widetilde{\Lambda})\subset T^*G^2\setminus 0$, where $\rho : T^*G^2\to T^*G^{(2)}$ is the natural restriction of linear forms. 
%Indeed, by construction, $\rho^*(\widetilde{\Lambda})\subset (T^*G)^{(2)}$ and thus the clean intersection assumption of Corollary \ref{cor:clean-compo-Lagrangian-in-symplectic-gpd} is trivially satisfied. It follows that $\Lambda=m_\Gamma(\rho^*(\widetilde{\Lambda}))$ is a local family $G$-relation such that $m_x^*\Lambda=\Lambda_x$ for any $x$. 
%

Let $\Lambda,\Lambda'$ be two local family $G$-relations answering the question. Set $\widetilde{\Lambda}=m^*(\Lambda)=\widetilde{m_\Gamma}^{-1}(\Lambda)$ and $\widetilde{\Lambda'}=m^*(\Lambda')=\widetilde{m_\Gamma}^{-1}(\Lambda')$. Since 
\begin{equation}
 m_x^*(\Lambda) =i_x^*\widetilde{\Lambda} = \Lambda_x = m_x^*(\Lambda')= i_x^*\widetilde{\Lambda'},\ \forall x,
\end{equation}
Theorem \ref{thm:gluing-family-of-Lagrangian} implies $\widetilde{\Lambda}=\widetilde{\Lambda'}$. Since $\widetilde{m_\Gamma}$ is surjective, we conclude
\begin{equation}
 \Lambda = \widetilde{m_\Gamma}(\widetilde{m_\Gamma}^{-1}(\Lambda)) = \widetilde{m_\Gamma}(\widetilde{m_\Gamma}^{-1}(\Lambda'))= \Lambda'.
\end{equation}
\end{proof}
We give a another characterization  of family $G$-relations.
\begin{prop}\label{prop:charact-family-G-rel}
Let $\Lambda$ be a  $G$-relation and $ p: T^*G\to G$ the natural projection map. Then  $\Lambda$ is a family $G$-relation   if and only if 
\begin{equation}\label{eq:transver-cond-G-rel-1}
  \forall x\in G^{(0)},\quad m_x\pitchfork p\vert_\Lambda,
\end{equation}
  that is, $dm_x(T_{\gamma_1}G_x\times T_{\gamma_2}G^x)+dp(T_{(\gamma,\xi)}\Lambda)= T_\gamma G$, for all $x$, $(\gamma,\xi)\in\Lambda$ and $(\gamma_1,\gamma_2)\in m_x^{-1}(\gamma)$.
\end{prop}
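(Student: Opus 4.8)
The plan is to relate the transversality of $m_x$ with $p|_\Lambda$ to the transversality of the pull-back $m^*(\Lambda)$ with $\pi : G^{(2)}\to G^{(0)}$, which by the discussion after Definition \ref{defn:family-G-relation} (via Theorems \ref{thm:gluing-family-of-Lagrangian} and \ref{thm:Lagrangian-slicing}) characterizes family $G$-relations. Recall that $m^*(\Lambda)=\widetilde{m_\Gamma}^{-1}(\Lambda)$, and that $\widetilde{m_\Gamma} : (\ker dm)^\perp\to T^*G$ is the fiberwise-isomorphism $(\gamma_1,\gamma_2,\zeta)\mapsto(\gamma_1\gamma_2,({}^tdm)^{-1}\zeta)$ covering $m : G^{(2)}\to G$. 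So $m^*(\Lambda)$ is a Lagrangian local submanifold of $T^*(G^{(2)})$; the only thing at stake in being a family $G$-relation is whether it is transverse to $\pi$, i.e.\ (by the criterion \eqref{eq:transv-equiv-cond} applied to $M=G^{(2)}$, $B=G^{(0)}$, reading off through the bundle projection $q : T^*(G^{(2)})\to G^{(2)}$) whether
\[
 T_{(\gamma_1,\gamma_2)}G^{(2)}_x + dq\big(T_{(\gamma_1,\gamma_2,\zeta)}m^*(\Lambda)\big) = T_{(\gamma_1,\gamma_2)}G^{(2)}
\]
for all $x$ and all points of $m^*(\Lambda)$ over $G^{(2)}_x = G_x\times G^x$.

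The key step is a diagram chase relating the two conditions through the division/multiplication structure. First I would note that $G^{(2)}_x = i_x(G_x\times G^x)$ and that $m|_{G^{(2)}_x}=m_x$, so $dq(T\,m^*(\Lambda))$ restricted over $G^{(2)}_x$ maps under $dm$ exactly onto $dp(T\Lambda)$ over the relevant points (since $\widetilde{m_\Gamma}$ covers $m$, and $m^*(\Lambda)=\widetilde{m_\Gamma}^{-1}(\Lambda)$, so $dm(dq(T\,m^*(\Lambda)))=dp(T\Lambda)$ using that $\widetilde{m_\Gamma}$ is a fiberwise isomorphism and that $(\ker dm)^\perp$ surjects appropriately). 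Then, applying $dm_{(\gamma_1,\gamma_2)}$ (a surjection $T_{(\gamma_1,\gamma_2)}G^{(2)}\to T_\gamma G$ with kernel $\ker dm$) to the displayed transversality equation, and using that $\ker dm\subset T\,m^*(\Lambda)$'s image under $dq$ (this is exactly the inclusion \eqref{eq:rem-about-G-rel-3}, $\ker d\widetilde{m_\Gamma}\subset T m^*(\Lambda)$, pushed down by $dq$), one checks that the displayed equation is equivalent to its image under $dm$, namely
\[
 dm_x\big(T_{(\gamma_1,\gamma_2)}(G_x\times G^x)\big) + dp\big(T_{(\gamma,\xi)}\Lambda\big) = T_\gamma G,
\]
which is precisely \eqref{eq:transver-cond-G-rel-1}. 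The point is that $\ker dm$ is "absorbed" on both sides: on the target side because $dm(T G^{(2)}_x)=dm_x(T(G_x\times G^x))$ and $\ker dm \subset T G^{(2)}$ maps to $0$, and on the source/$\Lambda$ side because $\ker dm \subseteq dq(T m^*(\Lambda))$ by \eqref{eq:rem-about-G-rel-3}, so adding $\ker dm$ to the left-hand side does not change it before applying $dm$.

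I expect the main obstacle to be the bookkeeping of the two bundle projections $p : T^*G\to G$ and $q : T^*(G^{(2)})\to G^{(2)}$ through the non-surjective (as a bundle map) but fiberwise-isomorphic $\widetilde{m_\Gamma}$: one must be careful that $\widetilde{m_\Gamma}$ is defined only on $(\ker dm)^\perp\subset T^*_{G^{(2)}}G^2$, not on all of $T^*(G^{(2)})$, so the identification $m^*(\Lambda)=\widetilde{m_\Gamma}^{-1}(\Lambda)$ and the tangent-level statement $dm\circ dq = dp\circ d\widetilde{m_\Gamma}$ must be set up correctly, and one must verify that $dq$ of the tangent space of $m^*(\Lambda)$ genuinely contains $\ker dm$ (which is where \eqref{eq:rem-about-G-rel-3} is used). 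Once the equivalence of the two transversality equations is established pointwise, the proposition follows immediately: $\Lambda$ is a family $G$-relation iff $m^*(\Lambda)\pitchfork \pi$ iff \eqref{eq:transver-cond-G-rel-1} holds for every $x$. I would also remark that the statement and proof are unchanged in the local case, covering by patches.
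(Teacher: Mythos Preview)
Your proposal is correct and follows essentially the same route as the paper: both hinge on the inclusion $\ker dm \subset dp^{(2)}(T m^*\Lambda)$ coming from \eqref{eq:rem-about-G-rel-3}, and both use it to show that the transversality of $m^*(\Lambda)=\widetilde{m_\Gamma}^{-1}(\Lambda)$ to $\pi$ is equivalent, under $dm$, to \eqref{eq:transver-cond-G-rel-1}. The paper presents this as a short chain of equivalences (using $dm\circ dp^{(2)} = dp\circ d\widetilde{m_\Gamma}$ and the absorption of $\ker dm$), which is exactly the diagram chase you sketch with your notation $q$ in place of the paper's $p^{(2)}$.
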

\begin{proof}
Let $\Lambda$ be a $G$-relation. The inclusion \eqref{eq:rem-about-G-rel-3} and the equality $dp^{(2)}(\ker d\widetilde{m_\Gamma})=\ker dm$ yields the inclusion 
\begin{equation}\label{proof:charact-family-G-rel-1}
 \ker dm \subset dp(Tm^*\Lambda).
\end{equation}
Therefore, for all $x\in G^{(0)}$ and omitting other base points, we have 
\begin{eqnarray*}
 &&dm(TG_x\times G^x) + dp(T\Lambda)= TG \\
   &\Leftrightarrow & dm(TG_x\times G^x) + dp(T\widetilde{m_\Gamma}(\widetilde{m_\Gamma}^{-1}(\Lambda)))= TG \\
 %  &&dm(TG_x\times G^x) + dp.d\widetilde{m_\Gamma}(T \widetilde{m_\Gamma}^{-1}(\Lambda))= TG \\ 
    &\Leftrightarrow & dm(TG_x\times G^x) + dm.dp^{(2)}(T \widetilde{m_\Gamma}^{-1}(\Lambda))= TG \\ 
 %   &\Leftrightarrow && TG_x\times G^x + dp^{(2)}(T \widetilde{m_\Gamma}^{-1}(\Lambda)) +\ker dm = TG^{(2)} \\
    &\Leftrightarrow & TG_x\times G^x + dp^{(2)} (T \widetilde{m_\Gamma}^{-1}(\Lambda)) = TG^{(2)}\qquad  \text{ by } \eqref{proof:charact-family-G-rel-1} \\
    &\Leftrightarrow &  d\pi.dp^{(2)} (T \widetilde{m_\Gamma}^{-1}(\Lambda)) = T_xG^{(0)}. %\text{ where } \pi : G^{(2)}\to G^{(0)}.
\end{eqnarray*}
The last line means that $\widetilde{m_\Gamma}^{-1}(\Lambda)$ is transversal to $\pi$ so the proof is ended. 
% \begin{equation}
%  \label{diag:charact-admissible-lagrangian-4}
%  \xymatrix{
%  \widetilde{\Lambda} \ar^{\widetilde{m}_\Gamma}[r] \ar^{\widetilde{\pi}}[d]& \Lambda  \ar^{ \pi }[d]\\
%   G^{(2)} \ar^{ m }[r] & G
%   }.
% \end{equation}
\end{proof}
The condition introduced in Proposition \ref{prop:charact-family-G-rel} has a strong geometrical meaning. We have 
\begin{equation}\label{def:feuilletage_groupoid}
 (dm_x)_{(\gamma_1,\gamma_2)}(T_{\gamma_1}G_x\times T_{\gamma_2}G^x) = T_\gamma G_{s(\gamma)}+T_\gamma G^{r(\gamma)} =T_\gamma\cF_G;
\end{equation}
for any $(\gamma_1,\gamma_2)\in G_x\times G^x$ such that $\gamma_1\gamma_2=\gamma$. Here $T_\gamma\cF_G$ denotes the tangent space at $\gamma$ of the leaf of $\cF_G$ passing through $\gamma$. It follows that   the condition introduced in Proposition \ref{prop:charact-family-G-rel} means 
\begin{equation}\label{eq:transver-cond-G-rel-bis}
  p\vert_\Lambda : \Lambda \to G \text { and } \cF_G \text{ are transversal},
\end{equation}
that is, 
\begin{equation}\label{eq:transver-cond-G-rel-ter}
 T_\gamma\cF_G + dp(T_{\gamma,\xi}\Lambda) = T_\gamma G, \quad\text{ for all } (\gamma,\xi)\in \Lambda. 
\end{equation}
%\orange{ en pr\'eparant l'expos\'e et en me demandant comment introduire la notion de family G-relation, c'est cette  caract\'erisation en terme de transversalit\'e au feuilletage que j'ai choisie, \`a la reflexion c'est   la meilleure   d\'efinition, sans compter qu'elle est imm\'ediate \`a v\'erifier dans la plupart des cas. Ce qui suit est int\'eressant aussi, mais moins parlant}
%
Furthermore, we may get rid of the projection $p$. Indeed,  \eqref{eq:transver-cond-G-rel-ter} is clearly equivalent to 
\begin{equation}\label{eq:transver-cond-G-rel-4}
 T_{(\gamma,\xi)}(T_L^*G) + T_{(\gamma,\xi)}\Lambda = T_{(\gamma,\xi)}T^*G, \quad\text{ for all } (\gamma,\xi)\in \Lambda,
\end{equation}
where  $L$ is the leaf of $\cF$ containing $\gamma$. That is, 
\begin{equation}\label{eq:transver-cond-G-rel-5}
 T_L^*G \pitchfork \Lambda   \quad\text{ for all } L\in\cF_G.
\end{equation}

 We deduce from \eqref{eq:transver-cond-G-rel-ter}:
\begin{prop}
 Let $\Lambda$ be a  $G$-relation. Then $\Lambda$ is a family $G$-relation if and only if
 \begin{equation*}
 \forall (\gamma,\xi)\in \Lambda,\ dr(T_\gamma G_{s(\gamma)})+ dr(d\pi T_{\gamma,\xi}\Lambda) =T_{r(\gamma)} G^{(0)} \text{ or } ds(T_\gamma G^{r(\gamma)})+ ds(d\pi T_{\gamma,\xi}\Lambda) =T_{s(\gamma)}  G^{(0)}.
 \end{equation*}
\end{prop}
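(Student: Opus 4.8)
The plan is to deduce the statement from the characterization \eqref{eq:transver-cond-G-rel-ter} already obtained, namely that $\Lambda$ is a family $G$-relation if and only if $T_\gamma\cF_G + dp(T_{(\gamma,\xi)}\Lambda) = T_\gamma G$ for every $(\gamma,\xi)\in\Lambda$, where $p:T^*G\to G$ is the bundle projection (so the $d\pi$ in the statement is this $dp$). It then suffices to show that, at a fixed point $(\gamma,\xi)\in\Lambda$, \emph{each} of the two alternatives appearing in the statement is, by itself, equivalent to this transversality equality at $(\gamma,\xi)$.

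First I would record the two elementary identifications. By \eqref{def:feuilletage_groupoid} one has $T_\gamma\cF_G = T_\gamma G_{s(\gamma)} + T_\gamma G^{r(\gamma)}$; and since $s,r$ are submersions onto $G^{(0)}$ with $G_x=s^{-1}(x)$, $G^x=r^{-1}(x)$, we have $T_\gamma G_{s(\gamma)} = \ker ds_\gamma$ and $T_\gamma G^{r(\gamma)} = \ker dr_\gamma$. Writing $W := dp(T_{(\gamma,\xi)}\Lambda)\subset T_\gamma G$, the condition \eqref{eq:transver-cond-G-rel-ter} at $(\gamma,\xi)$ thus reads $\ker ds_\gamma + \ker dr_\gamma + W = T_\gamma G$.

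The only extra ingredient is the linear-algebra fact: if $f:E\to F$ is a surjective linear map with kernel $K$ and $U\subset E$ is a linear subspace, then $f(U)=F$ if and only if $U+K=E$ (one direction from $f(U+K)=f(U)$, the other by choosing $f$-preimages in $U$). I would apply it with $f=dr_\gamma:T_\gamma G\to T_{r(\gamma)}G^{(0)}$, whose kernel is $\ker dr_\gamma=T_\gamma G^{r(\gamma)}$, and $U=\ker ds_\gamma + W = T_\gamma G_{s(\gamma)} + dp(T_{(\gamma,\xi)}\Lambda)$; by linearity $dr_\gamma(U)=dr\bigl(T_\gamma G_{s(\gamma)}\bigr)+dr\bigl(dp(T_{(\gamma,\xi)}\Lambda)\bigr)$, so the first alternative holds at $(\gamma,\xi)$ iff $U+\ker dr_\gamma = T_\gamma G$, i.e. iff $\ker ds_\gamma+\ker dr_\gamma+W=T_\gamma G$, i.e. iff \eqref{eq:transver-cond-G-rel-ter} holds at $(\gamma,\xi)$. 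Exchanging the roles of $r$ and $s$ (take $f=ds_\gamma$, $K=\ker ds_\gamma=T_\gamma G_{s(\gamma)}$, $U=\ker dr_\gamma+W$) gives the same equivalence for the second alternative.

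It then follows that each of the two alternatives in the statement is, at every point of $\Lambda$, equivalent to the transversality condition at that point; in particular the ``or'' is logically redundant (though harmless), and requiring the disjunction for all $(\gamma,\xi)\in\Lambda$ is equivalent to \eqref{eq:transver-cond-G-rel-ter} holding on all of $\Lambda$, which by Proposition \ref{prop:charact-family-G-rel} means exactly that $\Lambda$ is a family $G$-relation. I do not expect any genuine obstacle: the argument is purely pointwise linear algebra on top of Proposition \ref{prop:charact-family-G-rel}. The one point to state cleanly is the equivalence ``$f(U)=F\iff U+K=E$'' combined with the identification of $\ker ds_\gamma,\ \ker dr_\gamma$ with the fiber tangent spaces $T_\gamma G_{s(\gamma)},\ T_\gamma G^{r(\gamma)}$, together with the observation that either half of the disjunction already characterizes transversality.
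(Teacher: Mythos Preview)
Your proof is correct and is exactly the argument the paper has in mind: the paper gives no proof beyond ``We deduce from \eqref{eq:transver-cond-G-rel-ter}'', and you have spelled out that deduction via the standard linear-algebra fact $f(U)=F\iff U+\ker f=E$ applied to $f=dr_\gamma$ (resp.\ $f=ds_\gamma$) with $U=\ker ds_\gamma+dp(T_{(\gamma,\xi)}\Lambda)$ (resp.\ $U=\ker dr_\gamma+dp(T_{(\gamma,\xi)}\Lambda)$). Your remark that each alternative is already equivalent to \eqref{eq:transver-cond-G-rel-ter} at the given point---so the ``or'' is redundant---is a correct and worthwhile observation.
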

Therefore, if $\Lambda$ is a $G$-relation and $r\circ \pi : \Lambda\to G^{(0)}$ or $ s\circ\pi: \Lambda\to G^{(0)}$ are submersions then $\Lambda$ is a family $G$-relation. The converse is false: consider $G=X\times X\times Y$ with its natural structure of groupoid (fibered pair groupoid) and $\Lambda=N^*V\setminus 0$ where $V=\{(x_0,x_0)\}\times Y$. 
 
\begin{defn}\label{defn:strong-G-relations}
 A $G$-relation onto which the maps $s\circ \pi$ and $r \circ \pi$ are submersions is called a strong $G$-relation. 
\end{defn}
Next, we analyse the behavior of family $G$-relations under convolution. Unfortunately, it is not true that the convolution of   family $G$-relations is a   family $G$-relation. %even when they are convolable.
\begin{exam} Set $X=Z=\RR^n$, $n=k+(n-k)$, $G=X\times X\times Z\rightrightarrows X\times Z$, decompose $z\in Z$ into $(z',z'')$ with $z'\in\RR^k,z''\in\RR^{n-k}$ and consider $C^\infty$ maps  $x_j,y_j : Z\to X$, $j=1,2$  defined by 
\[
x_1(z)=y_1(z)=y_2(z)=z \text{ and } x_2(z)=(z',-z'').
\]
\ignore{\red{\[
[ y_1(z)=x_2(z) \Leftrightarrow z''=0]\quad \text{ and } \quad [ d_{z''}(x_2 - y_1) \text{ injective } ]
\]}}
Introduce the submanifolds of $G$ 
\[
 V_j = \mathop{Graph}(x_j,y_j) = \{ (x_j(z),y_j(z),z)\ ;\ z\in Z\},\quad j=1,2
\]
and the conic Lagrangian submanifolds of $T^*G\setminus 0$
\begin{align*}
\Lambda_j =\{ (x_j(z),\xi_j,y_j(z),\eta_j,z,-{}^tdx_j(\xi_j)-{}^tdy_j(\eta_j))\ ; \ z\in Z,\ \xi_j,\eta_j\in\RR^n\setminus 0\}\subset N^*V_j.
\end{align*}
%\[
%\Lambda_j = N^*V_j \setminus (TX\times  0 \times 0 \cup 0\times TX\times 0),\quad j=1,2. 
%\]
Thanks to the  subsets that we have removed from the conormal spaces, $\Lambda_1$ and $\Lambda_2$ are admissible. Since $T\cF= TX\times TX\times 0$, the transversality condition \eqref{eq:transver-cond-G-rel-ter} is satisfied for a given $\Lambda$  if and only if $d\pi(T\Lambda)$ projects onto $TZ$ . This is clearly the case for $\Lambda_1,\Lambda_2$ which are then family $G$-relations.
With the choices made, the intersection 
\begin{align*}
 & \Lambda_1\times \Lambda_2 \cap (T^*G)^{(2)} \\
&  = \{ (x_1(z'),\xi_1,y_1(z'),\eta_1,z',-{}^td(x_1,y_1)(\xi_1,\eta_1), y_1(z'),-\eta_1,y_2(z'),\eta_2,z',-{}^td(x_2,y_2)(-\eta_1,\eta_2)\ ; \\  & \qquad z'\in \RR^k,\ \xi_1,\eta_1,\eta_2\in\RR^n\setminus 0 \}
\end{align*}
is clean. We obtain
\begin{align*}
\Lambda_1 . \Lambda_2 & =\{  (x_1(z'),\xi_1,y_2(z'),\eta_2,z',-{}^td(x_1,y_1)(\xi_1,\eta_1)-{}^td(x_2,y_2)(-\eta_1,\eta_2)\ ; \\  & \qquad z'\in \RR^k,\ \xi_1,\eta_1,\eta_2\in\RR^n\setminus 0 \}.
\end{align*}
Here, $\Lambda= \Lambda_1 . \Lambda_2$ is a $G$-relation but not a family $G$-relation since the projection on $TZ$ of  $d\pi (T\Lambda)$ is $T\RR^k\times 0$.
\end{exam}
\ignore{
\begin{exam}
Let $H_1,H_2$ be two transversal vector planes in $\RR^3$ and $H$ a third vector plane such that the restrictions of the orthogonal projection $p : \RR^3\to H$  to $H_1$ and $H_2$ is bijective. Let $G=H\times H\times \RR^3$ be the cartesian product of the pair groupoid $H\times H$ by the group $(\RR^3,+)$. We identify $\RR^3{}^*\simeq\RR^3$ using the euclidean structure.
Let $a:H^*\times H^* \to \RR^3$ be a linear map   which will be given later and consider 
\begin{equation}
 \Lambda_j = \{  (p(z),\xi,p(z),\eta, t+a(\xi,\eta), z) \ ;\ z\in H_j, t\in H_j^\perp, \xi,\eta\in H^*\setminus 0\}\subset T^*G, \quad j=1,2.
\end{equation}
 These are $C^\infty$ conic submanifolds of dimensions $7$. We may adjust $a$ in order that they are Lagrangian. Denote $\omega$ the symplectic form of $T^*G$.
\begin{align*}
   &\omega ((p(z_1),\xi_1,p(z_1),\eta_1,t_1+a(\xi_1,\eta_1),z_1)\ ;\ (p(z_2),\xi_2,p(z_2),\eta_2,t_2+a(\xi_2,\eta_2),z_2)) \\ 
   = & \xi_2((p(z_1)) -\xi_1(p(z_2)) + \eta_2(p(z_1))-\eta_1(p(z_2))+z_2(a(\xi_1,\eta_1))-z_1(a(\xi_2,\eta_2))\\
 = & (\xi_2+\eta_2)\circ p (z_1)-z_1(a(\xi_2,\eta_2)) \ - \ (\xi_1+\eta_1)\circ p (z_2)-z_2(a(\xi_1,\eta_1)).
\end{align*}
Thus, we define $a$ to be the linear map $H^*\times H^* \to \RR^3$ defined by 
\[
 a(\xi,\eta).z  = (\xi+\eta)\circ p (z), \quad \forall z\in\RR^3.
\]
These choices give isotropic submanifolds $\Lambda_1,\Lambda_2$ of dimension $7$, they are therefore Lagrangian. They are also strong family $G$-relations since, for instance,
\[
 s\circ \pi : T^*G\to G^{(0)}\simeq H,\ (x,\xi,y,\eta,t,z)\mapsto y
\]
and we thus get by assumption on $p$ that $s\circ \pi :\Lambda_j\to H$ has a surjective differential everywhere. 

Recall now that $\Gamma^{(2)}= (T^*G)^{(2)}$ is the submanifold consisting of the elements
\begin{equation}
 (x_1,\xi_1,y,\eta,t_1,z,y,-\eta,y_2,\eta_2,t_2,z)
\end{equation}
where $x_1,\xi_1,t_1,t_2$ and $y,\eta,z$ are arbitrary, and the product is then equal to $(x_1,\xi_1,y_2,\eta_2,t_1+t_2,z)$.
 
Since $(T^*G)^{(2)}$ is a linear subspace and $\Lambda_1\times \Lambda_2$ is open  in a linear subspace, their intersection is automatically clean. 

Finally, denoting by $K$ the line $H_1\cap H_2$, we have
\begin{equation}
 \Lambda_1*\Lambda_2 = \{ (p(z),\xi,p(z),\eta,t+a(\xi,\eta),z)\ ;\ \xi,\eta\in H^*\setminus 0,\ t\in K^\perp, \ z\in K \}
\end{equation}
from which we see that $s\circ \pi(\Lambda) = p(K)\subsetneq H$, which proves that $\Lambda_1.\Lambda_2$ is not a strong family $G$-relation. 
\orange{remarques : \\
1) en mixant les deux exemples, on doit pouvoir produire une exemple de family $G$-relations fortes convolables dont le produit de convolution n'est pas une family $G$-relation, mais je n'ai plus le courage...
2) J'ai emprunt\'e des chemins tortueux pour en arriver l\`a, je ne serai pas \'etonn\'e qu'il existe des contre-exemples plus simples et plus lumineux.... 
}
\end{exam}
}
There are also contre-examples for strong $G$-relations. Actually, to obtain that $\Lambda_1 * \Lambda_2$ is a family $G$-relation, what matters is the position of the cartesian product $\Lambda_1\times \Lambda_2$  with respect to $\Gamma^{(2)}$ and not the position of each $\Lambda_j$ in $T^*G$. 
\begin{thm}\label{thm:myster-cdt-stab-G-rel-conv}
 Let  $\Lambda_1,\Lambda_2$ be convolable $G$-relations. 
 \begin{enumerate}
  \item $\Lambda_1.\Lambda_2$ is a local $G$-relation and   
\begin{equation}\label{thm:myster-cdt-stab-G-rel-conv-1}
 \Lambda_1.\Lambda_2 = (\Lambda_1\cup 0)*(\Lambda_2\cup 0)\setminus 0.
\end{equation}
\item  $\Lambda_1.\Lambda_2$ is a local family $G$-relation if and only if $\Lambda_1$ and $\Lambda_2$ satisfy 
\begin{equation}\label{eq:unwanted-condition}
(T_{\gamma_1}\cF_G\times T_{\gamma_2}\cF_G)^{(2)} + dp^2(T_{(\gamma_1,\xi_1,\gamma_2,\xi_2)}(\Lambda_1\times \Lambda_2)^{(2)}) = T_{(\gamma_1,\gamma_2)}G^{(2)}  
\end{equation}
for all $(\gamma_1,\xi_1,\gamma_2,\xi_2)\in (\Lambda_1\times \Lambda_2)^{(2)}$. 
%\red{In that case, $\Lambda_1$ and $\Lambda_2$ are also family $G$-relations.}
\end{enumerate}
\end{thm}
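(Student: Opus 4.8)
The plan is to treat the two assertions separately, reducing at once to the case where $\Lambda_1,\Lambda_2$ are genuine submanifolds (the local statement then following by the usual decomposition into patches).

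For assertion (1): since $T^*G\rightrightarrows A^*G$ is a symplectic groupoid \cite{CDW}, I would apply Corollary \ref{cor:clean-compo-Lagrangian-in-symplectic-gpd} to $\widetilde\Lambda=\Lambda_1\times\Lambda_2$. This gives that $\Lambda_1.\Lambda_2$ is a Lagrangian local submanifold of $T^*G$ and, reading off the proof of Proposition \ref{prop:push-forward-lagrangian}(2), that $m_\Gamma$ restricts to a surjective submersion $(\Lambda_1\times\Lambda_2)^{(2)}:=(\Lambda_1\times\Lambda_2)\cap\Gamma^{(2)}\to\Lambda_1.\Lambda_2$, a fact I will reuse in (2). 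Conicity is immediate since $m_\Gamma$ is $\RR_+^*$-homogeneous. The remaining assertions of (1) all follow from elementary composability computations exploiting admissibility of $\Lambda_1,\Lambda_2$: if $\xi_1\oplus\xi_2=0$, then evaluating on $(t_1,0)\in T_{(\gamma_1,\gamma_2)}G^{(2)}$ with $t_1\in\ker ds_{\gamma_1}$ forces $\xi_1\in(\ker ds_{\gamma_1})^\perp=\ker r_\Gamma$ (diagram \eqref{diag:cot-gpd-target-overview}), a contradiction, so $\Lambda_1.\Lambda_2\subset T^*G\setminus 0$; admissibility of $\Lambda_1.\Lambda_2$ follows from $r_\Gamma\circ m_\Gamma=r_\Gamma\circ\pr{1}$ and its $s$-analogue; and $\Lambda_1*0=0*\Lambda_2=\emptyset$ by the same device while $0*0$ is the zero section, so $(\Lambda_1\cup 0)*(\Lambda_2\cup 0)=(\Lambda_1.\Lambda_2)\cup 0$, which is \eqref{thm:myster-cdt-stab-G-rel-conv-1} after removing the zero section (here $\Lambda_1.\Lambda_2$ and $\Lambda_1*\Lambda_2$ denote the same underlying set).

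For assertion (2): by (1), $\Lambda:=\Lambda_1.\Lambda_2$ is a local $G$-relation, so by Proposition \ref{prop:charact-family-G-rel} in its local form, i.e. through \eqref{eq:transver-cond-G-rel-ter}, it is a local family $G$-relation iff $T_\gamma\cF_G+dp(T_{(\gamma,\xi)}\Lambda)=T_\gamma G$ for all $(\gamma,\xi)\in\Lambda$. The idea is to transport this identity, via the submersion $m_\Gamma$, into a condition on $T_{(\gamma_1,\gamma_2)}G^{(2)}$. Fix $(\gamma,\xi)\in\Lambda$ and a $m_\Gamma$-preimage $(\gamma_1,\xi_1,\gamma_2,\xi_2)\in(\Lambda_1\times\Lambda_2)^{(2)}$, with $\gamma=\gamma_1\gamma_2$ and $x=s(\gamma_1)=r(\gamma_2)$. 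Using that $m_\Gamma\colon(\Lambda_1\times\Lambda_2)^{(2)}\to\Lambda$ is a submersion and that $p\circ m_\Gamma=m\circ p^2$ on $\Gamma^{(2)}$ (diagram \eqref{diag:cot-gpd-mult-overview}), one gets $dp(T_{(\gamma,\xi)}\Lambda)=dm_{(\gamma_1,\gamma_2)}\bigl(dp^2\bigl(T_{(\gamma_1,\xi_1,\gamma_2,\xi_2)}(\Lambda_1\times\Lambda_2)^{(2)}\bigr)\bigr)$. Put $A=dm_{(\gamma_1,\gamma_2)}\colon V:=T_{(\gamma_1,\gamma_2)}G^{(2)}\twoheadrightarrow T_\gamma G$, $U_1=(T_{\gamma_1}\cF_G\times T_{\gamma_2}\cF_G)^{(2)}$ and $U_2=dp^2\bigl(T_{(\gamma_1,\xi_1,\gamma_2,\xi_2)}(\Lambda_1\times\Lambda_2)^{(2)}\bigr)\subset V$; the condition at $(\gamma,\xi)$ then reads $A(U_1)+A(U_2)=T_\gamma G$, and I claim this is equivalent to $U_1+U_2=V$, i.e. to \eqref{eq:unwanted-condition}. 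The equivalence rests on the trivial linear-algebra fact that if $A$ is onto and $\ker A\subset U_1$ then $A(U_1)+A(U_2)=W\Leftrightarrow U_1+U_2=V$, combined with the two geometric identities $A(U_1)=T_\gamma\cF_G$ and $\ker A\subset U_1$. For the first, $\supseteq$ is \eqref{def:feuilletage_groupoid} since $T_{\gamma_1}G_x\times T_{\gamma_2}G^x\subset U_1$, and $\subseteq$ follows by writing a vector of $U_1$ as $(a_1,b_2)+(b_1,a_2)$ with $a_1\in\ker ds_{\gamma_1}$, $b_2\in\ker dr_{\gamma_2}$ (image in $T_\gamma\cF_G$ by \eqref{def:feuilletage_groupoid}) and $b_1\in\ker dr_{\gamma_1}$, $a_2\in\ker ds_{\gamma_2}$ (image in $\ker dr_\gamma\subset T_\gamma\cF_G$, using $dr\circ dm=dr\circ\pr{1}$). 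For the second, any $w\in\ker dm_{(\gamma_1,\gamma_2)}$ is realized as $\tfrac{d}{dt}(\gamma_1\delta(t),\delta(t)^{-1}\gamma_2)|_{t=0}$ for a path $\delta$ in $G^x$ with $\delta(0)=x$, and both components of $w$ are then tangent to the relevant leaves of $\cF_G$. Finally, since $m_\Gamma$ is onto $\Lambda$, quantifying over $(\gamma,\xi)\in\Lambda$ is the same as quantifying over $(\gamma_1,\xi_1,\gamma_2,\xi_2)\in(\Lambda_1\times\Lambda_2)^{(2)}$, which yields exactly \eqref{eq:unwanted-condition}.

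The main obstacle is the inclusion $\ker dm_{(\gamma_1,\gamma_2)}\subset(T_{\gamma_1}\cF_G\times T_{\gamma_2}\cF_G)^{(2)}$: this is precisely what allows the transversality condition to be pushed through the non-injective differential $dm$ with no loss of information, making the reduction of \eqref{eq:transver-cond-G-rel-ter} to \eqref{eq:unwanted-condition} an equivalence rather than a one-sided implication. The companion identity $dm_{(\gamma_1,\gamma_2)}((T_{\gamma_1}\cF_G\times T_{\gamma_2}\cF_G)^{(2)})=T_\gamma\cF_G$ — that applying $dm$ to the full product of leaf directions, and not merely to $T_{\gamma_1}G_x\times T_{\gamma_2}G^x$, still lands exactly in $T_\gamma\cF_G$ — is the other point requiring care; everything else is bookkeeping around the already-established reduction machinery (Corollary \ref{cor:clean-compo-Lagrangian-in-symplectic-gpd}, Proposition \ref{prop:charact-family-G-rel}).
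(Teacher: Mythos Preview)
Your proof is correct and follows essentially the same route as the paper: part (1) via Corollary~\ref{cor:clean-compo-Lagrangian-in-symplectic-gpd} plus admissibility bookkeeping, and part (2) via Proposition~\ref{prop:charact-family-G-rel}, the commutativity $p\circ m_\Gamma=m\circ p^2$, and the linear-algebra reduction through $dm$. The only difference is packaging: the paper isolates your two ``main obstacles'' into a single identity, Lemma~\ref{lem:stab-foliat-gpd-under-mult}, namely $(T_{\gamma_1}\cF_G\times T_{\gamma_2}\cF_G)^{(2)}=(dm_{(\gamma_1,\gamma_2)})^{-1}(T_{\gamma_1\gamma_2}\cF_G)$, proved in one stroke by applying $(L\times L)^{(2)}=m_T^{-1}(L)$ to the Lie groupoid $T=TG$ with $L=TL'$ a leaf of $\cF_{TG}$---this simultaneously gives $dm((T\cF_G\times T\cF_G)^{(2)})=T\cF_G$ (surjectivity of $dm$) and $\ker dm\subset(T\cF_G\times T\cF_G)^{(2)}$ (take the preimage of $0$), whereas you prove these two halves separately by a direct decomposition and the path argument.
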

\begin{rmk}\ 
 \begin{enumerate}
  \item The right hand side in \eqref{thm:myster-cdt-stab-G-rel-conv-1} is the natural set containing the wave front set of the  convolution product $u_1*u_2$ of distributions on $G$ such that $\WF{u_j}\subset \Lambda_j$ \cite{LMV1}.
  \item The conclusions of the theorem are identical if we start with local submanifolds.
 \end{enumerate}
\end{rmk}
Clean convolability assumption together Condition \eqref{eq:unwanted-condition} will be called {\sl complete convolability}.
The proof of the theorem uses an elementary fact about Lie groupoids. 
\begin{lem}\label{lem:stab-foliat-gpd-under-mult}
For any $(\gamma_1,\gamma_2)\in G^{(2)}$, we have
\begin{equation}
\label{eq:lem-stab-fol-under-mult}
(T_{\gamma_1}\cF_G\times T_{\gamma_2}\cF_G)^{(2)} = (dm_{(\gamma_1,\gamma_2)})^{-1}(T_{\gamma_1\gamma_2} \cF_G).
\end{equation}
\end{lem}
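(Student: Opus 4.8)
The plan is to reduce the identity \eqref{eq:lem-stab-fol-under-mult} to a pointwise description of the tangent spaces of the leaves of $\cF_G$ and then to propagate that description through the groupoid identities $r\circ m=r\circ\pr{1}$ and $s\circ m=s\circ\pr{2}$, which hold on $G^{(2)}$.

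The first step — and the only one carrying real content — is to record the following refinement of \eqref{def:feuilletage_groupoid}: for each $\gamma\in G$, if $O$ denotes the $G$-orbit of $r(\gamma)$ (equivalently of $s(\gamma)$, since $r(\gamma)\sim_{G^{(0)}}s(\gamma)$ via $\gamma$), then
\begin{equation*}
 T_\gamma\cF_G=(dr_\gamma)^{-1}(T_{r(\gamma)}O)=(ds_\gamma)^{-1}(T_{s(\gamma)}O).
\end{equation*}
Here is how I would get this. The leaf $L$ through $\gamma$ is the $r$-saturation (equivalently the $s$-saturation) of $O$; it contains both fibres $G^{r(\gamma)}$ and $G_{s(\gamma)}$, so $\ker dr_\gamma\subset T_\gamma\cF_G\subset(dr_\gamma)^{-1}(T_{r(\gamma)}O)$, while $r|_L\colon L\to O$ is a surjective submersion, so $dr_\gamma(T_\gamma\cF_G)=T_{r(\gamma)}O$; a one-line argument (any $v$ with $dr_\gamma v\in T_{r(\gamma)}O$ differs from some $w\in T_\gamma\cF_G$ by an element of $\ker dr_\gamma\subset T_\gamma\cF_G$) then upgrades the second inclusion to an equality, and the $s$-case is identical (or is obtained by applying the inversion diffeomorphism, which swaps $r$ and $s$ and preserves the leaves). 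The point needing care is that $\cF_G$ is a (possibly singular) Stefan foliation and $O$ merely an immersed submanifold, so one really must use that $r|_L,s|_L$ are submersions onto $O$ with vertical spaces $\ker dr_\gamma$, $\ker ds_\gamma$ — a classical fact built into the construction of $\cF_G$ as the $r$-lift of $\cF_{G^{(0)}}$ (cf. \cite{Mackenzie2005}) and already implicit in \eqref{def:feuilletage_groupoid}.

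Everything else is formal. Fix $(\gamma_1,\gamma_2)\in G^{(2)}$, set $\gamma=\gamma_1\gamma_2$, and note that $r(\gamma)=r(\gamma_1)$ and $s(\gamma)=s(\gamma_2)$, so that $\gamma_1,\gamma,\gamma_2$ all have orbit $O=O_{r(\gamma_1)}$. Differentiating $r\circ m=r\circ\pr{1}$ at $(\gamma_1,\gamma_2)$ gives $dr_\gamma\circ dm_{(\gamma_1,\gamma_2)}=dr_{\gamma_1}\circ d(\pr{1})_{(\gamma_1,\gamma_2)}$ on $T_{(\gamma_1,\gamma_2)}G^{(2)}$, so that, applying the displayed formula at $\gamma$ and at $\gamma_1$,
\begin{equation*}
 (dm_{(\gamma_1,\gamma_2)})^{-1}(T_\gamma\cF_G)=(dr_\gamma\circ dm_{(\gamma_1,\gamma_2)})^{-1}(T_{r(\gamma_1)}O)=(dr_{\gamma_1}\circ d(\pr{1})_{(\gamma_1,\gamma_2)})^{-1}(T_{r(\gamma_1)}O)=(d(\pr{1})_{(\gamma_1,\gamma_2)})^{-1}(T_{\gamma_1}\cF_G).
\end{equation*}
The same computation with $s\circ m=s\circ\pr{2}$ yields $(dm_{(\gamma_1,\gamma_2)})^{-1}(T_\gamma\cF_G)=(d(\pr{2})_{(\gamma_1,\gamma_2)})^{-1}(T_{\gamma_2}\cF_G)$. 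Finally, $(T_{\gamma_1}\cF_G\times T_{\gamma_2}\cF_G)^{(2)}$ is by definition the set of $(t_1,t_2)\in T_{(\gamma_1,\gamma_2)}G^{(2)}$ with $t_1\in T_{\gamma_1}\cF_G$ and $t_2\in T_{\gamma_2}\cF_G$, that is $(d(\pr{1})_{(\gamma_1,\gamma_2)})^{-1}(T_{\gamma_1}\cF_G)\cap(d(\pr{2})_{(\gamma_1,\gamma_2)})^{-1}(T_{\gamma_2}\cF_G)$; by the two identities just obtained each member of this intersection equals $(dm_{(\gamma_1,\gamma_2)})^{-1}(T_\gamma\cF_G)$, hence so does the intersection, which is exactly \eqref{eq:lem-stab-fol-under-mult}.
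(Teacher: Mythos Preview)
Your proof is correct and follows a genuinely different route from the paper's. The paper argues at the level of the tangent groupoid $TG$: it observes that for any Lie groupoid $T$ and any leaf $L\in\cF_T$ one has the set-theoretic identity $(L\times L)^{(2)}=m_T^{-1}(L)$, and then applies this with $T=TG$, invoking the fact that the leaves of $\cF_{TG}$ are precisely the tangent bundles $TL$ of the leaves $L\in\cF_G$; the identity \eqref{eq:lem-stab-fol-under-mult} is then the fibre at $(\gamma_1,\gamma_2)$ of this equality.

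Your argument instead works directly with the differentials of $r$, $s$, $m$: you first establish the pointwise characterization $T_\gamma\cF_G=(dr_\gamma)^{-1}(T_{r(\gamma)}O)=(ds_\gamma)^{-1}(T_{s(\gamma)}O)$ and then propagate it via $r\circ m=r\circ\pr{1}$ and $s\circ m=s\circ\pr{2}$. This is more hands-on but also more self-contained: the paper's shortcut relies on the identification $\cF_{TG}=\{TL\,;\,L\in\cF_G\}$, which is stated without proof and, in the singular case, requires exactly the kind of care you take in your first step. Conversely, the paper's approach makes the symmetry of the statement transparent and avoids any explicit chase with $dr$, $ds$. A pleasant by-product of your computation is that you actually show the stronger equality $(d\pr{1})^{-1}(T_{\gamma_1}\cF_G)=(d\pr{2})^{-1}(T_{\gamma_2}\cF_G)=(dm)^{-1}(T_{\gamma}\cF_G)$, not just the intersection.
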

\begin{proof}[Proof of the lemma] 
Let $T$ be a Lie groupoid and $\cF_T$ its canonical foliation. If $(\delta_1,\delta_2)\in T^{(2)}$ then 
 $\delta_1,\delta_2$ and $\delta=\delta_1\delta_2$ are in the same leaf $L$. From the very definition of the leaves of $\cF_T$, we get
\begin{equation}\label{proof:stab-foliat-gpd-under-mult-v2-1}
 (L\times L)^{(2)}=m_T^{-1}(L).
\end{equation}
If $G$ is a Lie groupoid and $T=TG$, we have \(\displaystyle \cF_{TG} = \{ TL\ ; \ L\in \cF_G\}\).
% \begin{equation}\label{proof:stab-foliat-gpd-under-mult-v2-2}
%  \cF_{TG} = \{ TL\ ; \ L\in \cF_G\}.
% \end{equation}
The lemma follows. 
\end{proof}

\begin{proof}[Proof of the theorem]\
\begin{enumerate}
 \item If $\Lambda_j$, $j=1,2$ is admissible, then $(\Lambda_j\times G\times\{0\})^{(2)}= (G\times\{0\}\times\Lambda_j)^{(2)} =\emptyset$, which yields  \eqref{thm:myster-cdt-stab-G-rel-conv-1}, and  
 $r_\Gamma(\Lambda_1.\Lambda_2)\subset r_\Gamma(\Lambda_1)$ and $s_\Gamma(\Lambda_1.\Lambda_2)\subset s_\Gamma(\Lambda_2)$, which yields the admissibility of $\Lambda_1.\Lambda_2$. We then know that $\Lambda=\Lambda_1.\Lambda_2$ is a  local Lagrangian submanifold of $T^*G\setminus 0$ by Corollary \ref{cor:clean-compo-Lagrangian-in-symplectic-gpd}. The homogeneity of $\Lambda$ in the fibers is obvious. 
 
\item Using the equalities
\begin{equation}
 dm.dp^2(T((\Lambda_1\times\Lambda_2)^{(2)}))= dp .dm_\Gamma (T((\Lambda_1\times\Lambda_2)^{(2)})) =dp (T\Lambda)
\end{equation}
Lemma \ref{lem:stab-foliat-gpd-under-mult} and the fact that $\ker(dm) \subset (T\cF_G\times T\cF_G)^{(2)}$, we get the equivalence
\begin{eqnarray*}
  T\cF_G+dp(T\Lambda)= TG \Leftrightarrow (T\cF_G\times T\cF_G)^{(2)} + dp^2(T(\Lambda_1\times \Lambda_2)^{(2)}) = TG^{(2)}.
\end{eqnarray*}
where the suitable base points are understood.
\end{enumerate}
\end{proof}

\begin{rmk}
Geometrically, Condition \eqref{eq:unwanted-condition} means that the composable part of $\Lambda_1\times\Lambda_2$ has a projection into $G^{(0)}$ transversal to the canonical  foliation of $\cF_{G^{(0)}}$.  More precisely, it is easy to check that  \eqref{eq:unwanted-condition} is equivalent to  
\begin{equation}
 d\sigma^2(T_{(\gamma_1,\xi_1,\gamma_2,\xi_2)}(\Lambda_1\times \Lambda_2)^{(2)}) +TO_x = T_xG^{(0)}, \quad \forall 
 (\gamma_1,\xi_1,\gamma_2,\xi_2)\in (\Lambda_1\times \Lambda_2)^{(2)}).
\end{equation}
Here $x=s(\gamma_1)=r(\gamma_2)$ and $\sigma^2 :(T^*G)^{(2)}\to G^{(0)}$ is  defined by $\sigma^2(\gamma_1,\xi_1,\gamma_2,\xi_2)=s(\gamma_1)$.
\end{rmk}

\begin{prop}\label{prop:G-relation-passing-to-subgroupoid}
 Let $\Lambda$ be a $G$-relation and $Y\subset G^{(0)}$ be a saturated submanifold. We note $H=G_Y^Y$ the induced Lie subgroupoid, $i : H\hookrightarrow G$ the inclusion and $\rho :T^*_HG\longrightarrow T^*H$ the restriction map.  
 \begin{enumerate}
 \item If $i$ and $p :\Lambda\to G$ are transversal, then $\Lambda\cap N^*H=\emptyset$ and $i^* \Lambda$ is a local $H$-relation.
  \item If $\Lambda$ is a family, then the assumption in (1) is satisfied and $\rho^* \Lambda$ is a local family $H$-relation.
 \end{enumerate}
\end{prop}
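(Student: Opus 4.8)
The plan is to prove (1) directly and then bootstrap (2) from it. For (1), I would first rewrite the transversality hypothesis: since $p:T^*G\to G$ is a submersion and $T^*_HG=p^{-1}(H)$, transversality of $p|_\Lambda$ with $i$ is the same as the transversality $\Lambda\pitchfork T^*_HG$ inside $T^*G$. Dualizing this at a point $(\gamma,\xi)\in\Lambda$ with $\gamma\in H$ gives $\bigl(dp(T_{(\gamma,\xi)}\Lambda)\bigr)^{\perp}\cap N^*_\gamma H=\{0\}$; combining it with the inclusion $\Lambda\subset(dp(T\Lambda))^{\perp}$ valid for every conic Lagrangian (the vertical-vector argument in the proof of Theorem \ref{thm:Lagrangian-slicing}) and with $\Lambda\subset T^*G\setminus 0$ yields $\Lambda\cap N^*H=\emptyset$. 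Then I would invoke Proposition \ref{prop:model-case-of-convol-phases} in the degenerate case $X=G$, $Z=Y=H$, $f=\mathrm{id}_H$: there its ``$H$'' is $T^*_HG$ and its ``$\mu$'' is precisely the restriction map $\rho$, its clean-intersection hypothesis is the transversality just obtained, and its $N^*Z$-hypothesis is $\Lambda\cap N^*H=\emptyset$; the conclusion is that $i^*\Lambda:=\rho(\Lambda\cap T^*_HG)$ is a conic Lagrangian local submanifold of $T^*H\setminus 0$ (it avoids the zero section exactly because $\Lambda\cap N^*H=\emptyset$). One can alternatively quote directly the clean pull-back of Lagrangians from \cite{GS,Horm-classics}, as is done in the proof of Theorem \ref{thm:Lagrangian-slicing}.

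It remains to check that $i^*\Lambda$ is admissible for $H$, i.e. that it misses $(\ker dr_H)^{\perp}$ and $(\ker ds_H)^{\perp}$ inside $T^*H$, which are the kernels of the source and target maps of $T^*H\rightrightarrows A^*H$ by the analogues of \eqref{diag:cot-gpd-source-overview} and \eqref{diag:cot-gpd-target-overview} for $H$; this is where saturation of $Y$ enters. For $\gamma\in H$ the fibres $G^{r(\gamma)}$ and $G_{s(\gamma)}$ lie entirely inside $H$ (a unit of $Y$ can only be joined by an arrow to units of $Y$), so $G^{r(\gamma)}=H^{r(\gamma)}$, $G_{s(\gamma)}=H_{s(\gamma)}$, hence on tangent spaces $\ker dr_G|_\gamma=\ker dr_H|_\gamma\subset T_\gamma H$ and $\ker ds_G|_\gamma=\ker ds_H|_\gamma\subset T_\gamma H$. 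Now if $(\gamma,\eta)\in i^*\Lambda$ with $\eta\in(\ker dr_H)^{\perp}\subset T^*_\gamma H$, choosing $(\gamma,\xi)\in\Lambda$ with $\xi|_{T_\gamma H}=\eta$ forces $\xi\in(\ker dr_G)^{\perp}=\ker s_\Gamma$, contradicting the admissibility of $\Lambda$; the symmetric argument disposes of $(\ker ds_H)^{\perp}$. Hence $i^*\Lambda$ is a local $H$-relation.

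For (2), suppose $\Lambda$ is a family $G$-relation. First, the hypothesis of (1) is automatic: by \eqref{def:feuilletage_groupoid} and the fibre-containment above, $T_\gamma\cF_G=T_\gamma G_{s(\gamma)}+T_\gamma G^{r(\gamma)}\subset T_\gamma H$ for $\gamma\in H$, so the transversality $T_\gamma\cF_G+dp(T_{(\gamma,\xi)}\Lambda)=T_\gamma G$ of \eqref{eq:transver-cond-G-rel-ter} a fortiori gives $T_\gamma H+dp(T_{(\gamma,\xi)}\Lambda)=T_\gamma G$, that is $p|_\Lambda\pitchfork i$. So (1) applies and $\rho^*\Lambda=i^*\Lambda$ is a local $H$-relation; there remains to see it is a family, which by the local form of Proposition \ref{prop:charact-family-G-rel} (condition \eqref{eq:transver-cond-G-rel-ter} written for $H$) amounts to $T_\gamma\cF_H+dp(T_{(\gamma,\eta)}\rho^*\Lambda)=T_\gamma H$ for all $(\gamma,\eta)\in\rho^*\Lambda$. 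I would supply two facts: first, $T_\gamma\cF_H=T_\gamma\cF_G$ for $\gamma\in H$, again from \eqref{def:feuilletage_groupoid} together with $H_{s(\gamma)}=G_{s(\gamma)}$, $H^{r(\gamma)}=G^{r(\gamma)}$; second, picking $(\gamma,\xi)\in\Lambda$ over $(\gamma,\eta)$, the transversal intersection $\Lambda\pitchfork T^*_HG$ gives $T(\Lambda\cap T^*_HG)=T\Lambda\cap dp^{-1}(T_\gamma H)$, and $\rho$ restricts on a patch of $\Lambda\cap T^*_HG$ to a diffeomorphism onto a patch of $\rho^*\Lambda$ compatible with the projections to $G$ and to $H$, whence $dp(T_{(\gamma,\eta)}\rho^*\Lambda)=dp(T_{(\gamma,\xi)}\Lambda)\cap T_\gamma H$. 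Feeding these into the target identity, it becomes the elementary linear statement that $W\subseteq U\subseteq E$ and $W+V=E$ imply $W+(V\cap U)=U$, applied with $W=T_\gamma\cF_G$, $U=T_\gamma H$, $E=T_\gamma G$, $V=dp(T_{(\gamma,\xi)}\Lambda)$; hence $\rho^*\Lambda$ is a local family $H$-relation.

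The geometric input (saturation forces the $r$- and $s$-fibres over $Y$, hence $\ker dr$, $\ker ds$ and the leaf tangents $T\cF_G$, to sit inside $TH$ along $H$) is short, and the symplectic content is outsourced to Propositions \ref{prop:model-case-of-convol-phases} and \ref{prop:charact-family-G-rel}. I expect the only delicate point to be the second fact used in (2): making the identity $dp(T\rho^*\Lambda)=dp(T\Lambda)\cap TH$ rigorous patch by patch for the immersed (possibly self-intersecting) local submanifold $\rho^*\Lambda$, while keeping the ``local'' qualifiers consistent throughout — routine, but the step that most needs care.
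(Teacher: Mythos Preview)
Your proof is correct and follows essentially the same route as the paper: in (1) you reduce to the clean/transversal pull-back of Lagrangians and then check admissibility via $\ker dr_G|_H=\ker dr_H$, $\ker ds_G|_H=\ker ds_H$; in (2) you use $T_\gamma\cF_G=T_\gamma\cF_H\subset T_\gamma H$ to get the transversality of (1), then the identity $dp(T\rho^*\Lambda)=dp(T\Lambda)\cap T_\gamma H$ (from $\Lambda\pitchfork T^*_HG$) plus the elementary linear fact $W\subset U$, $W+V=E\Rightarrow W+(V\cap U)=U$. The paper does exactly this, writing the linear fact as \eqref{eq:G-relation-passing-to-subgroupoid-2-2} and the identity as the chain \eqref{eq:G-relation-passing-to-subgroupoid-2-3}.

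One small difference worth noting: for $\Lambda\cap N^*H=\emptyset$ you use the conic-Lagrangian inclusion $\Lambda\subset(dp(T\Lambda))^\perp$ together with the transversality, as in the proof of Theorem~\ref{thm:Lagrangian-slicing}. The paper instead observes that saturation gives $N^*H=\ker\rho\subset\ker s_\Gamma\cap\ker r_\Gamma$ (since $\ker ds_G,\ker dr_G\subset TH$ along $H$), so admissibility of $\Lambda$ alone already forces $\Lambda\cap N^*H=\emptyset$. Your argument has the slight advantage of not needing saturation at that particular step; the paper's has the advantage of packaging the admissibility check into a single commutative-diagram statement $\rho(\Lambda)\cap\ker\sigma_{T^*H}=\rho(\Lambda\cap\ker\sigma_{T^*G})$.
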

\begin{proof}
\begin{enumerate}
\item By transversality of $H$ and $p :\Lambda\to G$, the set $i^*\Lambda=\rho(\Lambda)$ is a local Lagrangian submanifold. Since $Y$ is saturated, we have $G_x=H_x$ and  $G^x=H^x$ for all $x\in Y$. This yields the equality $A_Y^*G=A^*H$ and the commutative diagram

 \begin{center}
 \begin{tikzcd}
 T_H^*G \arrow[rr, "\rho"] \arrow[d, xshift=0.7ex, "s_{T^*G}"] \arrow[d, xshift=-0.7ex, "r_{T^*G}"'] & & \arrow[d, xshift=0.7ex, "s_{T^*H}"] \arrow[d, xshift=-0.7ex, "r_{T^*H}"'] T^*H \\
 A_Y^*G \arrow[rr, equal] & & A^*H 
 \end{tikzcd}
 \end{center}
In particular $\ker \sigma_{T^*H}=\rho (\ker \sigma_{T^*G})$, $\sigma=s,r$ and the admissibility of $\Lambda$ implies 
\[
 \rho(\Lambda)\cap \ker \sigma_{T^*H}=\rho(\Lambda \cap \ker \sigma_{T^*G})= \emptyset,
\]
where the first equality holds for $\ker \rho \subset \ker \sigma_{T^*G}$.

\item By assumption we have
\begin{equation}\label{eq:G-relation-passing-to-subgroupoid-2}
 T_\gamma \cF_G + dp_G(T_{(\gamma,\xi)}\Lambda) = T_\gamma G, \ \forall (\gamma,\xi)\in \Lambda\cap T_H^*G.
\end{equation}
By saturation of $Y$ again, we have $\cF_H=\{L\in \cF_G\ ;\ L\cap H\not=\emptyset\}$. Thus $T_\gamma \cF_G=T_\gamma \cF_H\subset T_\gamma H $, for all $\gamma\in H$ and \eqref{eq:G-relation-passing-to-subgroupoid-2} gives 
\begin{equation}
\label{eq:G-relation-passing-to-subgroupoid-2-0}
 T_\gamma H + dp(T_{(\gamma,\xi)}\Lambda) = T_\gamma G,\ \forall (\gamma,\xi)\in \Lambda\cap T_H^*G.
\end{equation}
which is the assumption made in (1) and observe that is also equivalent to the property:
\begin{equation}
\label{eq:G-relation-passing-to-subgroupoid-2-1}
  T^*_HG \pitchfork \Lambda.
\end{equation}
We also get from \eqref{eq:G-relation-passing-to-subgroupoid-2}
\begin{equation}\label{eq:G-relation-passing-to-subgroupoid-2-2}
  T_\gamma H= (T_\gamma \cF_G + dp_G(T_{(\gamma,\xi)}\Lambda))\cap T_\gamma H=T_\gamma \cF_H + dp_G(T_{(\gamma,\xi)}\Lambda)\cap T_\gamma H , \ \forall (\gamma,\xi)\in \Lambda\cap T_H^*G.
\end{equation}
Furthermore, 
\begin{eqnarray}\label{eq:G-relation-passing-to-subgroupoid-2-3}
 dp_G(T_{(\gamma,\xi)}\Lambda)\cap T_\gamma H
 &=&  dp_G(T_{(\gamma,\xi)}\Lambda)\cap T_{(\gamma,\xi)}T^*_HG) \text{ since } \ker d(p_G)\vert_H\subset T^*_HG \nonumber\\
 &=& dp_G(T_{(\gamma,\xi)}(\Lambda \cap T^*_HG)) \text{ using } \eqref{eq:G-relation-passing-to-subgroupoid-2-1}\nonumber\\
 &=& dp_H\circ d\rho (T_{(\gamma,\xi)}(\Lambda \cap T^*_HG)) \text{ since } (p_G)\vert_H= p_H\circ \rho \nonumber\\
 &=& dp_H(T\rho(\Lambda)) \text{ by definition of } \rho(\Lambda).
\end{eqnarray}
Using the result of this computation in \eqref{eq:G-relation-passing-to-subgroupoid-2-2} proves that $\rho(\Lambda)$ is a family $H$-relation. 
\end{enumerate}
\end{proof}

\section{Fourier integral operators on groupoids}\label{G-FIO}

\subsection{Definitions}
Following \cite{Horm-classics}, we are lead to 
\begin{defn}
 Let $G$ be a Lie groupoid. Distributions belonging to $I(G,\Lambda;\Omega^{1/2})$ where $\Lambda$ is any (family) local $G$-relation are called  (family) Fourier integral $G$-operators.  
\end{defn}
We abbreviate Fourier integral $G$-operators into $G$-FIO and family Fourier integral $G$-operators into $G$-FFIO.
If $\Lambda$ is a $G$-relation then it is by definition admissible and we get from \cite{LMV1}
\begin{equation}
 \label{eq:defn-FIO-consequence-1}
  I(G,\Lambda;\Omega^{1/2}) \subset \cD'_{r,s}(G,\Omega^{1/2}).
\end{equation}
In particular, any $G$-FIO $u$ produces an equivariant $C^\infty$ family of operators $u_x :C^\infty_c(G_x)\to C^\infty(G_x)$,  $x\in G^{(0)}$, but each $u_x$ is not necessarily a Fourier integral operator  on $G_x$. It is worth to give an example. 

\begin{exam}\label{exam:G-FIO-not-G-FFIO}
 Consider the fibred pair groupoid $G=X\times X\times Z\rightrightarrows X\times Z$ with $X=Z=\RR$. 
Consider the open cone 
\begin{equation}
 \cC=\{(\gamma,\theta)\in G\times \RR^2\setminus 0\ ;\  \theta\in\cC_{x_1}  \}
\end{equation}
 where $\gamma=(x_1,x_2,x_3)$ and $\theta\in\cC_{x_1}$ means  
\begin{equation}
  2x_1\theta_2+\theta_1\not = 0,\   \theta_1\not=0.
\end{equation}
The function
\begin{equation}
 \phi : (\gamma,\theta) \longmapsto (x_1-x_2).\theta_1+ (x^2_1-z).\theta_2
\end{equation}
is a non degenerated phase function with associated Lagrangian given by 
\begin{equation}\label{eq:G-rel-not-family-G-rel}
 \Lambda =\{ (x,x,x^2,\theta_1+2x\theta_2,-\theta_1,-\theta_2)\ ;\ x\in\RR,  \theta\in\cC_{x} \}\subset T^*G\setminus 0.
\end{equation}
$\Lambda$ is a $G$-relation, but fails to be a family $G$-relation at the points where $x=0$.
Consider the closed cone 
\begin{equation}
 F=\{ (\gamma,\theta)\in G\times \RR^2 \ ;\ \vert \gamma\vert\le 1,\  2\vert  \theta_2\vert\le \vert \theta_1\vert   \}\subset \cC\cup G\times \{0\}
\end{equation}
and choose even functions $\chi\in C^{\infty}_c(\RR)$ and $b\in C^{\infty}_c(\RR^3)$ such that $\chi(0)=\chi''(0)=1$,  $\chi(t)=0$ if $\vert t\vert \ge \frac{1}{2}$, $\supp{b}\subset \{\gamma,\ \vert \gamma\vert \le 1\}$ and $b(0)=1$. Let choose a symbol $a\in S^{1}(G\times \RR^2)$, with support in $F$, such that 
 $a(\gamma,\theta)= b(\gamma)\chi(\theta_2/\theta_1)\theta_1$ when $\vert \theta\vert \ge 1$. Then 
 \begin{equation}
  u(\gamma) =\int e^{i\phi(\gamma,\theta)} a(\gamma,\theta)d\theta \in I^*(G,\Lambda)
 \end{equation}
and we look at the distribution $u_0=m^*_{(0,0)}u$ on $G^{(0,0)}\times G_{(0,0)}\simeq \RR^2$. It is given by 
\begin{equation}\label{exam:FIO-not-FFIO-7}
  u_0(x_1,x_2) =\int e^{i((x_1-x_2).\theta_1+ x^2_1.\theta_2) } a_0(x_1,x_2,\theta_1,\theta_2)d\theta_1d\theta_2 \in \cD'(\RR^2)
 \end{equation}
understood as a distribution where $a_0(x_1,x_2,\theta_1,\theta_2)=a(x_1,x_2,0,\theta_1,\theta_2)$. Indeed, observe that  
\begin{equation}
 \phi_0 : (x,\theta)\longmapsto (x_1-x_2).\theta_1+ x^2_1.\theta_2
\end{equation}
is a phase function on $\cC_0=\{ (x,\theta)\in \RR^2\times\RR^2\ ;\ \theta\in \cC_{x_1}\} $ and that $a_0\in S^*(\RR^2\times\RR^2)$ is supported in 
\begin{equation}
 F_0 =\{(x,\theta)\ ; \  \vert x \vert\le 1,\  2\vert  \theta_2\vert\le \vert \theta_1\vert \}\subset \cC_0\times \RR^2\times\{0\}.
\end{equation}
It follows that \eqref{exam:FIO-not-FFIO-7} is an oscillatory integral \cite[Paragraph 7.8]{Horm-classics} and thus, by \cite[Theorem 8.1.9]{Horm-classics} 
\begin{equation}\label{exam:FIO-not-FFIO-10}
 \WF{u_0} \subset \Lambda_{0}=\{ (0,0,\theta_1,-\theta_1)\ ; \ \theta_1\not=0\}.
\end{equation}
As expected, $\Lambda_{0}$ fails to be a Lagrangian submanifold of $T^*\RR^2\setminus 0$ (actually, it is a one dimensional isotropic conic submanifold) and even more, there is no Lagrangian submanifold $\Lambda'$ of $T^*\RR^2\setminus 0$ such that $u_0\in I^*(\RR^2,\Lambda')$. Before proving this assertion, observe that \eqref{exam:FIO-not-FFIO-10} implies $u_0\in C^\infty(\RR^2\setminus 0)$ and that for any $x$ with $x_1\not=0$
\begin{eqnarray*}
u_0(x)&=&b_0(x)\int e^{i((x_1-x_2).\theta_1+ x^2_1.\theta_2) } \chi(\theta_2/\theta_1)\theta_1 d\theta_1 d\theta_2 \text{ modulo } C^\infty(\RR^2) \\
   &=& b_0(x)\int e^{i((x_1-x_2).\theta_1+ x^2_1.\theta_1\theta_2) } \theta_1^2\chi(\theta_2)d\theta_1 d\theta_2 
   = b_0(x)\int e^{i(x_1-x_2).\theta_1 } \widehat{\chi}(-x_1^2\theta_1)\theta_1^2 d\theta_1 \\
   &=& b_0(x)x_1^{-6}\int e^{i\frac{x_2-x_1}{x_1^2}.\theta_1 } \widehat{\chi}(\theta_1)\theta_1^2  d\theta_1 
   = x_1^{-6}b_0(x)\chi''(\frac{x_2-x_1}{x_1^2}).
\end{eqnarray*}
Thus, $u_0$ is not $C^\infty$ at $(0,0)$ and $\WF{u_0}$ contains at least a half line in $T^*_{(0,0)}\RR^2$. Since $u_0$ is even, $\WF{u_0}$ also contains the opposite half line. This proves the equality in \eqref{exam:FIO-not-FFIO-10}. Now assume that $u_0\in I^m(\RR^2,\Lambda')$ for some Lagrangian $\Lambda'$. If the principal symbol $\sigma(u_0)$ does not vanish at some point $(x_0,\xi_0)\in \Lambda'$, then $(x_0,\xi_0)\in \WF{u_0}$. Thus $\sigma(u_0)$ must vanish on $\Lambda'\setminus \Lambda_{0}$. Since $\Lambda_0$ is one dimensional, it has empty interior in $\Lambda'$ and it follows that 
 $\sigma(u_0)$ vanishes identically. Thus $u_0\in I^{m-1}(\RR^2,\Lambda')$ and repeating the argument proves that $u_0$ is $C^\infty$, which is a contradiction.
\end{exam}
The phenomenon enlighted in this example precisely disappears for Fourier integral $G$-operators associated with family $G$-relations. Indeed,  
\begin{thm}\label{thm:equiv-G-FIO-and-families} Let $\Lambda$ be a family $G$-relation and $u\in \cD'(G,\Omega^{1/2})$. Then 
 $u\in I(G,\Lambda;\Omega^{1/2})$ if and only if 
 $u$ is a $G$-operator and $u_x=m^*_x(u)\in I(G_x\times G^x,m_x^*\Lambda; \Omega^{1/2}_{G_x\times G^x})$ for all $x\in G^{(0)}$.
\end{thm}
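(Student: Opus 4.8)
The plan is to transfer everything to the submersion $\pi:G^{(2)}\to G^{(0)}$ and the Lagrangian $m^*\Lambda=\widetilde{m_\Gamma}^{-1}(\Lambda)$, and then run the slicing/gluing dictionary of Section \ref{sec:Lagrange-family}. Three elementary remarks underlie the argument. (i) If $i_x:G^{(2)}_x\hookrightarrow G^{(2)}$ is the inclusion of the $\pi$-fiber over $x$, then $m_x=m\circ i_x$, so $m_x^*(u)=i_x^*(m^*(u))$ and $m_x^*\Lambda=i_x^*(m^*\Lambda)$ whenever the pull-backs are legitimate. (ii) By Definition \ref{defn:family-G-relation}, saying that $\Lambda$ is a family $G$-relation is saying that $m^*\Lambda$ is a conic Lagrangian local submanifold of $T^*G^{(2)}$ transverse to $\pi$; Theorem \ref{thm:Lagrangian-slicing} then makes $(m_x^*\Lambda)_x$ a $C^\infty$ family of conic Lagrangian local submanifolds subordinated to $\pi$, disjoint from $(\ker d\pi)^\perp=\bigcup_x N^*(G^{(2)}_x)$, and by Theorem \ref{thm:gluing-family-of-Lagrangian} its (unique) gluing is $m^*\Lambda$. (iii) Pull-back along the submersion $m$ carries $I(G,\Lambda;\Omega^{1/2})$ into $I(G^{(2)},m^*\Lambda;\cdot)$: locally, if $u=\int e^{i\phi(\gamma,\theta)}a(\gamma,\theta)\,d\theta$ then $\phi\circ(m\times\id)$ is again a non-degenerate phase parametrizing $m^*\Lambda_\phi$, the order shift and half-density twist being dictated by $\dim G^{(2)}-\dim G$ and by $\pi$.

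For the direct implication, let $u\in I(G,\Lambda;\Omega^{1/2})$. Since a $G$-relation is admissible, \eqref{eq:defn-FIO-consequence-1} gives $u\in\cD'_{r,s}(G,\Omega^{1/2})$, so $u$ is a $G$-operator and the family $(u_x)_x$ is defined. By (iii), $m^*u$ is a Lagrangian distribution with $\WF{m^*u}\subset m^*\Lambda$; by (ii) this set meets no $N^*(G^{(2)}_x)$, so the restrictions $i_x^*(m^*u)$ are well defined, and by Proposition \ref{prop:global-distri-to-family-submersion-case} the assignment $x\mapsto i_x^*(m^*u)$ is a $C^\infty$ family with $i_x^*(m^*u)\in I(G_x\times G^x,i_x^*(m^*\Lambda);\Omega^{1/2})$ for all $x$. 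By (i), $i_x^*(m^*u)=m_x^*(u)=u_x$ and $i_x^*(m^*\Lambda)=m_x^*\Lambda$, which is the claim.

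For the converse, assume $u$ is a $G$-operator and $u_x\in I(G_x\times G^x,m_x^*\Lambda;\Omega^{1/2})$ for all $x$. From the description of $G$-operators in \cite{LMV1}, the family $(u_x)_x$ is $C^\infty$ as a family of distributions and $m^*u$ is recovered by $\langle m^*u,f\rangle=\int_{G^{(0)}}\langle u_x,f\rangle$. Combining this smoothness with the fact, from (ii) and Theorem \ref{thm:family-Lagrangian-and-generating-function}, that $(m_x^*\Lambda)_x$ is parametrized by a single $x$-dependent family of non-degenerate phase functions, one checks that $(u_x)_x$ is a $C^\infty$ family of Lagrangian distributions in the sense of Definition \ref{defn:family-FIO-submersion}: in adapted coordinates each $u_x$ equals $\int e^{i\phi(x,\cdot,\theta)}a_x(\theta)\,d\theta$ modulo $C^\infty$, and the symbol $a_x$, determined modulo smoothing by $u_x$ and the fixed phase, depends smoothly on $x$. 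Proposition \ref{prop:global-distri-to-family-submersion-case} then gives $m^*u\in I(G^{(2)},m^*\Lambda;\cdot)$, the Lagrangian being the gluing $m^*\Lambda$. It remains to descend to $u$: around any point of $G$ choose a local $C^\infty$ section $\sigma:U\to G^{(2)}$ of $m$ (an embedding onto a submanifold transverse to $\ker dm$), so that $u|_U=\sigma^*(m^*u)$; since $dm$ restricts to an isomorphism $T\sigma(U)\to TG$, for any $(\gamma,\zeta)\in m^*\Lambda$ over $\sigma(U)$, writing $\zeta={}^t(dm)(\xi)$ with $(m(\gamma),\xi)\in\Lambda$ and $\xi\ne0$, the covector $\zeta$ does not annihilate $T\sigma(U)$, i.e.\ $m^*\Lambda\cap N^*\sigma(U)=\emptyset$. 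Hence $\sigma^*(m^*u)$ is a legitimate restriction, a Lagrangian distribution subordinated to $\sigma^*(m^*\Lambda)=(m\circ\sigma)^*\Lambda=\Lambda|_U$, and patching over a cover of $G$ gives $u\in I(G,\Lambda;\Omega^{1/2})$.

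The one genuinely delicate point is that the whole argument hinges on remark (ii): it is precisely transversality of $m^*\Lambda$ to $\pi$ that makes Section \ref{sec:Lagrange-family} applicable, and Example \ref{exam:G-FIO-not-G-FFIO} shows that for a mere $G$-relation the slices $m_x^*\Lambda$, hence the restrictions $u_x$, may fail to be Lagrangian. The descent step in the converse looks like the crux but is soft, being just the observation that local sections of $m$ are transverse to its fibers while admissibility keeps $m^*\Lambda$ off their conormals; the only real chore is the bookkeeping of Hörmander orders and of the half-density twist coming from $\pi$ in Proposition \ref{prop:global-distri-to-family-submersion-case}.
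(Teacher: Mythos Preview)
Your forward direction coincides with the paper's: pull $u$ back along the submersion $m$ to get $m^*u\in I(G^{(2)},m^*\Lambda)$, then use transversality of $m^*\Lambda$ to $\pi$ and Proposition~\ref{prop:global-distri-to-family-submersion-case} to slice. For the converse you diverge in the descent step. After gluing the family into $m^*u\in I(G^{(2)},m^*\Lambda)$ via Proposition~\ref{prop:global-distri-to-family-submersion-case}, the paper applies Proposition~\ref{lem:compo-LD-model-case} with $X=Z=G^{(2)}$, $Y=G$, $f=m$ to push forward along $m$ and recover $u\in I(G,\Lambda)$ directly. You instead pull back along local sections $\sigma$ of $m$. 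Your route is a touch more elementary (it avoids the fiber-integration machinery of Proposition~\ref{lem:compo-LD-model-case}) and makes the identity $u=\sigma^*(m^*u)$ transparent; the paper's route has the virtue of giving the order shift in one stroke.

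One small point you skate over: checking $m^*\Lambda\cap N^*\sigma(U)=\emptyset$ only ensures that $\sigma^*(m^*u)$ is a well-defined distribution. To conclude that it is a \emph{Lagrangian} distribution on $U$ subordinated to $\sigma^*(m^*\Lambda)$ you need the transversality $T\sigma(U)+dp\bigl(T(m^*\Lambda)\bigr)=TG^{(2)}$ (so that the restricted phase remains non-degenerate, as in~\eqref{eq:push-forward-LD-model-case-1}). This is automatic here, since $\ker dm\subset dp\bigl(T(m^*\Lambda)\bigr)$ by~\eqref{eq:rem-about-G-rel-3} and $T\sigma(U)\oplus\ker dm=TG^{(2)}$ because $\sigma$ is a section; it would be worth saying so explicitly. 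Your remark that the symbol $a_x$ depends smoothly on $x$ because it is determined (modulo smoothing) by $u_x$ and a fixed $x$-dependent phase is correct in spirit but compressed; the paper glosses over the same point.
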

\begin{proof}
Let us assume $u\in I(G,\Lambda;\Omega^{1/2})$. Then, as recalled before the theorem, $u$ is a $G$-operator and the pull-back distribution by the submersion $m$ gives 
\[
 m^*(u)\in I(G^{(2)},m^*\Lambda;m^*\Omega^{1/2})
\]
Since $m^*\Lambda$ is transversal to $\pi : G^{(2)}\to G^{(0)}$, Proposition \ref{prop:global-distri-to-family-submersion-case} gives the result for all the  $m^*_x(u)$, $x\in G^{(0)}$.

Conversely,  Proposition \ref{prop:global-distri-to-family-submersion-case} gives rise to distribution $\widetilde{u}\in  I(G^{(2)}, m^*\Lambda; m^*\Omega^{1/2})$ such that $\widetilde{u}\vert_{G_x\times G^x}= u_x$ and the result follows from the proposition \ref{lem:compo-LD-model-case} applied to $X=Z=G^{(2)}$, $Y=G$ and $f=m$, which yields $u=m_*\widetilde{u}\in I(G,\Lambda;\Omega^{1/2})$.
\end{proof}

%%%%%%%%%%%%%%%%%%%%%%%%%%%%%%%%%%%%%%%%%%%%%%
\subsection{Adjoint and composition}
Now, we can consider $G$-FFIO equivalently as family of usual Fourier integral operators or as single Lagrangian distributions on $G$, whose underlying Lagrangian submanifold $\Lambda$ has suitable properties. The  second choice leads to simpler and more conceptual statements and also reveals the role played by the cotangent groupoid $T^*G$. Moreover, most of the statements hold true for the more general class of $G$-FIO. The next two theorems argue for this point of view. 

 \begin{thm}\label{thm:FIO-adjoint} Let $\Lambda$ be a $G$-relation and set  $\Lambda^\star=i_{\Gamma}\Lambda$. If $A\in I^m(G,\Lambda)$ then $A^\star\in I^m(G,  \Lambda^\star)$.
\end{thm}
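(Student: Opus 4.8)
The plan is to produce $A^\star$ from the distribution $u$ on $G$ representing $A$ by two elementary operations — pull-back by the inversion diffeomorphism $i\colon G\to G$ and complex conjugation — and to track how the underlying conic Lagrangian transforms under each.

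First, since $\Lambda$ is admissible we have $A\in I^m(G,\Lambda;\Omega^{1/2})\subset\cD'_{r,s}(G,\Omega^{1/2})$ by \eqref{eq:defn-FIO-consequence-1}, so $A$ is an adjointable $G$-operator, and in the half-density formalism of \cite{LMV1} its adjoint $A^\star$ is the $G$-operator associated with $u^\star:=\overline{i^*u}$. Here one uses that $i$ is a diffeomorphism with $s\circ i=r$, so $di$ identifies $\ker ds$ with $\ker dr$ and hence $i^*\Omega^{1/2}\simeq\Omega^{1/2}$ canonically; this is precisely why $u^\star$ is again a section of the same half-density bundle, with no Jacobian factor.

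Next I would invoke the classical behaviour of Lagrangian distributions under a diffeomorphism and under conjugation \cite{Horm-classics}. Pull-back by $i$ maps $I^m(G,\Lambda;\Omega^{1/2})$ isomorphically onto $I^m(G,i^*\Lambda;\Omega^{1/2})$, with $i^*\Lambda=\{(\gamma,{}^tdi_\gamma\eta)\ ;\ (\gamma^{-1},\eta)\in\Lambda\}$ and the order unchanged thanks to the $\Omega^{1/2}$-twist in the definition. Using $di_{\gamma^{-1}}=(di_\gamma)^{-1}$ and the formula $i_\Gamma(\delta,\eta)=(\delta^{-1},-({}^tdi_\delta)^{-1}\eta)$, one rewrites
\begin{equation*}
 i^*\Lambda=\{(\delta^{-1},({}^tdi_\delta)^{-1}\eta)\ ;\ (\delta,\eta)\in\Lambda\}=a\bigl(i_\Gamma(\Lambda)\bigr)=a(\Lambda^\star),
\end{equation*}
where $a\colon(\gamma,\xi)\mapsto(\gamma,-\xi)$ is the fibrewise antipode. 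Finally, complex conjugation sends $I^m(G,\Lambda';\Omega^{1/2})$ into $I^m(G,a(\Lambda');\Omega^{1/2})$: if $\phi$ is a non-degenerate phase parametrizing $\Lambda'$ then $-\phi$ parametrizes $a(\Lambda')$ and the conjugate amplitude is a symbol of the same order. Composing the two steps gives $u^\star=\overline{i^*u}\in I^m\bigl(G,a(a(\Lambda^\star));\Omega^{1/2}\bigr)=I^m(G,\Lambda^\star;\Omega^{1/2})$, i.e. $A^\star\in I^m(G,\Lambda^\star)$. One also notes that $\Lambda^\star$ is again a $G$-relation, since $i_\Gamma$ is a groupoid automorphism of $T^*G$ interchanging $\ker s_\Gamma$ and $\ker r_\Gamma$; for local $G$-relations the statement follows by decomposing $\Lambda$ into patches.

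The two transformation rules above are routine. The step needing care — and the main obstacle — is the half-density bookkeeping: one must verify that passing from the Schwartz kernel of $A$ to that of $A^\star$ is exactly the conjugate pull-back by $i$ with no correction term, which is where the groupoid-adapted choice of $\Omega^{1/2}$ and the identity $s\circ i=r$ enter, as established in \cite{LMV1}.
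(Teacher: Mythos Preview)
Your proof is correct and is essentially the same as the paper's: the paper writes $A^\star(\gamma)=\int e^{-i\phi(\gamma^{-1},\theta)}\overline{a(\gamma^{-1},\theta)}\,d\theta$ directly and identifies the new phase $\psi(\gamma,\theta)=-\phi(\gamma^{-1},\theta)$ as a non-degenerate phase parametrizing $i_\Gamma(\Lambda)$, which is exactly the combined effect of your two steps (pull-back by $i$ followed by conjugation) carried out at the oscillatory-integral level. The only difference is packaging: you invoke the standard functoriality of Lagrangian distributions under diffeomorphism and conjugation, whereas the paper unpacks that functoriality explicitly.
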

\begin{proof}
It is sufficient to consider the case $A(\gamma)=\int e^{i\phi(\gamma,\theta)}a(\gamma,\theta)d\theta$ with $\phi$ a non degenerate phase function parametrizing locally $\Lambda$. Then 
\begin{equation}
A^\star(\gamma)=\int e^{-i\phi(\gamma^{-1},\theta)}\overline{a(\gamma^{-1},\theta)}d\theta.
\end{equation}
The function $b(\gamma,\xi)=\overline{a(\gamma^{-1},\theta)}$ is a symbol of the same order as $a$.  The function $\psi(\gamma,\theta)= -\phi(\gamma^{-1},\theta)$ is also a non degenerate phase function and 
\begin{equation}
\Lambda_\psi = \{ (\gamma,\xi)\in T^*G\ ;\ (\gamma^{-1},-{}^t(di_\gamma)(\xi))\in \Lambda_\phi\}.
\end{equation}
Since $i_\Gamma(\gamma,\xi)=  (\gamma^{-1},-{}^t(di_\gamma)(\xi))$, we get the result. 
\end{proof}
Note that if $\Lambda$ is moreover a family, then $\Lambda^\star$ too, and the adjoint of a $G$-FFIO $u\in I(G,\Lambda)$ is given by the family of adjoints of each Fourier integral operator $u_x$ on $G_x$.
\begin{thm}
 \label{thm:compo-FIO-on-gpd}
Let  $\Lambda_1,\Lambda_2$ be closed $G$-relations which are cleanly convolable with excess $e$. 
If $A_1\in I^{m_1}_c(G,\Lambda_1)$ and $A_2\in I^{m_2}_c(G,\Lambda_2)$ then
\begin{equation}
 \label{eq:comp-FIO-on-gpd}
 A_1.A_2 \in I^{m_1+m_2+e/2+n^{(0)}/2-n/4}(G,\Lambda_1.\Lambda_2).
\end{equation}
Here $n$ is the dimension of $G$ and $n^{(0)}$ is the dimension of $G^{(0)}$. \\
If moreover, $\Lambda_1,\Lambda_2$ are families and completely convolable (i.e. condition \eqref{eq:unwanted-condition} is fulfilled), then $ A_1.A_2$ is a family Fourier integral $G$-operator.
\end{thm}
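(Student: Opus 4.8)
The plan is to reduce the composition theorem for $G$-FIO to the classical composition theorem of Hörmander (\cite[Theorem 25.2.3]{Horm-classics}) by realizing $A_1.A_2$ as an iterated pull-back/push-forward along the groupoid structure maps, exactly as was done in the proof of Theorem \ref{thm:equiv-G-FIO-and-families} and in Proposition \ref{lem:compo-LD-model-case}. First I would recall that the convolution $A_1.A_2$ of distributions on $G$ is, by its very definition in \cite{LMV1}, obtained by forming the exterior tensor product $A_1\boxtimes A_2$ on $G\times G$, restricting it to $G^{(2)}$, and pushing it forward along the submersion $m: G^{(2)}\to G$. Concretely, I would write $A_1\boxtimes A_2 \in I^{m_1+m_2}(G^2,\Lambda_1\times\Lambda_2;\Omega^{1/2})$ using the standard fact that exterior tensor products of Lagrangian distributions are Lagrangian with the product Lagrangian, then apply Proposition \ref{lem:compo-LD-model-case} with $X=G^2$, $Z=G^{(2)}$, $Y=G$, $f=m$, $\widetilde\Lambda=\Lambda_1\times\Lambda_2$. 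The clean convolability hypothesis says precisely that $\widetilde\Lambda\cap \Gamma^{(2)}$ is clean with excess $e$, and the admissibility of the $\Lambda_j$ (together with $r_\Gamma\circ m_\Gamma = r_\Gamma\circ\pr{1}$, as noted just before Definition \ref{defn:G-rel}) gives $\widetilde\Lambda\cap N^*(G^{(2)})=\emptyset$, so the hypotheses of Proposition \ref{lem:compo-LD-model-case} are met, and the pushed-forward Lagrangian is exactly $m_\Gamma(\widetilde\Lambda\cap\Gamma^{(2)})=\Lambda_1.\Lambda_2$ by Corollary \ref{cor:clean-compo-Lagrangian-in-symplectic-gpd} and Corollary \ref{prop:clean-compo-phases-gpd}.

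Next I would do the order bookkeeping. Proposition \ref{lem:compo-LD-model-case} gives, for $u\in I^{\mu}_c(X,\widetilde\Lambda;\Omega)$, that $f_\#u\in I^{\mu+e/2+(n_X-2n_Z+n_Y)/4}_c(Y,\Lambda)$. Here $n_X=\dim G^2=2n$, $n_Z=\dim G^{(2)}=2n-n^{(0)}$ (since $G^{(2)}\subset G^2$ is the fibre product over $G^{(0)}$ along two submersions of fibre dimension $n-n^{(0)}$), and $n_Y=\dim G=n$. Substituting, $(n_X-2n_Z+n_Y)/4 = (2n - 2(2n-n^{(0)}) + n)/4 = (-n + 2n^{(0)})/4 = n^{(0)}/2 - n/4$. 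With $\mu = m_1+m_2$ this yields order $m_1+m_2+e/2+n^{(0)}/2-n/4$, matching \eqref{eq:comp-FIO-on-gpd}. One should also check that $A_1\boxtimes A_2$ carries the half-density bundle $\Omega^{1/2}_{G^2}$ compatibly with the line bundle condition $\Omega|_{G^{(2)}}=\Omega_{G^{(2)}}$ of Proposition \ref{lem:compo-LD-model-case}; this is the routine matching of half-density bundles under the identifications $\Omega^{1/2}_{G^2}|_{G^{(2)}}\otimes(\text{normal density})\cong\cdots$ already implicit in the convolution formalism of \cite{LMV1}, and I would simply invoke it. Properness of supports ensures the push-forward is defined and the result is again compactly supported (or at least properly supported as a $G$-operator), using the remark after Proposition \ref{lem:compo-LD-model-case}.

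For the last sentence — the family statement — I would argue as follows. Assume in addition that $\Lambda_1,\Lambda_2$ are families and that the complete convolability condition \eqref{eq:unwanted-condition} holds. By Theorem \ref{thm:myster-cdt-stab-G-rel-conv}(2), $\Lambda:=\Lambda_1.\Lambda_2$ is then a local family $G$-relation. Since $A_1.A_2\in I(G,\Lambda;\Omega^{1/2})$ by the first part and $\Lambda$ is a family $G$-relation, Theorem \ref{thm:equiv-G-FIO-and-families} immediately gives that $A_1.A_2$ is a family Fourier integral $G$-operator, i.e. each fibrewise piece $(A_1.A_2)_x$ lies in $I(G_x\times G^x, m_x^*\Lambda;\Omega^{1/2})$. (Alternatively, one can see the fibrewise statement directly: the equivariant family attached to $\Lambda_1.\Lambda_2$ is, fibrewise over $x$, the classical Hörmander composition $m^*_{r(\gamma)}\Lambda_1 \circ m^*_{s(\gamma)}\Lambda_2$ of canonical relations between the manifolds $G^{x}$, an intermediate fibre, and $G_x$, so $(A_1)_y\circ(A_2)_z$ is an ordinary FIO by \cite[Theorem 25.2.3]{Horm-classics}; the equivariance glues these into the family associated with $\Lambda_1.\Lambda_2$.)

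The main obstacle I anticipate is not the geometry — that is handled cleanly by Propositions \ref{prop:push-forward-lagrangian}, \ref{prop:model-case-of-convol-phases} and Corollary \ref{cor:clean-compo-Lagrangian-in-symplectic-gpd} — but the careful tracking of the half-density and Maslov bundles through the restriction-to-$G^{(2)}$ and push-forward-along-$m$ steps, and in particular verifying that the density twists appearing in Proposition \ref{lem:compo-LD-model-case} combine with the intrinsic half-density convolution on $G$ (as set up in \cite{LMV1}) to produce exactly $\Omega^{1/2}_G$ on the target, with no stray factor. A secondary point requiring care is that $\Lambda_1.\Lambda_2$ is only a \emph{local} submanifold in general, so the statement $A_1.A_2\in I^{\bullet}(G,\Lambda_1.\Lambda_2)$ must be read in the sense of Lagrangian distributions subordinated to a local (immersed) Lagrangian; this is already accommodated in the conventions recalled in Section \ref{Sec:vocabulary} and in the definition of $I^m(X,\cdot)$, so no extra work is needed beyond noting it.
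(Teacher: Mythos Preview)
Your overall architecture is exactly the paper's: apply Proposition \ref{lem:compo-LD-model-case} with $X=G^2$, $Z=G^{(2)}$, $Y=G$, $f=m$, $\widetilde\Lambda=\Lambda_1\times\Lambda_2$, and your order computation and your argument for the family statement are correct.

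However, there is a genuine gap at the very first step. You write that $A_1\boxtimes A_2\in I^{m_1+m_2}(G^2,\Lambda_1\times\Lambda_2;\Omega^{1/2})$ is a ``standard fact''. It is not. If $A_j$ is locally $\int e^{i\phi_j(\gamma_j,\theta_j)}a_j(\gamma_j,\theta_j)\,d\theta_j$ with $a_j\in S^{m_j+(n-2N_j)/4}$, then the product amplitude $a_1(\gamma_1,\theta_1)a_2(\gamma_2,\theta_2)$ is \emph{not} a symbol in the joint variable $(\theta_1,\theta_2)\in\RR^{N_1+N_2}$: the required estimates fail in the region where $|\theta_1|/|\theta_2|\to 0$ or $\infty$. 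Equivalently, $\WF{A_1\boxtimes A_2}\subset(\Lambda_1\cup 0)\times(\Lambda_2\cup 0)\setminus 0$ contains pieces $\Lambda_1\times 0$ and $0\times\Lambda_2$ that are not in $\Lambda_1\times\Lambda_2$ and are not Lagrangian, so Proposition \ref{lem:compo-LD-model-case} cannot be applied as a black box to $A_1\boxtimes A_2$.

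This is precisely the ``technical (and usual) obstruction'' the paper isolates and removes. The paper uses the admissibility of the $\Lambda_j$ to find constants $C_1,C_2$ such that on $\supp a_1\times\supp a_2$, the constraint $\overline{s}(\phi'_{1\gamma})=\overline{r}(\phi'_{2\gamma})$ (i.e.\ landing in $(T^*G)^{(2)}$) forces $C_1|\theta_2|<|\theta_1|<C_2|\theta_2|$. A homogeneous cutoff $\chi(\theta_1,\theta_2)$ supported where $|\theta_1|\sim|\theta_2|$ then splits $a=a_1a_2$ into $b=\chi a$, which \emph{is} a genuine symbol of the correct order so that Proposition \ref{lem:compo-LD-model-case} applies to the corresponding oscillatory integral, and a remainder $r=(1-\chi)a$ supported where $|\theta_1|+|\theta_2|\le C|\phi'_\eta|$, whose contribution to $A_1*A_2$ is $C^\infty$. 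So the obstacle you should have flagged is not the half-density bookkeeping but this symbol issue; once it is handled by the cutoff, the rest of your outline goes through verbatim and coincides with the paper's proof.
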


\ignore{
\begin{prop}\label{prop:push-forward-symbols} We use the notations and hypotheses of Proposition \ref{prop:model-case-of-convol-phases}. 
Let $\widetilde{\Sigma}$ be a line bundle over $T^*X$ such that, canonically, $\widetilde{\Sigma}\vert_H\simeq \Omega^{1/2}(\ker d\mu)$. Then for any  $ a\in  S^{m}(\widetilde{\Lambda}, I_{\widetilde{\Lambda}}\otimes \widetilde{\Sigma})$, the integral 
\begin{equation}\label{eq:push-forward-symbols-1}
 (y,\eta)\in \Lambda,\quad \mu_\#(a)(y,\eta) = 
\int_{\mu^{-1}(y,\eta)\cap \widetilde{\Lambda}} a(x,\xi) \in I_\Lambda(y,\eta)
\end{equation}
is  unambiguously defined and induces a linear map
\begin{equation}\label{eq:push-forward-symbols-2}
  \mu_\# : S^{[m]}(\widetilde{\Lambda}, I_{\widetilde{\Lambda}}\otimes \widetilde{\Sigma}) \longrightarrow S^{[m+e/2+(n_Y-n_X)/2]}(\Lambda, I_{\Lambda}).
\end{equation}
\end{prop}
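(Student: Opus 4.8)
The plan is to read the statement as the symbol-level shadow of the distributional pushforward of Proposition~\ref{lem:compo-LD-model-case}, i.e.\ as the model case of the symbol product of \cite[Theorems~21.6.6 and~25.2.3]{Horm-classics}: I would make $\mu_\#$ explicit in a phase-function parametrization and then check that it is canonical and has the claimed order. Decomposing $\widetilde\Lambda$ into patches and working in a small conic neighbourhood of a point $(x_0,\xi_0)\in\widetilde\Lambda\cap H$, I may assume by Proposition~\ref{prop:model-case-of-convol-phases} that $\widetilde\Lambda$ is parametrized by a non-degenerate phase $\widetilde\phi:U\times(\RR^N\setminus 0)\to\RR$, that $\phi=\widetilde\phi\big|_{(U\cap Z)\times(\RR^N\setminus 0)}$ is a clean phase of excess $e$ on the cone bundle $f\circ\pr{1}:C=(U\cap Z)\times(\RR^N\setminus 0)\to Y$ parametrizing $\Lambda=\mu(\widetilde\Lambda\cap H)$, and that the line bundles $M_{\widetilde\Lambda}$, $\Omega^{1/2}(\widetilde\Lambda)$, $\widetilde\Sigma$, $M_\Lambda$, $\Omega^{1/2}(\Lambda)$ are all trivialized over the relevant sets. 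The diffeomorphism $T_{\widetilde\phi}:C_{\widetilde\phi}\to\widetilde\Lambda$ carries $C_\phi$ onto $\widetilde\Lambda\cap H$ (equation~\eqref{proof:compo-phases-3}) and intertwines $T_\phi$ with $\mu$ (Remark~\ref{rem:clean-phase-and-symplectic-reduction}), hence for $\gamma=(y,\eta)\in\Lambda$ it maps the fibre $C_\gamma=T_\phi^{-1}(\gamma)$ diffeomorphically onto $\mu^{-1}(y,\eta)\cap\widetilde\Lambda$; so the integral in~\eqref{eq:push-forward-symbols-1} is an integral over $C_\gamma$. Since $\phi$ is clean, $T_\phi:C_\phi\to\Lambda$ is a fibration with $e$-dimensional fibres near $\gamma$, so the $C_\gamma$ vary smoothly with $\gamma$ and fibre integration produces a $C^\infty$ section over $\Lambda$; convergence is ensured by the usual conic-support-with-compact-base hypotheses on $a$, or by inserting a cut-off $\varphi$ as in the remark following Proposition~\ref{lem:compo-LD-model-case}.

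The heart of the matter, and the main obstacle, is the bundle bookkeeping that makes $\mu_\#(a)(y,\eta)$ an element of $I_\Lambda(y,\eta)$: I must produce a canonical bundle homomorphism over $\widetilde\Lambda\cap H$
\[
 I_{\widetilde\Lambda}\otimes\widetilde\Sigma\big|_{\widetilde\Lambda\cap H}\;\longrightarrow\;\mu^*(I_\Lambda)\otimes\Omega\bigl(\ker(d\mu|_{\widetilde\Lambda\cap H})\bigr),
\]
so that $a|_{\widetilde\Lambda\cap H}$ becomes, for each $(y,\eta)$, a genuine density on $\mu^{-1}(y,\eta)\cap\widetilde\Lambda$ valued in $I_\Lambda(y,\eta)$. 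I would assemble it from: (i) the clean-intersection identity $T(\widetilde\Lambda\cap H)=T\widetilde\Lambda\cap TH$ together with the symplectic reduction already invoked in the proof of Proposition~\ref{prop:push-forward-lagrangian} (\cite[Prop.~21.2.13]{Horm-classics}), giving the short exact sequence $0\to\ker(d\mu|_{\widetilde\Lambda\cap H})\to T(\widetilde\Lambda\cap H)\overset{d\mu}{\longrightarrow}\mu^*T\Lambda\to 0$ and hence $\Omega^{1/2}(\widetilde\Lambda\cap H)\cong\Omega^{1/2}(\ker(d\mu|_{\widetilde\Lambda\cap H}))\otimes\mu^*\Omega^{1/2}(\Lambda)$; (ii) the splitting $\Omega^{1/2}(\widetilde\Lambda)|_{\widetilde\Lambda\cap H}\cong\Omega^{1/2}(\widetilde\Lambda\cap H)\otimes\Omega^{1/2}(T\widetilde\Lambda/T(\widetilde\Lambda\cap H))$; (iii) the identification, using the symplectic form of $T^*X$ and the canonical density of $T^*Y$, of $\Omega^{1/2}(T\widetilde\Lambda/T(\widetilde\Lambda\cap H))$ tensored with $\widetilde\Sigma\big|_{\widetilde\Lambda\cap H}\simeq\Omega^{1/2}(\ker d\mu)$ with the remaining factor $\Omega^{1/2}(\ker(d\mu|_{\widetilde\Lambda\cap H}))$; and (iv) functoriality of the Keller--Maslov bundle under symplectic reduction, $M_{\widetilde\Lambda}|_{\widetilde\Lambda\cap H}\cong\mu^*M_\Lambda$ (\cite[Sect.~21.6]{Horm-classics}). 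This is precisely the homomorphism producing the density \enquote{$a_1\times a_2$} in \cite[Theorems~21.6.6,~25.2.3]{Horm-classics}, specialized to $S=T^*X$, $T=T^*Y$, $H=(\ker df)^\perp$, $\mu$; the twist by $\widetilde\Sigma$ is introduced exactly to absorb the factor $\Omega^{-1/2}(\ker d\mu)$ over $H$ that would otherwise obstruct it. Fitting (i)--(iv) into a single well-defined isomorphism — in particular getting all Maslov and half-density weights to cancel — is the genuinely laborious point; the rest is formal.

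It then remains to settle the symbol estimates, linearity, and the passage modulo lower order. In the phase picture $C_\gamma$ lies in the set $\{\widetilde\phi'_\theta=0,\ \widetilde\phi'_z=0\}$ and is parametrized homogeneously in the fibre variables, and the density furnished by the homomorphism above differs from the Euclidean one on $C_\gamma$ by a Jacobian factor homogeneous of a fixed degree; integrating $a\in S^m$ over the $e$-dimensional $C_\gamma$ then yields, by the same elementary homogeneity count as in the order computation of Proposition~\ref{lem:compo-LD-model-case} and of \cite[Thm.~25.2.3]{Horm-classics} (with the $\widetilde\Sigma$-twist taken into account), a symbol in $S^{m+e/2+(n_Y-n_X)/2}(\Lambda,I_\Lambda)$ depending smoothly on $\gamma$, so \eqref{eq:push-forward-symbols-1} is unambiguously defined and \eqref{eq:push-forward-symbols-2} lands in the asserted space. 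Linearity of $\mu_\#$ is immediate from the integral formula. Since the homomorphism above is canonical, $\mu_\#(a)$ is independent of the chosen phase and trivializations, and since the order shift is uniform, $a\in S^{m-1}$ forces $\mu_\#(a)\in S^{m+e/2+(n_Y-n_X)/2-1}$; hence $\mu_\#$ passes to the quotients and defines the map $S^{[m]}(\widetilde\Lambda,I_{\widetilde\Lambda}\otimes\widetilde\Sigma)\to S^{[m+e/2+(n_Y-n_X)/2]}(\Lambda,I_\Lambda)$.
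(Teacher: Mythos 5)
Your proposal takes essentially the same route as the paper's own proof: both reduce the well-definedness of the defining integral to the canonical bundle homomorphism of \cite[Theorem 21.6.6]{Horm-classics}, the hypothesis $\widetilde{\Sigma}\vert_H\simeq \Omega^{1/2}(\ker d\mu)$ serving exactly, as you say, to absorb the factor $\Omega^{-1/2}(\ker d\mu)$ so that $a$ restricts on each fibre $\mu^{-1}(y,\eta)\cap\widetilde{\Lambda}$ to a genuine density valued in $I_\Lambda(y,\eta)$. Two remarks. First, the paper's proof consists of that reduction alone and ends with the sentence \enquote{it remains to prove that it is a symbol of the right order}; in fact the whole proposition is commented out of the source and does not appear in the compiled paper, the published symbol formula (Corollary \ref{cor:compo-symbole-without-Maslov}) being obtained instead by quoting \cite[Theorem 25.2.3]{Horm-classics} directly. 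Your outline of the missing order computation --- transporting $\mu^{-1}(y,\eta)\cap\widetilde{\Lambda}$ to the fibre $C_\gamma$ of $T_\phi$ via the diffeomorphism $T_{\widetilde{\phi}}$ from Proposition \ref{prop:model-case-of-convol-phases} and counting homogeneities there --- therefore goes further than the paper does. Second, your text still leaves the two hard points as a programme: the cancellation of the half-density and Maslov factors in items (i)--(iv) is described but not carried out, and the homogeneity count producing the exponent $m+e/2+(n_Y-n_X)/2$ is asserted rather than performed (it must in particular track the homogeneity of the trivializing section of $\widetilde{\Sigma}$, which is where the rank $n_X-n_Y$ of $\ker d\mu$ enters). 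So what you have is a correct and coherent plan that matches, and indeed extends, the paper's fragmentary argument, but it is not yet a finished proof.
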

\begin{proof}
For any $(x,\xi)\in\widetilde{\Lambda}\cap H$, we have by assumption $a(x,\xi)\in  \lbrack I_{\widetilde{\Lambda}}\otimes \Omega^{1/2}(\ker d\mu)\rbrack_{(x,\xi)}$. Applying the Hörmander map $\mu_\#$ of \cite[Theorem 21.6.6]{Horm-classics}, we get 
\begin{equation}
\forall (x,\xi)\in \widetilde{\Lambda}\cap H \text{ and } (y,\eta)=\mu(x,\xi),\quad a(x,\xi)\in  (I_\Lambda)_{(y,\eta)}\otimes \Omega^{1/2}(\mu^{-1}(y,\eta)\cap  \widetilde{\Lambda})_{(x,\xi)}
\end{equation}
and thus the integral in \eqref{eq:push-forward-symbols-1} is well defined and $\mu_\#(a)\in C^\infty(\Lambda,I_\Lambda)$. It remains to prove that it is a symbol of the right order. 
\end{proof}
}

\begin{proof}[Proof of theorem \ref{thm:compo-FIO-on-gpd}]

We wish to apply Lemma \ref{lem:compo-LD-model-case} to the following data:  $X=G^2$, $Y=G$, $Z=G^{(2)}$, $f=m_G$ $\widetilde{\Lambda}=\Lambda_1\times \Lambda_2$ and $\widetilde{\phi}=\phi_1+\phi_2$ where $\phi_j : U_j\times(\RR^{N_j}\setminus 0)\to\RR$ are non degenerated phase functions  parametrizing   $\Lambda_j$ in a conic neighborhood of points $(\gamma_j,\xi_j)\in\Lambda_j$,  the latter points satisfying $(\gamma_1,\xi_1,\gamma_2,\xi_2)\in \Lambda_1\times \Lambda_2\cap (T^*G)^{(2)}$. We may assume that
\begin{equation}
 A_j(\gamma_j)=\int e^{i\phi_j(\gamma_j,\theta_j)}a_j(\gamma_j,\theta_j)d\theta_j,
\end{equation}
where $a_j\in S^{m_j+(n-2N_j)/4}(U_j\times(\RR^{N_j})$. 

The only technical (and usual) obstruction is that 
\begin{equation}
 a(\gamma_1,\gamma_2,\theta_1,\theta_2)=a_1(\gamma_1,\theta_1)a_2(\gamma_2,\theta_2)
\end{equation}
is not a symbol in general. The conditions of admissibility on $\Lambda_j$ allows to remove the regions in $(\theta_1,\theta_2)$ where the symbolic estimates for $a$ fail. 

Indeed, thanks to the admissibility assumptions on $\Lambda_1$ and $\Lambda_2$, we can reduce the problem to the case where $a_1,a_2$ have support in  compactly generated cones $\cC_1,\cC_2$ on which $\widetilde{s}(\phi'_{1\gamma})$ and $\widetilde{r}(\phi'_{2\gamma})$ never vanish. 
Argumenting on the degree one homogeneity of $\widetilde{s}(\phi'_{1\gamma})$, $\widetilde{r}(\phi'_{2\gamma})$ with respect to $\theta_1,\theta_2$, we can find constants $C_1,C_2$ such that 
\begin{equation}
 \text{ if } (\gamma_j,\theta_j)\in\cC_j \text{ and } \widetilde{s}(\phi'_{1\gamma}(\gamma_1,\theta_1))= 
\widetilde{r}(\phi'_{2\gamma}(\gamma_2,\theta_2))  \text{ then } C_1|\theta_2|<|\theta_1|<C_2|\theta_2|.
\end{equation}
We choose a homogeneous function $\chi(\theta_1,\theta_2)$ of degree $0$ equal to $1$ when $ C_1|\theta_2|/2<|\theta_1|<2C_2|\theta_2|$ and supported in $C_1|\theta_2|/3<|\theta_1|<3C_2|\theta_2|$. We set
\begin{equation}
  b(\gamma_1,\gamma_2,\theta_1,\theta_2)=\chi(\theta_1,\theta_2)a(\gamma_1,\gamma_2,\theta_1,\theta_2)
\end{equation}
and
\begin{equation}
  r(\gamma_1,\gamma_2,\theta_1,\theta_2)=(1-\chi(\theta_1,\theta_2))a(\gamma_1,\gamma_2,\theta_1,\theta_2).
\end{equation}
We have by construction of $\chi$,
\begin{equation}
 C_1|\theta_2|/3<|\theta_1|<3C_2|\theta_2| \text{ in } \supp b. 
\end{equation}
which allows to check that $b\in S^{m_1+m_2+(n-N_1-N_2)/2}$. Therfore we can apply the lemma to 
\[
\widetilde{B}=\int e^{i\widetilde{\phi}(\gamma_1,\gamma_2,\theta_1,\theta_2)}b(\gamma_1,\gamma_2,\theta_1,\theta_2)d\theta_1 d\theta_2,
\] 
and we get 
\begin{equation}
B(\gamma) = \int_{m^{-1}(\gamma)\times \RR^{N_1+N_2}} e^{i\phi(\gamma,\eta,\theta_1,\theta_2)}b(\gamma,\eta,\theta_1,\theta_2)d\eta d\theta_1 d\theta_2 \in I^{m_1+m_2+e/2+n^{(0)}/2-n/4}(G,\Lambda_1*\Lambda_2).
\end{equation}
Moreover, arguing again on the degree one homogeneity of $\widetilde{\phi}$ with respect to $(\theta_1,\theta_2)$ and using the expression of $\phi'_\omega$ given in  the proof of Proposition \ref{prop:model-case-of-convol-phases}, we also get 
\begin{equation}
 |\theta_1|+|\theta_2| <  C |\phi'_\eta(\gamma,\eta,\theta_1,\theta_2)|  \text{ in } \supp r, 
\end{equation}
where we have set $\gamma=\gamma_1\gamma_2$, $\eta=(\gamma_1,\gamma_2)\in m^{-1}(\gamma)$ and $\phi(\gamma,\eta,\theta_1,\theta_2)=\widetilde{\phi}(\gamma_1,\gamma_2,\theta_1,\theta_2)$. 
The previous estimates shows that 
\begin{equation}
 R(\gamma)=\int_{m^{-1}(\gamma)\times \RR^{N_1+N_2}} e^{i\phi(\gamma,\eta,\theta_1,\theta_2)}r(\gamma,\eta,\theta_1,\theta_2)d\eta d\theta_1 d\theta_2
 \end{equation}
belongs to $C^\infty(G)$ and we conclude that 
\begin{equation}
A_1*A_2(\gamma) = B(\gamma) \mod C^\infty(G),
\end{equation}
which proves the theorem. 

\end{proof}

\ignore{\begin{prop}
This induces a  map $\mu_\#$  between principal symbols 
 \begin{equation}\label{eq:push-forward-symbols-2}
  \mu_\# : S^{[m+n_X/4]}(\widetilde{\Lambda}, I_{\widetilde{\Lambda}}\otimes \widetilde{\Sigma}) \longrightarrow S^{[m+e/2+(n_X-2n_Z+2n_Y)/4] }(\Lambda, I_{\Lambda})
\end{equation}
 which sends  $[a]\in  S^{[*]}(\widetilde{\Lambda}, I_{\widetilde{\Lambda}}\otimes \widetilde{\Sigma})$ to the class of 
 \begin{equation}\label{eq:push-forward-symbols-1}
 (y,\eta)\in \Lambda,\quad \mu_\#(a)(y,\eta) = 
\int_{\mu^{-1}(y,\eta)\cap \widetilde{\Lambda}} a(x,\xi) \in I_\Lambda(y,\eta)
\end{equation}
\end{prop}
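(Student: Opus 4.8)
The plan is to obtain $\mu_\#$ on principal symbols as the map \emph{induced} by the distribution-level push-forward $f_\#$ of Proposition~\ref{lem:compo-LD-model-case}, and then to identify that induced map with the fibre integral $\mu_\#(a)(y,\eta)=\int_{\mu^{-1}(y,\eta)\cap\widetilde{\Lambda}}a(x,\xi)$ by a local computation with clean phase functions, exactly as the composition of principal symbols is handled in \cite{Horm-classics}. First I would decompose $\widetilde{\Lambda}$ into patches and so assume it is an honest conic Lagrangian submanifold; fixing $(x_0,\xi_0)\in\widetilde{\Lambda}\cap H$ and $(y_0,\eta_0)=\mu(x_0,\xi_0)$, Proposition~\ref{prop:model-case-of-convol-phases} furnishes a non-degenerate phase $\widetilde{\phi}:U\times(\RR^N\setminus0)\to\RR$ parametrising $\widetilde{\Lambda}$ near $(x_0,\xi_0)$ whose restriction $\phi$ to $(U\cap Z)\times(\RR^N\setminus0)$ is a clean phase of excess $e$ parametrising $\Lambda$ near $(y_0,\eta_0)$. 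Recall from Remark~\ref{rem:clean-phase-and-symplectic-reduction} and the symplectic reduction in Proposition~\ref{prop:push-forward-lagrangian}(2) that $\mu|_{\widetilde{\Lambda}\cap H}:\widetilde{\Lambda}\cap H\to\Lambda$ is then a submersion with $e$-dimensional fibres, identified through $T_{\widetilde{\phi}}$ and $T_\phi$ with the fibration $C_\phi\to\Lambda_\phi$ of \eqref{eq:Lambda-phi}.

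The role of the twist $\widetilde{\Sigma}$ is to turn $a$ into a density along those fibres. Picking, as in Proposition~\ref{lem:compo-LD-model-case}, a line bundle $\Omega$ on $X$ with $\Omega|_Z=\Omega_Z$, the H\"ormander bundle homomorphism attached to a clean composition (\cite[Thms.~21.6.6, 21.6.7]{Horm-classics}, \cite[Ch.~6]{GS}) yields a canonical bundle map on $\widetilde{\Lambda}\cap H$ which, combined with the hypothesis $\widetilde{\Sigma}|_H\simeq\Omega^{1/2}(\ker d\mu)$ and the identification $\Omega|_Z=\Omega_Z$ (used to convert the residual half-densities into a full density along the fibres and to absorb the density factor along the fibres of $f$), identifies $\bigl(I_{\widetilde{\Lambda}}\otimes\widetilde{\Sigma}\bigr)\big|_{\widetilde{\Lambda}\cap H}$ with $\mu^*(I_\Lambda)\otimes\Omega\bigl(\ker d\mu|_{\widetilde{\Lambda}\cap H}\bigr)$. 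Consequently, for a section $a$ of $I_{\widetilde{\Lambda}}\otimes\widetilde{\Sigma}$ and $(y,\eta)\in\Lambda$, the restriction of $a$ to $\mu^{-1}(y,\eta)\cap\widetilde{\Lambda}$ is a density on that $e$-dimensional fibre with values in the fixed line $(I_\Lambda)_{(y,\eta)}$, so $\int_{\mu^{-1}(y,\eta)\cap\widetilde{\Lambda}}a$ lies in $(I_\Lambda)_{(y,\eta)}$ and is independent of the auxiliary choices. Absolute convergence is not automatic, the fibre being only $e$-dimensional, but a representative $a$ may be taken with support in a cone with compact base (as in Proposition~\ref{lem:compo-LD-model-case}), whence $\supp{a}\cap\mu^{-1}(y,\eta)$ is compact; this shows the integral is unambiguously defined.

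For the map on classes and its order I would use that $f_\#$ of Proposition~\ref{lem:compo-LD-model-case} sends $I^m_c(X,\widetilde{\Lambda};\Omega)$ into $I^{m+e/2+(n_X-2n_Z+n_Y)/4}_c(Y,\Lambda)$ and, by the same statement with $m$ replaced by $m-1$, sends $I^{m-1}_c$ into $I^{m-1+e/2+(n_X-2n_Z+n_Y)/4}_c$; hence $f_\#$ descends to a map of principal-symbol spaces. Composing with the principal-symbol isomorphisms $\sigma$ of \cite[Thm.~25.1.9]{Horm-classics} on both sides (with $\widetilde{\Sigma}$ as coefficient bundle on $\widetilde{\Lambda}$), one gets a well-defined linear map into $S^{[m+e/2+(n_X-2n_Z+n_Y)/4+n_Y/4]}(\Lambda,I_\Lambda)$, and since $\tfrac{n_X-2n_Z+n_Y}{4}+\tfrac{n_Y}{4}=\tfrac{n_X-2n_Z+2n_Y}{4}$ this is the order asserted.

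It remains to identify this abstract map with the fibre integral. Taking $A=\int e^{i\widetilde{\phi}(x,\theta)}a(x,\theta)\,d\theta\in I^m(X,\widetilde{\Lambda};\Omega)$ with $a$ localised as in the first paragraph, the proof of Proposition~\ref{lem:compo-LD-model-case} shows that $f_\#A$ is the oscillatory integral over $f(U\cap Z)$ with the clean phase $\phi$ and amplitude $a$ (after the homogenisation $\omega(z',\theta)=(|\theta|z',\theta)$ used there). By \cite[Thms.~21.6.7, 25.1.9]{Horm-classics}, the principal symbol of an oscillatory integral with a clean phase of excess $e$ is the integral of the amplitude along the fibres of $T_\phi:C_\phi\to\Lambda_\phi$ against the canonical Maslov and half-density data; transporting this through the diffeomorphism $T_{\widetilde{\phi}}:C_\phi\to\widetilde{\Lambda}\cap H$ constructed in the proof of Proposition~\ref{prop:model-case-of-convol-phases}, and using the identifications of the second paragraph, turns it into $\int_{\mu^{-1}(y,\eta)\cap\widetilde{\Lambda}}a$, which is the claimed formula. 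The symbol seminorm estimates then follow by differentiation under the integral sign: in coordinates adapted to the clean phase $\mu$ is a linear projection, so derivatives of $\mu_\#(a)$ are fibre integrals of derivatives of $a$ over sets on which $\supp{a}$ is compact. The main obstacle is the second paragraph: checking that the H\"ormander bundle homomorphism is canonical in this clean, excess-$e$ (non-transversal) situation and that the Maslov line bundles together with the several half-density normalisations combine into precisely $I_\Lambda$ with the shift $(n_X-2n_Z+2n_Y)/4$; once that bookkeeping is fixed, the remaining steps are routine.
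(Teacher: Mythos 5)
Your strategy is sound, and it is worth saying plainly that the paper itself contains no complete proof of this proposition: in the source it is stated without proof, and the only related material is a suppressed fragment (attached to an earlier variant of the statement) which carries out exactly your second paragraph --- it applies the bundle homomorphism of \cite[Theorem 21.6.6]{Horm-classics} together with the hypothesis $\widetilde{\Sigma}\vert_H\simeq \Omega^{1/2}(\ker d\mu)$ to turn $a$ into an $I_\Lambda$-valued density on the $e$-dimensional fibres of $\mu\vert_{\widetilde{\Lambda}\cap H}$ --- and then breaks off with the remark that it remains to prove the image is a symbol of the right order. Your first, third and fourth paragraphs supply precisely that missing part, and by the same route the authors actually follow for the two-factor analogues (Proposition \ref{prop:prod-densities-n-compo-lag}, Corollary \ref{cor:compo-symbole-without-Maslov} and the proof of Theorem \ref{thm:compo-FIO-on-gpd}): take $\mu_\#$ to be the map induced on principal symbols by the distribution-level push-forward $f_\#$ of Proposition \ref{lem:compo-LD-model-case}, then identify it with the fibre integral through the clean phase $\phi=\widetilde{\phi}\vert_{(U\cap Z)\times\RR^N}$ of Proposition \ref{prop:model-case-of-convol-phases} and the diffeomorphism $T_{\widetilde{\phi}}:C_\phi\to\widetilde{\Lambda}\cap H$. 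Your order arithmetic is correct and reproduces the stated exponent: $f_\#$ shifts the Lagrangian order by $e/2+(n_X-2n_Z+n_Y)/4$, and the symbol conventions contribute $n_X/4$ on the source and $n_Y/4$ on the target, giving $(n_X-2n_Z+2n_Y)/4$.

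Two refinements. First, compactness of $\supp{a}\cap\mu^{-1}(y,\eta)\cap\widetilde{\Lambda}$ does not follow from the compact base of the cone alone, since along the fibre of $\mu$ the conormal component of $\xi$ is unconstrained while the horizontal part is pinned to $\eta$; one must also invoke the standing hypothesis $\widetilde{\Lambda}\cap N^*Z=\emptyset$ (plus closedness of $\widetilde{\Lambda}$ over the compact base) to bound that component by a multiple of $\vert\eta\vert$ on $\widetilde{\Lambda}\cap H$ --- the same mechanism used to tame the amplitude in the proof of Theorem \ref{thm:compo-FIO-on-gpd}. Second, the bundle bookkeeping you flag as the main obstacle is genuinely the only nontrivial point left open, and the paper resolves it no further than you do: it is delegated to \cite[Theorems 21.6.6, 21.6.7]{Horm-classics}, exactly as in Proposition \ref{prop:prod-densities-n-compo-lag}. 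With those two points made explicit, your argument is a legitimate completion of what the paper leaves unproved.
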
}

\subsection{Principal symbol}
By \cite[Section 25.1]{Horm-classics}, the principal symbol of  $A\in I^m(G,\Lambda;\Omega^{1/2})$ belongs to $S^{[m+n/4]}(\Lambda,I_\Lambda\otimes\hat{\Omega}^{1/2}\otimes\hat{\Omega}^{-1/2}_G)$ and the principal symbol map gives rise to an isomorphism
\begin{equation}\label{eq:symbol-FIO-gpd}
 \sigma : I^{[m]}(G,\Lambda;\Omega^{1/2}) \longrightarrow S^{[m+n/4]}(\Lambda,I_\Lambda\otimes\hat{\Omega}^{1/2}\otimes\hat{\Omega}^{-1/2}_G).
\end{equation}
Here we have set $\hat{E}=(p\vert_\Lambda)^*(E\vert_\Lambda)$ for any bundle $E\to G$. 
To  understand the product formula of symbols of $G$-FIO, we analyse the auxiliary bundle 
\begin{equation}\label{eq:Sigma-bundle}
\Sigma^{\alpha} = \Omega^{-\alpha}_G\otimes \Omega^{\alpha}
\end{equation}
involved in the right hand side of \eqref{eq:symbol-FIO-gpd} with $\alpha=\frac12$. As we shall see, $\hat{\Sigma}$ is strongly related to the groupoid structure of $T^*G$.

We need a simple statement about vector bundles epimorphisms.
\begin{lem}\label{lem:vb-epimorphisms-and-kernel-exact-seq}
 Let $f:X\to Y$ be a submersion, $p:E\to X, q:F\to Y$ be $C^\infty$ vector bundles and $g : E\to F$ an $C^\infty$ epimorphism:
\begin{equation}\label{diag:vb-epimorphisms-and-kernel-exact-seq}
 \xymatrix{
  0 \ar[r] &  \ker g\ar[r]\ar^{p}[d]  & E \ar^{g}[r]\ar^{p}[d]  &  F \ar[r]\ar^{q}[d]  & 0 \\
     & X \ar^{=}[r] &  X \ar^{f}[r]&  Y  & 
}
\end{equation}
Then the sequence 
\begin{equation}\label{eq:vb-epimorphisms-and-kernel-exact-seq}
 0\longrightarrow  p^*(\ker g) \overset{v}{\longrightarrow} \ker dg \overset{dp}{\longrightarrow}p^*\ker df\longrightarrow 0.
\end{equation}
is exact. The map $v$ is defined by 
\[
  p^*(\ker g)_{(x,e)}\ni \lambda \longmapsto v(\lambda)=\frac{d}{dt}(x,e+t\lambda)\vert_{t=0}\in \ker dg_{(x,e)}\cap \ker dp_{(x,e)}.
\]

\end{lem}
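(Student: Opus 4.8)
The plan is to verify exactness of \eqref{eq:vb-epimorphisms-and-kernel-exact-seq} fibrewise over each point $(x,e)\in E$, reducing immediately to a statement about linear maps between finite-dimensional vector spaces. First I would fix $(x,e)\in E$ with $f(x)=y$, $g(e)=\varphi\in F_y$, and describe the three spaces explicitly. Since $p:E\to X$ is a vector bundle, $\ker dp_{(x,e)}$ is canonically identified with the fibre $E_x$, and the "vertical" map $v$ sends $\lambda\in(\ker g)_x\subset E_x$ to the tangent vector $\frac{d}{dt}(x,e+t\lambda)|_{t=0}$; this lands in $\ker dp_{(x,e)}$ (it is vertical) and in $\ker dg_{(x,e)}$ because $g$ is fibrewise linear, so $dg(v(\lambda))=\frac{d}{dt}(g(e)+tg(\lambda))|_{t=0}=\frac{d}{dt}(g(e))|_{t=0}=0$ as $g(\lambda)=0$. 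Hence $v$ is well defined and clearly injective. Then $dp:\ker dg_{(x,e)}\to p^*(\ker df)_{(x,e)}=\ker df_x$ is well defined: if $w\in\ker dg_{(x,e)}$ then $df(dp(w))=dq(dg(w))=0$ by commutativity of \eqref{diag:vb-epimorphisms-and-kernel-exact-seq}, so $dp(w)\in\ker df_x$.

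Next I would check exactness at the three spots. Exactness at $p^*(\ker g)$ is the injectivity of $v$ just noted. Exactness at $p^*(\ker df)$, i.e. surjectivity of $dp$ restricted to $\ker dg$, is the crux: given $t\in\ker df_x$, lift it to some $\tilde w\in T_{(x,e)}E$ with $dp(\tilde w)=t$ (possible since $p$ is a submersion); then $dg(\tilde w)\in T_\varphi F$ satisfies $dq(dg(\tilde w))=df(t)=0$, so $dg(\tilde w)$ is a vertical vector in $F_y$, i.e. an element $\psi\in F_y$; since $g:E_x\to F_y$ is a linear surjection (here the epimorphism hypothesis is essential), pick $\mu\in E_x$ with $g(\mu)=\psi$ and set $w=\tilde w-v(\mu)$; then $dp(w)=t-0=t$ and $dg(w)=\psi-\psi=0$, so $w\in\ker dg_{(x,e)}$ maps to $t$. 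Exactness at $\ker dg$ is a dimension count combined with the inclusion $\im v\subset\ker dp\cap\ker dg$: we have $\dim\ker dg=\dim E+\dim\ker df$ (since $g$ is a submersion, as an epimorphism of bundles over a submersion $f$, $dg$ is surjective with kernel of the expected rank) while $\dim\ker df=\dim X+\dim\ker df_x$ as a bundle over $X$ pulled back, wait — more cleanly: $\dim\ker dg=\operatorname{rk}(\ker g)+\dim\ker df$ (fibre ranks), $\dim\im v=\operatorname{rk}(\ker g)$, $\dim p^*(\ker df)=\dim\ker df$, so $\dim\ker dg=\dim\im v+\dim\ker dp|_{\ker dg}$ forces $\ker(dp|_{\ker dg})=\im v$ once $\im v\subseteq\ker dp$ is known, which it is by construction.

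The only genuine subtlety — hence the step I would be most careful with — is the surjectivity of $dp:\ker dg\to p^*\ker df$, because it is exactly where one must use that $g$ is a fibrewise surjection and not merely a bundle map; the argument above produces the required correction term $v(\mu)$ precisely from a fibrewise preimage under $g$. Everything else is bookkeeping with ranks and the commuting square \eqref{diag:vb-epimorphisms-and-kernel-exact-seq}, and the smoothness of the maps $v$ and $dp$ is automatic since they are restrictions/corestrictions of smooth bundle maps to smooth subbundles (the subbundles being smooth because $p,f,g$ all have locally constant rank). I would present the fibrewise computation once and remark that it patches to an exact sequence of vector bundles over $E$ by the local triviality of all the bundles involved.
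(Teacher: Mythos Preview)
Your argument is correct and is essentially the same as the paper's: the paper works in local trivializations, writing $g(x,e)=(f(x),\widetilde g(x,e))$ and computing $dg_{(x,e)}(t,u)=(df_x(t),(d_x\widetilde g)(t)+\widetilde g(x,u))$, so that surjectivity of $dp$ on $\ker dg$ comes from solving $\widetilde g(x,u)=-(d_x\widetilde g)(t)$ --- exactly your lift-and-correct step, and middle exactness is read off directly as $t=0,\ u\in\ker\widetilde g$ rather than via your dimension count. One small slip: when you write $w=\widetilde w - v(\mu)$ you mean the vertical lift of $\mu\in E_x$, not $v(\mu)$, since your $v$ is only defined on $p^*(\ker g)$ and $\mu$ was chosen with $g(\mu)=\psi\neq 0$; the formula $\frac{d}{dt}(x,e+t\mu)|_{t=0}$ of course makes sense for any $\mu\in E_x$ and is what you intend.
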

\begin{proof} 
Thanks to the diagram \eqref{diag:vb-epimorphisms-and-kernel-exact-seq}, we have $dp(\ker g)\subset \ker df$ and the map 
\begin{equation}\label{proof:vb-epimorphisms-and-kernel-exact-seq-1}
\ker dg \ni (e,u) \overset{dp}{\longmapsto} (e,dp_e(u)) \in p^*\ker df
\end{equation}
is well defined. To prove its surjectivity, we work in local coordinates on $TE$ associated with local coordinates on $X$ and local trivializations of $E$,  so that the map \eqref{proof:vb-epimorphisms-and-kernel-exact-seq-1} corresponds to 
\begin{equation}\label{proof:vb-epimorphisms-and-kernel-exact-seq-2}
 (x,e, t,u) \longmapsto  (x,e, t).
\end{equation}
Writing $g(x,e)=(f(x),\widetilde{g}(x,e))$, we compute
\begin{align}\label{proof:vb-epimorphisms-and-kernel-exact-seq-3}
(dg)_{(x,e)}(t,u) & = (df_x(t), (d_x\widetilde{g})_{(x,e)}(t) + (d_e\widetilde{g})_ {(x,e)}(u) )\\
 & = (df_x(t), (d_x\widetilde{g})_{(x,e)}(t) + \widetilde{g}(x,u) ). 
\end{align}
Since $\widetilde{g}$ is fiberwise linear and surjective, the linear equation $(d_x\widetilde{g})_{(x,e)}(t) + \widetilde{g}(x,u) =0$ for fixed $x,e$ and $t$ has solutions in $u$. Let $u_{x,e,t}$ be such a solution. Then, for any $t\in\ker df_x$, the element  $(t,u_{x,e,t})$ belongs to $\ker dg_{(x,e)}$. 

Next, it follows from \eqref{proof:vb-epimorphisms-and-kernel-exact-seq-1} and  \eqref{proof:vb-epimorphisms-and-kernel-exact-seq-3} that $(x,e, t,u)\in \ker dp\cap \ker dg $ if and only if $t=0$ and $u\in \ker\widetilde{g}$. Since in these coordinates
\begin{equation}\label{proof:kernel-ds-Gamma-exact-seq-4}
  v: p^*(\ker g)\ni (x,e,u)\longmapsto (x,e,0,u)\in \ker dp\cap \ker dg
\end{equation}
we get that $v(p^*(\ker g))$ is the kernel of the vector bundle  epimorphism 
$\ker dg \overset{d p}{\longrightarrow}p^*\ker df$.
\end{proof}
As announced, we can interpret $\Sigma$ in terms of densities bundles associated with the groupoid structure of $T^*G$. 
\begin{prop}\label{cor:basic-identities-densities-on-Gamma} We have canonically
\begin{align}
\hat{\Sigma}^{1/2}& \simeq \Omega^{1/2}(\ker ds_\Gamma)   \simeq \Omega^{1/2}(\ker dr_\Gamma) \label{cor:basic-identities-densities-on-Gamma:line1} \\ 
%\Omega^{1/2}(\ker dr_\Gamma) & \simeq \pi^*(\Omega_G^{-1/2}\otimes\Omega^{1/2}_{r,s})\\
%\Omega^{1/2}(\ker ds_\Gamma\oplus\ker dr_\Gamma) &\simeq \pi^*(\Sigma) \\
 \Omega(\ker dm_\Gamma) & \simeq \pr{(1)}^*\hat{\Sigma}^{1/2} \otimes \pr{(2)}^*\hat{\Sigma}^{1/2} \simeq m_\Gamma^*\hat{\Sigma} \label{cor:basic-identities-densities-on-Gamma:line2}\\
  \Omega(\ker dm_\Gamma) & \simeq (p^{2})^*(\Omega(\ker m_\Gamma)\otimes \Omega(\ker dm)). \label{cor:basic-identities-densities-on-Gamma:line3}
\end{align}
\end{prop}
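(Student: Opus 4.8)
The plan is to deduce the three families of isomorphisms from Lemma~\ref{lem:vb-epimorphisms-and-kernel-exact-seq}, applied to the defining exact sequences \eqref{diag:cot-gpd-mult-overview}, \eqref{diag:cot-gpd-source-overview} and \eqref{diag:cot-gpd-target-overview} of the cotangent groupoid, together with the elementary functoriality of density bundles. Throughout, $\hat{(\cdot)}$ denotes pull-back by $p : T^*G \to G$ (resp. by $p^2 : (T^*G)^{(2)} \to G^{(2)}$), $\Sigma^\alpha = \Omega_G^{-\alpha}\otimes\Omega^\alpha$ with $\Omega = \Omega(\ker ds)\otimes\Omega(\ker dr) \simeq r^*\Omega(AG)\otimes s^*\Omega(AG)$, and I use freely that a short exact sequence $0\to E'\to E\to E''\to 0$ of vector bundles yields a canonical isomorphism $\Omega^t(E)\simeq\Omega^t(E')\otimes\Omega^t(E'')$, that $\Omega(E^*)\simeq\Omega(E)^{-1}$, and that $\Omega(T^*X)\simeq\Omega_X^{-1}$. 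First I would prove \eqref{cor:basic-identities-densities-on-Gamma:line1}. Applying Lemma~\ref{lem:vb-epimorphisms-and-kernel-exact-seq} to \eqref{diag:cot-gpd-source-overview}, i.e. to the epimorphism $s_\Gamma : T^*G\to A^*G$ over $s : G\to G^{(0)}$ with $\ker s_\Gamma = (\ker dr)^\perp$, produces the exact sequence $0\to p^*((\ker dr)^\perp)\to\ker ds_\Gamma\to p^*(\ker ds)\to 0$, hence $\Omega(\ker ds_\Gamma)\simeq p^*(\Omega((\ker dr)^\perp)\otimes\Omega(\ker ds))$. Dualizing $0\to\ker dr\to TG\to TG/\ker dr\to 0$ identifies $(\ker dr)^\perp$ with $(TG/\ker dr)^*$, so that $\Omega_G^{-1}\simeq\Omega(T^*G)\simeq\Omega((\ker dr)^\perp)\otimes\Omega(\ker dr)^{-1}$, i.e. $\Omega((\ker dr)^\perp)\simeq\Omega_G^{-1}\otimes\Omega(\ker dr)$; substituting gives $\Omega(\ker ds_\Gamma)\simeq p^*(\Omega_G^{-1}\otimes\Omega) = \hat\Sigma$, and taking square roots, $\Omega^{1/2}(\ker ds_\Gamma)\simeq\hat\Sigma^{1/2}$. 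The identical computation with \eqref{diag:cot-gpd-target-overview} ($r_\Gamma$, $\ker r_\Gamma = (\ker ds)^\perp$) gives $\Omega^{1/2}(\ker dr_\Gamma)\simeq\hat\Sigma^{1/2}$.

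Identity \eqref{cor:basic-identities-densities-on-Gamma:line3} is then immediate: applying Lemma~\ref{lem:vb-epimorphisms-and-kernel-exact-seq} to \eqref{diag:cot-gpd-mult-overview}, i.e. to $m_\Gamma : (T^*G)^{(2)}\to T^*G$ over $m : G^{(2)}\to G$ with $\ker m_\Gamma = N^*G^{(2)}$, yields $0\to (p^2)^*(\ker m_\Gamma)\to\ker dm_\Gamma\to(p^2)^*(\ker dm)\to 0$, and taking densities gives $\Omega(\ker dm_\Gamma)\simeq(p^2)^*(\Omega(\ker m_\Gamma)\otimes\Omega(\ker dm))$. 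For \eqref{cor:basic-identities-densities-on-Gamma:line2} I would combine \eqref{cor:basic-identities-densities-on-Gamma:line1} with three canonical isomorphisms valid in any Lie groupoid, specialized to $\Gamma = T^*G$: (i) the first projection is a diffeomorphism $m_\Gamma^{-1}(\delta)\simeq\Gamma^{r_\Gamma(\delta)}$ with inverse $\eta\mapsto(\eta,\eta^{-1}\delta)$, which differentiated along $(T^*G)^{(2)}$ gives a bundle isomorphism $\ker dm_\Gamma\simeq\pr{(1)}^*(\ker dr_\Gamma)$; (ii) right translation by $\xi_2$ is a diffeomorphism between two $s_\Gamma$-fibres carrying $\xi_1$ to $\xi_1\xi_2$, giving $\pr{(1)}^*(\ker ds_\Gamma)\simeq m_\Gamma^*(\ker ds_\Gamma)$ over $(T^*G)^{(2)}$; (iii) left translation by $\xi_1$ is a diffeomorphism between two $r_\Gamma$-fibres carrying $\xi_2$ to $\xi_1\xi_2$, giving $\pr{(2)}^*(\ker dr_\Gamma)\simeq m_\Gamma^*(\ker dr_\Gamma)$. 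Passing to half-densities and inserting $\Omega^{1/2}(\ker ds_\Gamma)\simeq\Omega^{1/2}(\ker dr_\Gamma)\simeq\hat\Sigma^{1/2}$ from \eqref{cor:basic-identities-densities-on-Gamma:line1}, items (ii) and (iii) give $\pr{(1)}^*\hat\Sigma^{1/2}\simeq m_\Gamma^*\hat\Sigma^{1/2}\simeq\pr{(2)}^*\hat\Sigma^{1/2}$, while (i) gives $\Omega(\ker dm_\Gamma)\simeq\pr{(1)}^*\Omega(\ker dr_\Gamma)\simeq\pr{(1)}^*\hat\Sigma$; combining, $\Omega(\ker dm_\Gamma)\simeq\pr{(1)}^*\hat\Sigma^{1/2}\otimes\pr{(2)}^*\hat\Sigma^{1/2}\simeq m_\Gamma^*\hat\Sigma^{1/2}\otimes m_\Gamma^*\hat\Sigma^{1/2} = m_\Gamma^*\hat\Sigma$, which is \eqref{cor:basic-identities-densities-on-Gamma:line2}.

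I expect the main obstacle to be \eqref{cor:basic-identities-densities-on-Gamma:line1}: one must keep track of the chain of canonical identifications $(\ker dr)^\perp\simeq(TG/\ker dr)^*$, $\Omega(T^*G)\simeq\Omega_G^{-1}$ and $\Omega(E^*)\simeq\Omega(E)^{-1}$, and match the outcome with $\Sigma = \Omega_G^{-1}\otimes\Omega$, which is exactly where the definition $\Omega = \Omega(\ker ds)\otimes\Omega(\ker dr)$ is used; inverse/sign slips in the density bookkeeping are the obvious pitfall. Once \eqref{cor:basic-identities-densities-on-Gamma:line1} is in place, \eqref{cor:basic-identities-densities-on-Gamma:line3} is a one-line application of the Lemma, and \eqref{cor:basic-identities-densities-on-Gamma:line2} is formal; the only remaining care is to check that under the differentials of the first projection, of right translation and of left translation in $\Gamma = T^*G$, the bundles $\ker dm_\Gamma$, $\ker ds_\Gamma$ and $\ker dr_\Gamma$ go exactly where claimed, which is part of the standard structure theory of Lie groupoids.
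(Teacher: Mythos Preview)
Your proposal is correct and follows essentially the same route as the paper's proof: apply Lemma~\ref{lem:vb-epimorphisms-and-kernel-exact-seq} to the three diagrams \eqref{diag:cot-gpd-source-overview}, \eqref{diag:cot-gpd-target-overview}, \eqref{diag:cot-gpd-mult-overview} to obtain the relevant short exact sequences, deduce \eqref{cor:basic-identities-densities-on-Gamma:line1} and \eqref{cor:basic-identities-densities-on-Gamma:line3} directly from the density functor, and get \eqref{cor:basic-identities-densities-on-Gamma:line2} from the general Lie-groupoid isomorphisms $\ker dm_\Gamma\simeq\pr{(1)}^*(\ker dr_\Gamma)\simeq\pr{(2)}^*(\ker ds_\Gamma)$ and $\pr{(j)}^*(\ker d\sigma_\Gamma)\simeq m_\Gamma^*(\ker d\sigma_\Gamma)$ combined with \eqref{cor:basic-identities-densities-on-Gamma:line1}. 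The only cosmetic difference is that the paper splits $\Omega(\ker dm_\Gamma)$ as $\Omega^{1/2}\otimes\Omega^{1/2}$ and applies the two isomorphisms $\ker dm_\Gamma\simeq\pr{(1)}^*(\ker dr_\Gamma)$ and $\ker dm_\Gamma\simeq\pr{(2)}^*(\ker ds_\Gamma)$ to each half separately, whereas you first pull everything to $\pr{(1)}^*\hat\Sigma$ and then transport one half-density factor to $\pr{(2)}^*$ via the translation isomorphisms; the content is identical.
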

 \begin{proof}
Applying the lemma to \eqref{diag:cot-gpd-source-overview}, \eqref{diag:cot-gpd-target-overview} and \eqref{diag:cot-gpd-mult-overview}, one gets the exact sequence
\begin{equation}\label{eq:kernel-ds-Gamma-exact-seq}
 0\longrightarrow p^*((\ker dr)^\perp) \longrightarrow \ker ds_\Gamma \overset{dp }{\longrightarrow}p^*\ker ds \longrightarrow 0,
\end{equation}
\begin{equation}\label{eq:kernel-dr-Gamma-exact-seq}
 0\longrightarrow  p^*((\ker ds)^\perp) \longrightarrow \ker dr_\Gamma \overset{dp}{\longrightarrow}p^*\ker dr\longrightarrow 0.
\end{equation}
and
\begin{equation}\label{eq:kernel-dm-Gamma-exact-seq}
 0\longrightarrow (p^{2})^*\ker m_\Gamma \longrightarrow \ker dm_\Gamma \overset{dp^{2}}{\longrightarrow}(p^{2})^*\ker dm\longrightarrow 0.
\end{equation}
 To prove (\ref{cor:basic-identities-densities-on-Gamma:line1}), observe that \eqref{eq:kernel-ds-Gamma-exact-seq} gives 
\[
 \Omega^{1/2}(\ker ds_\Gamma)\simeq p^*\Omega^{1/2}(\ker ds)\otimes p^*\Omega^{1/2}((\ker dr)^\perp)
\]
and that $\Omega^{1/2}((\ker dr)^\perp)=\Omega^{-1/2}(TG/\ker dr)=\Omega^{-1/2}_G\otimes \Omega^{1/2}(\ker dr)$. Next, observe that for any Lie groupoid $G$, the maps
\[
   \ker dm \ni (\gamma_1,\gamma_2,X_1,X_2)\longmapsto (\gamma_1,\gamma_2,X_2)\in \pr{(2)}^*(\ker ds)
\]
and 
\[
 \pr{(1)}^*(\ker ds) \ni (\gamma_1,\gamma_2,X_1)\longmapsto (\gamma_1,\gamma_2,(dR_{\gamma_2})_{\gamma_1}(X_1))\in m^*(\ker ds)
\]
are  isomorphisms of vector bundles over $G^{(2)}$.  Similarly, $\ker dm\simeq \pr{(1)}^*(\ker dr) $ and $\pr{(2)}^*(\ker dr)\simeq m^*(\ker dr)  $. These information used for the groupoid $T^*G$ give
\begin{align}
 \Omega(\ker dm_\Gamma) &\simeq  \Omega^{1/2}(\ker dm_\Gamma)\otimes \Omega^{1/2}(\ker dm_\Gamma) \\
 & \simeq  \pr{(1)}^*\Omega^{1/2}(\ker dr_\Gamma)\otimes \pr{(2)}^*\Omega^{1/2}(\ker ds_\Gamma)\\
 & \simeq  m_\Gamma^*(\Omega(\ker ds_\Gamma))\simeq  m_\Gamma^*(\Omega(\ker dr_\Gamma))
\end{align}
where we have used (\ref{cor:basic-identities-densities-on-Gamma:line1}) to pass from the second to the third line.
 This  proves (\ref{cor:basic-identities-densities-on-Gamma:line2}) and then (\ref{cor:basic-identities-densities-on-Gamma:line3}) follows directly from \eqref{eq:kernel-dm-Gamma-exact-seq}.
\end{proof}

\begin{prop}\label{prop:prod-densities-n-compo-lag} Let  $\Lambda_1,\Lambda_2$ be closed $G$-relations which are cleanly convolable. Let $\Lambda=\Lambda_1.\Lambda_2$. We have a natural homomorphism of vector bundles over $\Lambda_1\times\Lambda_2\cap \Gamma^{(2)}$:
\begin{equation}
 \label{eq:prod-densities-n-compo-lag}
  (\hat{\Sigma}^{1/2}\otimes I_{\Lambda_1})\boxtimes (\hat{\Sigma}^{1/2}\otimes I_{\Lambda_2})\longrightarrow  m_\Gamma^*(I_{\Lambda}\otimes \hat{\Sigma}^{1/2})  \otimes \Omega(\ker dm_\Gamma\cap T(\Lambda_1\times\Lambda_2)).
\end{equation}
\end{prop}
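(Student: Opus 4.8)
The plan is to build the homomorphism \eqref{eq:prod-densities-n-compo-lag} by combining the two ``abstract'' density identities already established, namely the composition homomorphism for Maslov-half-density bundles under clean composition of canonical relations and the identity \eqref{cor:basic-identities-densities-on-Gamma:line2} of Proposition \ref{cor:basic-identities-densities-on-Gamma}. Recall that the general composition theorem for Lagrangian distributions (\cite[Theorems 21.6.6, 25.2.3]{Horm-classics}, reproduced in the preliminaries) provides, for the clean composition of $\Lambda_1\subset T^*G^2$-type data, a natural bundle map
\begin{equation}
 I_{\Lambda_1}\otimes I_{\Lambda_2} \longrightarrow p^*(I_{\Lambda})\otimes \Omega(\ker dp)\otimes \Omega^{-1/2}(T^*G)
\end{equation}
over $\widetilde{\Lambda}=\Lambda_1\times\Lambda_2\cap T^*G^2\times N^*(\Delta)\times\cdots$; here however we are composing inside the symplectic groupoid $T^*G$, so $p$ is the projection $m_\Gamma : \Lambda_1\times\Lambda_2\cap\Gamma^{(2)}\to\Lambda$ and $\Omega(\ker dp)=\Omega(\ker dm_\Gamma\cap T(\Lambda_1\times\Lambda_2))$, while the factor $\Omega^{-1/2}(T^*G)$ is the one that will be absorbed by the $\hat\Sigma^{1/2}$ twists. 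Concretely, I would first pull back the statement of Proposition \ref{prop:model-case-of-convol-phases} and \ref{lem:compo-LD-model-case} (applied with $X=G^2$, $Y=G$, $Z=G^{(2)}$, $f=m$) so that the composition $\Lambda_1.\Lambda_2$ is realised as a push-forward $\mu(\widetilde\Lambda\cap H)$ along $\mu=m_\Gamma$ with $H=\Gamma^{(2)}$, exactly as in Corollary \ref{cor:clean-compo-Lagrangian-in-symplectic-gpd}.

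The key step is then to rewrite the ``density correction'' factors using the groupoid structure. By Proposition \ref{cor:basic-identities-densities-on-Gamma}, line \eqref{cor:basic-identities-densities-on-Gamma:line2}, we have canonically $\pr{(1)}^*\hat\Sigma^{1/2}\otimes\pr{(2)}^*\hat\Sigma^{1/2}\simeq m_\Gamma^*\hat\Sigma\simeq\Omega(\ker dm_\Gamma)$ over $\Gamma^{(2)}$, and restricting to $\Lambda_1\times\Lambda_2\cap\Gamma^{(2)}$ this gives the identification of the ``extra'' half-density factors on the left of \eqref{eq:prod-densities-n-compo-lag} with a piece of $\Omega(\ker dm_\Gamma)$. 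Since the clean intersection hypothesis means $T(\widetilde\Lambda\cap H)=T\widetilde\Lambda\cap TH$ and, as noted in the proof of Theorem \ref{thm:inversibility-equiv-transversal-adjoint} and \eqref{eq:rem-about-G-rel-3}, $\ker dm_\Gamma$ decomposes compatibly, one gets a canonical isomorphism
\begin{equation}
 \Omega(\ker dm_\Gamma)\vert_{\widetilde\Lambda\cap H}\simeq \Omega(\ker dm_\Gamma\cap T(\Lambda_1\times\Lambda_2))\otimes \Omega(\text{transverse complement}),
\end{equation}
where the transverse complement is identified, via $\rho$ and the basic diagram \eqref{diag:cot-gpd-overview}, with a bundle naturally carrying $\Omega^{1/2}(T^*G)\otimes\Omega^{1/2}(T^*G)$-type information coming from the $m^*$ pull-back; this is precisely the factor that cancels the $\Omega^{-1/2}(T^*G)$ appearing in Hörmander's composition map and converts the naked $I_\Lambda$ into $m_\Gamma^*(I_\Lambda\otimes\hat\Sigma^{1/2})$. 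Assembling: the Hörmander map gives $I_{\Lambda_1}\boxtimes I_{\Lambda_2}\to m_\Gamma^* I_\Lambda\otimes\Omega(\ker dm_\Gamma\cap T(\Lambda_1\times\Lambda_2))\otimes\Omega^{-1/2}(T^*G)$, and tensoring both sides with $\hat\Sigma^{1/2}\boxtimes\hat\Sigma^{1/2}$ and using \eqref{cor:basic-identities-densities-on-Gamma:line2} to rewrite the left side and the trivialisation $\Omega^{-1/2}(T^*G)\otimes(\text{canonical density of }T^*G)$ to kill the remaining factor, one lands exactly on the right-hand side of \eqref{eq:prod-densities-n-compo-lag}.

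I expect the main obstacle to be the careful bookkeeping of half-densities on the fibre $\ker dm_\Gamma$: one must check that the splitting of $\ker dm_\Gamma$ along $\widetilde\Lambda\cap H$ into the ``tangent to $\Lambda_1\times\Lambda_2$'' part and a complement is canonical (it is, by cleanness, since $\ker dm_\Gamma\subset T\Gamma^{(2)}=TH$ and $TH\cap T(\Lambda_1\times\Lambda_2)=T(\widetilde\Lambda\cap H)$, so the complement is canonically $\ker dm_\Gamma/(\ker dm_\Gamma\cap T(\Lambda_1\times\Lambda_2))$), and that under $m_\Gamma$ this complement maps isomorphically onto a bundle whose density is exactly the $\Omega^{-1/2}(T^*Y)$-trivialising factor in \cite[Theorem 21.6.6]{Horm-classics}. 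Once this matching is pinned down using diagram \eqref{diag:cot-gpd-overview} and $\widetilde{m_\Gamma}=(m,({}^tdm)^{-1})$ — which intertwines the vector-bundle structures precisely so that the canonical symplectic density of $T^*G$ pulls back correctly — the homomorphism is obtained by pure functoriality, with no further computation. I would therefore present the proof as: (i) invoke the clean composition set-up and Hörmander's symbol-composition bundle map; (ii) apply Proposition \ref{cor:basic-identities-densities-on-Gamma} to rewrite source and the correction factor; (iii) verify the canonical splitting of $\ker dm_\Gamma$ over $\widetilde\Lambda\cap H$ and conclude.
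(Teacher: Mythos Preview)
Your overall plan --- invoke H\"ormander's Theorem 21.6.6 for the clean reduction $m_\Gamma : (\Lambda_1\times\Lambda_2)\cap\Gamma^{(2)}\to\Lambda$, then twist by $\hat\Sigma^{1/2}\boxtimes\hat\Sigma^{1/2}$ using Proposition \ref{cor:basic-identities-densities-on-Gamma} --- is exactly the paper's approach. But you have misidentified the density correction produced by Theorem 21.6.6. In the groupoid setting the coisotropic is $H=\Gamma^{(2)}$, its characteristic distribution is $\ker dm_\Gamma$, and the H\"ormander map reads
\[
 I_{\Lambda_1}\boxtimes I_{\Lambda_2}\longrightarrow m_\Gamma^* I_\Lambda \otimes \Omega^{-1/2}(\ker dm_\Gamma) \otimes \Omega\big(\ker dm_\Gamma\cap T(\Lambda_1\times\Lambda_2)\big),
\]
\emph{not} with $\Omega^{-1/2}(T^*G)$. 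This is precisely the point of the proposition: unlike the classical pair-groupoid case, where the fibre of the reduction is the symplectic space $T^*Y$ and hence carries a canonical density, here $\ker dm_\Gamma$ has dimension $n=\dim G$ and admits no natural trivialisation (the paper stresses this explicitly). Your proposed step \enquote{trivialise $\Omega^{-1/2}(T^*G)$ by the canonical density of $T^*G$} therefore does not apply, and if you carry through your version of the bookkeeping you end up with $m_\Gamma^*\hat\Sigma$ on the target instead of $m_\Gamma^*\hat\Sigma^{1/2}$ --- off by half a power.

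Once the correction factor is written correctly the proof is a two-line cancellation: tensoring both sides with $\hat\Sigma^{1/2}\boxtimes\hat\Sigma^{1/2}\simeq\Omega(\ker dm_\Gamma)$ (this is \eqref{cor:basic-identities-densities-on-Gamma:line2}) turns $\Omega^{-1/2}(\ker dm_\Gamma)$ into $\Omega^{1/2}(\ker dm_\Gamma)\simeq m_\Gamma^*\hat\Sigma^{1/2}$ (same identity again), and one lands directly on the right-hand side of \eqref{eq:prod-densities-n-compo-lag}. The splitting of $\ker dm_\Gamma$ into $\ker dm_\Gamma\cap T(\Lambda_1\times\Lambda_2)$ and a \enquote{transverse complement}, and the attempt to match that complement with a $T^*Y$-type bundle via diagram \eqref{diag:cot-gpd-overview}, is neither needed nor canonically available; you should drop that entire paragraph.
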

\begin{proof}
Applying \cite[Theorem 21.6.6]{Horm-classics}, we get
\begin{equation}
 \label{eq:ident-Maslov-times-densities-n-compo-lag}
  I_{\Lambda_1}\boxtimes I_{\Lambda_2}\longrightarrow m_\Gamma^*I_{\Lambda}\otimes \Omega^{-1/2}(\ker dm_\Gamma)\otimes \Omega(\ker dm_\Gamma\cap T(\Lambda_1\times\Lambda_2))
\end{equation}
Contrary to what happens in the proof of   of \cite[Theorem 21.6.7]{Horm-classics}, the bundle  $\Delta= \ker dm_\Gamma$ is not necessarily symplectic (actually, it may even be odd dimensional since the fibers are of dimension $n=\dim G$) and we cannot expect any natural trivialization of the corresponding density bundle.  This is where the bundle $\Sigma$ is useful. 

Using \eqref{cor:basic-identities-densities-on-Gamma:line2} in Corollary \ref{cor:basic-identities-densities-on-Gamma} we get 
\begin{equation}
 \label{proof:prod-densities-n-compo-lag-3}
  (\hat{\Sigma}^{1/2}\otimes I_{\Lambda_1})\boxtimes (\hat{\Sigma}^{1/2}\otimes I_{\Lambda_2}) \simeq \Omega(\ker dm_\Gamma) \otimes (I_{\Lambda_1}\boxtimes  I_{\Lambda_2})\simeq m_\Gamma^*(\hat{\Sigma}) \otimes (I_{\Lambda_1}\boxtimes  I_{\Lambda_2}).
\end{equation}
Using  \eqref{cor:basic-identities-densities-on-Gamma:line2} again to get $\Omega(\ker dm_\Gamma)^{1/2} \simeq m_\Gamma^*(\hat{\Sigma}^{1/2})$ and combining \eqref{proof:prod-densities-n-compo-lag-3} and \eqref{eq:ident-Maslov-times-densities-n-compo-lag}, we obtain \eqref{eq:prod-densities-n-compo-lag}.

\end{proof}

These identifications of Maslov and densities related  bundles allow to apply the formula for the product of principal symbols given in \cite[Theorem 25.2.3]{Horm-classics}. In the present situation, it gives

\begin{cor}
 \label{cor:compo-symbole-without-Maslov}
Let  $\Lambda_1,\Lambda_2$ be closed $G$-relations which are cleanly convolable with excess $e$ and set $\Lambda=\Lambda_1.\Lambda_2$.   Let $A_j\in I^{m_j}(G,\Lambda_j;\Omega^{1/2})$ be compactly supported $G$-FIO and $a_j\in S^{m_j+n/4}(\Lambda_j,\hat{\Sigma}^{1/2}\otimes I_{\Lambda_j})$  be representants of the principal symbol of $A_j$.  Let $(a_1\boxtimes a_2)_{\gamma,\xi}$ be the density  on the compact manifold $m^{-1}_\Gamma(\gamma,\xi)\cap \Lambda_1\times\Lambda_2$ with values in $ \hat{\Sigma}^{1/2}\otimes I_{\Lambda}$ as given by   \eqref{eq:prod-densities-n-compo-lag}. Then $a_1*a_2$ defined by 
\begin{equation}
 (\gamma,\xi)\in \Lambda,\quad  a_1*a_2(\gamma,\xi) = 
\int_{m^{-1}_\Gamma(\gamma,\xi)\cap \Lambda_1\times\Lambda_2}  a_1\boxtimes a_2
\end{equation}
belongs  $S^{m_1+m_2+e/2+n^{(0)}/2}(\Lambda,\hat{\Sigma}^{1/2}\otimes I_{\Lambda})$ and represents the principal symbol of $A=A_1*A_2$. 
\ignore{In other words, $(a_1,a_2)\to a_1*a_2$ induces the map
\begin{equation}
  S^{[m_1]}(\Lambda_1,\hat{\Sigma}^{1/2}\otimes I_{\Lambda_1})\times S^{[m_2]}(\Lambda_2,\hat{\Sigma}^{1/2}\otimes I_{\Lambda_2})\longrightarrow S^{[m_1+m_2+e/2-n/4]}(\Lambda,\hat{\Sigma}^{1/2}\otimes I_{\Lambda}).
\end{equation}}
\end{cor}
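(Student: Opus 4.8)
The plan is to reduce Corollary~\ref{cor:compo-symbole-without-Maslov} to the classical symbol composition formula \cite[Theorem 25.2.3]{Horm-classics}, using the bundle identifications just established in Proposition~\ref{prop:prod-densities-n-compo-lag} to make the abstract statement explicit. First I would recall that in the setting of Corollary~\ref{prop:clean-compo-phases-gpd} (applied with $X=G^2$, $Y=G$, $Z=G^{(2)}$, $f=m$), the Lagrangian $\Lambda=\Lambda_1.\Lambda_2$ is obtained by the symplectic reduction $m_\Gamma(\Lambda_1\times\Lambda_2\cap\Gamma^{(2)})$, and that Theorem~\ref{thm:compo-FIO-on-gpd} already gives $A=A_1*A_2\in I^{m_1+m_2+e/2+n^{(0)}/2-n/4}(G,\Lambda)$. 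So the order count is already settled and only the principal symbol identification remains. The principal symbol of $A$ lives in $S^{[m_1+m_2+e/2+n^{(0)}/2-n/4+n/4]}(\Lambda,I_\Lambda\otimes\hat\Omega^{1/2}\otimes\hat\Omega^{-1/2}_G)=S^{[m_1+m_2+e/2+n^{(0)}/2]}(\Lambda,\hat\Sigma^{1/2}\otimes I_\Lambda)$, which matches the target space in the statement; this is a bookkeeping check I would carry out explicitly.

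Next I would trace through the three-step factorization of $*$ used in the proof of Theorem~\ref{thm:compo-FIO-on-gpd}: external tensor product $A_1\boxtimes A_2\in I(G^2,\Lambda_1\times\Lambda_2)$, restriction to $G^{(2)}$, then pushforward along $m$. At the level of symbols, the external product of principal symbols is the obvious $a_1\boxtimes a_2$; the restriction $i^*$ to $G^{(2)}$ leaves the symbol essentially unchanged on $\rho(\Lambda_1\times\Lambda_2)$ (this is the content of the proof of Proposition~\ref{lem:compo-LD-model-case}, where the phase and amplitude are literally restricted); and the pushforward $m_*$ integrates over the fibers of $m$, i.e. over $m^{-1}_\Gamma(\gamma,\xi)\cap\Lambda_1\times\Lambda_2$, which by the clean intersection hypothesis is the compact fiber $C_\gamma$ of the map $p:\widetilde\Lambda\to\Lambda$ in \cite[Theorem 25.2.3]{Horm-classics}. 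The only subtlety is that the integrand must be a density valued in $\hat\Sigma^{1/2}\otimes I_\Lambda$, and this is exactly what Proposition~\ref{prop:prod-densities-n-compo-lag} provides via the homomorphism \eqref{eq:prod-densities-n-compo-lag}. I would emphasize that the trivialization of $\Omega^{-1/2}(T^*Y)$ used in Hörmander's general formula—here $Y=G$, using the canonical density of $T^*G$—is precisely absorbed into the identification $\Omega(\ker dm_\Gamma)\simeq m_\Gamma^*\hat\Sigma$ of \eqref{cor:basic-identities-densities-on-Gamma:line2}, so that no extra choices enter.

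The main obstacle I anticipate is verifying that the density $a_1\boxtimes a_2$ produced by \eqref{eq:prod-densities-n-compo-lag} genuinely coincides, under the Maslov-and-density identification of Proposition~\ref{cor:basic-identities-densities-on-Gamma}, with the density $a_1\times a_2$ on $C_\gamma$ appearing in Hörmander's formula $a(\gamma)=\int_{C_\gamma}a_1\times a_2$; this requires chasing the commutative diagram
\begin{equation*}
 I_{\Lambda_1}\otimes I_{\Lambda_2}\longrightarrow p^*(I_\Lambda)\otimes\Omega(\ker dp)\otimes\Omega^{-1/2}(T^*Y)
\end{equation*}
through the isomorphisms \eqref{cor:basic-identities-densities-on-Gamma:line1}--\eqref{cor:basic-identities-densities-on-Gamma:line3}, and keeping careful track of which $\ker dp$, $\ker dm$, $\ker m_\Gamma$ contribute. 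In particular one must use the exact sequence \eqref{eq:kernel-dm-Gamma-exact-seq} to relate $\Omega(\ker dm_\Gamma\cap T(\Lambda_1\times\Lambda_2))$ to the densities on $C_\gamma$ and on $\ker dm$, and check that the $\Omega(\ker m_\Gamma)=\Omega(N^*G^{(2)})$ factor cancels correctly (this is where the shift $n^{(0)}/2$ as opposed to $-n/4$ comes from, since $\dim N^*G^{(2)}$ and $\dim\ker dm$ enter). Once these bundle identifications are pinned down, the formula for $a_1*a_2$ and the claim that it represents $\sigma(A)$ follow directly from \cite[Theorem 25.2.3]{Horm-classics}, and the order of the symbol is the one already recorded, so the proof concludes. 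I would present these verifications compactly, pointing to the relevant diagrams rather than reproducing all index arithmetic.
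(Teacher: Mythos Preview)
Your proposal is correct and follows exactly the strategy the paper uses: the paper states this corollary as an immediate application of \cite[Theorem 25.2.3]{Horm-classics} once the bundle identifications of Proposition~\ref{cor:basic-identities-densities-on-Gamma} and Proposition~\ref{prop:prod-densities-n-compo-lag} are in hand, and gives no further proof beyond the sentence ``These identifications of Maslov and densities related bundles allow to apply the formula for the product of principal symbols given in \cite[Theorem 25.2.3]{Horm-classics}.'' Your plan to trace the three-step factorization and chase the density diagrams is a faithful (and more detailed) unpacking of that same reduction.
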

%\orange{D\'etails ?}

\ignore{
\begin{proof}
This needs to checked for symbols supported in small open cones on which parametrizations of the lagrangian exist. Thus, let $\phi_j : U_j\times \RR^{N_j}\to \RR$ be non-degenerate phase functions parametrizing locally $\Lambda_j$ and $a_j\in S^{m_j}(\Lambda_j,\hat{\Sigma}^{1/2}\otimes I_{\Lambda_j}) $ be symbols with support in a compactly generated cone included in $\Lambda_{\phi_j}$. Let us fix local charts $(U_j,\chi_j)$ for $G$ and denote by $Q_{j,\gamma,\theta}$ the non-degenerate quadratic form on $T_\gamma G\times \RR^{N_j}$ associated with the Hessian of $\phi_j$ in these local coordinates. Then we have a local section (actually, a local basis) of the bundle $I_{\Lambda_j}$ over the open subset $\Lambda_{\phi_j}\subset \Lambda_j$:
\begin{equation}
 \iota_j : \Lambda_{\phi_j} \ni(\gamma,\xi)\longmapsto \int e^{iQ_{j,\gamma,\theta}(t,\tau)}d\tau \vert dt\vert^{1/2}\in I_{\Lambda_{j}}(\gamma,\xi)
\end{equation}
where $\xi=\phi_{j,\gamma}'(\gamma,\theta)$. Since $\iota_j$ has degree $N_j/2$ in $\xi$, we can write, after the trivialization of $\Sigma$:
\begin{equation}
 a_j(x,\xi)= b_j(x,\xi)\iota_j(x,\xi) \quad \text{ with } b_j\in S^{m_j-N_j/2}(\Lambda).
\end{equation}
\end{proof}}
We end with some direct consequences of the previous statements. 

\subsection{Composition with pseudodifferential operators} 
\begin{thm}\label{thm:pseudodiff-bimodule}
Any closed $G$-relation $\Lambda$ is transversally convolable with the unit $G$-relation $A^*G$ and the convolution product of distributions turns  $ I(G,\Lambda;\Omega^{1/2})$  into a $\Psi_c(G,\Omega^{1/2})$-bimodule : 
\[
  \Psi_c(G;\Omega^{1/2})* I(G,\Lambda;\Omega^{1/2})\subset I(G,\Lambda;\Omega^{1/2})  \ ; \  I(G,\Lambda;\Omega^{1/2})*\Psi_c(G;\Omega^{1/2}) \subset I(G,\Lambda;\Omega^{1/2}). 
\]
\end{thm}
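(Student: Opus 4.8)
The plan is to reduce this statement to the composition theorem, Theorem \ref{thm:compo-FIO-on-gpd}, by checking that the unit space $A^*G\subset T^*G$ is a (closed) $G$-relation with which every closed $G$-relation $\Lambda$ is transversally convolable, and then to identify $\Lambda\,.\,A^*G$ with $\Lambda$. First I would observe that $A^*G=\Gamma^{(0)}$ is a conic Lagrangian submanifold of $T^*G$ (it is the conormal bundle $N^*(G^{(0)})$, which is conic, and it is Lagrangian because $G^{(0)}$ is a submanifold of $G$), and it is admissible since $s_\Gamma$ and $r_\Gamma$ restrict to the identity on $\Gamma^{(0)}$, so $\Gamma^{(0)}\cap\ker s_\Gamma=\Gamma^{(0)}\cap\ker r_\Gamma=\emptyset$; hence $A^*G$ is a closed $G$-relation. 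Recall that $\Psi_c(G,\Omega^{1/2})=I_c(G,A^*G;\Omega^{1/2})$ modulo smoothing, so the two inclusions in the statement are instances of $I^{m_1}_c(G,\Lambda)\,.\,I^{m_2}_c(G,A^*G)\subset I(G,\Lambda\,.\,A^*G)$ and $I^{m_1}_c(G,A^*G)\,.\,I^{m_2}_c(G,\Lambda)\subset I(G,A^*G\,.\,\Lambda)$.

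Next I would verify transversal convolability, i.e. that $\Lambda\times A^*G$ meets $\Gamma^{(2)}$ transversally (and symmetrically $A^*G\times\Lambda$). Here the groupoid algebra does the work: since $r_\Gamma,s_\Gamma$ are identities on $\Gamma^{(0)}$, a pair $(\lambda,u_0)\in\Lambda\times A^*G$ is composable iff $s_\Gamma(\lambda)=r_\Gamma(u_0)=u_0$, so $(\Lambda\times A^*G)^{(2)}$ is the graph of $s_\Gamma:\Lambda\to A^*G$, which is smooth of dimension $\dim\Lambda=n$. A dimension count as in the proof of Theorem \ref{thm:inversibility-equiv-transversal-adjoint} then gives clean intersection with excess $e=\operatorname{codim}(\Lambda\times A^*G)+\operatorname{codim}(\Gamma^{(2)})-\operatorname{codim}((\Lambda\times A^*G)^{(2)})=(n+n_{(0)})+n-(2n+n_{(0)}-n)=0$, so the intersection is in fact transversal; the same argument applies to $A^*G\times\Lambda$. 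Moreover $m_\Gamma$ restricted to this graph is just $\lambda\mapsto u_\Gamma(u_\Gamma^{-1}\dots)$—concretely, $\lambda\cdot u_\Gamma(s(\gamma))=\lambda$ by the unit axiom in $T^*G$—so $\Lambda\,.\,A^*G=\Lambda$ and likewise $A^*G\,.\,\Lambda=\Lambda$; this is why the Lagrangian is unchanged.

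Finally, I would feed this into Theorem \ref{thm:compo-FIO-on-gpd} with $\Lambda_1=\Lambda$, $\Lambda_2=A^*G$ (and vice versa) and $e=0$: for $A\in I^{m}_c(G,\Lambda)$ and $P\in\Psi^{k}_c(G,\Omega^{1/2})=I^{k'}_c(G,A^*G;\Omega^{1/2})$ for the appropriate $k'$, we get $A\,.\,P\in I^{m+k'+n_{(0)}/2-n/4}(G,\Lambda\,.\,A^*G)=I(G,\Lambda)$, and symmetrically $P\,.\,A\in I(G,\Lambda)$. Since $\Psi_c(G,\Omega^{1/2})$ is closed under convolution (it is an algebra, being $I_c(G,A^*G;\Omega^{1/2})$ and $A^*G\,.\,A^*G=A^*G$), this exhibits $I(G,\Lambda;\Omega^{1/2})$ as a $\Psi_c(G,\Omega^{1/2})$-bimodule. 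The main obstacle is essentially bookkeeping: making precise the order shift so that composition with an order-$k$ pseudodifferential operator raises the FIO order by $k$ rather than by $n_{(0)}/2-n/4+k'$—this amounts to recalling from \cite{LMV1,NWX} the normalization $\Psi^k(G,\Omega^{1/2})=I^{k+n_{(0)}/4-n/4}(G,A^*G;\Omega^{1/2})$ (so that order-$0$ operators contain the half-density identity), and then checking the arithmetic. One also needs the elementary remarks that convolution of properly/compactly supported distributions on $G$ is well defined and that $A^*G$ is closed in $T^*G\setminus 0$, both of which are immediate.
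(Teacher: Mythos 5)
Your argument is correct and follows the same overall reduction to Theorem \ref{thm:compo-FIO-on-gpd}, but the central step --- transversal convolability of $\Lambda$ with $A^*G$ --- is handled by a genuinely different device than in the paper. The paper dualizes: since $\ker dm_\Gamma$ is the symplectic orthogonal of $T\Gamma^{(2)}$ and $A^*G\times\Lambda$ is Lagrangian, transversality is equivalent to $T(A^*G\times\Lambda)\cap\ker dm_\Gamma=0$, which is then verified by a short computation with $r_\Gamma,s_\Gamma$ and the unit axioms, valid in any symplectic groupoid. You instead describe $(\Lambda\times A^*G)^{(2)}$ explicitly as the graph of $s_\Gamma\vert_\Lambda$, note that its tangent space is $T(\Lambda\times A^*G)\cap T\Gamma^{(2)}$, and conclude by a codimension count; this has the advantage of yielding $\Lambda\,.\,A^*G=\Lambda$ with point fibers for free (a fact the paper records separately to ensure the convolution is proper and connected), while the paper's dual formulation avoids having to compute the intersection at all. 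Two slips should be corrected, neither of which affects the validity of the approach. First, with the correct codimensions in $\Gamma^2$ (namely $\mathrm{codim}(\Lambda\times A^*G)=2n$, $\mathrm{codim}\,\Gamma^{(2)}=n$, and $\mathrm{codim}$ of the graph $=3n$) the excess is $2n+n-3n=0$; the expression you actually wrote simplifies to $n$, not $0$. Second, the normalization is $\Psi^{k}(G)=I^{k+(n-2n^{(0)})/4}(G,A^*G;\Omega^{1/2})$, not $I^{k+(n^{(0)}-n)/4}$; with the former, the shift $n^{(0)}/2-n/4$ from Theorem \ref{thm:compo-FIO-on-gpd} (with $e=0$) cancels exactly, so that composition with an order-$k$ pseudodifferential operator raises the Lagrangian order by exactly $k$, as you anticipated it should.
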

 When $\Lambda= A^*G$,  we recover the fact that $\Psi_c(G)$ is an algebra. 
\begin{proof}
$\Lambda_0=A^*G$ and $\Lambda$ transversally convolable means that $T(\Lambda_0\times \Lambda)+ T\Gamma^{(2)}=T\Gamma^2$ at any point $(\delta_1,\delta_2)\in \Lambda_0\times \Lambda\cap\Gamma^{(2)}$. Passing to the symplectic othogonal, this is equivalent to
\begin{equation}
  T_{(\delta_1,\delta_2)}(\Lambda_0\times \Lambda)\cap \ker (dm_\Gamma)_{(\delta_1,\delta_2)} = 0
\end{equation}
and the latter follow from general properties of Lie groupoids. Indeed, Let $\Gamma$ be any Lie groupoid and consider $\gamma\in \Gamma$, $r(\gamma)=x,s(\gamma)=y\in \Gamma^{(0)} $, $(t_1,t_2)\in T_{x,\gamma}\Gamma^{(0)}\times \Gamma \cap \ker (dm_\Gamma)_{(x,\gamma)} $.

Since $r_{\Gamma}\vert_{\Gamma^{(0)}}=s_{\Gamma}\vert_{\Gamma^{(0)} }=\mathrm{Id}$, we get $ds_{\Gamma}(t_1)= dr_{\Gamma}(t_1)=t_1$ and since $r_{\Gamma}\circ m_\Gamma=r_{\Gamma}\circ \pr{1}$, $s_{\Gamma}\circ m_\Gamma=s_{\Gamma}\circ \pr{2}$ we get from the assumption on $(t_1,t_2)$ that 
\[
 t_1 = dr_{\Gamma}(t_1)= dr_{\Gamma}\circ dm_\Gamma(t_1,t_2)=0 \text{ and } ds_\Gamma(t_2)=ds_{\Gamma}\circ dm_\Gamma(t_1,t_2)=0.
\]
Also, we get $0=ds_\Gamma(t_1)=dr_\Gamma(t_2)$, therefore 
\[
 (0,t_2)\in \ker (dm_\Gamma)_{(x,\gamma)} ,\quad t_2\in T_\gamma \Gamma^{x}_{y}.
\]
Then 
\[
 (0,d(R_{\gamma^{-1}})_\gamma(t_2))\in \ker (dm_\Gamma)_{(x,x)} ,\quad d(R_{\gamma^{-1}})_\gamma(t_2)\in T_x \Gamma^{x}_{x}.
\]
Since $(dm_\Gamma)_{(x,x)}(u_1,u_2)=u_1+u_2$ if $u_j\in T_x \Gamma^{x}_{x}$ we also conclude $t_2=0$ and this proves that $\Lambda_0$ and $\Lambda$ transversally convolable. This is obviously the same with $\Lambda_0$ on the right. In both cases the fibers of the convolution $\Lambda_0.\Lambda=\Lambda.\Lambda_0=\Lambda$ are just points, hence the convolution is proper and connected. Now Theorem \ref{thm:compo-FIO-on-gpd} gives the conclusion. 
\end{proof}

Combining Theorems \ref{thm:pseudodiff-bimodule} and \ref{thm:compo-FIO-on-gpd}, we obtain
\begin{thm}\label{thm:Egorov}({\sl Egorov theorem for groupoids}).
 Let   $\Lambda,\Lambda'\subset T^*G\setminus 0$  be convolable closed $G$-relations such that 
\begin{equation}\label{defn:condition-weak-invertibility-Lagrangian}
  \Lambda. \Lambda'\subset A^*G\setminus 0 \text{ and } \Lambda' . \Lambda \subset A^*G\setminus 0. 
 \end{equation}
Then 
 \begin{equation}
  \label{thm:Egorov-as-sub-bi-module}
   I_c(G,\Lambda;\Omega^{1/2})*\Psi(G;\Omega^{1/2})* I_c(G,\Lambda';\Omega^{1/2})\subset \Psi(G;\Omega^{1/2}).
 \end{equation}
\end{thm}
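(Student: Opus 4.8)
The plan is to assemble the two composition results already established. Recall first that, as explained in the introduction (see \cite{MP,NWX,LMV1}), $G$-pseudodifferential operators are exactly the Lagrangian distributions on $G$ subordinated to the unit $G$-relation, that is $\Psi(G;\Omega^{1/2})=I(G,A^*G;\Omega^{1/2})$ with $A^*G=N^*(G^{(0)})$. I would absorb the pseudodifferential factor into one of the two $G$-FIO by the bimodule structure of Theorem \ref{thm:pseudodiff-bimodule}, compose the two resulting $G$-FIO by Theorem \ref{thm:compo-FIO-on-gpd}, and finally observe that the target Lagrangian $\Lambda.\Lambda'$, being a Lagrangian local submanifold contained in $A^*G\setminus 0$, is an open piece of $A^*G$, so that the composition lands in $\Psi(G;\Omega^{1/2})$.

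Concretely, let $u\in I_c(G,\Lambda;\Omega^{1/2})$, $P\in\Psi(G;\Omega^{1/2})$ and $v\in I_c(G,\Lambda';\Omega^{1/2})$. As $u$ is compactly supported and $P$ properly supported, $u*P*v$ is a well defined, compactly supported distribution, and since convolution on $G$ is associative the bracketing is irrelevant. Theorem \ref{thm:pseudodiff-bimodule} says that $A^*G$ is transversally (hence cleanly) convolable with the closed $G$-relation $\Lambda$, with $\Lambda.A^*G=\Lambda$; combined with a routine support argument (only a compact part of $P$ matters near the compact support of $u$) it gives $u*P\in I_c(G,\Lambda;\Omega^{1/2})$. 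Now $\Lambda$ and $\Lambda'$ are closed and cleanly convolable by hypothesis and $u*P$, $v$ are compactly supported, so Theorem \ref{thm:compo-FIO-on-gpd} yields
\[
 (u*P)*v \in I(G,\Lambda.\Lambda';\Omega^{1/2}),
\]
still with compact support.

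The last step is to identify this space. By Corollary \ref{cor:clean-compo-Lagrangian-in-symplectic-gpd}, $\Lambda.\Lambda'$ is a conic Lagrangian local submanifold of $T^*G\setminus 0$ of dimension $n=\dim G$, and by hypothesis it is contained in $A^*G\setminus 0$, which is itself a conic Lagrangian submanifold of dimension $n$. Arguing exactly as at the beginning of the proof of Theorem \ref{thm:inversibility-equiv-transversal-adjoint}, each patch of $\Lambda.\Lambda'$ is then an open subset of $A^*G\setminus 0$, hence $\Lambda.\Lambda'$ is an open subset of $A^*G\setminus 0$ and
\[
 I(G,\Lambda.\Lambda';\Omega^{1/2}) \subset I(G,A^*G;\Omega^{1/2}) = \Psi(G;\Omega^{1/2}).
\]
Together with the previous paragraph this gives $u*P*v\in\Psi(G;\Omega^{1/2})$. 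Note that only the inclusion $\Lambda.\Lambda'\subset A^*G\setminus 0$ is used; the symmetric condition $\Lambda'.\Lambda\subset A^*G\setminus 0$ is recorded in the statement because it is the natural weak invertibility requirement on $(\Lambda,\Lambda')$, cf. the discussion around Theorem \ref{thm:inversibility-equiv-transversal-adjoint}.

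Since the whole argument is just a bookkeeping of Theorems \ref{thm:pseudodiff-bimodule} and \ref{thm:compo-FIO-on-gpd}, I do not expect any genuine obstacle. The only points deserving care are the support bookkeeping — making sure the triple convolution is defined and stays compactly supported when $P$ is merely properly (not compactly) supported — and the elementary dimension count that recognizes a Lagrangian local submanifold sitting inside $A^*G\setminus 0$ as an open piece of it.
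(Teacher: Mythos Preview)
Your proposal is correct and follows exactly the approach the paper takes: the paper states the theorem as an immediate consequence of Theorems \ref{thm:pseudodiff-bimodule} and \ref{thm:compo-FIO-on-gpd}, without further argument. Your write-up in fact supplies more detail than the paper does, including the dimension count identifying $\Lambda.\Lambda'$ as an open subset of $A^*G\setminus 0$ and the support bookkeeping for the triple product.
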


%%%%%%%%%%%%%%%%%%%%%%%%%%

We recall that the usual conventions for the order of conormal and Lagrangian distributions and for the order of pseudodifferential operators on groupoids yield
\begin{equation}
 \Psi^{m}(G) = I^{m+(n-2n^{(0)})/4}(G,A^*G;\Omega^{1/2}). 
\end{equation}
Let us also recall that in \cite{Vassout2006} were introduced a class of generalized smoothing operators and of Sobolev spaces. Recall that $C^*(G)$ denotes the C$^*$-algebra associated with the groupoid $G$.
The set of generalized smoothing operators $\Psi^{-\infty}(G) $ (Definition 24 p77 in \cite{Vassout2006}) is defined as the subset of $C^*(G)$ of those elements $R$ such that the closure of $P_1RP_2$ is again in $C^*(G)$ for any two compactly supported pseudodifferential $G$ operators $P_1$ and $P_2$.
For $s>0$ the Sobolev module $H^s$ of rank $s$ is defined as the $C*(G)$-module dom($\overline{P}$) endowed with scalar product $\scal{x}{y}_s=\scal{Px}{Py}+\scal{x}{y}$ where $P$ is any elliptic operator of order $s$. The corresponding Sobolev module $H^{-s}$ is defined by duality.\\ Next we can give, using techniques coming from \cite{MP} and \cite{Vassout2006}, a result on the continuity in the spirit of Theorem 25.3.1 in \cite{Horm-classics}
\begin{thm}\label{thm:continuity_C*}
 Let $\Lambda$ be an locally invertible local $G$-relation and $A\in I_c^{m}(G,\Lambda;\Omega^{1/2})$. 
 \begin{enumerate}
 \item If $m=(n-2n^{(0)})/4$, then the associated Fourier integral $G$-operator still denoted by $A$ extends into an operator which is a bounded multiplier of $C^*(G)$ :
 \begin{equation}
   A\in \cM(C^*(G)).
 \end{equation}
 \item If $m<(n-2n^{(0)})/4$ then $A$ extends into an element of $C^*(G)$.
 \item In the general case, $A$ can be extended to a morphism from $H^s$ to $H^{s-m'}$ with $$m'=m-(n-2n^{(0)})/4.$$
 \end{enumerate}
\end{thm}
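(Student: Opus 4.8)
**Proof plan for Theorem \ref{thm:continuity_C*}.**

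The plan is to reduce everything to the already-established composition calculus (Theorems \ref{thm:compo-FIO-on-gpd}, \ref{thm:pseudodiff-bimodule}, \ref{thm:inversibility-equiv-transversal-adjoint}) together with the known $C^*$-continuity of pseudodifferential operators on groupoids from \cite{MP,Vassout2006}. First I would treat the model case $m=(n-2n^{(0)})/4$, i.e. $A\in I^{0}$ in the pseudodifferential normalization. Since $\Lambda$ is locally invertible, Theorem \ref{thm:inversibility-equiv-transversal-adjoint} gives that $\Lambda$ is transversally convolable with $\Lambda^\star=i_\Gamma(\Lambda)$ and that $\Lambda.\Lambda^\star=r_\Gamma(\Lambda)\subset A^*G$ (locally; one covers $\Lambda$ by invertible patches and works patch by patch, using a partition of unity on $\Lambda$ to write $A$ as a locally finite sum of $G$-FIO subordinated to invertible patches). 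By Theorem \ref{thm:FIO-adjoint}, $A^\star\in I^{m}(G,\Lambda^\star)$, and by Theorem \ref{thm:compo-FIO-on-gpd} (with excess $e=0$, connected point fibers, hence proper composition), $A^\star*A\in I^{2m+n^{(0)}/2-n/4}(G,\Lambda^\star.\Lambda)=I^{(n-2n^{(0)})/4}(G,A^*G)=\Psi^{0}_c(G,\Omega^{1/2})$. Now the classical H\"ormander trick applies inside the groupoid: if $C$ is a positive constant dominating the (bounded) principal symbol of $A^\star*A$, then $C-A^\star*A$ is, modulo $\Psi^{-\infty}(G)$, of the form $B^\star*B$ for some $B\in\Psi^{0}_c(G)$ (parametrix square-root construction, exactly as in the proof of $L^2$-boundedness of order-zero $\Psi$DO's, which is available on groupoids via \cite{MP}). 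Pairing with an element $\xi$ of the Hilbert $C^*(G)$-module $L^2(G)=C^*(G)$ gives $\langle A\xi,A\xi\rangle = \langle(A^\star*A)\xi,\xi\rangle \le C\langle\xi,\xi\rangle$ in the $C^*$-algebra, which is exactly the statement that $A$ extends to a bounded multiplier of $C^*(G)$.

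For part (2), if $m<(n-2n^{(0)})/4$ then $A^\star*A\in\Psi^{m'}_c(G)$ with $m'=2m-(n-2n^{(0)})/4<(n-2n^{(0)})/4-\text{(something)}$; more precisely $A^\star*A$ has negative order in the pseudodifferential normalization, hence lies in $C^*(G)$ by the standard fact that negative-order compactly supported $\Psi$DO's on a groupoid are in $C^*(G)$ (\cite{Vassout2006}). Then from $A^\star*A\in C^*(G)$ and $A$ a bounded multiplier (by part (1), since lowering the order keeps $A$ in the order-zero multiplier class), one concludes $A=A\cdot(\text{approximate unit})$ converges appropriately and in fact $A\in C^*(G)$: indeed $\|A\xi\|^2=\|(A^\star*A)^{1/2}\xi\|^2$ and $(A^\star*A)^{1/2}\in C^*(G)$, so $A$ factors through an element of $C^*(G)$ and is itself in $C^*(G)$ (using that $C^*(G)$ is an ideal in its multiplier algebra and the polar-decomposition-type argument in the multiplier algebra).

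For part (3), the general case, one renormalizes: given $A\in I^{m}_c(G,\Lambda)$ with $m'=m-(n-2n^{(0)})/4$, choose an elliptic $P\in\Psi^{m'}_c(G)$ (invertible modulo $\Psi^{-\infty}$, with parametrix $Q\in\Psi^{-m'}_c(G)$). By Theorem \ref{thm:pseudodiff-bimodule}, $Q*A\in I^{m-m'}_c(G,\Lambda)=I^{(n-2n^{(0)})/4}_c(G,\Lambda)$, which by part (1) is a bounded multiplier of $C^*(G)$, hence maps $H^0=C^*(G)$ to $H^0$ boundedly. Composing with $P$ on the left and using the very definition of the Sobolev modules $H^s$ (that $P$ implements the isomorphism $H^s\cong H^0$, $H^0\cong H^{-m'}$ up to bounded invertibles), one gets that $A=P*(Q*A)$ extends to a bounded module morphism $H^s\to H^{s-m'}$ for every $s$; the shift by $m'$ records exactly the order. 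The main obstacle, and the point that needs the most care, is the H\"ormander square-root trick \emph{in the $C^*$-module setting}: one must check that the parametrix construction producing $B$ with $C-A^\star*A\equiv B^\star*B \pmod{\Psi^{-\infty}(G)}$ can be carried out with compactly supported symbols and that the remainder in $\Psi^{-\infty}(G)$ is genuinely negligible as a multiplier of $C^*(G)$ — this is where one invokes the precise definition of $\Psi^{-\infty}(G)$ from \cite{Vassout2006} (Definition 24) and the fact that such operators, sandwiched between compactly supported $\Psi$DO's, remain in $C^*(G)$. Everything else is a formal consequence of the composition theorems already proved.
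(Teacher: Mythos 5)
Your proposal is correct and follows essentially the same route as the paper: reduce to invertible patches, use Theorems \ref{thm:FIO-adjoint}, \ref{thm:inversibility-equiv-transversal-adjoint} and \ref{thm:compo-FIO-on-gpd} to place $A^\star * A$ in $\Psi^{2m-(n-2n^{(0)})/2}_c(G)$, invoke $C^*$-continuity of order-$\le 0$ pseudodifferential $G$-operators, and conjugate by elliptic operators for general $s$. The only cosmetic differences are that the paper cites the inclusions $\Psi^0_c(G)\subset\cM(C^*(G))$ and $\Psi^{m'}_c(G)\subset C^*(G)$ ($m'<0$) from \cite{MP},\cite{Vassout2006} together with the Cauchy--Schwarz inequality for Hilbert modules, rather than re-running the H\"ormander square-root trick, and in part (3) it uses the exactly invertible elliptic operators $P(s)$ of \cite{Vassout2006} instead of a parametrix, which spares you the smoothing remainder in $A=P*(Q*A)$.
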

\begin{proof}
By definition of $G$-FIO and the assumptions of the theorem, $A$ can be decomposed into a finite sum $A=\sum A_i$ where for all $i$,  $A_i\in I_c^{m}(G,\Lambda_i;\Omega^{1/2})$ and $\Lambda_i$ is an invertible patch of $\Lambda$. Therefore, we can directly assume that $\Lambda$ is an invertible $G$-relation. 

From \cite{LMV1},  we know that $A$ is an adjointable $G$-operator and Theorem \ref{thm:FIO-adjoint} gives $A^*\in I_c^{m}(G,\Lambda^\star;\Omega^{1/2})$.  Remember that,  $ \scal{ }{ }$ denoting the Hilbertian product of $C^*(G)$ seen as a $C^*(G)$-Hilbert module, the adjoint $A^*$ is characterized by 
\begin{equation}\label{proof:continuity-C*-2}
  \scal{Au}{v}=\scal{u}{A^*v} \quad \forall u,v \in C^\infty_c(G,\Omega^{1/2}).
\end{equation}
Observe that, by the Cauchy-Schwarz inequality for Hilbert modules, we have 
\begin{equation}\label{proof:continuity-C*-3}
  \| Au\|^2 = \| \scal{Au}{Au}\|=\| \scal{A^*Au}{u}\| \le \| A^*A u\|\| u\| \quad \forall u \in C^\infty_c(G,\Omega^{1/2}),
\end{equation}
and similarly for $A^*$. Since $\Lambda$ is invertible, Theorems \ref{thm:compo-FIO-on-gpd} and \ref{thm:inversibility-equiv-transversal-adjoint} give 
\[
 A^*A \in \Psi^{2m-\frac{n-2n^{(0)}}{2}}_c(G).
\]
Now a fundamental result \cite[Theorem 18]{MP}, \cite[Proposition 39]{Vassout2006} is 
\begin{equation}
 \label{proof:continuity-C*-5}
 \Psi^{0}_c(G)\subset \cM(C^*(G)) \text{ and } \Psi^{m'}_c(G)\subset C^*(G) \text{ for any } m'<0
\end{equation}
and then we can proceed as in  \cite{MP},\cite{Vassout2006}:
\begin{enumerate}
\item Let assume $m=(n-2n^{(0)})/4$. Then  $A^*A\in\Psi^{0}_c(G) $ and there exists $C\ge 0$ such that, using \eqref{proof:continuity-C*-3}:
\[
 \| Au\|^2 \le C\| u\|^2 \quad \forall u \in C^\infty_c(G,\Omega^{1/2}),
\]
and similarly for $A^*$. This allows to extend by continuity the relation \eqref{proof:continuity-C*-2} to all $u,v\in C^*(G)$, which then proves that $A\in\cL(C^*(G))\simeq \cM(C^*(G))$.

\item Let assume $m<(n-2n^{(0)})/4$. Then $A\in \cM(C^*(G))$ as before and by \eqref{proof:continuity-C*-5}, $A^*A\in C^*(G)$, which implies that  $A\in C^*(G)$ too since  $C^*(G)$ is an ideal of $\cM(C^*(G))$ (see \cite[Chapter 1]{Ped1979} for instance).  
\item In the general case, we know from \cite{Vassout2006} that for all $s$ there exists an invertible elliptic pseudodifferential $G$-operator $P(s)$ of order exactly $s$ , of inverse $P(-s)$, and that this operator $P(s)$ is an isomorphism of Hilbert modules between $H^s$ and $C^*(G)$. \\ Then $P(s-m)AP(-s)$ is an element of $I_c^{(n-2n^{(0)})/4}(G,\Lambda;\Omega^{1/2})$ hence by the first result, a bounded morphism of Hilbert modules between $C^*(G)$ and itself, and the result follow by multiplying on the left by $P(m-s)$ an on the right by $P(s)$.

\end{enumerate}
 
 \end{proof}

\section{An example: b-FIO}\label{sec:b-FIO}

Applying the previous constructions to the groupoid defined in \cite{DLR} to describe the pseudodifferential $\mathsf{S}$-calculus on manifolds with iterated fibred corners, we have at hands tools to determine what Lagrangian submanifolds of $T^*X^2_\pi$ are suitable to define a decent class of Fourier integral operators on manifolds with iterated fibred corners, and then equivalently on stratified spaces. Since this will be treated in details elsewhere, we content ourselves here with the elementary example of a manifold $X$ with boundary $\partial X=Y$ equipped with the trivial fibration $\pi : Y\to S=\{\cdot\}$. This example has also the advantage to allow a comparison of our constructions with existing ones \cite{Melrose1981}.

Let $X^2_b$ denote the stretched product of $X^2$ with respect to $Y^2$, $\beta : X^2_b\to X^2$ the blow-down map  \cite[Sections 4.1,4.2]{Melrose1993}. 
This a  manifold with corners  whose boundary consists in three boundary hypersurfaces 
\begin{equation}\label{eq:b-FIO-1}
 \mathrm{bf} = \beta^{-1}(Y^2), \quad  \mathrm{lb} = \overline{\beta^{-1}(Y\times\overset{\circ}{X})},\quad  \mathrm{rb} = \overline{\beta^{-1}( \overset{\circ}{X}\times Y)}. 
\end{equation}
The open submanifold
\begin{equation}\label{eq:b-FIO-2}
 \cG_b = X^2_b\setminus (\mathrm{lb}\cup\mathrm{rb})
\end{equation}
has a Lie groupoid structure \cite{MP, NWX} with unit space 
\begin{equation}\label{eq:b-FIO-3}
  \Delta_b=\overline{\beta^{-1}(\Delta_{\overset{\circ}{X}})}
\end{equation}  
where $\Delta_{\overset{\circ}{X}}$ is the diagonal of $\overset{\circ}{X}\times\overset{\circ}{X}$. 
The map $\beta$ induces a diffeomorphism between $\Delta_b$ and $\Delta_X$. 
The Lie algebroid is given by the stretched tangent bundle ${}^bTX$ and the anchor map is the natural map
\begin{equation}\label{eq:b-FIO-4}
 \iota : {}^bTX \longrightarrow TX.
\end{equation}
The groupoid structure of $\cG_b$  is the  unique $C^\infty$ extension of the pair groupoid 
structure of $\overset{\circ}{X}\times\overset{\circ}{X}$ and it coincides with the natural one on 
\begin{equation}\label{eq:b-FIO-5}
 G_b=\overset{\circ}{X}\times\overset{\circ}{X}\cup Y^2\times \RR_+^* \rightrightarrows X,
\end{equation}
through the diffeomorphism: 
\begin{align}\label{eq:b-FIO-6}
G_b\simeq \{(p,q,\lambda)\in X^2\times \RR_+^* \ ;\ x(q)=\lambda x(p) \} &\longrightarrow \cG_b\subset X^2_b \\
 (p,q,\lambda) & \longmapsto \begin{cases}
                                 (p,q) &\text{ if } p\not \in \partial X \\
                                 [(t,\lambda t, p',q')_{t\in[0,\epsilon]}]  &\text{ if } p \in \partial X 
                             \end{cases}
\end{align}
Above, $x$ is a defining function for the boundary of $X$ and points in a neighborhood $\cU$ of $\partial X$ are written using the convention $p=(x(p),p')$ with $p'\in Y$ after using the collar neighborhood theorem. Assuming that $\cU\simeq [0,\infty)\times Y$, we have a $C^\infty$ groupoid isomorphism
\begin{equation}\label{eq:b-FIO-7}
 \RR_+\rtimes \RR_+^*\times Y^2 \simeq (\cG_b)_{\cU}^{\cU} 
\end{equation}
given by 
\begin{align}\label{eq:b-FIO-8}
  \RR_+\rtimes\RR_+^*\times Y^2 &\longrightarrow   G_b\simeq \cG_b \\
  (x,\lambda,y,z) & \longmapsto   \begin{cases}
                                    (x,\lambda x , y,z) & \text{ if } x>0\\
                                     (\lambda, y,z)& \text{ if } x=0.
                                  \end{cases}
\end{align}
We assume in the sequel that a defining function is fixed, and we identify  $\cG_b$ with the left hand side of \eqref{eq:b-FIO-8}. In particular, we note $\mathrm{bf}$ the subgroupoid   
$ Y^2\times \RR_+^*$. For more details, see \cite{MP,NWX}.

\bigskip 
It is natural to focus on lagrangian conic submanifolds of $T^* X^2_b\setminus 0$ which are contained in $T^*\cG_b$. Indeed, since a diffeomorphism of $X$ onto itself maps necessarily the boundary to the boundary, the  inclusion $\Lambda\subset T^*\cG_b$ hold for  canonical relations $\Lambda$ given by  graphs of co-differential of diffeomophisms of $X$. For the same reason, one could moreover ask that the lagrangian submanifolds $\Lambda \subset T^*\cG_b\setminus 0$ under consideration have compact projections in $\cG_b$, but there is no immediate need for requiring this property.

\begin{prop}\label{prop:b-lagrangian}
Let $\Lambda\subset T^*\cG_b\setminus 0$ be a family $\cG_b$-relation and $i : \mathrm{bf}\hookrightarrow \cG_b$. Then
\begin{enumerate}
 \item $\Lambda\cap T^*(\overset{\circ}{X}\times \overset{\circ}{X})$ is a canonical relation contained in $(T^* \overset{\circ}{X}\setminus 0)\times (T^* \overset{\circ}{X}\setminus 0)$. 
%  \item $\Lambda$ is transverse to $T^*\cG_b\vert_{\mathrm{bf}}$ and $N^*(\mathrm{bf})\cap \Lambda=\emptyset$.
 \item The projection $i^*\Lambda\subset T^*\mathrm{bf}\setminus 0$ is a family $\mathrm{bf}$-relation.
\end{enumerate}
\end{prop}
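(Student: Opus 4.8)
The plan is to verify the two assertions by unwinding the groupoid structure of $\cG_b$ and applying the results on $G$-relations established in Section \ref{sec:Lag-subm-of-Gamma}, in particular Proposition \ref{prop:G-relation-passing-to-subgroupoid}.

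\textbf{Part (1).} First I would observe that $\overset{\circ}{X}\times\overset{\circ}{X}$ is an open subgroupoid of $\cG_b$ corresponding to the open saturated subset $\overset{\circ}{X}\subset \Delta_b\simeq X$ of the unit space (its orbit is the single open leaf, the interior, of the canonical foliation of $\cG_b$). Being open, the inclusion $j:\overset{\circ}{X}\times\overset{\circ}{X}\hookrightarrow\cG_b$ is trivially transverse to $p:\Lambda\to\cG_b$, so by Proposition \ref{prop:G-relation-passing-to-subgroupoid}(1) the set $j^*\Lambda=\Lambda\cap T^*(\overset{\circ}{X}\times\overset{\circ}{X})$ is a local $(\overset{\circ}{X}\times\overset{\circ}{X})$-relation; but for the pair groupoid $M\times M$ of a manifold, the cotangent groupoid $T^*(M\times M)\rightrightarrows A^*(M\times M)=T^*M$ has $\ker s_\Gamma$ and $\ker r_\Gamma$ equal respectively to the subsets $T^*M\times 0$ and $0\times T^*M$ (up to the standard sign conventions identifying $T^*(M\times M)\simeq T^*M\times T^*M$), so admissibility is exactly the "no-zeros" condition, i.e. $j^*\Lambda\subset (T^*\overset{\circ}{X}\setminus 0)\times(T^*\overset{\circ}{X}\setminus 0)$. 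It remains to note that $j^*\Lambda$ is in fact a true submanifold, not merely a local one: this is because $\Lambda$ is a submanifold and $j$ is an open immersion, so $j^*\Lambda=\Lambda\cap T^*(\overset{\circ}{X}\times\overset{\circ}{X})$ is an open subset of $\Lambda$. Hence it is a canonical relation in the classical sense.

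\textbf{Part (2).} For the boundary face, I would use that $Y\subset\Delta_b\simeq X$ is a saturated submanifold of the unit space of $\cG_b$ (it is a union of orbits: the closed leaf $Y$ of the foliation $\cF_{\Delta_b}$ consists precisely of the boundary, since $G_b$ acts on $Y$ through the $\RR_+^*$-action with a single orbit $Y$... more precisely the induced subgroupoid $(\cG_b)_Y^Y$ is exactly $\mathrm{bf}=Y^2\times\RR_+^*$, which is the $b$-front-face groupoid). Since $\Lambda$ is assumed to be a \emph{family} $\cG_b$-relation, Proposition \ref{prop:G-relation-passing-to-subgroupoid}(2) applies directly with $Y\subset G^{(0)}$ saturated and $H=(\cG_b)_Y^Y=\mathrm{bf}$: it gives that the transversality hypothesis of part (1) of that proposition is automatically satisfied, hence $\Lambda\cap N^*\mathrm{bf}=\emptyset$, and that $\rho^*\Lambda$ — which, identifying $i^*\Lambda$ with $\rho(\Lambda)$ as in the statement of Proposition \ref{prop:G-relation-passing-to-subgroupoid}, is $i^*\Lambda$ — is a local family $\mathrm{bf}$-relation. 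This is exactly assertion (2).

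\textbf{Main obstacle.} The substantive point is not the abstract machinery but the identification of the relevant subgroupoids: one must check carefully that $Y$ is genuinely saturated in the unit space of $\cG_b$ and that the restricted groupoid $(\cG_b)_Y^Y$ is precisely $\mathrm{bf}=Y^2\times\RR_+^*$ (and similarly that $\overset{\circ}{X}$ is open and saturated with $(\cG_b)_{\overset{\circ}{X}}^{\overset{\circ}{X}}=\overset{\circ}{X}\times\overset{\circ}{X}$). This is a direct consequence of the explicit description \eqref{eq:b-FIO-5}--\eqref{eq:b-FIO-8}: the orbits of $\cG_b$ on $X$ are $\overset{\circ}{X}$ and $Y$, so every boundary-avoiding or boundary-contained submanifold of $X$ saturated under $\cG_b$ is one of these two. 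Once this bookkeeping is in place, both parts follow mechanically from Proposition \ref{prop:G-relation-passing-to-subgroupoid} together with the standard description of $\ker s_\Gamma,\ker r_\Gamma$ for a pair groupoid, so I would only sketch the verification that $\overset{\circ}{X}$ and $Y$ are the orbits and spend the bulk of the proof on citing the relevant propositions.
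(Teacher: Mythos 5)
Your proof is correct and follows essentially the same route as the paper, which simply invokes Proposition \ref{prop:G-relation-passing-to-subgroupoid} after noting that $\overset{\circ}{X}\times\overset{\circ}{X}$ and $\mathrm{bf}$ are the saturated subgroupoids over the only two leaves of $\cF_{\cG_b}$. Your additional bookkeeping (openness of the interior giving transversality for free, and the identification of admissibility with the classical no-zeros condition for the pair groupoid) is accurate and just spells out what the paper leaves implicit.
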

\begin{proof}
The follows from  Proposition \ref{prop:G-relation-passing-to-subgroupoid} since  $\overset{\circ}{X}\times \overset{\circ}{X}$ and $\mathrm{bf}$ are saturated subgroupoids (actually,  they are the only two leaves of $\cF_{\cG_b}$).
\end{proof}

In particular, given a family $\cG_b$-relation $\Lambda$, the $\cG_b$-FFIO associated with $\Lambda$ give by restriction to $\overset{\circ}{X}\times \overset{\circ}{X}$ and $\mathrm{bf}$, respectively, an ordinary Fourier integral operator acting on $\overset{\circ}{X}$ and a Fourier integral operator acting on $\RR_+^*\times Y$ which is invariant with respect to the homotethies in $\RR^*_+$. 

We end this section by  comparing $\cG_b$-relations and  {\sl boundary canonical relations} as defined by R. Melrose    \cite[chap III, Definition 2.19]{Melrose1981}. R. Melrose used the later to develop the analog of Hormander's theory of Lagrangian distributions in the framework of manifolds with boundary and totally characteristic operators. 

Let $\Lambda$ be a  boundary canonical  relation  with  $N=M=X$ in \cite[chap III, Definition 2.19]{Melrose1981}. 
Condition (2.10) of \cite[chap III, Definition 2.19]{Melrose1981} is the requirement that $\Lambda$ is a conic Lagrangian submanifold of $T^*X^2_b$ contained in $T^*\cG_b$ and Condition (2.15) there coincides with the admissibility condition (\enquote{no zeros condition}). Thus $\Lambda$ is $\cG_b$-relation. 

Now Condition (2.9) of \cite[chap III, Definition 2.19]{Melrose1981} reads
\begin{equation}\label{eq:b-lagrangian-7}
  T \Lambda + T T^*_{\mathrm{bf}}\cG_b = T T^*\cG_b. 
\end{equation}
This is equivalent to $T\mathrm{bf} + dp(T\Lambda) = T_{\mathrm{bf}}\cG_b$, which gives Condition \eqref{eq:transver-cond-G-rel-1} of Proposition \ref{prop:charact-family-G-rel} at any  point $(\gamma,\xi)\in \Lambda$ with $\gamma\in \mathrm{bf}	$. Since Condition \eqref{eq:transver-cond-G-rel-1} is here empty at any  point $(\gamma,\xi)\in \Lambda$ with $\gamma\in \overset{\circ}{X}\times \overset{\circ}{X}$, it follows that $\Lambda$ is a family $\cG_b$-relation.

It follows that boundary canonical relations as defined in \cite[chap III, Definition 2.19]{Melrose1981} are $\cG_b$-relations. 
We now analyse (2.13) of \cite[chap III]{Melrose1981}. We know that $\Lambda_1=i^*(\Lambda)\subset T^*\mathrm{bf}$ is a $\mathrm{bf}$-relation. The fiber bundle considered in  \cite[chap III, (2.13)]{Melrose1981} is here
\begin{equation}\label{eq:b-lagrangian-12}
  T^*\mathrm{bf} \supset F = \{ (\lambda,y,z,\nu,\xi,\eta)\ ; \ \nu=0\} \longrightarrow  T^*Y^2.
\end{equation}
Thus the first part of  Condition (2.13) consists in requiring that 
\begin{equation}\label{eq:first-half-2.13}
 TF + T\Lambda_1 = T T^*\mathrm{bf}
\end{equation}
and we can find $\cG_b$-relations that do not fulfill this equality. Indeed, let us fix $(p_0)\in Y^2$
%fix $y_0\in Y^2$, $v_0\in \RR^{2}$ 
and set 
\begin{equation}
 V= \{ (t,\lambda, p_0)\ ;\ t\in\RR_+,\ \lambda\in \RR_+^*\}\subset \cG_b. 
\end{equation}
Let $\cC$ be the  cone in $T^*\cG_b\setminus 0$ defined by 
\begin{equation}
  \cC = \{ (x,\lambda,y_1,y_2,\tau,\nu,\xi_1,\xi_2) \ ; \ \epsilon \vert\xi_1\vert< \vert\xi_2\vert< \epsilon^{-1}\vert\xi_1\vert\}
\end{equation}
for some $\epsilon>0$. Then set 
\begin{equation}\label{eq:Gb-relation-not-melrose-bdy-relation}
 \Lambda = N^*V \cap \cC=\{ (t,\lambda, p_0, 0,0,\xi_1,\xi_2) \ ; \  t\in\RR_+,\ \lambda\in \RR_+^*,\ \epsilon \vert\xi_1\vert< \vert\xi_2\vert<  \epsilon^{-1}\vert\xi_1\vert\}.
\end{equation}
We are going to check that $\Lambda$ is a $\cG_b$-relation which does not satisfy \eqref{eq:first-half-2.13}. Firstly, $\Lambda$ is obviously a conic Lagrangian submanifold. A straight computation shows that $A^*\cG_b$ is the subset of $T^*_{\cG_b^{(0)}}\cG_b$ consisting in elements of the form 
\begin{equation}\label{eq:b-lagrangian-2}
 (x,1,y,y,0,\nu,-\xi,\xi)\in T^*_{\cG_b^{(0)}}\cG_b. 
\end{equation}
and that 
\begin{equation}\label{eq:b-lagrangian-3}
 s_\Gamma : (x,\lambda,y,z,\tau,\nu, \xi,\eta) \longmapsto (x\lambda,1,z,z,0,\lambda\nu, -\eta,\eta) 
\end{equation}
\begin{equation}\label{eq:b-lagrangian-4}
 r_\Gamma : (x,\lambda,y,z,\tau,\nu, \xi,\eta) \longmapsto (x,1,y,y,0,\lambda\nu-x\tau, \xi,-\xi) 
\end{equation}
Applying these formulae to points in \eqref{eq:Gb-relation-not-melrose-bdy-relation} shows that $\Lambda$ is a $\cG_b$-relation. 
Next, let us check Condition \eqref{eq:transver-cond-G-rel-1}  of Proposition \ref{prop:charact-family-G-rel}, that is, the transversality between $p : \Lambda\to \cG_b$ and the foliation $\cF_{\cG_b}$. The condition is empty at any interior point so we focus on boundary points, that is on points in $\Lambda\cap p^{-1}(\mathrm{bf})$. We have
\begin{equation}\label{eq:b-lagrangian-5}
 \gamma=(0,\lambda,y)\in\mathrm{bf}=\{0\}\times\RR_+^*\times Y^2,\quad T_\gamma\cF_{\cG_b}= T_\gamma\cG_{b\, s(\gamma)}+T_\gamma\cG_{b}^{r(\gamma)}=\{0\}\times T\RR_+^*\times Y^2 \subset T_{\mathrm{bf}} \cG_b.
\end{equation}
Using the expression \eqref{eq:Gb-relation-not-melrose-bdy-relation} we obtain immediately
\begin{equation}\label{eq:b-lagrangian-6}
 dp(T_{\gamma,\xi}\Lambda) +  T_\gamma\cF_{\cG_b}= T_\gamma\cG_{b}, \quad \forall (\gamma,\xi)\in\Lambda\cap p^{-1}(\mathrm{bf}),
\end{equation}
therefore $\Lambda$ is a family $\cG_b$-relation. Now consider 
\begin{equation}
\Lambda_1=i^*\Lambda = \{ (\lambda, p_0, 0,\xi_1,\xi_2) \ ; \ \lambda\in \RR_+^*,\ \epsilon \vert\xi_1\vert< \vert\xi_2\vert<  \epsilon^{-1}\vert\xi_1\vert\}\subset T^*\mathrm{bf}\setminus 0.
\end{equation}
Thus 
\begin{equation}
 T\Lambda_1 = \{ (\lambda, p_0, 0,\xi; u,0,0,\zeta) \ ; \ \lambda\in \RR_+^*,\ \epsilon \vert\xi_1\vert< \vert\xi_2\vert<  \epsilon^{-1}\vert\xi_1\vert,\ \zeta\in T_{p_0}^*Y^2.\}
\end{equation}
Since 
\begin{equation}
TF =\{ (\lambda,p,0,\xi ; u,v,0,\zeta)\ ; \ (\lambda,u)\in T\RR_+^*,\ (p,v)\in TY^2,\ \xi,\zeta\in T_{p}^*Y^2\}.
\end{equation}
We now see that $T\Lambda_1+TF\not = T(T^*\mathrm{bf})$, therefore $\Lambda$ is not a boundary canonical relation in the sense of  \cite[chap III, Definition 2.19]{Melrose1981}.

 \bibliographystyle{plain}  
\bibliography{biblio_fougro.bib}

\begin{thebibliography}{10}

\bibitem{AS2006}
Iakovos Androulidakis and Georges Skandalis.
\newblock The holonomy groupoid of a singular foliation.
\newblock {\em J. Reine Angew. Math.}, 626:1--37, 2009.

\bibitem{Connes1994}
A.~Connes.
\newblock {\em Noncommutative {G}eometry}.
\newblock Academic Press, San Diego, CA, 1994.

\bibitem{CDW}
A.~Coste, P.~Dazord, and A.~Weinstein.
\newblock Groupo\"{\i}des symplectiques.
\newblock {\em Publications du Dep. de Maths. de l'Univ. de Lyon 1}, 2/A, 1987.

\bibitem{DLR}
C.~Debord, J.-M. Lescure, and F.~Rochon.
\newblock Pseudodifferential operators on manifolds with fibred corners.
\newblock {\em Ann. Inst. Fourier}, 65(3):1799--1880, 2015.

\bibitem{Debord2001}
Claire Debord.
\newblock Holonomy groupoids of singular foliations.
\newblock {\em J. Differential Geom.}, 58(3):467--500, 2001.

\bibitem{DL2009}
Claire Debord and Jean-Marie Lescure.
\newblock {$K$}-duality for stratified pseudomanifolds.
\newblock {\em Geom. Topol.}, 13(1):49--86, 2009.

\bibitem{DLN}
Claire Debord, Jean-Marie Lescure, and Victor Nistor.
\newblock Groupoids and an index theorem for conical pseudo-manifolds.
\newblock {\em J. Reine Angew. Math.}, 628:1--35, 2009.

\bibitem{VY}
E.~Van Erp and R.~Yuncken.
\newblock A groupoid approach to pseudodifferential operators.
\newblock arXiv:1511.01041, 2015.

\bibitem{GS}
V.~Guillemin and S.~Sternberg.
\newblock {\em Geometric asymptotics}.
\newblock Mathematical Surveys, No. 14, American Mathematical Society, 1977.

\bibitem{Horm-FIO1}
L.~H{\"o}rmander.
\newblock Fourier integral operators. {I}.
\newblock {\em Acta Math.}, 127(1-2):79--183, 1971.

\bibitem{Horm-classics}
L.~H{\"o}rmander.
\newblock {\em The analysis of linear partial differential operators. {I--IV}}.
\newblock Classics in Mathematics. Springer, 2003--2009.

\bibitem{LMV1}
J.M. Lescure, D.~Manchon, and S.~Vassout.
\newblock About the convolution of distributions on groupoids.
\newblock arXiv:1502.02002v3.

\bibitem{Mackenzie2005}
K.~Mackenzie.
\newblock {\em General theory of {L}ie groupoids and {L}ie algebroids}, volume
  213 of {\em London Mathematical Society Lecture Note Series}.
\newblock Cambridge University Press, Cambridge, 2005.

\bibitem{Melrose1981}
R.B. Melrose.
\newblock Transformation of boundary problems.
\newblock {\em Acta Math.}, 147(3-4):149--236, 1981.

\bibitem{Melrose1993}
R.B. Melrose.
\newblock {\em The {A}tiyah-{P}atodi-{S}inger index theorem}, volume~4 of {\em
  Research Notes in Mathematics}.
\newblock A K Peters Ltd., Wellesley, MA, 1993.

\bibitem{Monthubert1999}
B.~Monthubert.
\newblock Pseudodifferential calculus on manifolds with corners and groupoids.
\newblock {\em Proc. Amer. Math. Soc.}, 127(10):2871--2881, 1999.

\bibitem{MP}
B.~Monthubert and F.~Pierrot.
\newblock Indice analytique et groupo\"{\i}de de {L}ie.
\newblock {\em C.R.A.S S\'{e}rie 1}, 325:193--198, 1997.

\bibitem{NielStetk1974}
B{\o}rge P.~D. Nielsen and Henrik Stetkaer.
\newblock Invariant {F}ourier integral operators on {L}ie groups.
\newblock {\em Math. Scand.}, 35:193--210, 1974.

\bibitem{NWX}
V.~Nistor, A.~Weinstein, and P.~Xu.
\newblock Pseudodifferential operators on differential groupoids.
\newblock {\em Pacific J. of Math.}, 181(1):117--152, 1999.

\bibitem{Ped1979}
G.K. Pedersen.
\newblock {\em $C^*$-algebras and their automorphism groups}.
\newblock Academic Press, 1979.

\bibitem{Pradines1986}
J.~Pradines.
\newblock How to define the differentiable graph of a singular foliation.
\newblock {\em C. de Top. et Geom. Diff. Cat.}, XXVI{\ }(4):339--381, 1986.

\bibitem{Pradines1988}
J.~Pradines.
\newblock Remarque sur le groupo\"{\i}de cotangent de {W}einstein-{D}azord.
\newblock {\em C.R. Acad Sc. Paris S\'erie I}, 306:557--560, 1988.

\bibitem{VE2010I}
Erik van Erp.
\newblock The {A}tiyah-{S}inger index formula for subelliptic operators on
  contact manifolds. {P}art {I}.
\newblock {\em Ann. of Math. (2)}, 171(3):1647--1681, 2010.

\bibitem{VE2010II}
Erik van Erp.
\newblock The {A}tiyah-{S}inger index formula for subelliptic operators on
  contact manifolds. {P}art {II}.
\newblock {\em Ann. of Math. (2)}, 171(3):1683--1706, 2010.

\bibitem{Vassout2006}
S.~Vassout.
\newblock Unbounded pseudodifferential calculus on {L}ie groupoids.
\newblock {\em J. Funct. Anal.}, 236(1):161--200, 2006.

\bibitem{Weinstein1979}
A.~Weinstein.
\newblock {\em Lectures on symplectic manifolds}, volume~29 of {\em CBMS
  Regional Conference Series in Mathematics}.
\newblock American Mathematical Society, Providence, R.I., 1979.
\newblock Corrected reprint.

\bibitem{Winkelnkemper1983}
H.~E. Winkelnkemper.
\newblock The graph of a foliation.
\newblock {\em Ann. Global Anal. Geom.}, 1(3):51--75, 1983.

\end{thebibliography}

\end{document}